\newtheorem{thm}[subsection]{Theorem}
\newtheorem{defn}[subsection]{Definition}
\newtheorem{prop}[subsection]{Proposition}
\newtheorem{cor}[subsection]{Corollary}
\newtheorem{lemma}[subsection]{Lemma}
\theoremstyle{definition}  
\newtheorem{example}[subsection]{Example}
\newtheorem{remark}[subsection]{Remark}
\newcommand{\dfn}{\textbf} 
\newcommand{\mdfn}[1]{\dfn{\mathversion{bold}#1}} 
\newcommand{\iso}               {\cong}  
\newcommand{\cat}{\EuScript}    
\newcommand{\cA}{{\cat A}}      
\newcommand{\cC}{{\cat C}}
\newcommand{\cI}{{\cat I}}
\newcommand{\cM}{{\cat M}}
\newcommand{\cO}{{\cat O}}
\newcommand{\cS}{{\cat S}}
\renewcommand{\cA}{{\mathcal A}}
\renewcommand{\cC}{{\mathcal C}}
\renewcommand{\cI}{{\mathcal I}}
\newcommand{\field}[1]  {\mathbb #1} 
\newcommand{\F}         {\field F}
\newcommand{\R}         {\field R}
\newcommand{\N}         {\field N}
\newcommand{\Z}         {\field Z}
\newcommand{\Q}         {\field Q}
\newcommand{\OO}        {\field O}
\DeclareMathOperator*{\GL}{GL}
\DeclareMathOperator*{\Homeo}{Homeo}
\DeclareMathOperator*{\Invol}{Invol}
\DeclareMathOperator*{\intr}{int}
\DeclareMathOperator*{\im}{Im}
\DeclareMathOperator{\Hom}{Hom}
\DeclareMathOperator{\Sp}{Sp}
\DeclareMathOperator{\Doub}{Doub}
\newcommand{\ra}{\rightarrow}                   
\newcommand{\lra}{\longrightarrow}              
\newcommand{\inc}{\hookrightarrow}              
\newcommand{\blank}{-}                          
\newcommand{\Id}{Id}                            
\newcommand{\id}{id}                            
\newcommand{\bd}{\partial}
\newcommand{\he}{\simeq}
\numberwithin{equation}{section}
\newcommand{\RP}{{\R}P}
\newcommand{\ssim}{\!\!\sim}
\DeclareMathOperator{\esum}{\#_{\mathnormal{2}}}
\newenvironment{myequation}
  {\addtocounter{subsection}{1}\begin{eqnarray}}
  {\end{eqnarray}$\!\!$}
\DeclareMathOperator{\rot}{rot}
\DeclareMathOperator{\anti}{anti}
\newcommand{\antitube}{\text{\rm antitube}}
\newcommand{\bfantitube}{\text{\bf antitube}}
\newcommand{\at}{\text{\rm AT}}
\DeclareMathOperator{\spit}{spit}
\DeclareMathOperator{\triv}{triv}
\DeclareMathOperator{\refl}{refl}
\DeclareMathOperator{\Aut}{Aut}
\DeclareMathOperator{\Iso}{Isom}
\DeclareMathOperator{\tr}{tr}
\DeclareMathOperator{\rank}{rank}
\newcommand{\tD}{\tilde{D}}
\newcommand{\talpha}{\tilde{\alpha}}
\newcommand{\btwo}{[2]}
\newenvironment{bsmallmatrix}
  {\left [\begin{smallmatrix}}
  {\end{smallmatrix}\right ] }
\begin{document}

\title{Involutions on surfaces}

\author{Daniel Dugger}
\address{Department of Mathematics\\ University of Oregon\\ Eugene, OR
97403} 

\email{ddugger@math.uoregon.edu}

\begin{abstract}
We use equivariant surgery to classify all involutions on
closed surfaces, up to isomorphism.  Work on this problem is
classical, dating back to the nineteenth century, but some questions
seem to have been left unanswered.  We give a modern
treatment that leads to a complete classification.
\end{abstract}

\maketitle

\tableofcontents

\section{Introduction}
\label{se:intro}
Let $C_2$ denote the group of order $2$.  
The goal of this paper is to classify all $C_2$-actions on closed
$2$-manifolds, up to equivariant isomorphism.   If $X$ is a closed
$2$-manifold this involves 
\begin{itemize}
\item[(P1)]  counting all of the (isomorphism classes
of) $C_2$-structures on $X$; 
\item[(P2)] developing a nomenclature for
explicitly identifying each $C_2$-structure, and an algorithm for
listing all the  structures on $X$; 
\item[(P3)] providing an algorithm for taking a given $C_2$-action on
$X$ and deciding which element of the list from (P2) represents the
same isomorphism class.  (For practical purposes, this amounts to developing a
set of invariants that is ``complete'' in the sense that it distinguishes
isomorphism classes).  
\end{itemize}
In addition---and this is important---we
 want the nomenclature from (P2) to be
reasonably geometric and to 
lend  itself to the calculation of cohomology groups and other
homotopical invariants.

Our motivation for wanting to solve these problems comes from ongoing
work on trying to understand $RO(G)$-graded Bredon cohomology in the
case $G=C_2$.  Computations in this area are scarce, and we wanted a
supply of basic spaces to use as a testing ground.  It was natural to
start by looking at 2-manifolds, and we originally hoped  there
was a very simple answer to (P1)---(P3) that one could just look up.
The present paper exists because we were unable to find such a
reference.  Several papers in the literature treat significant aspects
of this problem, and it is worth mentioning upfront \cite{Sc},
\cite{S}, \cite{A}, \cite{N1}, \cite{N2}, \cite{BCNS}.  The case of orientable
surfaces is certainly well understood and completely classical, but
the non-orientable case is not.  We give a bit
more history in Section~\ref{se:history} below.

\medskip

We now describe the results in more detail.
An involution on a space $X$ is a map
$\sigma\colon X\ra X$ such that $\sigma^2=\id$.  This is the same as
an action of the group $C_2$ on $X$.  If $(X,\sigma_X)$ and
$(Y,\sigma_Y)$ are spaces with involutions, an isomorphism between them
is a homeomorphism $f\colon X\ra Y$ such that 
$f\circ \sigma_X=\sigma_Y\circ f$.  

Let $T_g$ denote the genus $g$ torus and let $N_r$ denote the
connected sum of $r$ copies of $\RP^2$.  The 
solution to problem (P1) is the following:

\begin{thm}
\label{th:count}
The number of isomorphism classes of $C_2$-actions on $T_g$ is equal to $4+2g$.  
The number of isomorphism classes of $C_2$-actions on $N_r$ is given by the
following formulas:

\[\begin{cases}
1+\frac{(r+3)^3}{64}=\frac{1}{64}\bigl ( r^3+9r^2 +27r+91 \bigr ) &
\text{if $r\equiv 1$\ mod $4$,}\\[0.2in]
1+\frac{(r+1)(r+3)(r+5)}{64}=\frac{1}{64}\bigl ( r^3+9r^2+23r+79\bigr )
& \text{if $r\equiv 3$\ mod $4$,}\\[0.2in]
\frac{1}{64}\bigl ( r^3+18r^2+152r \bigr )
&
\text{if $r\equiv 0$\ mod $4$,}\\[0.2in]
\frac{1}{64}\bigl ( r^3+18r^2+156r -8 \bigr )
&
\text{if $r\equiv 2$\ mod $4$.}\\[0.2in]
\end{cases}
\]
\end{thm}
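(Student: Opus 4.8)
The plan is to deduce the count from a structural classification obtained by equivariant surgery, and then to turn the classification into a lattice-point count. The first step is to set up the dictionary between an involution and its orbit data. For a nontrivial involution $\sigma$ on a closed surface $X$, the fixed set $X^\sigma$ is a disjoint union of $b$ isolated points (where $d\sigma_p=-\id$) and a collection of circles (where $d\sigma_p$ is a reflection); the orbit space $Y=X/\sigma$ is a compact surface whose boundary circles are exactly the images of the fixed circles, and $q\colon X\to Y$ is an honest double cover away from $\partial Y$ and the $b$ images of the isolated fixed points, near which it is the local model $z\mapsto z^2$. Conversely $X$ is reconstructed from the \emph{decorated} datum: the surface $Y$ (orientable or not, with boundary), the $b$ branch points in $\mathrm{int}\,Y$, a ``type'' on each boundary circle recording whether a regular neighborhood of the corresponding fixed circle is an annulus or a M\"obius band, and a double-cover class in $H^1$ of $Y$ minus (boundary $\cup$ branch points) compatible with these decorations and with connected total space. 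The equivariant surgery input is precisely that cutting $X$ along its fixed circles and excising invariant disks about the isolated fixed points realizes this reconstruction, and that equivariant isomorphisms correspond to decoration-preserving homeomorphisms of $Y$. I would prove this as a classification proposition and use it for the count.

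Given the dictionary, the theorem becomes a counting problem. For a fixed target $X$ one has the Riemann--Hurwitz relation $\chi(X)=2\chi(Y)-b$, together with $\chi(Y)=2-(\text{genus of }Y)-\#\partial Y$; so the orbit parameters --- the genus $h$ of $Y$, the number of boundary circles split by type as $n_1+n_2$, and the number $b$ of branch points --- satisfy a single linear constraint together with evident inequalities and parity conditions ($b$ even, and monodromy/orientation compatibility). Orientability of $X$ imposes one further relation among these parameters: for orientable $X$ this collapses a degree of freedom (and forces the answer to be linear in $g$), while for non-orientable $X$ the interplay between the genus of $Y$, the M\"obius-type boundary circles, and $b$ is exactly what produces the $r\bmod 4$ case split. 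Over each admissible parameter value one must count the compatible double covers up to the mapping class group of the decorated $Y$: when $b\ge 2$ the branched double covers with a fixed branch locus form essentially one orbit, while when $b=0$ one enumerates the mapping-class-group orbits on the relevant subset of $H^1(Y;\Z/2)$, a short finite list controlled by the mod-$2$ self-intersection data. Summing a per-parameter count that is at worst linear in $r$ over a two-dimensional admissible region yields a cubic polynomial in $r$; for $T_g$ the extra orientability constraint leaves a one-dimensional region and the answer is $4+2g$.

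Concretely I would organize the count into four strata: (i) the trivial action; (ii) free actions ($b=0$, $Y$ closed), where $\chi(Y)$ already pins down $Y$ and only a bounded number of $H^1$-orbits remain; (iii) actions with only isolated fixed points ($b\ge 2$, $Y$ closed), contributing essentially one involution per admissible $(h,b)$; and (iv) actions with fixed circles ($n_1+n_2\ge 1$, $Y$ with boundary), which carries the bulk of the count and all of the type bookkeeping, and in which $Y$ may itself be orientable or not. For each stratum I would write down the admissible quadruples $(h,n_1,n_2,b)$, sum the per-parameter cover counts, and add the strata. I would check the output against small cases: $S^2$ should give $4$ (trivial, $\pi$-rotation, reflection, antipodal), $T_1$ should give $6$, and $\RP^2$, $N_2$ should match the stated formulas.

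The main obstacle is Step (iv) in the non-orientable case: determining exactly which quadruples $(h,n_1,n_2,b)$ are realized by a \emph{connected} equivariant surface with underlying space $N_r$ --- i.e., getting the boundary cases of the inequalities right, especially for small $r$ and at the transition between having or not having room for an extra handle or extra fixed circle --- and, in the free sub-case, correctly counting mapping-class-group orbits of $H^1$ classes on non-orientable surfaces with boundary, where there is no symplectic form to appeal to and the orbit structure depends delicately on the mod-$2$ self-intersection form. The $r\bmod 4$ behavior of the final answer is a direct symptom of these parity constraints, and it is here that miscounting is most likely. Everything else is routine once the classification proposition and the cover-counting lemmas are in place.
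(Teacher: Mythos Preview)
Your strategy is sound and is a genuinely different route from the paper's. You parametrize involutions by the decorated quotient $(Y,b,\text{boundary types},\text{covering class})$ and propose to count these; the paper instead works upstairs, using equivariant surgery to reduce every $X$ to a basic piece (one of $S^{2,1}$, $S^{2,2}$, $S^2_a$, $T_g^{\anti}$, $T_g^{\rot}$, or a doubled surface) plus a formal sum of $S^{1,0}$- and $S^{1,1}$-antitubes, dual crosscaps, and $FM$-moves, and then counts the resulting normal forms. Your parameters $(b,n_1,n_2,\text{orientability of }Y)$ are exactly the paper's $(F,C_+,C_-,Q)$; the covering ambiguity you flag is resolved in the paper not by an MCG-orbit count on $H^1(Y;\Z/2)$ but by a separating/non-separating invariant $\epsilon$ together with a ``double Dickson'' invariant of $\sigma^*$ acting on $H^1(X;\Z/2)$. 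For the enumeration itself the paper defines auxiliary functions $A(r)$ and $B(r)$ that count admissible taxonomies with $F+2C\le r$ and with $F+2C\equiv r+2\pmod 4$ respectively, obtains closed cubic formulas for them by a second-difference recursion, and then applies small explicit corrections for the taxonomies that are unrealized or multiply realized. Your approach would reach the same cubics by a direct lattice-point sum over $(h,n_1,n_2,b)$; the paper's route trades that sum for easier recursions at the cost of a longer case analysis in the classification.

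Two places where your outline needs sharpening before the count will come out right. First, the dichotomy ``$b\ge 2$: essentially one orbit; $b=0$: enumerate $H^1$-orbits'' should read ``$b+n_2>0$ versus $b=n_2=0$''. A M\"obius-type boundary circle forces nontrivial monodromy around it and is just as rigidifying as a branch point: in the paper's language, whenever $F>0$ or $C_->0$ there is at most one action per $Q$-sign. Genuine multiplicity occurs only when $b=n_2=0$ and $Y$ is non-orientable, and there one sees up to \emph{three} connected covers of a single $Y$ (the trivial class, which gives the separating action, and two nontrivial MCG-orbits that the paper distinguishes with its $DD$-invariant). Second, in stratum (iv) you must, for each fixed $(n_1,n_2,b)$, allow both orientability types of $Y$ and test them separately: the orientable-$Y$ action exists iff $F+2C\equiv r+2\pmod 4$ while the non-orientable-$Y$ action exists iff $F+2C\le r$, and these are independent conditions. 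Getting both refinements right is precisely what generates the $r\bmod 4$ split and the correction terms in the even case.
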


Of course, merely counting the actions is not our  main goal.  But
Theorem~\ref{th:count}  gives an immediate sense of the
qualitative difference between the orientable and non-orientable
cases.  It also raises some questions.  Why is the count a linear
function of $g$ in the orientable case, but a cubic function of $r$
in the non-orientable case?  Why do the formulas admit a nice
factorization in the case $r$ is odd, but not when $r$ is even?  
This paper contains answers to both, although the {\it ultimate\/} source of
the factorizations is number-theoretic and somewhat of a mystery.  See
Proposition~\ref{pr:A,B} and Remark~\ref{re:factorizations} below.

\begin{remark}[Connection with the mapping class group]
There are relations between (P1)--(P3) and the problem of identifying
conjugacy classes of involutions in the mapping class group of $X$.
But in the end, these are somewhat different problems.  For example,
the 180-degree fixed-point-free
rotation of the torus about the central axis of its doughnut
hole is the identity in the mapping class group, but is a nontrivial
$C_2$-action.  The mapping class group of $S^2$ is $\Z/2$, but there
are four isomorphism classes of $C_2$-actions. For the Klein bottle
the mapping class group is $\Z/2\times \Z/2$, while there are six
$C_2$-actions.  See Section~\ref{se:MCG} for further discussion of
this issue.
\end{remark}

\subsection{Invariants}
Before describing the rest of our results we need a few tools.
There are four easily-obtained invariants for $C_2$-actions on a $2$-manifold
$X$.  The fixed set $X^{C_2}$ will be a union of isolated points and copies of
$S^1$, and the number of such will be denoted by $F$ and $C$,
respectively.  The copies of $S^1$ in $X^{C_2}$ are historically referred
to as {\it ovals\/}.  
A simple closed curve in a $2$-manifold has a normal
bundle, which will be either trivial or nontrivial: in the former case
we call the curve \mdfn{$2$-sided}, and in the latter case
\mdfn{$1$-sided}.
Let $C_+$ and $C_-$ be the number of $2$-sided and $1$-sided ovals
in $X^{C_2}$, so that $C=C_+ + C_-$.  The three numbers  $(F,C_+,C_{-} )$ are the
first three interesting invariants of the action.  

We will let $\beta$ always 
denote $\dim_{\Z/2} H_1(X;\Z/2)$, sometimes instead writing $\beta(X)$ if
necessary.  
When $X$
is non-orientable this is typically called the genus of $X$, but when
$X$ is orientable the word ``genus'' refers to
$\frac{\beta}{2}$.  To correct for this ambiguous terminology we will
refer to $\beta(X)$  as the ``$\beta$-genus'' of $X$.

The following result shows that the invariants $F$, $C_+$, and $C_-$
are constrained by the $\beta$-genus.  This is an old result due to
Scherrer \cite{Sc}, 
but we will give a modern treatment in Section~\ref{se:general} below.
The inequality portion also follows from Smith theory.

\begin{prop}[Scherrer]
\label{pr:Scherrer}
If a closed $2$-manifold $X$ has a nontrivial $C_2$-action then
$F+2C\leq \beta+2$ and $F\equiv C_-\equiv \beta$ (mod $2$).  
\end{prop}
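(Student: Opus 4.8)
The plan is to prove both the parity statements and the inequality by studying the quotient map $\pi\colon X \to X/C_2$ together with the behavior of mod-$2$ homology. First I would handle the quotient: since the $C_2$-action is nontrivial and $X$ is a closed surface, the fixed set $X^{C_2}$ is a closed $1$-manifold together with isolated points, so $X^{C_2}$ consists of $C$ circles and $F$ points, and the quotient $Y = X/C_2$ is again a closed surface (possibly with the fixed ovals becoming boundary circles or interior curves depending on whether they are $2$-sided or $1$-sided). The free part $X \setminus X^{C_2} \to Y \setminus \pi(X^{C_2})$ is a genuine double cover. I would then feed this into a transfer/Euler-characteristic computation: $\chi(X) = 2\chi(Y) - \chi(X^{C_2}) = 2\chi(Y) - F$ (the ovals contribute $0$ to Euler characteristic), which already forces $\chi(X) \equiv F \pmod 2$, hence $F \equiv \beta \pmod 2$ since $\chi(X) = 2 - \beta(X)$ when $X$ is orientable and $\chi(X) = 2 - \beta(X)$ as well in the form $\chi = 2 - \beta$ for $N_r$ (here $\beta = r$, $\chi = 2-r$). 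So the congruence $F \equiv \beta \pmod 2$ falls out of Euler characteristic alone.

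For the congruence $C_- \equiv \beta \pmod 2$, the key input is that a $1$-sided simple closed curve is exactly a curve whose mod-$2$ homology class has nonzero self-intersection (equivalently, nonzero value under the mod-$2$ intersection form / $w_1$-pairing). I would let $a_1, \dots, a_{C_-}$ be the classes of the $1$-sided ovals in $H_1(X;\Z/2)$. These ovals are disjoint, so $a_i \cdot a_j = 0$ for $i \neq j$, while $a_i \cdot a_i = 1$. Summing, $\bigl(\sum_i a_i\bigr) \cdot \bigl(\sum_i a_i\bigr) = C_- \bmod 2$. On the other hand $\sum_i a_i$ is represented by the disjoint union of the $1$-sided ovals, which is (a component of) the fixed set; I would argue this total class is the Poincaré dual of $w_1(X)$ restricted appropriately — more precisely, that the fixed circles of a $C_2$-action, read mod $2$, represent a class whose self-intersection equals the self-intersection of $[w_1(X)]^\vee$, and $[w_1]\cdot[w_1] = \langle w_1^2,[X]\rangle \equiv \chi(X) \equiv \beta \pmod 2$ by Wu's formula. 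This gives $C_- \equiv \beta \pmod 2$.

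For the inequality $F + 2C \leq \beta + 2$, the cleanest route is Smith theory applied to mod-$2$ homology: for any $C_2$-space one has $\sum_i \dim H_i(X^{C_2};\Z/2) \leq \sum_i \dim H_i(X;\Z/2)$. Here $H_*(X;\Z/2)$ has total dimension $2 + \beta$ (ranks $1, \beta, 1$ in degrees $0,1,2$), and $X^{C_2}$, a disjoint union of $F$ points and $C$ circles, has total mod-$2$ Betti number $F + 2C$. Comparing gives $F + 2C \leq \beta + 2$ immediately. Alternatively, and perhaps preferably for the ``modern treatment'' flavor promised in the text, I would re-derive this from the quotient surface: cutting $X$ along the ovals and taking the quotient expresses $\beta(X)$ in terms of $\beta(Y)$, $F$, $C_+$, $C_-$ via a Mayer–Vietoris / additivity-of-Euler-characteristic argument, and the inequality becomes the statement that $\beta(Y) \geq 0$ (plus bookkeeping of how many quotient components appear).

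The main obstacle I expect is the self-intersection identification in the $C_-$ congruence: asserting that the total mod-$2$ class of the fixed set is dual to $w_1(X)$ is the subtle point, and it is precisely where the $C_2$-action (as opposed to an arbitrary disjoint union of circles) is used — it should follow because near a $1$-sided fixed oval the action is the orientation-reversing reflection on a Möbius band, so locally the fixed curve represents the obstruction to orientability, and a patching argument over the whole surface upgrades this to the global statement $[X^{C_2}]_{\Z/2} = \mathrm{PD}(w_1(X))$ (or at least has the same self-intersection mod $2$). Once that is in hand, Wu's formula $\langle w_1^2 + w_2, [X]\rangle = \langle \mathrm{Sq}^2, [X]\rangle$-type computation reducing to $\chi(X) \bmod 2$ finishes it, and this same Euler-characteristic reduction is what powers the $F$ congruence, so the two parity statements are really one argument applied twice.
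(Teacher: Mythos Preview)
Your arguments for $F\equiv\beta\pmod 2$ (via $\chi(X)=2\chi(X/C_2)-\chi(X^{C_2})=2\chi(X/C_2)-F$) and for the inequality $F+2C\leq\beta+2$ (via the Smith inequality on total mod~$2$ Betti numbers) are both correct. These are different from the paper's proof, which establishes all three statements simultaneously by induction on $\beta$ using equivariant surgery: one checks the base cases $S^2_a$, $S^{2,1}$, $S^{2,2}$, the free actions, and the doubled spaces $\Doub(S,1{:}S^{1,0})$, $\Doub(S,1{:}S^{1,1})$ by hand, and then observes (Proposition~\ref{pr:surg-invariance}) that $F+2C-\beta$ and the residues of $F-\beta$ and $F-C_-$ are preserved by $S^{1,0}$-, $S^{1,1}$-, and $FM$-surgery. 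The paper explicitly remarks that the inequality also follows from Smith theory, so your route there is anticipated.

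Your argument for $C_-\equiv\beta\pmod 2$, however, has a genuine gap. The assertion that the mod~$2$ class of the $1$-sided ovals equals $\mathrm{PD}(w_1(X))$ is false. Take any free $C_2$-action on a non-orientable surface, for instance $S^2_a+[DCC]$ on the Klein bottle $N_2$: there are no ovals at all, so the proposed class is $0$, while $\mathrm{PD}(w_1(N_2))=\alpha_1+\alpha_2\neq 0$. Your local observation---that each $1$-sided oval individually has nonzero pairing with $w_1$---is correct but does not globalize to an identification of classes; the non-orientability of $X$ need not be carried by the fixed locus of an arbitrary involution. The fallback ``or at least has the same self-intersection mod~$2$'' is then just a restatement of the conclusion $C_-\equiv\beta$, so the argument becomes circular.

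A repair in the spirit of your quotient approach: remove open tubular neighborhoods of all fixed components to obtain $X_1$ with free action and quotient $Y_1=X_1/\sigma$, a connected compact surface with $F+C_++C_-$ boundary circles. The double cover $X_1\to Y_1$ is classified by $\xi\in H^1(Y_1;\Z/2)$, and $\xi$ restricts nontrivially to exactly the $F+C_-$ boundary circles arising from $S^1_a$'s (those around isolated fixed points and $1$-sided ovals). The long exact sequence of $(Y_1,\partial Y_1)$ together with Lefschetz duality shows that any class in the image of $H^1(Y_1;\Z/2)\to H^1(\partial Y_1;\Z/2)$ must be nonzero on an \emph{even} number of boundary components. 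Hence $F+C_-$ is even, which combined with $F\equiv\beta$ gives $C_-\equiv\beta\pmod 2$.
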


Scherrer's result can be used to help give a heuristic explanation for one of
our questions about Theorem~\ref{th:count}.  When $X$ is orientable
then all the curves in $X$ are $2$-sided, and so of course $C_-=0$.
Even more, it follows by an easy argument (see Lemma~\ref{le:isolated-or-ovals}) 
that either $F=0$ or $C_+=0$: in the
orientable case one cannot have both isolated fixed points and ovals.
Thus, of the three invariants $(F,C_+,C_-)$ only one is ever nonzero
at a time, and Proposition~\ref{pr:Scherrer} constrains this nonzero
invariant by a linear function in $\beta$.  Coarsely speaking, this is
responsible for the linear formula for the number of actions that
appears in
Theorem~\ref{th:count}.

When $X$ is non-orientable the three invariants $(F,C_+,C_-)$ turn out
to be essentially independent, with again
Proposition~\ref{pr:Scherrer} constraining each of them by a linear
function in $\beta$.  Coarsely speaking again, this is responsible for the
number of actions in Theorem~\ref{th:count} being cubic in the $\beta$-genus.

\medskip

In many cases a triple $(F,C_+,C_-)$ satisfying the conditions of
Proposition~\ref{pr:Scherrer} uniquely determines a $C_2$-action, but
in many cases it does not.  We will need more invariants.  The
quotient $X/C_2$ is either orientable or not, and we will call this
the \mdfn{$Q$-sign} of the $C_2$-space $X$ (``Q'' for quotient).  More
precisely, the
$Q$-sign is said to be $+$ (positive) if $X/C_2$ is orientable, and $-$
(negative) otherwise.  

A quadruple $(F,C_+,C_-,Q)$ will be called a \dfn{taxonomy}.  
It turns out to be useful to include $C$ in the notation even
though it is redundant, and sometimes we will want to ignore $Q$.  So
we will write $[F,C:(C_+,C_-)]$ for an ``unsigned taxonomy'' and
$[F,C:(C_+,C_-),Q]$ for a ``signed taxonomy''.  When we merely say
``taxonomy'' we let it be determined from context whether it
is signed or unsigned.  

Unfortunately, it is still not the case that a $C_2$-space is uniquely
determined by its signed taxonomy.  But this does happen in most cases,
and even when it fails it doesn't fail too badly:

\begin{prop}
\label{pr:taxonomy}
Fix a $2$-manifold $X$ and a taxonomy $[F,C:(C_+,C_-),Q]$.  
Up to isomorphism, there are
at most three $C_2$-actions on $X$ having the given taxonomy.  
The only cases where there exist more than one action with a given
taxonomy are where all of the following are true:
\[ \text{ $X\iso N_r$, $r$ is even, $r\geq 4$, $F=C_-=0$, and the
$Q$-sign is negative.}
\]
In these cases things break down as follows:
when $r\geq 4$ there are exactly two actions when $C=0$, three when
$1\leq C\leq \frac{r}{2}-2$, two when $C=\frac{r}{2}-1$, and one when
$C=\frac{r}{2}$.  
\end{prop}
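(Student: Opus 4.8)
\emph{Overview.} The plan is to turn the classification into a count of mapping-class-group orbits on a space of mod-$2$ cohomology classes, and then to carry out that count.

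\emph{Reduction to covering data.} Given a $C_2$-action on $X$, let $\bar X=X/C_2$; this is a compact surface with boundary, whose $C$ boundary circles are the images of the ovals, with the $F$ isolated fixed points marked in its interior, and $\bar X$ is orientable iff $Q=+$. Off the fixed set $X\to\bar X$ is a double cover, near a marked point a $z\mapsto z^2$ branched cover, and near an oval a fold which is trivial or twisted according as the oval is $2$-sided or $1$-sided. So a $C_2$-action on $X$ is the same as a \emph{decorated surface} ($\bar X$ with its marked points and a labelling of $\partial\bar X$ by $C_+$ plus-signs and $C_-$ minus-signs) together with the class $\omega\in H^1\bigl(\bar X\setminus(\partial\bar X\cup\{\text{marked pts}\});\Z/2\bigr)$ classifying the double cover, where $\omega$ must be nontrivial on each loop around a marked point, trivial (resp.\ nontrivial) on a loop parallel to a ``$+$'' (resp.\ ``$-$'') boundary circle, and where the rebuilt total space must be homeomorphic to $X$. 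Since $\chi(X)=2\chi(\bar X)-F$, the decorated surface is determined up to homeomorphism by $X$ and the signed taxonomy, and two such \emph{admissible} classes give isomorphic $C_2$-spaces iff a self-homeomorphism of the decorated surface matches them. So I must count orbits of the mapping class group of the decorated surface on the admissible classes.

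\emph{The ``at most three'' bound.} The Euler characteristic and boundary-component count of the rebuilt surface are automatic, so ``rebuilt total space $\iso X$'' reduces to: connectedness of the rebuild, and its orientability type --- both depending on $\omega$ essentially through whether $\omega=0$, whether $\omega$ equals the Wu class $w_1$ of the decorated surface, and, since folds can flip the orientability of the rebuild, on the number and types of folds. Granting (see the last paragraph) that for $Q=-$ the mapping class group acts on the subspace $V\iso(\Z/2)^k$ of classes trivial on all boundary loops through the full isometry group $O_k(\F_2)$ of the mod-$2$ intersection form, this group has exactly four orbits on $V$: $\{0\}$, $\{w_1\}$, and the two sets $\{\langle\omega,\omega\rangle=0\}$ and $\{\langle\omega,\omega\rangle=1\}$ of remaining classes, where $\langle\omega,\omega\rangle\in\Z/2$ records whether the curve dual to $\omega$ in $\bar X$ is $2$-sided or $1$-sided. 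In any fixed taxonomy at least one of these four is inadmissible --- $\{0\}$ only when $F=0$, $C_-=0$ and at least one fold is present (to keep the rebuild connected), and $\{w_1\}$ only when rebuilding from $w_1$ reproduces the orientability type of $X$ --- so there are at most three admissible orbits; for $Q=+$ the form on $V$ is symplectic, $O_k(\F_2)$ is replaced by a symplectic group transitive on nonzero vectors, and a parallel (easier) analysis gives at most one. This also shows multiplicity $>1$ forces $Q=-$ with $X$ non-orientable.

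\emph{The exceptional taxonomies.} Suppose $X=N_r$ with $r$ even, $r\geq4$, $F=C_-=0$, $Q=-$. Then $\chi(\bar X)=1-\frac r2$ gives $k=1+\frac r2-C$, hence $C\leq\frac r2$; every boundary circle is ``$+$''; and one checks that rebuilding from $w_1$ yields an \emph{orientable} surface (so $\{w_1\}$ is inadmissible), that $\{0\}$ is admissible iff $C\geq1$, and that $\{\langle\cdot,\cdot\rangle=0\}$ and $\{\langle\cdot,\cdot\rangle=1\}$ both produce non-orientable rebuilds of Euler characteristic $2-r$, hence $N_r$, so are admissible whenever nonempty. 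Since $\{\langle\cdot,\cdot\rangle=1\}\neq\varnothing$ iff $k\geq2$ and $\{\langle\cdot,\cdot\rangle=0\}\neq\varnothing$ iff $k\geq3$: for $C=0$ (so $k\geq3$) the admissible orbits are $\{\langle\cdot,\cdot\rangle=0\}$, $\{\langle\cdot,\cdot\rangle=1\}$, giving $2$; for $1\leq C\leq\frac r2-2$ (so $k\geq3$) they are $\{0\}$, $\{\langle\cdot,\cdot\rangle=0\}$, $\{\langle\cdot,\cdot\rangle=1\}$, giving $3$; for $C=\frac r2-1$ ($k=2$) they are $\{0\}$, $\{\langle\cdot,\cdot\rangle=1\}$, giving $2$; and for $C=\frac r2$ ($k=1$) only $\{0\}$, giving $1$ --- exactly the claimed counts.

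\emph{The main obstacle.} Two things carry the weight. First, the assertion that the mapping class group of a (possibly non-orientable) surface with boundary and marked points realizes the \emph{entire} group of automorphisms of $H^1(\,\cdot\,;\Z/2)$ preserving the intersection form, $w_1$, and the boundary and puncture classes, so that $(\omega=0?,\ \omega=w_1?,\ \langle\omega,\omega\rangle?)$ is a complete orbit invariant; proving this means producing enough homeomorphisms (Dehn twists, crosscap slides, point-pushes), equivalently enough equivariant surgeries, and it is here that a finite list of small surfaces must be inspected by hand. Second, the orientability bookkeeping for the rebuilt surface: one must pin down, case by case in terms of $F$, $C_+$, $C_-$ and ``$\omega$ versus $0$ or $w_1$'', exactly when the rebuild is orientable --- the delicate step, since it is what decides which of the four orbits $\{0\},\{w_1\},\{\langle\cdot,\cdot\rangle=0\},\{\langle\cdot,\cdot\rangle=1\}$ is admissible, and hence both the ``at most three'' bound and the exact counts. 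With those settled, Proposition~\ref{pr:Scherrer} and $\chi(X)=2\chi(\bar X)-F$ fix $k$ and the range of $C$, and everything else is the elementary orbit arithmetic above.
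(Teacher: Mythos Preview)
Your approach is genuinely different from the paper's and worth contrasting. The paper does not prove this proposition directly: it is assembled a posteriori from the case-by-case classification theorems of Section~7 (Theorems~\ref{th:N-free}, \ref{th:F,0}, \ref{th:F>0a}, \ref{th:F>0b}, \ref{th:0C0}), each proved by inductive equivariant surgery --- peel off an $S^{1,0}$- or $S^{1,1}$-antitube or an $[FM]$-piece to lower the $\beta$-genus, and appeal to the already-proved classifications of free actions (Section~\ref{se:free}) and orientable actions (Section~\ref{se:orient}) as base cases. Your idea --- pass to the quotient $\bar X$, encode the action as a characteristic class $\omega$, and count mapping-class-group orbits --- is more structural, and your exceptional-case computation lines up perfectly with Theorem~\ref{th:0C0} once one knows the orbit structure of $O_k(\F_2)$ on $\F_2^k$ (the paper's Lemma~\ref{le:one}). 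The paper's route also produces explicit surgery presentations of every action, which it needs for problem~(P2); yours would give a tidier proof of the bare count.

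That said, the proposal leaves two load-bearing steps unproven, and you flag them yourself. First, the mapping-class-group surjectivity you require is \emph{strictly stronger} than anything the paper proves: Theorem~\ref{th:mcg-H^1} is only for \emph{closed} surfaces, whereas you need it for non-orientable surfaces with labelled boundary and marked points, preserving all of $w_1$, the boundary classes, and the puncture classes. That is a relative version of the McCarthy--Pinkall result and must be proved separately. Second --- and this is where the argument actually breaks down rather than merely being deferred --- your four-orbit analysis is carried out on the subspace $V$ of classes \emph{vanishing on all boundary and puncture loops}, but when $F>0$ or $C_->0$ the admissible $\omega$'s lie in a nontrivial \emph{coset} of a smaller subspace, not in $V$ itself. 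Your orbit count on $V$ therefore says nothing about those cases, and you have not shown that the decorated MCG acts transitively on that coset (subject to the connectedness/orientability constraints on the rebuild). Your argument establishes ``at most three'' in general and handles $Q=+$, but the heart of the proposition is that outside the exceptional window there is \emph{exactly one}, and for the $Q=-$, $F+C_->0$ cases that step is simply missing.
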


Proposition~\ref{pr:taxonomy} is stated in a relatively weak form, as
it does not indicate which taxonomies correspond to a unique action
as oppposed to no action at all.
The complete answers can be extracted from  Theorem~\ref{th:Tg} and
\ref{th:P2} 
below.  However, an exhaustive statement along these lines requires a
large number of cases and is a bit off-putting.

Proposition~\ref{pr:taxonomy} explains a bit more about the
counting results from Theorem~\ref{th:count} for actions on $N_r$.
Notice that when $r$ is odd there is never more than one action with a
given taxonomy.  The problem of counting possible taxonomies
(which end up being basically---but not quite---all the 
taxonomies satisfying Scherrer's conditions)  
ends up having a
nice solution, described by the factorizations that appear in
Theorem~\ref{th:count}.  This is the answer we are looking for when
$r$ is odd, but not when $r$ is even.  In the latter case it needs to
be modified to take into account the few cases where multiple actions can
have the same taxonomy.

\subsection{Describing  \mdfn{$C_2$}-equivariant 2-manifolds}
We will build up equivariant $2$-manifolds using specific types of
surgery, but explaining these surgeries requires a good deal of
notation.  We let $S^{1,0}$ denote a circle with trivial involution,
$S^{1,1}$ for a circle whose involution is reflection across a
diameter, and $S^1_a$ for a circle with the antipodal involution.
Similarly,
we write $S^2_a$ for a $2$-sphere with antipodal action, $S^{2,1}$ for
a $2$-sphere where the action is reflection across the equatorial
plane, and 
$S^{2,2}$ for a $2$-sphere with involution given by $180$-degree
rotation about an axis.  The reasons for our nomenclature will be given
in Section~\ref{se:background}, but for now let us just accept it.  

There are three basic $C_2$-spaces we will need, two cylinders and one
M\"obius band, as shown in the following diagrams:

\begin{picture}(300,110)(0,-10)
\put(0,0){\includegraphics[scale=0.5]{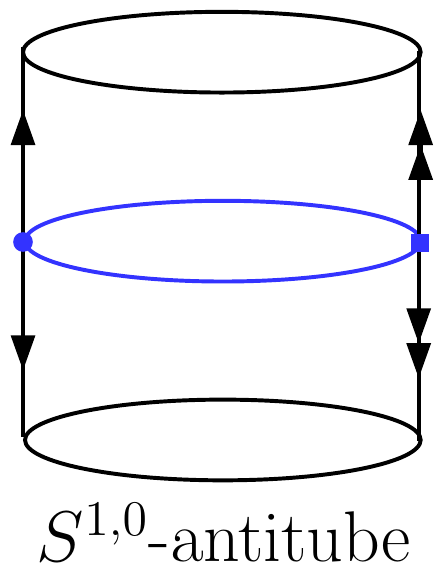}}
\put(120,0){\includegraphics[scale=0.5]{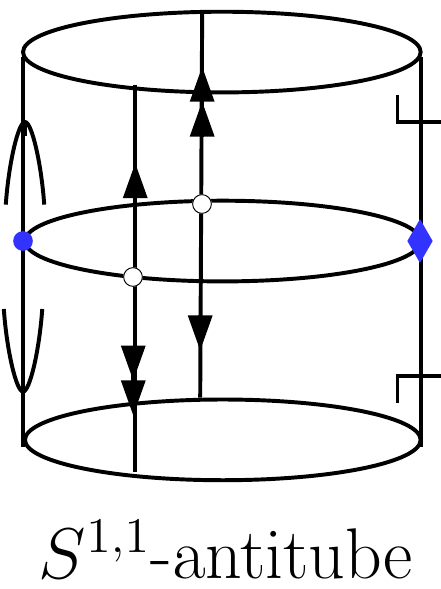}}
\put(232,-5){\includegraphics[scale=0.5]{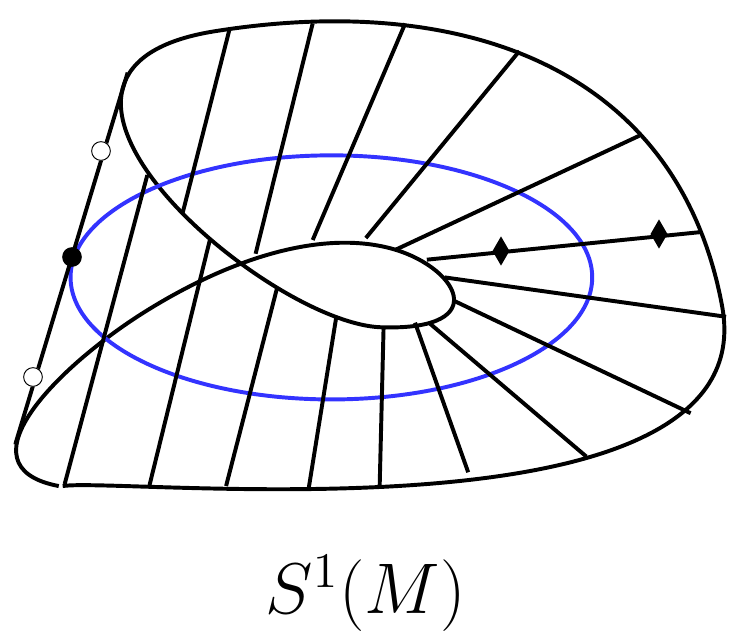}}
\end{picture}

\noindent
In each of these diagrams we depict conjugate points by marking them with the
same symbols.  The fixed set is always shown in blue.  
The ``antitubes'' have the feature that the two ends of
the cylinder are swapped by the involution, whereas in ``tubes'' (not
depicted here) the ends are not swapped.  There are other
$C_2$-actions on cylinders and M\"obius bands that are not shown here;
for a complete list, see Section~\ref{se:cylcap}.

There are five kinds of surgery operations  we will need to perform on 
$C_2$-equivariant $2$-manifolds $X$, three of them using the above spaces:
\begin{enumerate}[]
\item \mbox{} {\bf [DCC]}: 
cut out two disjoint disks, conjugate under the involution,
and sew in conjugate copies of a M\"obius band (that is, form the
connected sum with two conjugate copies of $\RP^2$).  
\item \mbox{} {\bf [DT]}: 
cut out two disjoint disks, conjugate under the involution,
and sew in conjugate copies of a torus (that is, form the
connected sum with two conjugate copies of $T_1$).  
\item \mbox{} {\bf [FM]}: cut out a small neighborhood of an isolated fixed point, leaving an
$S^1_a$ on the boundary, and sew in  a copy of the M\"obius band
$S^1(M)$ (which has $S^1_a$ for its boundary).

\item \mbox{} \mdfn{[$S^{1,0}-\bfantitube$]}: cut out two disjoint disks, 
conjugate under the involution, and sew in an $S^{1,0}$-antitube.
\item \mbox{} \mdfn{[$S^{1,1}-\bfantitube$]}: 
cut out two disjoint disks, conjugate under the involution,
and sew in an $S^{1,1}$-antitube.
\end{enumerate}
We will write $X+[DCC]$, $X+[FM]$, and so on, for the results of performing
these operations on $X$, and will often include multiplicities.  For
example,
\begin{myequation}
\label{eq:surg-pro}
 S^2_a+2[DCC]+3[S^{1,0}-\antitube]+[S^{1,1}-\antitube]+2[FM]
\end{myequation}
is obtained from $S^2_a$ by eight surgery operations.  Note that the
underlying space is $N^{14}$: each [DCC] operation and each antitube
increases the $\beta$-genus by 2, whereas each [FM] operation increases
the $\beta$-genus by 1.  The ``DT'', ``DCC'', and ``FM'' acronyms are silly but
convenient; they stand for ``Dual Tori'', ``Dual CrossCaps'',
and for ``Fixed point $\lra$ M\"obius band''.  The equivariant isomorphism
type of the resulting space does not depend on the order in which the
surgeries are performed, or the choices made in where to
perform them. 
See Sections~\ref{se:background} and \ref{se:general} for more
information.  

\begin{remark}
Be warned that surgery decompositions such as (\ref{eq:surg-pro}) are
not unique.  For example, 
$S^{2,1} + [S^{1,1}-\antitube]\iso S^{2,2}+[S^{1,0}-\antitube]$.  This
leads to a certain amount of hardship when trying to enumerate
all decompositions,
as we have not been able to identify a ``canonical form''
for such things.
\end{remark}

The following result (which is largely self-evident) 
summarizes how we will use the above surgeries to
inductively decompose a $C_2$-equivariant $2$-manifold into pieces of
smaller $\beta$-genus:

\begin{thm}
Let $X$ be a $2$-manifold with $C_2$-action.  
\begin{enumerate}[(a)]
\item If $X$ has two isolated fixed points $a$ and $b$, then there is
a (possibly disconnected)
equivariant $2$-manifold $Y$ and an
isomorphism $X\iso Y+[S^{1,1}-\antitube]$ that carries $a$
and $b$ to the fixed points of the $S^{1,1}$-antitube.   Note that
if $Y$ is connected then $\beta(Y)=\beta(X)-2$.  
\item If $X$ has a two-sided oval $\cO$ then there is a (possibly
disconnected) equivariant $2$-manifold $Y$ and an isomorphism
$X\iso Y+[S^{1,0}-\antitube]$ that carries $\cO$ to the  oval
inside the $S^{1,0}$-antitube.  Again, if $Y$ is connected then
$\beta(Y)=\beta(X)-2$.  
\item If $X$ has a one-sided oval $\cO$ then there is an equivariant
$2$-manifold $Y$ and an isomorphism $X\iso Y+[FM]$ that carries $\cO$
to the oval inside the attached copy of $S^1(M)$.  The space $Y$ has
one more isolated fixed point than $X$, but $\beta(Y)=\beta(X)-1$.  
\end{enumerate}
\end{thm}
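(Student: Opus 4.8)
The plan is to prove all three statements by one device: each of the surgeries $[S^{1,1}\text{-}\antitube]$, $[S^{1,0}\text{-}\antitube]$ and $[FM]$ is invertible, and one recovers $Y$ from $X$ by locating an invariant regular neighborhood of the given fixed-point data, excising its interior, and capping the resulting boundary in the canonical way. Once $Y$ is built like this, the asserted isomorphism $X\iso Y+[\cdots]$ will hold by construction, provided one invokes the fact (from Sections~\ref{se:background} and \ref{se:general}) that $Y+[\cdots]$ does not depend on the choices made in performing the surgery. So the real content is, in each case, to identify the regular neighborhood with one of the standard $C_2$-cylinders or with the $C_2$-M\"obius band $S^1(M)$.

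I would dispatch (b) and (c) first, as they are essentially routine. For (b): take an invariant regular neighborhood $N$ of the two-sided oval $\cO$; since $\cO$ is two-sided, $N\iso \cO\times[-1,1]$, and because $\cO$ is an entire component of $X^{C_2}$ the involution must act by $t\mapsto -t$ on each normal fibre, so $N$ is an $S^{1,0}$-antitube whose two boundary circles are swapped. Then $Y$ is obtained by deleting $\intr N$ and sewing conjugate disks onto those circles; an Euler-characteristic count gives $\beta(Y)=\beta(X)-2$ when $Y$ is connected, and $Y$ is disconnected (two interchanged halves) exactly when $\cO$ separates $X$. For (c): a one-sided oval has an invariant M\"obius-band neighborhood $N$, and examining the induced involution on $\partial N$ --- which double-covers $\cO$ --- identifies $\partial N$ with $S^1_a$ and hence $N$ with $S^1(M)$; a one-sided curve never separates, so $X\setminus\intr N$ is connected, and capping its $S^1_a$ boundary with an antipodal disk, whose cone point is the new isolated fixed point, produces $Y$ with $F(Y)=F(X)+1$ and $\beta(Y)=\beta(X)-1$.

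Part (a) is the interesting case, and here I would first produce a circle to cut along: choose an embedded arc $\alpha$ from $a$ to $b$ that, away from its endpoints, misses $X^{C_2}$ and is disjoint from $\sigma\alpha$ (equivalently, lift an embedded arc in $X/C_2$ from $\bar a$ to $\bar b$ meeting the branch locus only at its ends), so that $C:=\alpha\cup\sigma\alpha$ is an embedded $\sigma$-invariant circle on which $\sigma$ acts as a reflection fixing $a$ and $b$ --- that is, $C\iso S^{1,1}$. The hard part will be verifying that $C$ is two-sided; this is precisely what makes the $S^{1,1}$-antitube, a cylinder, the right thing to excise rather than a M\"obius band. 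The argument I have in mind: at an isolated fixed point on a surface the local model for $\sigma$ is $-I$ on $\R^2$, which \emph{preserves} orientation because $\det(-I)=(-1)^2=1$; hence $d\sigma$ acts as the identity on the orientation lines at $a$ and at $b$, and since $\sigma$ carries $\alpha$ onto $\sigma\alpha$, orientation transport along the two arcs agrees, so transport around the loop $\gamma=\alpha\cdot\overline{\sigma\alpha}$ is trivial and $w_1([C])=0$. Granting this, an invariant regular neighborhood $N$ of $C$ is a cylinder; the fixed-point data at $a$ and $b$ force $\sigma$ to reverse the normal direction, so $N$ is an $S^{1,1}$-antitube with fixed set exactly $\{a,b\}$. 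Excising $\intr N$ and sewing in two conjugate disks then gives $Y$ with $X\iso Y+[S^{1,1}\text{-}\antitube]$ carrying $a,b$ to the fixed points of the antitube, and with $\beta(Y)=\beta(X)-2$ in the connected case.

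What remains is to assemble standard ingredients: the equivariant regular-neighborhood theorem, the classification of $C_2$-actions on cylinders and M\"obius bands from Section~\ref{se:cylcap} (to legitimately name the neighborhoods above), the observation that capping a conjugate pair of boundary circles by conjugate disks --- or an $S^1_a$ boundary by an antipodal disk --- allows essentially one equivariant choice, and the already-quoted independence of the surgery construction from the choices involved, which is what upgrades ``$Y$ was built by undoing the surgery'' to ``$X\iso Y+[\cdots]$''. The sole step I expect to require genuine care is the two-sidedness of $C$ in part (a).
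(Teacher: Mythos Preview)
Your proposal is correct and follows the same strategy as the paper, which flags the theorem as ``largely self-evident'' and spreads the supporting machinery across Section~\ref{se:cylcap}, Remark~\ref{re:S11-fp}, and Appendix~A.

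Two minor remarks. First, for the arc $\alpha$ in part (a) the paper uses a direct moving-lemma argument in $X$ (Proposition~\ref{pr:ML} and Corollary~\ref{co:nicepath}) rather than lifting from the quotient; your lifting approach also works once one notes that the preimage of an embedded arc in $X/C_2$ meeting the branch locus only at its ends is an embedded invariant circle in $X$. Second, your $w_1$ argument for two-sidedness of $C=\alpha\cup\sigma\alpha$ is valid, but the classification in Section~\ref{se:cylcap} gives a shortcut that sidesteps it entirely: an equivariant tubular neighborhood of an embedded $S^{1,1}$ must be one of the three listed $S^{1,1}$-normal bundles, and only the $S^{1,1}$-antitube has \emph{both} fixed points of the core circle isolated in the ambient fixed set (in the $S^{1,1}$-tube both fixed points have an entire fixed normal fibre through them, and in $S^{1,1}(M)$ exactly one of them does). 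Since $a$ and $b$ are assumed isolated in $X^{C_2}$, the neighborhood is forced to be the antitube, and two-sidedness comes for free.
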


\subsection{The orientable case}
We next describe certain special actions on the genus $g$ torus
$T_g$.  
If $T_g$ is embedded in $\R^3$ in the standard way, with the
origin at its center of mass, then the antipodal map $x\mapsto -x$
gives an involution of $T_g$: we call this space $T_g^{\anti}$.  When
$g$ is odd the origin is inside the central doughnut hold of $T_g$,
and $180$-degree rotation about an appropriate axis gives another free
action: we call this one $T_g^{\rot}$.  See Section~\ref{se:cons-free}
 for pictures.

The following diagrams depict two classes of non-free actions on
$T_g$, called the {\it spit\/} action and the {\it reflection\/}
action, respectively:

\begin{picture}(300,135)
\put(0,29){\includegraphics[scale=0.5]{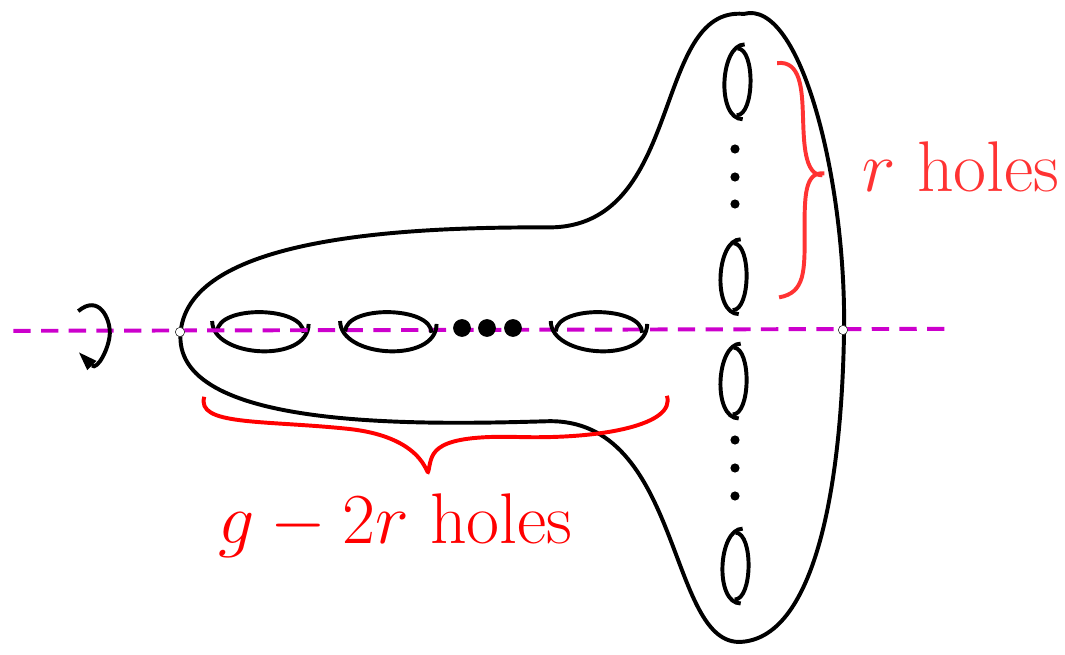}}
\put(180,29){\includegraphics[scale=0.5]{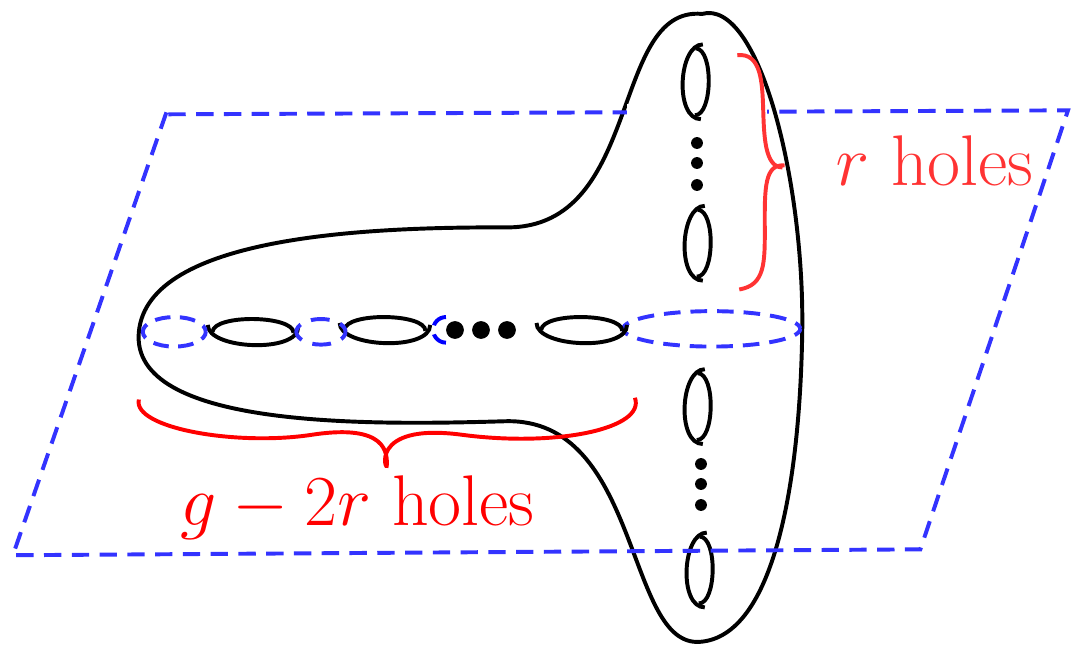}}
\put(10,10){\fbox{$T^{\spit}_g[F]$, \quad $F=2+2g-4r$}}
\put(190,10){\fbox{$T^{\refl}_g[C]$, \quad $C=1+g-2r$}}
\end{picture}

\noindent
The action in the first case 
is 180-degree rotation about the indicated line (the spit),
and in the second it is reflection in the indicated plane.  Note that in the
first case the parameter $F$
denotes the number of fixed points, and in the second case the
parameter $C$ denotes the number of ovals.  
We can also give surgery-based descriptions of these spaces:
\begin{align*}
T_g^{\spit}[F]&\iso
S^{2,2}+(\tfrac{F}{2}-1)[S^{1,1}-\antitube]+\bigl (
\tfrac{2g+2-F}{4}\bigr )[DT]\\
T_g^{\refl}[C]&\iso S^{2,1}+(C-1)[S^{1,0}-\antitube]+\bigl (
\tfrac{g+1-C}{2}\bigr )[DT].
\end{align*}

\vspace{0.1in}

\begin{thm}
\label{th:Tg}
A complete list of isomorphism classes of $C_2$-actions on $T_g$ is as follows:
\begin{enumerate}[(1)]
\item The trivial action,
\item The free actions $T_g^{\anti}$ and $T_g^{\rot}$ (the latter only
when $g$ is odd),
\item $T_g^{\refl}[C]$ for $1\leq C\leq g+1$ and $C\equiv g+1$ (mod
$2$).
\item $T_g^{\spit}[F]$ for $2\leq F\leq 2+2g$ and $F\equiv 2+2g$ (mod
$4$).  
\item $T_{g-C}^{\anti}+C[S^{1,0}-\antitube]$ for $1\leq C\leq g$.  
\end{enumerate}
\end{thm}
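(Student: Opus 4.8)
The plan is to establish the list in three parts: every entry is a well-defined $C_2$-action on $T_g$ (\emph{existence}), every $C_2$-action on $T_g$ is isomorphic to one of them (\emph{completeness}), and the entries are pairwise non-isomorphic (\emph{non-redundancy}). Existence is immediate: the trivial action and the free actions $T_g^{\anti}$, $T_g^{\rot}$ are constructed in Section~\ref{se:cons-free} ($T_g^{\rot}$ only when $g$ is odd), while $T_g^{\refl}[C]$, $T_g^{\spit}[F]$ and $T_{g-C}^{\anti}+C[S^{1,0}-\antitube]$ are manifestly $C_2$-manifolds with underlying space $T_g$; the only thing to verify is that the stated ranges and congruences on $C$ and $F$ are exactly those for which the constructions make sense (e.g.\ $\tfrac{g+1-C}{2}$ and $\tfrac{2g+2-F}{4}$ must be nonnegative integers, which is the content of the congruences), and this is also visible from the surgery descriptions of $T_g^{\refl}[C]$ and $T_g^{\spit}[F]$ displayed just before the theorem.

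For completeness I would induct on $g$. Fix a $C_2$-action on $X=T_g$. Orientability forces $C_-=0$, and Lemma~\ref{le:isolated-or-ovals} says $F$ and $C_+$ are never both positive; in particular $[FM]$ is never applicable, so among the moves of the structural theorem above only the two antitube surgeries can occur. If the action is trivial we are at (1). If it is free, then $X\to X/C_2$ is a double cover with $\chi(X/C_2)=1-g$; an orientable quotient forces $g$ odd and, since the mapping class group acts on mod-$2$ first homology through the full symplectic group, which is transitive on nonzero vectors, there is then exactly one such action, namely $T_g^{\rot}$, while a non-orientable quotient must be $N_{g+1}$ with the cover its orientation double cover, giving exactly $T_g^{\anti}$. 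This disposes of (2); together with the elementary classification of $C_2$-actions on $S^2$ --- the trivial one, $S^2_a=T_0^{\anti}$, $S^{2,1}=T_0^{\refl}[1]$, $S^{2,2}=T_0^{\spit}[2]$ --- it also settles the base case $g=0$.

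For the inductive step take $g\geq1$ with the action non-trivial and non-free, so exactly one of $F$, $C_+$ is positive. Suppose $F=0$ and $C=C_+\geq1$. Choosing a two-sided oval and applying part (b) of the structural theorem, $X\iso Y+[S^{1,0}-\antitube]$ with $Y$ orientable and $F(Y)=0$. If $Y$ is disconnected then, $X$ being connected, $Y$ is a pair of components interchanged by the involution, hence free with no ovals, so $C=1$; then $X\iso(T_{g/2}\sqcup T_{g/2})+[S^{1,0}-\antitube]$ has the signed taxonomy $[0,1:(1,0),+]$ of $T_g^{\refl}[1]$, and Proposition~\ref{pr:taxonomy} --- whose exceptional cases never occur for the orientable $T_g$ --- forces $X\iso T_g^{\refl}[1]$. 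If $Y$ is connected then $\beta(Y)=2g-2$ and $C_+(Y)=C-1$; for $C\geq2$ the inductive hypothesis identifies $Y$ (the only entries of the $(g-1)$-list with $F=0$ and $C_+\geq1$) with $T_{g-1}^{\refl}[C-1]$ or with $T_{g-C}^{\anti}+(C-1)[S^{1,0}-\antitube]$, and re-adjoining the antitube gives $T_g^{\refl}[C]$ (same signed taxonomy, so Proposition~\ref{pr:taxonomy} applies) or $T_{g-C}^{\anti}+C[S^{1,0}-\antitube]$ (literally the same construction); for $C=1$, $Y$ is free (a connected trivial $Y$ admitting no pair of disjoint conjugate disks), and since the involution on an $[S^{1,0}-\antitube]$ reverses the orientation of its cylinder, orientability of $X$ forces $Y\iso T_{g-1}^{\anti}$, so $X$ is entry (5) with $C=1$. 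The case $C=0$, $F\geq2$ is treated identically using part (a) and $[S^{1,1}-\antitube]$, whose involution \emph{preserves} the orientation of its cylinder: the connected free $Y$ is then forced to be $T_{g-1}^{\rot}$, and one lands in family (4); no analogue of family (5) appears, because $T_{g-1}^{\rot}$ has orientable quotient and adjoining antitubes preserves orientability of the quotient.

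Non-redundancy is then quick: each entry has a recognizable signed taxonomy --- $[0,0:(0,0),-]$ for $T_g^{\anti}$, $[0,0:(0,0),+]$ for $T_g^{\rot}$, $[0,C:(C,0),+]$ for $T_g^{\refl}[C]$, $[F,0:(0,0),+]$ for $T_g^{\spit}[F]$, and $[0,C:(C,0),-]$ for $T_{g-C}^{\anti}+C[S^{1,0}-\antitube]$ (negative because adjoining an $[S^{1,0}-\antitube]$ glues an annulus onto the quotient and so leaves it non-orientable) --- these are pairwise distinct, the trivial action is singled out by $X^{C_2}=X$, and Proposition~\ref{pr:taxonomy} has no exceptional cases for orientable surfaces, so distinct signed taxonomies give non-isomorphic actions. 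I expect the main obstacle to be exactly the orientation bookkeeping threaded through the inductive step: one must show that when $X$ is orientable the connected free $Y$ arising in $X\iso Y+[S^{1,0}-\antitube]$ must be $T_{g-1}^{\anti}$ and the one arising in $X\iso Y+[S^{1,1}-\antitube]$ must be $T_{g-1}^{\rot}$ --- i.e.\ that the orientation-reversing resp.\ orientation-preserving behavior of the antitube on its cylinder has to agree with that of the involution on $Y$ --- and, relatedly, that a decomposition with disconnected $Y$ can only split $Y$ into a $\sigma$-interchanged pair of closed orientable surfaces. Granting this, the rest is a routine induction resting on Proposition~\ref{pr:taxonomy}, with the two classical inputs (free involutions on $T_g$, and the four $C_2$-actions on $S^2$) cited from Section~\ref{se:cons-free} and the surface literature.
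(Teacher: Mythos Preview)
Your overall strategy---induct on $g$, split into the $F>0$ and $C>0$ cases, surger away one antitube, and invoke the classification at genus $g-1$---is essentially the paper's.  The genuine problem is that your inductive step and your non-redundancy check both lean on Proposition~\ref{pr:taxonomy}, and for orientable surfaces that proposition is a \emph{corollary} of Theorem~\ref{th:Tg} (indeed the paper says so just after its statement, and records it again as Corollary~\ref{co:P3-orient}).  So each time you write ``same signed taxonomy, so Proposition~\ref{pr:taxonomy} applies'' to conclude $Y+[\text{antitube}]\iso T_g^{\refl}[C]$ or $T_g^{\spit}[F]$, you are assuming what you are trying to prove.

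The paper avoids this in two ways.  First, the separating case is handled in one stroke by Proposition~\ref{pr:doub}: if $X-X^{C_2}$ is disconnected then $X\iso\Doub(S,C)$, which is visibly $T_g^{\refl}[C]$.  Second, in the non-separating inductive step the paper uses Proposition~\ref{pr:surg-subtract} rather than a taxonomy argument: once you know $X-[\text{antitube}]\iso T_{g-1}^{\spit}[F-2]\iso T_g^{\spit}[F]-[\text{antitube}]$, you conclude $X\iso T_g^{\spit}[F]$ directly, and likewise for the $T^{\anti}[u,g-u]$ family.  The base case where $Y$ becomes free and orientation-preserving, giving $T_{g-1}^{\rot}+[S^{1,1}-\antitube]$, is not ``landing in family (4)'' for free---it requires the explicit identification in Proposition~\ref{pr:neededonce}.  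Finally, for non-redundancy you do not need Proposition~\ref{pr:taxonomy} at all: distinct signed taxonomies are distinct invariants, full stop.  Replace the three circular appeals by Proposition~\ref{pr:doub}, Proposition~\ref{pr:surg-subtract}, and Proposition~\ref{pr:neededonce}, and your argument becomes the paper's.
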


This result is proven in Section~\ref{se:orient} 
(see Theorem~\ref{th:T_g-action}).  It is classical, and should
probably be attributed to \cite{K}.  We do not know of a modern
reference for the proof.  Note that counting up all the actions listed
in Theorem~\ref{th:Tg} readily yields the number $4+2g$, both when $g$
is even and $g$ is odd.  

The following chart lists the signed taxonomies for the five classes of
nontrivial $C_2$-actions on $T_g$ (note that in this case, where the
space is orientable, the signed taxonomies have quite a bit of
redundant information):

\vspace{0.1in}

\begin{tabular}{rlll}
$T_g^{\anti}$ & $[0,0:(0,0),-]$  & 
\qquad $T_g^{\rot}$ & $[0,0:(0,0),+]$  \\ 
$T_{g-C}^{\anti}+C[S^{1,0}-\antitube]$ & $[0,C:(C,0),-]$ &
\qquad $T_g^{\spit}[F]$ & $[F,0:(0,0),+]$ \\ 
&&\qquad $T_g^{\refl}[C]$ & $[0,C:(C,0),+]$ \\
\end{tabular}

\vspace{0.1in}

\noindent
By inspection, Theorem~\ref{th:Tg} yields the following answer to
problem (P3)
in the orientable case:

\begin{cor}
\label{co:P3-orient-intro}
Two $C_2$-actions on $T_g$ 
 are isomorphic if and only if they have the
same 
 signed taxonomies.  
\end{cor}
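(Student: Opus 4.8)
The plan is to read off Corollary~\ref{co:P3-orient-intro} directly from the explicit classification in Theorem~\ref{th:Tg} and the accompanying table of signed taxonomies, without needing any new geometric input. One direction is trivial: isomorphic $C_2$-spaces have the same fixed set, the same number of ovals of each type, and the same quotient, so they have the same signed taxonomy. The content is the converse, and the strategy is simply to verify that the map from the list in Theorem~\ref{th:Tg} to signed taxonomies is injective.

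First I would tabulate the signed taxonomy of each of the five families, which is already done in the excerpt: the trivial action has $[0,0:(0,0),+]$ (its quotient is $T_g$ again, orientable); $T_g^{\rot}$ has $[0,0:(0,0),+]$; $T_g^{\anti}$ has $[0,0:(0,0),-]$; $T_g^{\spit}[F]$ has $[F,0:(0,0),+]$ with $F\geq 2$; $T_g^{\refl}[C]$ has $[0,C:(C,0),+]$ with $C\geq 1$; and $T_{g-C}^{\anti}+C[S^{1,0}-\antitube]$ has $[0,C:(C,0),-]$ with $C\geq 1$. Then I would argue by sorting on the $Q$-sign and then on which of $F$, $C$ is nonzero. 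Among positive-$Q$-sign actions: the trivial action and $T_g^{\rot}$ both have taxonomy $[0,0:(0,0),+]$ --- here I must note that $T_g^{\rot}$ occurs only when $g$ is odd, and the one genuine subtlety is that $T_g^{\rot}$ is \emph{not} isomorphic to the trivial action (it is free and nontrivial, hence distinguishable, but they share a signed taxonomy). This is the crux: the corollary's "if and only if" can only be true if, in fact, $T_g^{\rot}$ and the trivial action are identified, or if one of them is excluded. I expect the resolution is that the corollary is implicitly about \emph{nontrivial} actions, or else that $T_g^{\rot}$ for $g$ odd is secretly accounted for differently --- I would check the precise hypotheses of Theorem~\ref{th:Tg} and the surrounding text, and if the corollary as literally stated is false for the trivial-versus-$T_g^{\rot}$ pair, flag that the intended reading excludes the trivial action (or treats $Q$-sign of the trivial action as undefined/both).

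Setting that point aside, the remaining verification is routine bookkeeping. For positive $Q$-sign and $F>0$ we are forced into the $T_g^{\spit}$ family, where $F$ itself is the parameter, so the taxonomy determines the action. For positive $Q$-sign, $F=0$, $C>0$ we are in the $T_g^{\refl}$ family, again with $C$ as parameter. For negative $Q$-sign and $C=0$ we get $T_g^{\anti}$; for negative $Q$-sign and $C>0$ we get $T_{g-C}^{\anti}+C[S^{1,0}-\antitube]$, with $C$ the parameter. In every case the numerical data $(F,C)$ together with the sign pins down both the family and the value of the family's single parameter, so no two distinct entries of the list share a signed taxonomy. The only real obstacle is the trivial-action edge case noted above; everything else is an inspection that I would present as a short case analysis keyed to the table.
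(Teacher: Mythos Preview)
Your approach is exactly the paper's: the corollary is deduced ``by inspection'' from the list in Theorem~\ref{th:Tg} together with the table of signed taxonomies for the five nontrivial families. Your case analysis is correct and matches the paper's reasoning.

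The one concern you raise---that the trivial action and $T_g^{\rot}$ both receive taxonomy $[0,0:(0,0),+]$---is not actually a gap, and you should not hedge about it. The invariants $F$, $C_+$, $C_-$ are defined via the decomposition of $X^{C_2}$ as a disjoint union of isolated points and circles; this description is valid precisely when the action is nontrivial. For the trivial action the fixed set is all of $T_g$, a $2$-manifold, so it is not meaningful to assign it $F=0$ and $C=0$. The paper's table accordingly lists only the five \emph{nontrivial} classes. The trivial action is separated from every nontrivial action by the most primitive invariant of all---whether the fixed set has dimension $2$---and there is no collision. So the inspection goes through cleanly, and your proof is complete once you note this rather than treating it as an unresolved edge case.
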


Note that we have now solved problems
(P1)--(P3) for $C_2$-actions on orientable $2$-manifolds.

\begin{remark}
There is another invariant that can be used in place of the $Q$-sign
in the above result.  
It turns out (see Proposition~\ref{pr:conn}) 
that if $C_2$ acts on a connected $2$-manifold $X$
then $X-X^{C_2}$ has either one or two components.  In the latter case
we call the action \dfn{separating}, and in the former case
\dfn{non-separating}.  The spaces $T_g^{\refl}[C]$ are
separating, whereas the spaces $T_{g-C}^{\anti}+C[S^{1,0}-\antitube]$
are non-separating.  Let $\epsilon$ be the invariant whose values are
``separating'' or ``non-separating'' (historically, authors haved
sometimes used $1$ and $0$ here).  

The invariant $\epsilon$ can be used in place of the $Q$-sign in
Corollary~\ref{co:P3-orient-intro}.  It also plays a role in the story
for non-orientable manifolds below.  
\end{remark}

\subsection{The non-orientable case}
The situation for the non-orientable surfaces
$N_r$ is more complicated, in several respects.  Let us start
with an algorithm for listing all the $C_2$-actions:

\begin{itemize}
\item Make a list of all tuples $[F,C:(C_+,C_-)]$ satisfying $F+2C\leq
r+2$ and $F\equiv C_-\equiv r$ (mod $2$). 

\item For tuples with  $F+2C\leq r$:
If $C_->0$ or $F>0$, write down the space of
negative $Q$-sign 
\[ S^2_a+\tfrac{r-F-2C}{2}[DCC]+\tfrac{F+C_-}{2}[S^{1,1}-\antitube] +
C_+[S^{1,0}-\antitube] +C_-[FM].
\]

\item For tuples with $F+2C\equiv r+2$ (mod $4$):  
If $C_->0$ or ($0<F\leq r$ and
$C\geq 1$), write down the space of positive $Q$-sign
\[
T^{\spit}_{\frac{r-C_--2C_+}{2}}[F+C_-]+C_+[S^{1,0}-\antitube]+C_-[FM]
\]

\item If $F=C_-=0$ and $0<C\leq \frac{r}{2}$ and $2C\equiv r+2$ (mod
$4$),
write down the space of positive $Q$-sign
\[ T^{\rot}_{\frac{r}{2}-C}+C[S^{1,0}-\antitube].
\]
\item If $F=C_-=0$ and $0\leq C\leq \frac{r}{2}$, write down the following
spaces of negative $Q$-sign:

\vspace{0.1in}

\begin{tabular}{ll}
 $S^{2,1}+(\tfrac{r}{2}-C+1)[DCC]+(C-1)[S^{1,0}-\antitube]$ & \text{if
   $1\leq C$,} \\
 $S^2_a+(\tfrac{r}{2}-C)[DCC]+C[S^{1,0}-\antitube]$ & \text{if
  $C<\frac{r}{2}$,} \\
 $T_1^{\anti}+(\tfrac{r}{2}-C-1)[DCC]+C[S^{1,0}-\antitube]$ & \text{if
  $C<\frac{r}{2}-1$.} 
\end{tabular}

\vspace{0.1in}

\noindent
Note that when $1\leq C\leq \frac{r}{2}-2$ all three spaces are added to the
list.  
\end{itemize}

This algorithm is completely mechanical, and easy to implement on a
computer.  The tables in Appendix B list the results for $N_r$ where
$2\leq r\leq 7$.  Note that the last two steps of the algorithm
only occur in the case where $r$ is even.  

\begin{thm}
\label{th:P2}
For $r\geq 1$ the above algorithm gives a complete list of all
$C_2$-actions on $N_r$ up to equivariant isomorphism, with no action
being represented more than once on the list.
\end{thm}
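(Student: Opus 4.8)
The plan is to prove Theorem~\ref{th:P2} in two halves: first that every $C_2$-action on $N_r$ appears on the list, and second that no action appears twice. For the first (completeness) half, I would argue by induction on $\beta = r$ using the surgery decomposition theorem stated above. Given a $C_2$-action on $N_r$, one of several mutually exclusive situations occurs: $X$ has an isolated fixed point, or a two-sided oval, or a one-sided oval, or the action is free. In each of the first three cases the decomposition theorem produces an isomorphism $X \iso Y + [S^{1,1}-\antitube]$, $X \iso Y + [S^{1,0}-\antitube]$, or $X \iso Y + [FM]$ with $\beta(Y) < \beta(X)$ (adjusting $F$, $C_\pm$ accordingly), and one applies the inductive hypothesis to $Y$ (taking care that $Y$ may be disconnected, in which case one classifies each component and reassembles — this requires knowing how the surgery operations interact with connected components). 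The free case is a base-type case handled separately: a free involution on $N_r$ is a double cover by an $N$ or a $T$, and these are enumerated directly via covering-space theory (the quotient is a surface with $\beta$ roughly half of $r$, and the classification of connected double covers of surfaces is classical). Keeping track of the $Q$-sign through each surgery is essential: one needs to know precisely when $[S^{1,1}-\antitube]$, $[S^{1,0}-\antitube]$, $[FM]$, $[DCC]$, $[DT]$ preserve or can change orientability of the quotient — presumably this is recorded in Sections~\ref{se:background} and \ref{se:general}.

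The real bookkeeping is then to check that the output of this inductive reduction, after normalizing the surgery decomposition into one of the canonical forms written in the algorithm (using identities like $S^{2,1}+[S^{1,1}-\antitube]\iso S^{2,2}+[S^{1,0}-\antitube]$ and the surgery presentations of $T_g^{\spit}[F]$, $T_g^{\refl}[C]$, $T_g^{\anti}$, $T_g^{\rot}$), lands in exactly one of the algorithm's enumerated families, and that the stated inequalities ($F+2C\leq r$, or $F+2C\equiv r+2$ mod $4$, etc.) are precisely the realizability constraints. Much of this is an exercise in matching the surgery arithmetic (each [DCC] or antitube adds $2$ to $\beta$, each [FM] adds $1$) against Scherrer's constraints from Proposition~\ref{pr:Scherrer}, together with the parity constraints coming from the $Q$-sign. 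The cases $F=C_-=0$ with negative $Q$-sign — the three-space family $S^{2,1}+\cdots$, $S^2_a+\cdots$, $T_1^{\anti}+\cdots$ — are exactly where Proposition~\ref{pr:taxonomy} says multiplicity occurs, so these demand the most care: one must verify all three are genuinely realized (when the range conditions $1\leq C$, $C<\frac{r}{2}$, $C<\frac{r}{2}-1$ hold), that they are pairwise non-isomorphic, and that no fourth option appears.

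For the second (non-redundancy) half, the strategy is to show that the pairs of a taxonomy together with the ``extra'' discriminating data determine the isomorphism class, and that distinct algorithm outputs carry distinct such data. Concretely: the signed taxonomy $[F,C:(C_+,C_-),Q]$ is an isomorphism invariant (the numbers $F$, $C_\pm$ are read off the fixed set, and $Q$ off the quotient); by Proposition~\ref{pr:taxonomy}, actions on $N_r$ with a common signed taxonomy number at most three, occurring only in the listed degenerate regime, and in that regime the count is pinned down exactly as a function of $C$. So it suffices to (i) compute the signed taxonomy of every space the algorithm emits — a direct calculation from the surgery presentation — and check that within each fixed taxonomy the algorithm emits exactly the number of spaces that Proposition~\ref{pr:taxonomy} permits, and (ii) in the multiplicity-three regime, check the three spaces $S^{2,1}+\cdots$, $S^2_a+\cdots$, $T_1^{\anti}+\cdots$ are mutually non-isomorphic by an auxiliary invariant distinguishing them (for instance, the isomorphism type of the quotient-with-boundary $X/C_2$, or equivalently the way $X$ is built from its non-fixed half).

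The step I expect to be the main obstacle is the normalization: taking an arbitrary surgery decomposition produced by the induction and rewriting it into the unique algorithm-form. Because surgery decompositions are genuinely non-unique (as the remark before the theorem warns, there is no canonical form in general), one must carefully enumerate the relations among the building blocks $S^2_a, S^{2,1}, S^{2,2}, T_1^{\anti}$ and the five surgeries, show these relations suffice to reach the normal form, and confirm that the normal form is then unique — i.e. that the surviving relations are exactly accounted for by the discrete parameters $(F, C_+, C_-, Q, C)$ in the algorithm. This is where the even-$r$ exceptional cases and the ``number-theoretic'' factorization phenomena enter, and where the bulk of the case analysis will live.
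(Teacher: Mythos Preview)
Your overall strategy---inductive surgery reduction on $\beta$, case-split by the structure of the fixed set, then bookkeeping the output into the algorithm's normal forms---is the same as the paper's. The paper organizes the induction slightly differently (it first partitions on the taxonomy type: $C=0$; $F>0$ and $C_-=0$; $C_->0$; and finally $F=C_-=0$) and proves a separate structure theorem in each regime, but the mechanics are what you describe. The ``normalization'' step you flag as the main obstacle is handled in the paper by a short list of explicit fundamental isomorphisms, chiefly $S^{2,2}+[DCC]\iso S^2_a+[S^{1,1}-\antitube]$ and $S^2_a+[DCC]+[S^{1,1}-\antitube]\iso T_1^{\anti}+[S^{1,1}-\antitube]$, proved by direct cut-and-paste on a square model; once these are in hand the inductive reductions all funnel into the listed forms without further difficulty.

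There is one concrete gap in your non-redundancy argument. In the $F=C_-=0$, negative $Q$-sign regime you propose to separate the three spaces by ``the isomorphism type of the quotient-with-boundary $X/C_2$.'' This does not work: for the two non-separating candidates
\[
S^2_a+\bigl(\tfrac{r}{2}-C\bigr)[DCC]+C[S^{1,0}-\antitube]
\quad\text{and}\quad
T_1^{\anti}+\bigl(\tfrac{r}{2}-C-1\bigr)[DCC]+C[S^{1,0}-\antitube],
\]
the quotients are both $N_{\frac{r}{2}-C+1}$ with $C$ open disks removed---the same surface-with-boundary. What distinguishes them is the \emph{covering}, not the base. The paper's device is a cancellation lemma: because every oval here is two-sided and the remaining fixed set is empty, an isomorphism between these two spaces would, after excising the $C$ ovals and capping, yield an isomorphism $S^2_a+(\tfrac{r}{2}-C)[DCC]\iso T_1^{\anti}+(\tfrac{r}{2}-C-1)[DCC]$ of \emph{free} actions on $N_{r-2C}$, and these were already shown to be non-isomorphic in the free classification (they have different characteristic classes in $H^1$ of the common quotient). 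Equivalently one can compute the $DD$-invariant on $H^1(X;\Z/2)$ directly. Either route is fine, but you need one of them; the quotient alone will not do.
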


At this point we have described the solutions to problems (P1) and
(P2) for $N_r$.  We {\it almost\/} have a complete solution to (P3), since we
know that
except for the cases where $F=C_-=0$ there is at most one
isomorphism class of $C_2$-action with a given signed taxonomy.
In the exceptional case where $F=C_-=0$, we need a way of deciding
which of the two or three isomorphism classes in the above list
corresponds to a given
action.  This can be done by adding one last invariant into the list.

If $X$ is a $2$-manifold with involution $\sigma$, then
$\sigma^*\colon H^1(X;\Z/2)\ra H^1(X;\Z/2)$ is an involution on
cohomology.  The cup product equips $H^1(X;\Z/2)$ with a nondegenerate
symmetric bilinear form; let $\Iso(H^1(X;\Z/2))$ denote the group of
isometries.  Then $\sigma^*$ is an involution in $\Iso(H^1(X;\Z/2))$.
It is shown in \cite{D} that conjugacy classes of involutions in such
an isometry group
are classified by the \dfn{double Dickson invariant} 
\[ DD(\sigma)\in \N\times \Z/2\times \N\times \Z/2.
\]
We recall the complete definition in Section~\ref{se:DD}, but for now
let us just give a bit of the main idea.  
For an involution $\sigma$ on a finite-dimensional vector space $V$
over $\Z/2$, define the \mdfn{$D$-invariant} of $\sigma$ to be the
dimension of $\im(\sigma+\Id)$.  This is an invariant of the
conjugacy class of $\sigma$ in $\GL(V)$, and it is in fact a complete
invariant: two involutions in $\GL(V)$ are conjugate if and only if
they have the same $D$-invariant.

When  $V$ has a nondegenerate, 
symmetric bilinear form and we replace $\GL(V)$ by $\Iso(V)$, further
invariants are needed.  The first coordinate of $DD(\sigma)$ is just
$D(\sigma)$, but the other three coordinates are defined in ways that 
make use of the bilinear form.  Again, see Section~\ref{se:DD} for the
detailed definition.  The main thing to know right now is that all the
coordinates of $DD(\sigma)$  are
algebraically computed by linear algebra.  

If $(X,\sigma)$ is a surface with involution then define
$DD(X)=DD(\sigma^*)$, where $\sigma^*$ is the induced map in $\Iso( H^1(X;\Z/2))$.
The following theorem is our solution to problem (P3):

\begin{thm}
Fix $r\geq 1$.  
\begin{enumerate}[(a)]
\item
The signed taxonomy $[F,C:(C_+,C_-),Q]$ gives a complete
invariant for $C_2$-actions on orientable surfaces: two actions on an orientable surface $X$ are isomorphic if
and only if they have the same signed taxonomy.
\item If $F+C_->0$ or if $Q$ is positive then any two $C_2$-actions on the non-orientable
surface $N_r$ having taxonomy
$[F,C:(C_+,C_-),Q]$ are isomorphic.  
\item Suppose $X$ and $Y$ are two $C_2$-actions on $N_r$ having
taxonomy $[0,C:(C,0),-]$.  Then $X$ and $Y$ are isomorphic if and only
if they have the same $\epsilon$-invariant and the same
$DD$-invariant.
\end{enumerate}
In summary, the invariants $F$, $C_+$, $C_-$, $Q$, $\epsilon$, and
$DD$---in addition to the purely topological invariant $H_1(X;\Z)$---constitute
a complete set of invariants for $C_2$-actions on surfaces.  
\end{thm}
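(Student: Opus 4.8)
The argument splits along the three numbered parts, with the concluding summary obtained by combining them with the classification of closed surfaces. Part (a) is nothing more than Corollary~\ref{co:P3-orient-intro}: the five families in Theorem~\ref{th:Tg} have pairwise distinct signed taxonomies (as recorded in the chart following that theorem), and an equivariant homeomorphism visibly preserves $F$, $C_+$, $C_-$, and the orientability of the quotient, so the signed taxonomy is a complete invariant in the orientable case. Part (b) is immediate from Proposition~\ref{pr:taxonomy}: that proposition enumerates every situation in which two non-isomorphic actions share a taxonomy, and in all of them $F=C_-=0$ with negative $Q$-sign; hence if $F+C_->0$ or $Q$ is positive a given signed taxonomy on $N_r$ is realized by at most one action up to isomorphism.

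For part (c), the first step is to note that $\epsilon$ and $DD$ are genuine isomorphism invariants: $\epsilon$ is read off from the number of components of $X-X^{C_2}$ (which makes sense and is preserved by Proposition~\ref{pr:conn}), while an equivariant homeomorphism $X\to Y$ conjugates $\sigma_X^*$ to $\sigma_Y^*$ inside $\Iso(H^1(-;\Z/2))$ and so preserves the conjugacy class, hence $DD$. The second step is to pin down the candidates: by Theorem~\ref{th:P2} together with Proposition~\ref{pr:taxonomy}, the $C_2$-actions on $N_r$ with taxonomy $[0,C:(C,0),-]$ are exactly the (at most three, pairwise non-isomorphic) surgery spaces produced by the last bullet of the listing algorithm, namely the ones built from $S^{2,1}$, from $S^2_a$, and from $T_1^{\anti}$. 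It then suffices to show that the pair $(\epsilon,DD)$ separates these candidates.

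The $\epsilon$-computation is elementary: a $[DCC]$ summand never joins the two components of the complement of the fixed set, whereas inserting an $[S^{1,0}-\antitube]$ does (its oval splits the glued-in cylinder into two halves, each bridging the two sides), so the $S^{2,1}$-based space is separating precisely when no antitube appears, i.e.\ when $C=1$, while the $S^2_a$- and $T_1^{\anti}$-based spaces start from a free action on a connected surface and remain non-separating. This already isolates the $S^{2,1}$-based space when $C=1$; in the remaining cases I would compute $DD$ directly by propagating $\sigma^*$ through the surgery decomposition. Here $H^1(N_r;\Z/2)$ decomposes as an orthogonal sum of rank-two blocks --- one for each $[DCC]$ (on which $\sigma^*$ interchanges an orthonormal pair of crosscap classes), one for each $[S^{1,0}-\antitube]$ (with the prescribed action from Section~\ref{se:cylcap}), plus, in the $T_1^{\anti}$ case, the block carrying the action of the antipodal map on $H^1(T_1;\Z/2)$ --- after which the four coordinates of $DD$ are computed from $(H^1(N_r;\Z/2),\cup,\sigma^*)$ by the purely linear-algebra recipe recalled in Section~\ref{se:DD}, and one checks the resulting values are distinct for the candidates present. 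This bookkeeping, together with the tracking of exactly which of the three spaces exists for which value of $C$ (as in Proposition~\ref{pr:taxonomy}), is the main obstacle; the rest is formal.

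Finally, for the summary: if $(X,\sigma)$ and $(Y,\tau)$ agree on $H_1(-;\Z)$ and on all of $F,C_+,C_-,Q,\epsilon,DD$, then the coincidence of $H_1(-;\Z)$ forces $X$ and $Y$ to be homeomorphic by the classification of closed surfaces, so they are both $T_g$ for a common $g$ or both $N_r$ for a common $r$. In the orientable case part (a) supplies the isomorphism; in the non-orientable case, if $F+C_->0$ or $Q$ is positive part (b) does, and otherwise $F=C_-=0$ with $Q$ negative, the common taxonomy is $[0,C:(C,0),-]$, and the agreement of $\epsilon$ and $DD$ produces the isomorphism via part (c). Hence the listed invariants form a complete set.
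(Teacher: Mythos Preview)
Your overall structure for (a), (b), and the summary is fine and matches the paper. The problem is in your argument for (c), where there is a genuine error in the $\epsilon$-computation.

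You claim that inserting an $[S^{1,0}-\antitube]$ into a separating space makes it non-separating, and hence that the $S^{2,1}$-based space $S^{2,1}+(\tfrac{r}{2}-C+1)[DCC]+(C-1)[S^{1,0}-\antitube]$ is separating only when $C=1$. This is false. The antitube is a copy of $S^{1,0}\times\R^{1,1}$; its fixed oval $S^{1,0}\times\{0\}$ cuts it into two half-cylinders, one glued to each of the two conjugate boundary circles. If the ambient space was separating before the surgery, those two boundary circles lie in \emph{different} components of the complement of the fixed set, so each half-cylinder enlarges one component without connecting them. Concretely, $S^{2,1}+[S^{1,0}-\antitube]\iso T_1^{\refl}[2]$, which is visibly separating. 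Thus the $S^{2,1}$-based space is separating for \emph{every} $C\ge 1$, while the $S^2_a$- and $T_1^{\anti}$-based spaces are non-separating; $\epsilon$ therefore isolates the first from the other two in all cases, not just $C=1$. With your version, when $C\ge 2$ you would believe all three candidates are non-separating and would need $DD$ to separate the $S^{2,1}$-based space from the $S^2_a$-based one---a computation you have not done and which your sketch does not cover.

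Your plan for the $DD$-computation is also too optimistic. The claim that $H^1(N_r;\Z/2)$ splits as an orthogonal $\sigma^*$-invariant sum with one rank-two block per $[S^{1,0}-\antitube]$ is not correct: in the model for $S^2_a+C[S^{1,0}-\antitube]$ one finds $\sigma_*(b_1)=b_1+a_1+\cdots+a_C$, so the antitube blocks interact. The paper handles this by first computing $DD$ explicitly for $S^2_a+C[S^{1,0}-\antitube]$ and $T_1^{\anti}+C[S^{1,0}-\antitube]$ (Proposition~\ref{pr:DD-computation}), obtaining $[1,1,1,1]$ and $[2,1,2,1]$ respectively, and then using the genuinely clean orthogonal behaviour of $[DCC]$ (Proposition~\ref{pr:DD-DCC}) to pass to the general case (Corollary~\ref{co:DD-sep}). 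The two spaces end up differing only in the $\tD$-coordinate, so the full $DD$ really is needed. None of this is difficult, but it is not the routine block decomposition you describe.
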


\subsection{History and apology}
\label{se:history}
The classification problem for $C_2$-actions on surfaces has been
long studied.
When $X$ is orientable the problem first arose in the
nineteenth century, in connection with interest in real algebraic
geometry.  Some results were obtained by Harnack \cite{H}, and later
there was a more complete study by Klein
in the context of his interest in dianalytic surfaces: see
\cite{K} and also the earlier PhD thesis \cite{W} of Weichold (a
student of Klein).  
In this case there are relatively few
$C_2$-actions, and it is easy to describe them all.  The case where $X$
is non-orientable has more challenges, and seems to have first been
addressed by Scherrer \cite{Sc} in 1929.  However, Scherrer's paper
does not explicitly solve (P1)--(P3).   

After Scherrer there is a long hiatus in the literature, with the topic
being picked up again in the 1980s by Natanzon \cite{N1},
\cite[Section 6]{N2}.  The difficulty
here is that the classification results in these sources are somewhat
unwieldy, and still don't seem to solve (P1)--(P3).    
The paper \cite{BCNS} contains an improved classification, but with
two caveats.  First, the results are developed in the context of
dianalytic surfaces and non-Euclidean crystallographic groups, and as
a consequence there is a certain lack of geometric simplicity.  Also,
it seems again that (P1)--(P3) are not {\it explicitly\/} solved,
although certainly the germs of a solution are contained in those
papers.  The discussion at the end of the introduction of \cite{BCNS}
is close to our Proposition~\ref{pr:taxonomy}, 
but the cases where $F=C_-=0$ are omitted.

This is not to criticize any of the aforementioned papers.
In each case the authors were interested
in deeper and more complicated problems: generalizing from
involutions to periodic automorphisms of higher order, or from closed
$2$-manifolds to $2$-manifolds with boundary, or from a purely topological
problem to a more geometric one.  The unfortunate end result, though, is
that if a topologist wants to know all the ways $C_2$ can
act on a closed non-orientable surface, there doesn't seem to be a
place in the literature where the answer is completely explained.
It is also worth pointing out that the $DD$-invariant, which provides
the last piece to the classification puzzle, seems to have been
completely overlooked.

Finally, we close with a stylistic comment.  The road to understanding
$C_2$-actions is to first classify the free actions, then to
understand the actions on orientable surfaces, and ultimately to
understand the non-orientable case.  Each depends on knowledge
gained in the previous stages.  While the first two stages are fairly
well-documented in the literature, 
bringing all the techniques together in one place results in a more
coherent narrative.  This paper attempts to provide such a narrative,
although there is a resulting cost to brevity.

\subsection{Organization of the paper}
Sections 2 and 3 contain background information about
$C_2$-equivariant spaces and equivariant surgery constructions.  The
real work begins in Section 4, where we classify the free actions on
$2$-manifolds.  Section 5 explores the non-free $C_2$-actions, but
concentrating on {\it orientable\/} $2$-manifolds.  As a prelude to
the non-orientable case, Section 6 deals with general questions about
invariants of $C_2$-actions and their behavior under surgery.  Then
Section 7 completes the solution of problem (P2) for  actions
on non-orientable manifolds.  Section 8 counts the actions, thereby
solving (P1).  Section 9 introduces the $DD$-invariant and uses this
to complete the classification by solving (P3).  Section 10 briefly
discusses the connection with order 2 elements of the mapping class
group.  Finally, there are two appendices.  Appendix A gives proofs
for the fundamental surgery theorems in the $C_2$-equivariant context,
and Appendix B consists of tables listing all $C_2$-actions on the
non-orientable surfaces $N_r$ for $2\leq r\leq 7$.

\subsection{Notation and terminology}
Whenever $X$ is a $C_2$-space we will use $\sigma$ to denote the
involution $X\ra X$.  Also, by ``$2$-manifold'' we always mean a
connected, {\it
  closed\/} $2$-manifold unless otherwise indicated.  For convenience
we work in the smooth category.

\subsection{Acknowledgments}
The pictures in this paper were constructed using the software package
LaTeXDraw, which I
learned about from Paulo Lima-Filho and Pedro dos Santos.  I am 
grateful to them for sharing code from the pictures in \cite{LFdS}.

Eric Hogle and Clover May read through a preliminary version of this
paper and gave several important comments.  Additionally, their
knowledge of $C_2$-actions on tori and Klein bottles was crucial for
getting these results off the ground.  Finally, I am grateful to
Robert  Lipshitz for some useful conversations and encouragement.

\section{Background and basic constructions}
\label{se:background}

This section  describes the basic notation and constructions that will be used
throughout the paper.

\medskip

\subsection{Equivariant spheres}
Write $\R$ and $\R_-$ for the trivial and sign actions of $C_2$ on the
real number line.  For $p\geq q$  
let $\R^{p,q}=\R^{\oplus(p-q)}\oplus \R_-^{\oplus q}$, so that
$\R^{p,q}$ is a $p$-dimensional real vector space with a $C_2$-action
whose fixed set is $(p-q)$-dimensional.  This indexing convention
comes from the world of motivic and equivariant homotopy, though
in the present paper we will only
need the case where $p\in \{1,2\}$.

Write $S^{p,q}$ for the one-point compactification of $\R^{p,q}$.  
Note that $S^{1,0}$ is a circle with trivial $C_2$-action, whereas
$S^{1,1}$ is a circle whose action is reflection across a
diameter.  Similarly, we have 
the equivariant $2$-spheres $S^{2,0}$, $S^{2,1}$, and $S^{2,2}$.  
Finally, let $S^p_a$ denote a $p$-sphere with the antipodal action.

\subsection{Non-orientable surfaces}
It will be useful to establish some language and notation for talking about
surfaces.  
Let $X$ be a closed surface, and let $D$ be an embedded disk.  
Then $X-\intr(D)$ has a circle for its boundary, and identifying
antipodal points on this circle produces a new surface $X'$.  This process is called adding a
\dfn{crosscap} to $X$.  Note that this is equivalent to sewing  a
M\"obius band onto $X-\intr{D}$, with the boundary of the M\"obius band
wrapping once around $\bd D$.  One also has  
$X'\iso X\# \RP^2$.   We depict a crosscap as a circle with a cross 
inside of it; for example, the left picture below shows a torus with a
crosscap:

\begin{picture}(300,100)(0,40)
\put(-10,0){\includegraphics[scale=0.5]{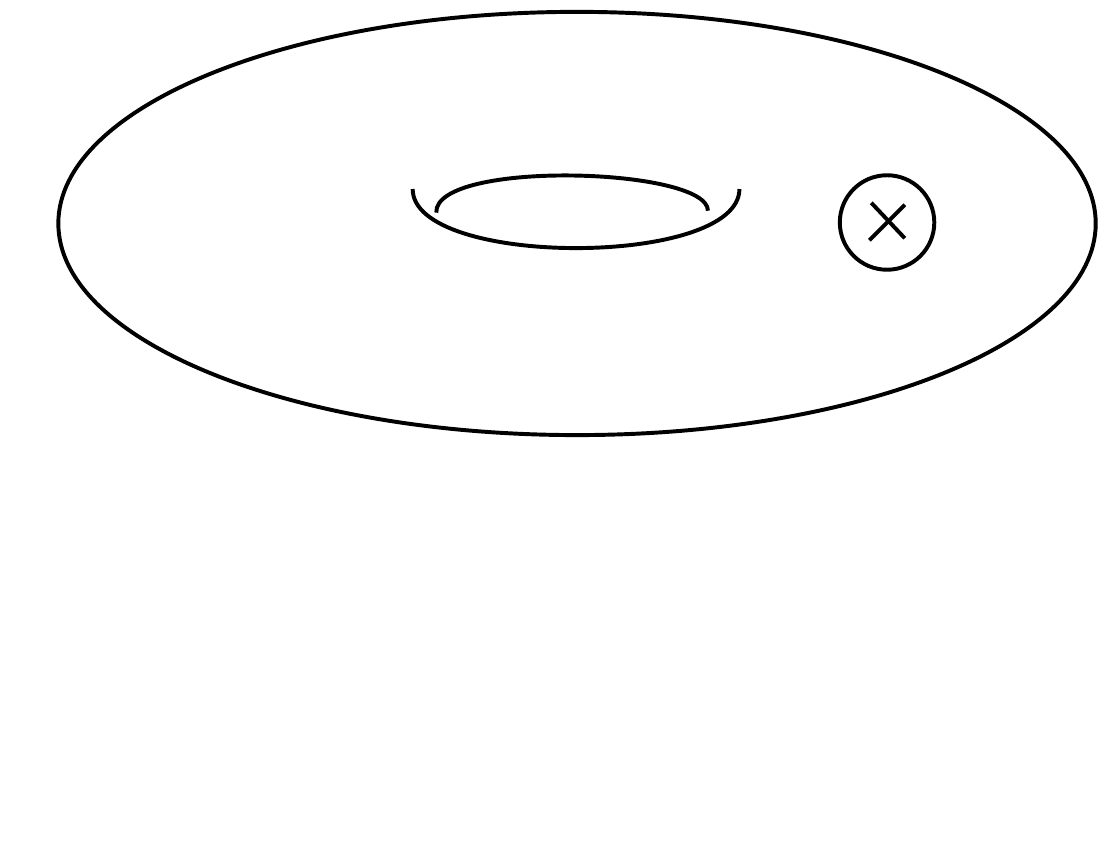}}
\put(180,0){\includegraphics[scale=0.5]{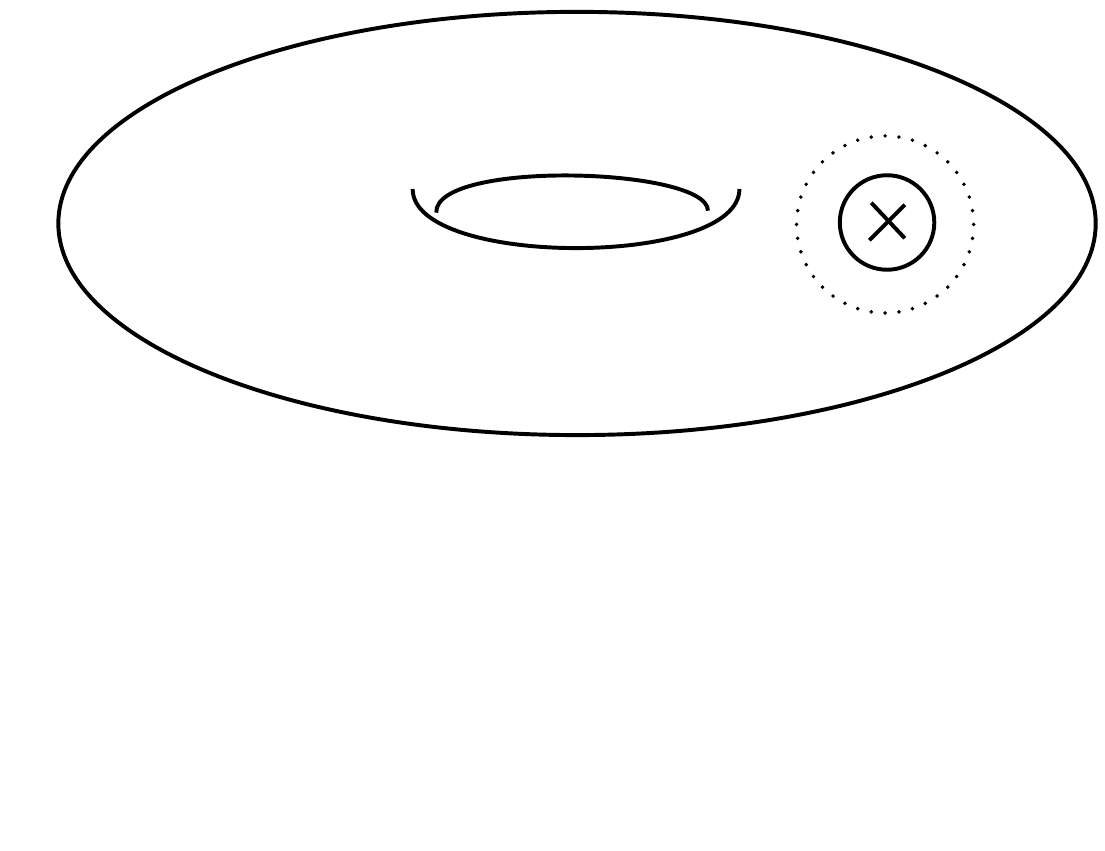}}
\end{picture}

\noindent
The circle depicts a hole, and the cross in the middle reminds us that
opposite points on the hole are identified.  To see the M\"obius
band in the picture, take a tubular neighborhood of the original 
circle---shown in the
picture on the right.  We then see an annulus with the opposite points
on its inner circle identified, and this is our M\"obius band.

Let $N_r$ denote the non-orientable surface of genus $r$,
i.e. $N_r=(\RP^2)^{\# r}$.  A useful model for $N_r$ is a $2$-sphere
with $r$ crosscaps.

\begin{remark}
\label{re:addcross}
An easy Euler characteristic argument shows that adding a crosscap
increases the $\beta$-genus by one, and of course it turns an
orientable space into a non-orientable one.
So 
$
 T_g \ + \ (\text{$r$ crosscaps}) \iso 
N_{2g+r}$.  
\end{remark}

\subsection{Equivariant connected sums}
Let $X$ be a $2$-manifold with nontrivial $C_2$-action, 
and let $M$ be any $2$-manifold.  We define a new $C_2$-space
$X \esum M$ as follows.  Let $M'$ denote $M$ with an open
disk removed.  Choose an open disk $D$
in $X$ that is disjoint from its conjugate $\sigma D$.  Let $X'$
denote $X$ with $D$ and $\sigma D$ removed.  Choose an isomorphism
$f\colon \bd M\ra \bd D$, and 
let
\[ X\esum M = \Bigl [X' \amalg (M'\times \{0\}) \amalg (M'\times \{1\}) \Bigr
]/\sim
\]
where the equivalence relation is $(m,0)\sim f(m)$ and $(m,1)\sim \sigma
f(m)$ for $m\in \bd M'$.  The $C_2$-structure on $X\esum M$ is the
evident one on $X'$ together with $\sigma(m,0)=(m,1)$ for $m\in M'$.  

The usual kinds of arguments show that when $X$ and $M$ are connected
this construction is
independent (up to isomorphim) on the choices of open disks and of the
map $f$.  See
Corollary~\ref{co:surg-inv}.

\begin{remark}
\label{re:esum}
Note the evident isomorphisms $X\esum (M\# N)\iso (X\esum M)\esum N$.
\end{remark}

The construction $X\esum N_1$ plays a special role for involutions on
non-orientable surfaces.  It can be regarded as adding ``dual
crosscaps'' to the $C_2$-manifold $X$, and we will usually denote
this as $X+[DCC]$.  Here ``DCC'' stands for
``Dual CrossCaps''.  We will write $X+2[DCC]$ for $X+[DCC]+[DCC]$, and so
forth.  This notation is not simpler than the $\esum$ notation, but it
works better with other surgery constructions that will be introduced
later.  

The following is a simple result using this notation:

\begin{prop}
\label{pr:DCC-esum}
Let $X$ be a connected $2$-manifold with $C_2$-action and let $M$ be any
connected $2$-manifold.  Then there is an equivariant isomorphism
\[ \bigl ( X+[DCC] \bigr )\esum M \iso X+(\beta(M)+1)[DCC].
\]
\end{prop}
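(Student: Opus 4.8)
The plan is to reduce the proposition to the single topological fact that a crosscap absorbs any connected summand, namely $N_1\#M\iso N_{\beta(M)+1}$, and then let the formal identity of Remark~\ref{re:esum} do the remaining bookkeeping.

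First I would record a preliminary observation: since $X+[DCC]$ is by definition $X\esum N_1$, iterating Remark~\ref{re:esum} yields an equivariant isomorphism $X\esum N_k\iso X+k[DCC]$ for every $k\geq 1$, proved by induction on $k$ using $N_{k}\iso N_{k-1}\#N_1$ at the inductive step. Hence the proposition is equivalent to exhibiting an equivariant isomorphism $(X\esum N_1)\esum M\iso X\esum N_{\beta(M)+1}$.

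Next I would apply Remark~\ref{re:esum}, with its ``$M$'' taken to be $N_1$ and its ``$N$'' taken to be the present $M$, to obtain $(X\esum N_1)\esum M\iso X\esum(N_1\#M)$. It then remains to identify $N_1\#M$ with $N_{\beta(M)+1}$. By the classification of closed surfaces, $M$ is either orientable, so $M\iso T_g$ with $\beta(M)=2g$, or non-orientable, so $M\iso N_r$ with $\beta(M)=r$. In the non-orientable case $N_1\#N_r=N_{r+1}$ and there is nothing to prove. In the orientable case one needs $N_1\#T_g\iso N_{2g+1}$; this is Dyck's theorem --- a handle in the presence of a crosscap becomes two crosscaps --- which follows from the classification of surfaces, or alternatively from the single instance $\RP^2\#T_1\iso N_3$ together with the additivity of $\beta$-genus under connected sum noted in Remark~\ref{re:addcross}. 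Feeding this back gives $(X\esum N_1)\esum M\iso X\esum N_{\beta(M)+1}\iso X+(\beta(M)+1)[DCC]$.

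I do not anticipate any real obstacle. The only input beyond formal manipulation of $\esum$ is the surface-classification fact $N_1\#M\iso N_{\beta(M)+1}$; the modest care required is to invoke Remark~\ref{re:esum} in the correct form and to remember that throughout we are gluing in two conjugate copies of each summand, which is handled automatically once everything is phrased via $\esum$.
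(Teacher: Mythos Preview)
Your proposal is correct and is essentially identical to the paper's own proof: both rewrite $X+[DCC]$ as $X\esum N_1$, apply Remark~\ref{re:esum} to get $X\esum(N_1\#M)$, invoke $N_1\#M\iso N_{\beta(M)+1}$, and then unwind $X\esum N_{\beta(M)+1}$ as $X+(\beta(M)+1)[DCC]$ via iterated use of Remark~\ref{re:esum}. If anything, you are slightly more explicit about the topological input (Dyck's theorem / classification of surfaces) underlying $N_1\#M\iso N_{\beta(M)+1}$.
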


\begin{proof}
We simply write
\begin{align*}
\bigl ( X+[DCC] \bigr) \esum M=(X\esum N_1)\esum M& \iso X\esum (N_1\#
M)\\
 &\iso X\esum
\bigl ( N_1^{\#(\beta(M)+1)} \bigr )\\
& \iso (((X \esum N_1)\esum N_1)\esum \cdots )\esum  N_1 \\
& \iso X + (\beta(M)+1)[DCC],
\end{align*}
where we have used Remark~\ref{re:esum} several times.
\end{proof}

\subsection{Constructions of free actions}
\label{se:cons-free}

Let $T_g$ be the genus $g$ torus.  If we assume this is embedded in
$\R^3$ in a standard way, with the ``center'' of the torus at the
origin, then the antipodal map $x\mapsto -x$ preserves the torus and
is an involution.  When $g$ is odd the origin is {\it inside\/} the
central hole of this embedded torus, and rotation by $180$ degrees
through an appropriate central axis gives another involution.  For $T_2$ and $T_3$ 
these are demonstrated in the pictures below:

\begin{picture}(300,180)(0,50)
\put(-10,120){\includegraphics[scale=0.35]{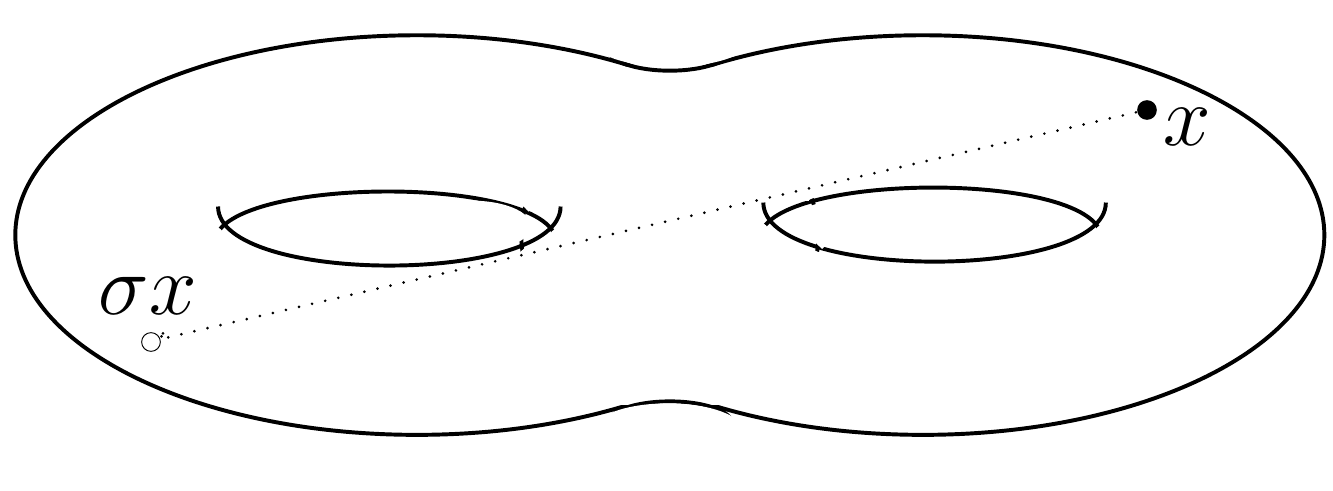}}
\put(100,90){\includegraphics[scale=0.45]{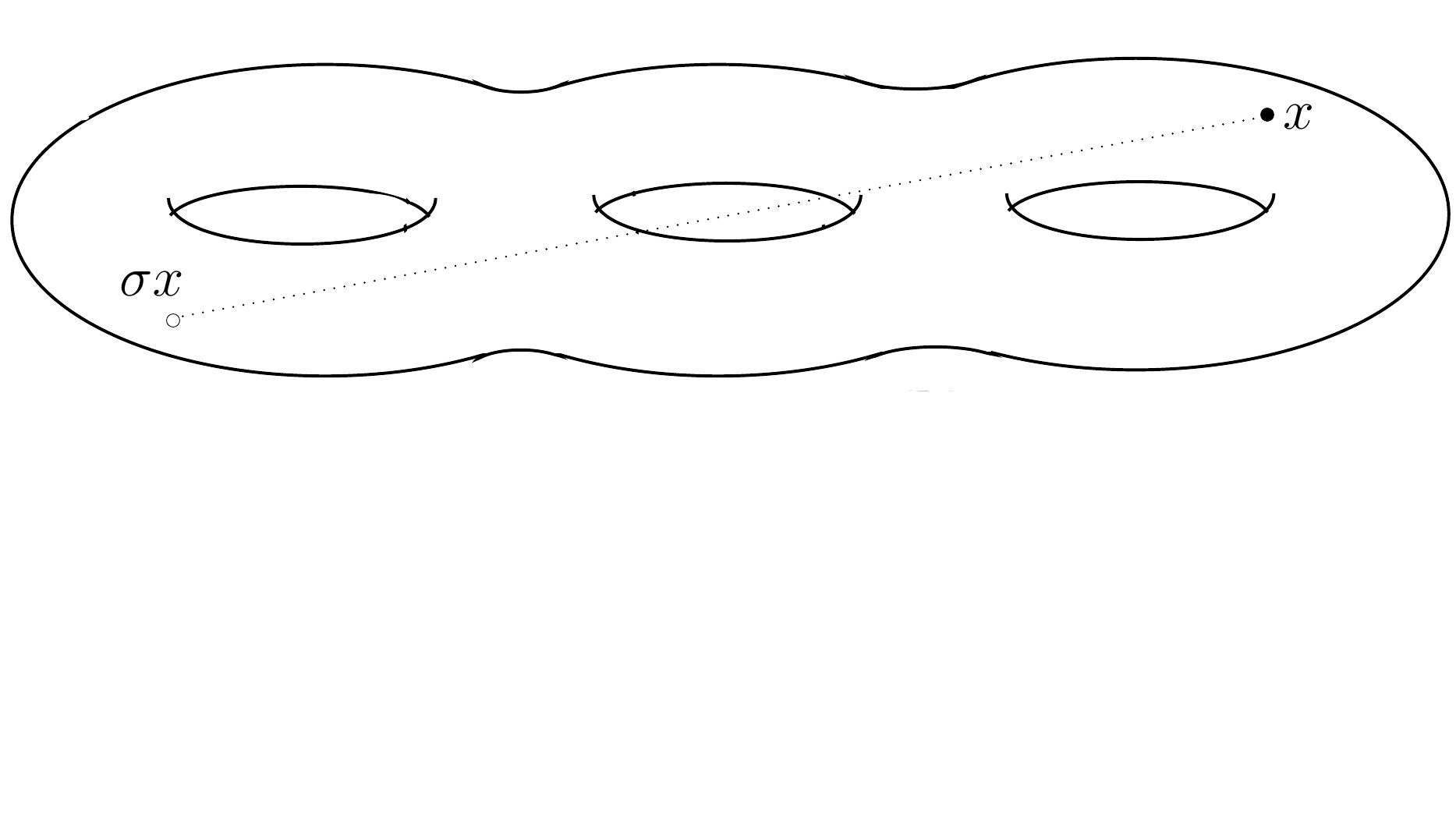}}
\put(100,0){\includegraphics[scale=0.45]{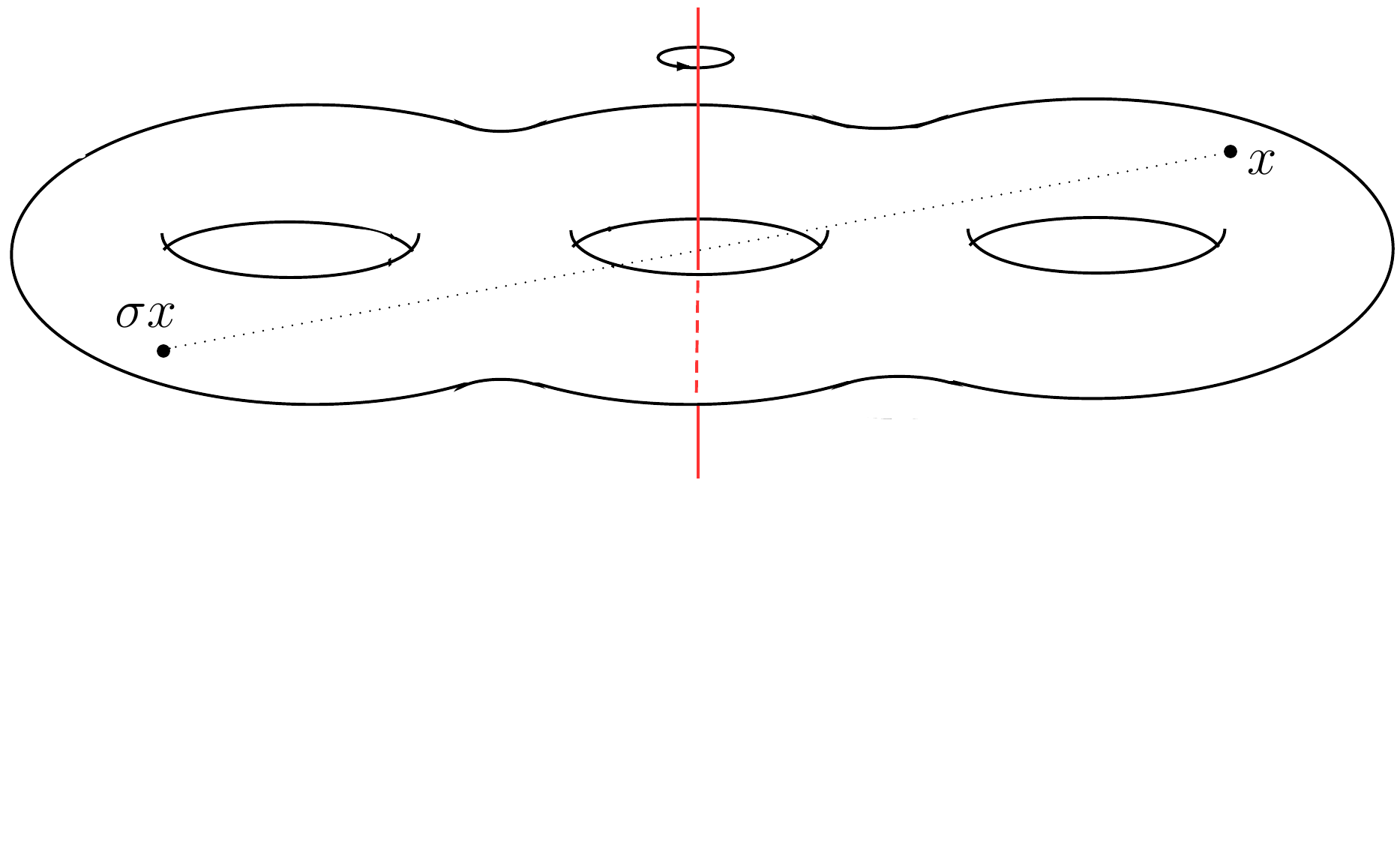}}
\end{picture}

\noindent
Note that in these pictures the black dots represent points on the top
side of the torus, whereas the open dots represent points on the
underside.  

We will write $T_g^{\anti}$ and $T_{g}^{\rot}$ to denote these two
surfaces with involutions.  
Note that 
\[T_{g}^{\rot}/C_2\iso T_{(1+g)/2} \quad\text{and}\quad
T_{g}^{\anti}/C_2\iso N_{g+1}
\]  
(recall that $g$ must be odd in the former case).  
The first is because
$T_{g}^{\rot}/C_2$ is orientable and its Euler characteristic
is half of $2-2g$.  Likewise,
the second is because
$T_{g}^{\anti}/C_2$ is non-orientable
with Euler characteristic also equal to $1-g$.

The following claims are easy to verify:

\begin{prop}
\label{pr:Tg-free}
For $g\geq 0$ there are equivariant isomorphisms 
\[ T_{2g}^{\anti}  \iso S^2_a \esum T_g, \qquad
T_{2g+1}^{\anti} \iso T_1^{\anti}\esum T_g, \qquad
T_{2g+1}^{\rot}  \iso T_1^{\rot}\esum T_g.
\]
\end{prop}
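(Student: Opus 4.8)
The plan is to establish each isomorphism by exhibiting an explicit equivariant disk decomposition of the left-hand side that matches the definition of the $\esum$-construction on the right-hand side. All three isomorphisms have the same flavor: the involution on $T_g^{\anti}$ or $T_g^{\rot}$ pairs up two ``halves'' of the surface that are abstractly copies of $T_g$ with a disk removed, and locating a conjugate pair of disks in the fixed-point-free setting identifies the surface with $X \esum T_g$ where $X$ is a low-genus model. First I would fix a standard symmetric embedding of $T_g$ in $\R^3$ so that the relevant involution (antipodal or $180$-degree rotation) is realized geometrically; the embedding is chosen as a connected sum of $g$ standard tori arranged symmetrically, so that one can see a distinguished ``core'' region acted on by the involution together with $g$ handles that get permuted or reflected in an understood way.

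For $T_{2g}^{\anti} \iso S^2_a \esum T_g$: I would arrange $2g$ handles on a central sphere, placed in $g$ antipodal pairs, so that the antipodal map on the ambient $\R^3$ swaps the two handles in each pair and acts antipodally on the central sphere. Removing a small conjugate pair of disks from the central sphere (disjoint from all handle-attaching regions and from each other's images) exhibits $T_{2g}^{\anti}$ as $S^2_a$ with two conjugate copies of (a genus-$g$ surface minus a disk) glued in along the two boundary circles, with the gluing intertwining the two copies exactly as in the definition of $\esum$. The key point to check is that each ``half'' — the central-sphere hemisphere together with the $g$ handles attached to it — is homeomorphic to $T_g$ with an open disk removed; this follows from the standard classification of surfaces with boundary (genus $g$, one boundary component, orientable), after an Euler characteristic count. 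For $T_{2g+1}^{\anti} \iso T_1^{\anti} \esum T_g$ the argument is identical except the central piece is a standard symmetric torus $T_1$ with the antipodal involution (whose existence and freeness were noted in Section~\ref{se:cons-free}), again with $g$ antipodally-paired handles attached; for $T_{2g+1}^{\rot} \iso T_1^{\rot} \esum T_g$ one uses instead the $180$-degree rotation: the rotation axis passes through the central hole of $T_1^{\rot}$, and the $2g$ extra handles are placed in $g$ pairs swapped by the rotation.

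The main obstacle I expect is not any single deep point but rather the bookkeeping needed to verify that the disk-pair one removes is genuinely conjugate and genuinely disjoint from its image, and that the ``half'' surfaces are attached with the correct identification to land in the definition of $\esum$ rather than in some variant; equivalently, one must be careful that the involution restricted to a tubular neighborhood of each gluing circle is the orientation-type expected (so that the resulting surface is the non-equivariant connected sum one wants, with the stated underlying topological type). Since the paper has already established (Corollary~\ref{co:surg-inv}) that $X \esum M$ is independent of the choices of disk and gluing map when $X$ and $M$ are connected, I only need to produce \emph{one} valid such decomposition in each case, which removes the burden of canonicality. A clean way to present all of this is to draw the picture for small $g$ (as the paper does elsewhere) and then remark that the general case is the evident iteration, invoking Remark~\ref{re:esum} to reduce attaching $g$ handles to attaching one handle $g$ times; indeed $S^2_a \esum T_g \iso S^2_a \esum (T_1^{\# g}) \iso (\cdots(S^2_a \esum T_1)\esum T_1 \cdots)\esum T_1$, and each single step is the visually transparent operation of adding one antipodal (or rotation-swapped) pair of handles.
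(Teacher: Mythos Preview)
Your proposal is correct and is exactly the kind of argument the paper has in mind: the paper's own ``proof'' consists of the single line ``Left to the reader,'' so you have supplied precisely the geometric disk-decomposition argument that the reader is expected to carry out. Your invocation of Corollary~\ref{co:surg-inv} to sidestep canonicality of the disk choice, and of Remark~\ref{re:esum} to reduce the $g$-handle case to iterated single-handle attachments, are both appropriate and make the sketch complete.
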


\begin{proof}
Left to the reader.
\end{proof}

Note that if $X$ is a $2$-manifold with free $C_2$-action and $M$ is
any $2$-manifold, then the action on $X\esum M$ is again free.
In particular, the constructions
$S^2_a+r[DCC]$, $T_1^{\anti}+(r-2)[DCC]$, and $T_1^{\rot}+(r-2)[DCC]$
give free actions on $N_r$. 
Here we are just adding crosscaps, as shown in the following pictures:

\begin{picture}(300,150)(0,-17)
\put(20,10){\includegraphics[scale=0.45]{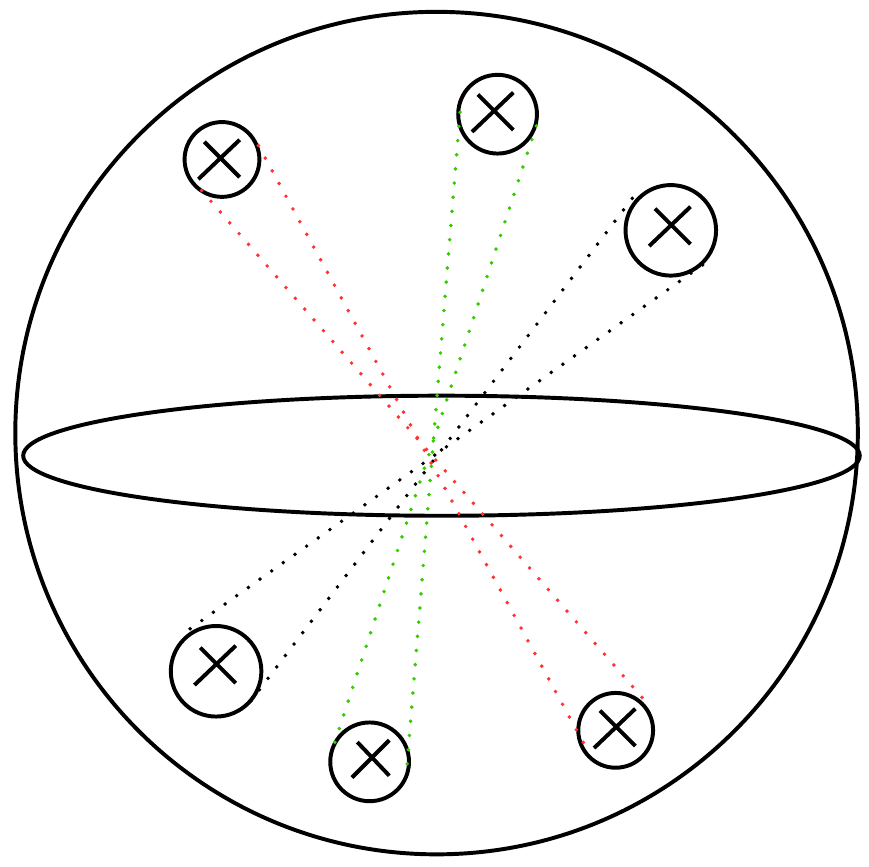}}
\put(50,-10){$S^2_a+3[DCC]$}
\put(170,-30){\includegraphics[scale=0.5]{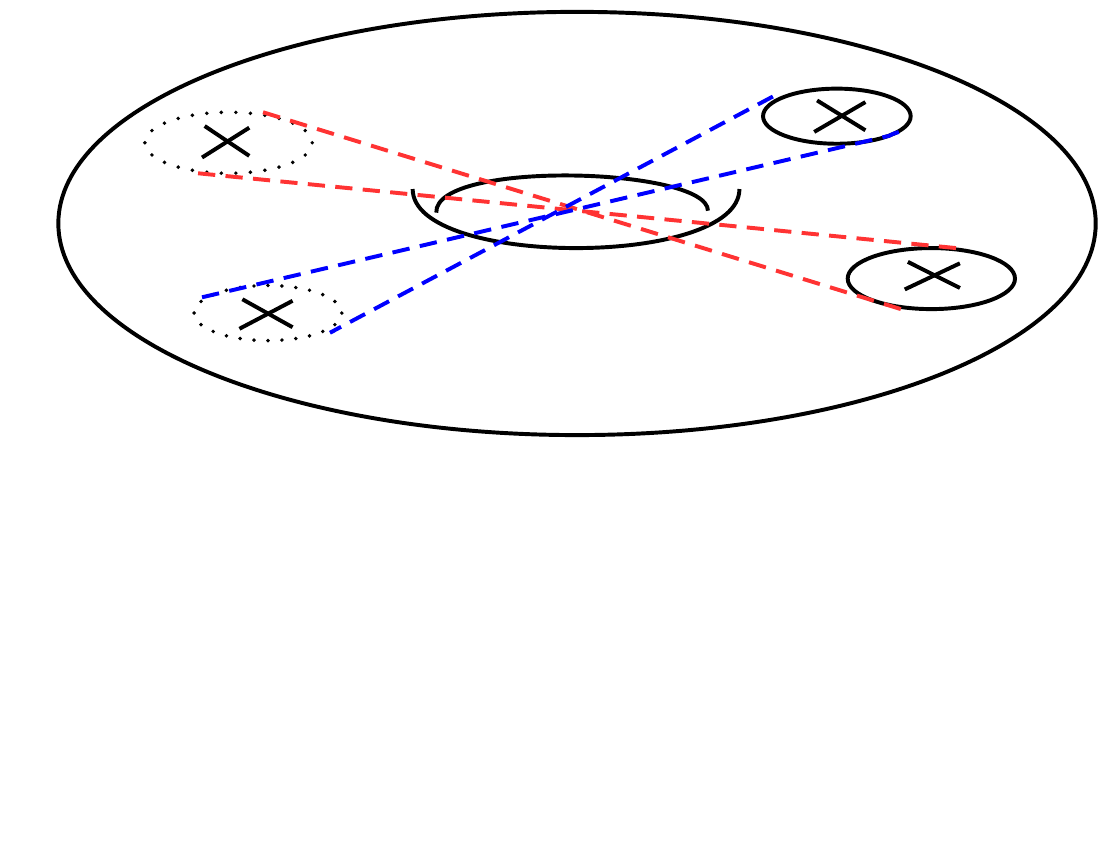}}
\put(220,0){$T_1^{\anti}+2[DCC]$}
\end{picture}

We will see later that $T_1^{\rot}+(r-2)[DCC]\iso
T_1^{\anti}+(r-2)[DCC]$ when $r>2$, but this requires a bit of work; see
Proposition~\ref{pr:rot=anti}.  The following are some easier
isomorphisms:

\begin{prop}
\label{pr:Tga-char}
For $g\geq 0$ and $s\geq 1$ there is a $C_2$-equivariant isomorphism 
\[ T_g^{\anti}+s[DCC]\iso
\begin{cases}
S^2_a+(g+s)[DCC] & \text{if $g$ is even,}\\
T_1^{\anti}+(g+s-1)[DCC] & \text{if $g$ is odd.}
\end{cases}
\]
Also, when $g$ is odd one has
\[ T_g^{\rot}+s[DCC]\iso T_1^{\rot}+(g+s-1)[DCC].
\]
\end{prop}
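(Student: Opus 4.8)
The plan is to reduce everything to Proposition~\ref{pr:Tg-free} and Remark~\ref{re:esum}, exactly as in the proof of Proposition~\ref{pr:DCC-esum}.  The key point is that the operation $X+s[DCC]$ is just $X \esum N_s$ (iterating the definition $X+[DCC]=X\esum N_1$ and using $N_1^{\# s}=N_s$ together with Remark~\ref{re:esum}), so all three claimed isomorphisms are statements about equivariant connected sums with non-orientable surfaces, and we may freely move crosscaps around using $X\esum(M\# N)\iso (X\esum M)\esum N$.

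First I would handle the even case.  Write $g=2h$.  By Proposition~\ref{pr:Tg-free} we have $T_{2h}^{\anti}\iso S^2_a\esum T_h$, so
\[
T_{2h}^{\anti}+s[DCC]\iso (S^2_a\esum T_h)\esum N_s\iso S^2_a\esum(T_h\# N_s)\iso S^2_a\esum N_{2h+s}\iso S^2_a+(2h+s)[DCC],
\]
where the third isomorphism uses Remark~\ref{re:addcross} ($T_h\# N_s\iso N_{2h+s}$) and the last uses the identification of $[DCC]$ with $\esum N_1$ again.  For $g=2h+1$ odd, Proposition~\ref{pr:Tg-free} gives $T_{2h+1}^{\anti}\iso T_1^{\anti}\esum T_h$, so the same chain of reasoning yields
\[
T_{2h+1}^{\anti}+s[DCC]\iso (T_1^{\anti}\esum T_h)\esum N_s\iso T_1^{\anti}\esum N_{2h+s}\iso T_1^{\anti}+(2h+s)[DCC]=T_1^{\anti}+(g+s-1)[DCC],
\]
since $g+s-1=2h+s$.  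The $\rot$ statement is identical: $T_{2h+1}^{\rot}\iso T_1^{\rot}\esum T_h$ gives $T_{2h+1}^{\rot}+s[DCC]\iso T_1^{\rot}\esum N_{2h+s}\iso T_1^{\rot}+(2h+s)[DCC]$.

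There is essentially no obstacle here; the only thing to be slightly careful about is the bookkeeping that $X+s[DCC]$ genuinely equals $X\esum N_s$ up to equivariant isomorphism, which follows by induction on $s$ from the definition and Remark~\ref{re:esum} (this is the same step used implicitly in Proposition~\ref{pr:DCC-esum}).  One also needs $s\geq 1$ so that these connected-sum manipulations make sense as stated, but that hypothesis is given.  Everything else is a direct application of Proposition~\ref{pr:Tg-free}, Remark~\ref{re:esum}, and Remark~\ref{re:addcross}, so I would present the proof as the two short displayed computations above with a sentence pointing to those references.
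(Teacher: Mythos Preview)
Your proof is correct and is essentially identical to the paper's: the paper writes out the even case exactly as you do (using Proposition~\ref{pr:Tg-free}, Remark~\ref{re:esum}, and $T_h\# N_s\iso N_{2h+s}$) and then says ``the other statements are proven similarly,'' which is precisely your odd and $\rot$ computations.
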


\begin{proof}
This is immediate from Proposition~\ref{pr:Tg-free}.  For example, 
when $g$ is even
\begin{align*}
 T_{g}^{\anti}+s[DCC]\iso (S^2_a\esum T_{\frac{g}{2}}) \esum N_s \iso S^2_a
\esum (T_{\frac{g}{2}}\# N_s) & \iso S^2_a\esum (N_{g+s})\\
&\iso S^2_a +
(g+s)[DCC].
\end{align*}
The other statements are proven similarly.  
\end{proof}


\section{Equivariant surgery and other generalities}

\label{se:general}
In this section we further develop the machinery for breaking down an
equivariant space into smaller pieces.  This mainly focuses on the
surgery-type constructions introduced  in Section~\ref{se:intro}.

\subsection{Connected components}

Let $X$ be a $2$-manifold with $C_2$-action.  Recall than an
\dfn{oval} is a component of $X^{C_2}$ that is homeomorphic to 
$S^1$.  We write $C$ for the number of ovals in $X$.

\begin{prop}
\label{pr:conn}
$X-X^{C_2}$ has either one or two path components.  Moreover, if
there are two components then every oval touches both of them.
\end{prop}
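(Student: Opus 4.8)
The plan is to study the quotient map $\pi\colon X \to X/C_2$ and the behavior of $X - X^{C_2}$ near the fixed set. First I would note that $X^{C_2}$ is a closed submanifold of $X$, consisting of isolated points and embedded circles (this is the standard fact about smooth involutions on surfaces, recalled in the text). Away from $X^{C_2}$ the action is free, so $X - X^{C_2} \to (X - X^{C_2})/C_2$ is a double cover. Since $X$ is connected, $X/C_2$ is connected, and one checks that $(X-X^{C_2})/C_2$ is obtained from the connected surface-with-boundary $X/C_2$ by deleting the images of the isolated fixed points (which doesn't disconnect it, being the removal of finitely many interior points from a connected surface) and is therefore still connected. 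Hence $X - X^{C_2}$, being a double cover of a connected space, has either one or two path components, which is the first assertion.

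Next, suppose $X - X^{C_2}$ has two components, say $A$ and $B$; since $\sigma$ acts freely and the double cover $X - X^{C_2} \to (X-X^{C_2})/C_2$ has connected base, $\sigma$ must interchange $A$ and $B$ (if it preserved each, each would be a copy of the connected base, but then the cover would be trivial over that base, and $A \sqcup B$ would be the \emph{disjoint} union of two copies, forcing the base to be disconnected when we had argued it is connected---so actually the cleanest phrasing is: a double cover of a connected space has connected total space unless it is the trivial cover, in which case $\sigma$ swaps the two sheets). Now let $\cO$ be an oval. Choose a small $\sigma$-invariant tubular neighborhood $U$ of $\cO$ in $X$; because $\cO$ is a circle, $U$ is either an annulus or a M\"obius band, with $\sigma$ acting as a reflection fixing the core circle $\cO$. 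In the annulus case, $U - \cO$ has two components, swapped by $\sigma$; in the M\"obius case $U - \cO$ is a single annulus, and $\sigma$ acts on it freely. In either case I claim $U - \cO$ meets both $A$ and $B$: indeed $U - \cO$ is $\sigma$-invariant and $\sigma$ swaps $A$ and $B$, so $(U-\cO)\cap A$ and $(U-\cO)\cap B$ are swapped by $\sigma$, and I must rule out that one of them is empty---but if $(U - \cO) \subseteq A$ then $A$ would be $\sigma$-invariant, contradicting that $\sigma$ swaps $A$ and $B$. Since every point of $U - \cO$ lies in $A$ or $B$ and can be connected within $U-\cO$ to points arbitrarily close to $\cO$, the oval $\cO$ lies in the closures $\overline{A}$ and $\overline{B}$, i.e. $\cO$ touches both components.

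I expect the main obstacle to be handling the normal-bundle dichotomy for the oval cleanly: in the M\"obius-band case $U - \cO$ is connected yet still must hit both $A$ and $B$, so the argument cannot simply match up the two local sides with the two global components. The resolution above---arguing purely from $\sigma$-invariance of $U - \cO$ together with the fact that $\sigma$ swaps $A$ and $B$, hence neither $A$ nor $B$ can be $\sigma$-invariant and so neither can contain all of the invariant set $U - \cO$---sidesteps this, and is the one delicate point worth writing out carefully. Everything else is routine once one has the tubular neighborhood theorem in the $C_2$-equivariant setting and the elementary fact that a connected double cover need not, but a disconnected one must, split as two copies of its base.
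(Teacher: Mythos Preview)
Your proof is correct and takes a genuinely different route from the paper's. The paper argues directly by ``coloring'' the components of $X-X^{C_2}$ and tracking how colors propagate across ovals along paths; you instead exploit that $X-X^{C_2}\to (X-X^{C_2})/C_2$ is a double cover of a connected space, which immediately bounds the number of components by two and gives you that $\sigma$ swaps them in the two-component case. Your approach is slicker and more conceptual; the paper's approach is more elementary and avoids saying anything about the quotient.

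Three small points to tighten:

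\begin{itemize}
\item Your description of $(X-X^{C_2})/C_2$ is not quite right: it is the \emph{interior} of $X/C_2$ minus the images of the isolated fixed points, since the ovals map to boundary circles of $X/C_2$. You only mention deleting the isolated fixed-point images. The connectedness conclusion survives, but the sentence as written is inaccurate.
\item The clause ``if $(U-\cO)\subseteq A$ then $A$ would be $\sigma$-invariant'' is not the right implication. What you want is: $U-\cO$ is nonempty and $\sigma$-invariant, and $\sigma$ interchanges $A$ and $B$, so $A$ cannot contain any nonempty $\sigma$-invariant subset. (Equivalently, $(U-\cO)\cap A$ and $(U-\cO)\cap B$ are swapped by $\sigma$, hence both nonempty.) You essentially set this up correctly in the preceding sentence; the final clause just misstates it.
\item Your last paragraph treats the M\"obius case as a difficulty to be finessed, but in fact your own argument eliminates it: if $U-\cO$ is connected \emph{and} meets both $A$ and $B$, that is a contradiction, since $A$ and $B$ are open and disjoint. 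So one-sided ovals cannot occur in the two-component case. You have actually proven the paper's Corollary~\ref{co:conn} as a byproduct, rather than needing to ``sidestep'' anything. Recognizing this would sharpen the write-up.
\end{itemize}
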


\begin{proof}
Paint $X-X^{C_2}$ different colors (Red, Blue, etc.), one for each
path component.  We will examine the colors attached to the ``sides''
of each oval in $X^{C_2}$ (where by side we mean the components of
$N-\cO$ where $\cO$ is the oval and $N$ is a tubular neighborhood).
Recall that an oval can have only one side; despite this, it will be
convenient to talk about the {\it two\/} sides of an oval even though
these two sides might be the same!  This is merely a linguistic
issue.  

Assume that $U$ and $V$ are ovals, each of which has a side that is
colored Red.  Pick $u\in U$ and $v\in V$.  Then there is a path
from $u$ to $v$ that stays entirely within the Red component
(except at its two endpoints).  The conjugate of this path starts on
the other side of $U$ and proceeds to the other side of $V$, never
crossing the fixed set, and so the other sides of $U$ and $V$ must
be colored the same.  So if two ovals have one side the same color,
the colors of the other sides must match as well.  

Continue to assume that $U$ is an oval with one side colored Red.
Let $\cC$ denote the color attached to the second side of $U$ (which
might or might not be Red).  Let $E$ be any oval that is different
from $U$, pick $u\in U$ and $e\in E$, and let $\alpha$ be a path from
$u$ to $e$ in $X$ that only crosses the fixed set finitely many
times (surely such a path exists).  Starting at $U$, whose sides are
colored Red and $\cC$, the next oval crossed by $\alpha$ will have a side
that matches one of these colors.  By the previous paragraph, the
two sides of this oval must be colored the same as $U$.  Now we apply
this argument to each successive oval crossed by $\alpha$, until we
reach $E$.  The conclusion is that the two sides of each oval are
colored the same as $U$.

The path components of $X-X^{C_2}$ only depend on the ovals, not the
isolated fixed  points in $X$.  We have just argued that if one oval
touches two (possibly equal) path components, then every oval touches
these same two components.  So $X-X^{C_2}$ has at most two components,
and if there are exactly two then every oval touches both of them.
\end{proof}

\begin{cor}
\label{co:conn}
If $X$ has a one-sided oval then $X-X^{C_2}$ is connected.
\end{cor}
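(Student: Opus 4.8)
The plan is to derive this directly from Proposition~\ref{pr:conn}. Suppose $X$ has a one-sided oval $\cO$. By definition of ``one-sided,'' a tubular neighborhood $N$ of $\cO$ has the property that $N - \cO$ is connected; equivalently, the two ``sides'' of $\cO$ (in the convention of the proof of Proposition~\ref{pr:conn}) are literally the same side. So when we paint the components of $X - X^{C_2}$ with colors, the two sides of $\cO$ receive the \emph{same} color.

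Now apply the key conclusion of Proposition~\ref{pr:conn}: if $X - X^{C_2}$ had two components, then every oval would have to touch both of them, i.e.\ the two sides of every oval would get different colors. But $\cO$ is an oval whose two sides get the \emph{same} color, a contradiction. Hence $X - X^{C_2}$ has exactly one path component, i.e.\ it is connected.

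There is really no obstacle here; the content is entirely in Proposition~\ref{pr:conn}, and the only thing to be careful about is matching the ``two sides'' bookkeeping in that proof with the topological meaning of one-sidedness. One should note that the argument does not even require $\cO$ to be a \emph{component} of the fixed set in any essential way beyond being an oval: the single input is that a one-sided simple closed curve has a connected deleted tubular neighborhood, which forces its two formal sides to coincide, and Proposition~\ref{pr:conn} then rules out the disconnected case.
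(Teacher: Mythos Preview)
Your proof is correct and follows essentially the same approach as the paper: both arguments use Proposition~\ref{pr:conn} to conclude that in the two-component case every oval must touch both components, and then observe that a one-sided oval touches only one component (its deleted tubular neighborhood is connected), giving a contradiction. The paper phrases this slightly differently---coloring the component adjacent to the one-sided oval Red and concluding all sides of all ovals are Red---but the logical content is identical.
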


\begin{proof}
Again color the components of $X-X^{C_2}$, with the component touching
our distinguished oval colored Red.  
By Proposition~\ref{pr:conn} {\it every\/} oval has both of its sides
colored Red, and so $X-X^{C_2}$ can have only one component.  
\end{proof}

When $X-X^{C_2}$ has two components we say that the involution
is \dfn{separating}.  Otherwise we call the involution
\dfn{non-separating}.  Let $\epsilon=\epsilon(X)$ denote this binary
invariant.  In a slightly different use of the term, we will also say that an oval $\cO$ is separating if
$X-\cO$ has two components.

\subsection{Doubled spaces and separating ovals}

Start with a non-equivariant surface $S$ and remove $C$
disjoint open disks to produce a space $X$.  Let $Y=[(X\times \{0\})
\amalg (X\times \{1\})]/\sim$ where the equivalence relation has
$(x,0)\sim (x,1)$ if $x\in \bd X$.  Give $Y$ the evident $C_2$-action
where $\sigma(a,0)=(a,1)$ for all $a\in X$.  We call $Y$ the
\mdfn{double of $S$ across $C$ boundary circles}, and we will denote
it $\Doub(S,C)$.  It is clear that $S$ is orientable if and only if
$\Doub(S,C)$ is orientable.  
Moreover, an Euler characteristic
argument shows if $S\iso N_s$ then $\Doub(S,C)\iso N_{2(s+C-1)}$ and
if $S\iso T_g$ then $\Doub(S,C)\iso T_{2g+C-1}$.  Uniting these cases,
we can write $\beta(\Doub(S,C))=2\beta(S)+2(C-1)$.

In the case of the doubling construction, the fixed set separates the
surface into two path components.  This turns out to be the {\it only\/}
case where this happens:

\begin{prop}
\label{pr:doub}
Let $X$ be a $2$-manifold with $C_2$-action.  If $X-X^{C_2}$ has two
path components then all ovals are two-sided and $X\iso \Doub(S,C)$
for some surface $S$ (where $C$  is the number of ovals in $X$).
\end{prop}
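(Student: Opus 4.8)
The plan is to reconstruct the surface $S$ directly from the two-component structure. Assume $X - X^{C_2}$ has two path components; by Corollary~\ref{co:conn} there are no one-sided ovals, so every oval is two-sided (this handles the first assertion). Call the two components $U$ and $V$. First I would observe that $\sigma$ cannot fix either component setwise: if $\sigma(U)=U$, then since $\sigma$ acts freely on $X - X^{C_2}$ (any fixed point lies in $X^{C_2}$), one could try to derive a contradiction, but more cleanly: the closures satisfy $\overline{U}\cap\overline{V}\supseteq$ all ovals, and since every oval has a tubular neighborhood whose two sides lie one in $U$ and one in $V$ (by the argument in Proposition~\ref{pr:conn}, all ovals behave the same way, and here both sides cannot be the same component or we'd only have one component), $\sigma$ must swap the two sides of each oval's tubular neighborhood, hence swaps $U$ and $V$.

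Next I would set $S = \overline{U} = U \cup X^{C_2}$, with the ovals as its boundary circles — one needs that $\overline{U}$ is a manifold-with-boundary whose boundary is exactly the union of ovals. This follows because near an oval $\cO$, a tubular neighborhood is $\cO \times (-1,1)$ with $\cO \times \{0\}$ fixed and $\sigma$ acting by $(\theta,t)\mapsto(\theta,-t)$ (two-sidedness gives the product structure; the $C_2$-action on the normal bundle is the sign representation), so $\overline{U}$ locally looks like $\cO\times[0,1)$. Also $X^{C_2}$ has no isolated fixed points: an isolated fixed point would have a neighborhood $\cong \R^{2,q}$, and for $q=1$ the fixed set isn't isolated, while for $q=2$ (the antipodal-type point) $X - X^{C_2}$ is locally connected near it, forcing $U=V$ locally — contradiction. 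So $X^{C_2}$ is a disjoint union of $C$ ovals, $S$ is a compact surface with $C$ boundary circles, and $X = S \cup_{\partial} \sigma(S)$ glued along the common boundary, which is precisely $\Doub(S', C)$ where $S'$ is $S$ with the $C$ boundary circles capped off by disks.

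The main obstacle is the claim that $\sigma$ swaps $U$ and $V$ and that $\overline{U}$ is a genuine surface-with-boundary meeting $\overline{V}$ only along the ovals — i.e. that the picture really is two halves glued along the fixed set, with nothing else identified. Everything reduces to the local normal-form analysis of $\sigma$ near $X^{C_2}$: at a fixed circle the slice representation is forced to be the sign representation on a $1$-dimensional normal space (the trivial case would put the circle in the interior of one component, contradicting that it's on the boundary between $U$ and $V$), and there are no isolated fixed points by the local-connectedness argument above. Once the local structure is pinned down, the global conclusion $X\iso\Doub(S',C)$ is formal, and the count of boundary circles being $C$ is immediate. I expect the write-up to lean on the equivariant tubular neighborhood theorem (available in the smooth category, as stated in the paper's conventions) for the normal-form step.
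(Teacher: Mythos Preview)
Your proposal is correct and follows essentially the same approach as the paper: identify the closure of one path component as a surface-with-boundary and recognize $X$ as two copies glued along their common boundary. The paper's proof is much terser---it simply sets $\bar{P}_1=P_1\cup X^{C_2}$, asserts that $\sigma$ gives a homeomorphism $\bar{P}_1\to\bar{P}_2$, and that $\bar{P}_1$ is a surface with $C$ disks removed---leaving implicit the local analysis (no isolated fixed points, $\sigma$ swaps the two components, normal form near each oval) that you have carefully spelled out.
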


\begin{proof}
We know that all ovals are two-sided by Corollary~\ref{co:conn}.  Let
$P_1$ and $P_2$ denote the two path components of $X-X^{C_2}$, and let
$\bar{P}_1=P_1\cup X^{C_2}$ and $\bar{P}_2=P_2\cup X^{C_2}$.  The
involution then gives a homeomorphism $\sigma \colon \bar{P}_1\ra \bar{P}_2$.  

Certainly $\bar{P}_1$ can be obtained by
removing $C$ open disks from a $2$-manifold $S$.  
Construct a map $\Doub(S,C)\ra X$ by having
it be the inclusion on $\bar{P}_1\times \{0\}$ and $\sigma$ on
$\bar{P}_1\times \{1\}$.  This is an equivariant isomorphism.
\end{proof}

\begin{cor}
Suppose that $X$ has a separating oval.  Then $\beta(X)$ is even and
$X\iso \Doub(S,1)$ for
some $2$-manifold $S$ having $\beta(S)=\frac{\beta(X)}{2}$.  
\end{cor}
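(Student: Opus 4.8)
The plan is to reduce everything to Proposition~\ref{pr:doub}: once we know that $X-X^{C_2}$ (and not merely $X-\cO$) has exactly two path components, Proposition~\ref{pr:doub} gives $X\iso\Doub(S,C)$ where $C$ is the number of ovals, and then the identity $\beta(\Doub(S,C))=2\beta(S)+2(C-1)$ established above finishes the proof once we have checked that $C=1$. So let $\cO$ be a separating oval and write $X-\cO=A\sqcup B$ with $A,B$ nonempty, open, and connected.

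First I would observe that $\cO$ must be two-sided: a one-sided simple closed curve on a surface never separates (its M\"obius-band tubular neighborhood minus its core circle is connected and meets every component of the complement of the neighborhood, so $X-\cO$ stays connected), so a separating oval is two-sided. Hence $\cO$ has an equivariant tubular neighborhood equivariantly isomorphic to $\cO\times\R$ on which $\sigma$ acts linearly in the normal direction; this linear involution of the normal line is $\pm 1$, and the same sign along the connected curve $\cO$. If the sign were $+1$ then $\sigma$ would restrict to the identity on an entire neighborhood of $\cO$, making $X^{C_2}$ two-dimensional near $\cO$ and contradicting that $\cO$ is a component of $X^{C_2}$ homeomorphic to $S^1$. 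So the sign is $-1$, i.e. $\sigma$ interchanges the two sides of $\cO$.

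Next, the two sides of $\cO$ — the two components of $(\cO\times\R)\setminus\cO$ — lie one in $A$ and one in $B$: each is connected, hence wholly in $A$ or wholly in $B$, and both must occur since a component of $X-\cO$ whose closure misses $\cO$ would be clopen in the connected space $X$. Because $\sigma$ swaps these sides, the connected set $\sigma(A)$ meets $B$, so $\sigma(A)\subseteq B$; symmetrically $\sigma(B)\subseteq A$, and $\sigma^2=\id$ forces $\sigma(A)=B$. In particular no point of $A\cup B$ is fixed, so $X^{C_2}=\cO$ and $X-X^{C_2}=A\sqcup B$ has exactly two path components. Proposition~\ref{pr:doub} then yields $X\iso\Doub(S,C)$ with $C$ the number of ovals, namely $C=1$, whence $\beta(X)=2\beta(S)+2(C-1)=2\beta(S)$; this is even and $\beta(S)=\frac{\beta(X)}{2}$.

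The step I expect to carry the real content is showing that $\sigma$ reverses the normal direction of $\cO$, equivalently that it swaps $A$ and $B$: this is exactly what excludes the a priori possibility that $\sigma$ preserves each side of $\cO$ and acquires extra fixed points inside $A$ or $B$, which would ruin the clean two-component picture that Proposition~\ref{pr:doub} needs. Everything after that is routine bookkeeping with connectedness plus the already-proven $\beta$-formula for doubles.
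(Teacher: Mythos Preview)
Your proof is correct. Both your argument and the paper's reduce to Proposition~\ref{pr:doub} after establishing that $X-X^{C_2}$ has exactly two components and $C=1$, but you reach that conclusion by a different route. The paper first observes that $X-X^{C_2}$ has two components (since $X-X^{C_2}\subseteq X-\cO$ and the other component of $X-\cO$ cannot lie entirely in the fixed set), and then invokes the second clause of Proposition~\ref{pr:conn}: any further oval would have to touch both components of $X-X^{C_2}$, but such an oval lies inside one component of $X-\cO$, a contradiction. You instead work locally at $\cO$, showing that $\sigma$ reverses the normal direction and hence interchanges $A$ and $B$; this immediately forces $X^{C_2}=\cO$, killing off both extra ovals and isolated fixed points in one stroke. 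Your approach is more self-contained (it does not need the coloring argument behind Proposition~\ref{pr:conn}), while the paper's is shorter because that coloring argument has already been banked.
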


\begin{proof}
If $\cO$ is a separating oval then of course it must be two-sided, and
$X-X^{C_2}$ has two components.  
If $X$ has another oval then by Proposition~\ref{pr:conn} it touches
both components, which contradicts the statement that $X-\cO$ is
disconnected.  So in fact $X$ has only the single oval $\cO$.  
By Proposition~\ref{pr:doub} we know $X\iso \Doub(S,1)$ for some
$2$-manifold $S$.  But $\beta(\Doub(S,1))=2\beta(S)$, and so
$\beta(X)$ was even.  
\end{proof}

\begin{remark}
\label{re:doub}
Note that $\Doub(X,1)\iso S^{2,1}\esum X$.  
\end{remark}

\begin{cor}
\label{co:isfp=>ns}
Let $X$ be a $2$-manifold with $C_2$-action.  If $X$ has isolated
fixed points, then $X$ is non-separating.  In particular, if $\OO$ is
the union of the ovals in $X$ then any two isolated fixed points are
connected by a path in $X-\OO$.  
\end{cor}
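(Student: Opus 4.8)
The plan is to get the first assertion essentially for free from Proposition~\ref{pr:doub} and then to construct the promised path by hand. First I would argue by contradiction. Suppose $X$ were separating, so that $X-X^{C_2}$ has two path components. Proposition~\ref{pr:doub} then supplies an equivariant isomorphism $X\iso \Doub(S,C)$ for some surface $S$, with $C$ the number of ovals. But in the doubling construction the fixed set is precisely the union of the $C$ boundary circles along which one doubles; hence $\Doub(S,C)^{C_2}$ is a disjoint union of copies of $S^1$ and has no isolated points. Since $X$ was assumed to have an isolated fixed point, this is a contradiction, so $X$ is non-separating, and therefore $X-X^{C_2}$ is path-connected (it has at most two components by Proposition~\ref{pr:conn}, and not two).

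For the ``in particular'' clause, let $a$ and $b$ be isolated fixed points and let $\OO\subseteq X^{C_2}$ denote the union of the ovals. Because $X^{C_2}$ is closed and $a$ is an isolated point of it, I can choose an open disk neighborhood $U$ of $a$ with $U\cap X^{C_2}=\{a\}$; since $a$ is isolated it lies on no oval, so in fact $U\cap\OO=\emptyset$. As a disk minus a point is path-connected, I may pick a point $a'\in U$ with $a'\ne a$ and a path from $a$ to $a'$ lying entirely in $U$, hence entirely in $X-\OO$; note that $a'\in X-X^{C_2}$. Performing the same construction near $b$ produces a point $b'\in X-X^{C_2}$ together with a path from $b'$ to $b$ lying in $X-\OO$.

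Finally, since $X-X^{C_2}$ is path-connected by the first part and contains both $a'$ and $b'$, there is a path $\gamma$ from $a'$ to $b'$ inside $X-X^{C_2}$, and since $\OO\subseteq X^{C_2}$ this $\gamma$ automatically avoids $\OO$. Concatenating the path from $a$ to $a'$, then $\gamma$, then the path from $b'$ to $b$, gives a path from $a$ to $b$ lying entirely in $X-\OO$, as required. The whole argument is essentially formal once Proposition~\ref{pr:doub} is in hand; the only point that needs a moment's care is the local picture around an isolated fixed point — namely that a small enough disk about $a$ meets $X^{C_2}$ only in $a$ — which is exactly what lets us nudge $a$ off the fixed set without bumping into an oval.
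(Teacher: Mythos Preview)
Your argument is correct and follows the paper's approach exactly: the first assertion is obtained by contraposition from Proposition~\ref{pr:doub}, since a doubled space has no isolated fixed points. The paper leaves the ``in particular'' clause implicit, but your explicit argument (nudging each isolated fixed point into the connected set $X-X^{C_2}$ via a small disk and concatenating) is precisely the obvious justification.
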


\begin{proof}
By Proposition~\ref{pr:doub}, if $X$ is separating then $X\iso
\Doub(S,C)$ for some surface $S$; but this doubled space clearly has
no isolated fixed points.
\end{proof}

\subsection{Cylinders and caps}
\label{se:cylcap}
There are three types of equivariant circles: $S^{1,0}$, $S^{1,1}$,
and $S^1_a$.  If one of these circles lies inside an equivariant
$2$-manifold, there are multiple possibilities for what its
equivariant tubular neighborhood can look like.  The normal bundle to
the circle can be twisted or untwisted, and when untwisted there are
two possibilities depending on the $C_2$-representation type of the
normal direction.  The following pictures show all of the possible
normal bundles with nontrivial action.  Our convention is that the
fixed set is always shown in blue, and that identical symbols
represent conjugate points (so for examples, two points labelled with
a square in the same picture are conjugate).  On the  oriented spaces
we indicate whether the action is orientation-preserving or
reversing.  

\begin{picture}(300,120)
\put(0,90){\fbox{$S^{1,0}$-normal bundles}}
\put(0,0){\includegraphics[scale=0.5]{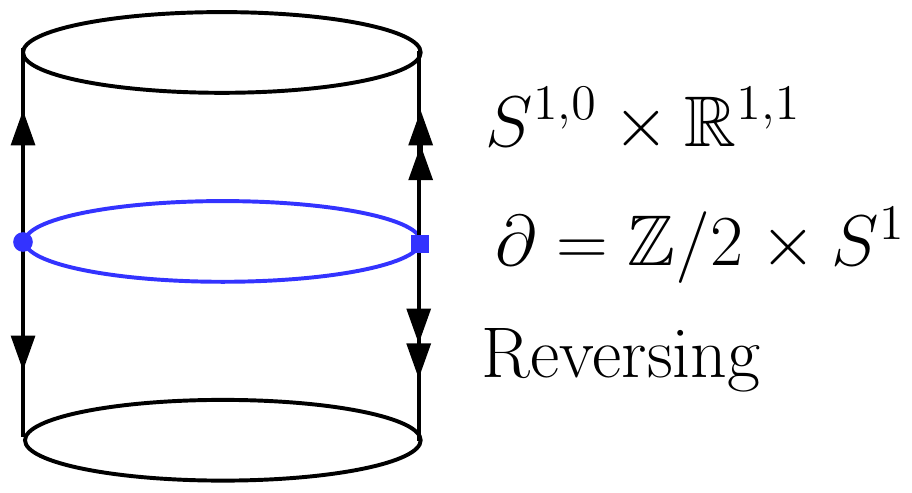}}
\put(150,0){\includegraphics[scale=0.5]{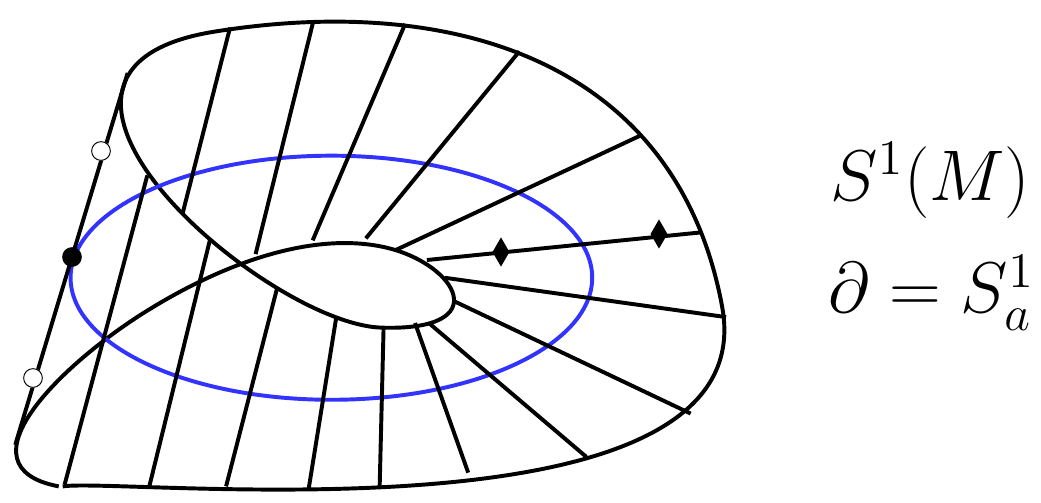}}
\end{picture}

\begin{picture}(300,120)
\put(0,90){\fbox{$S^{1}_a$-normal bundles}}
\put(0,0){\includegraphics[scale=0.5]{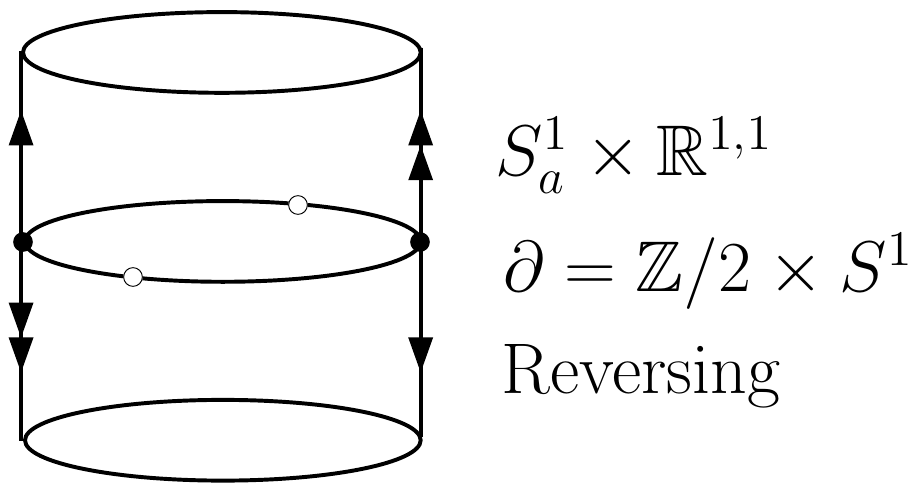}}
\put(180,0){\includegraphics[scale=0.5]{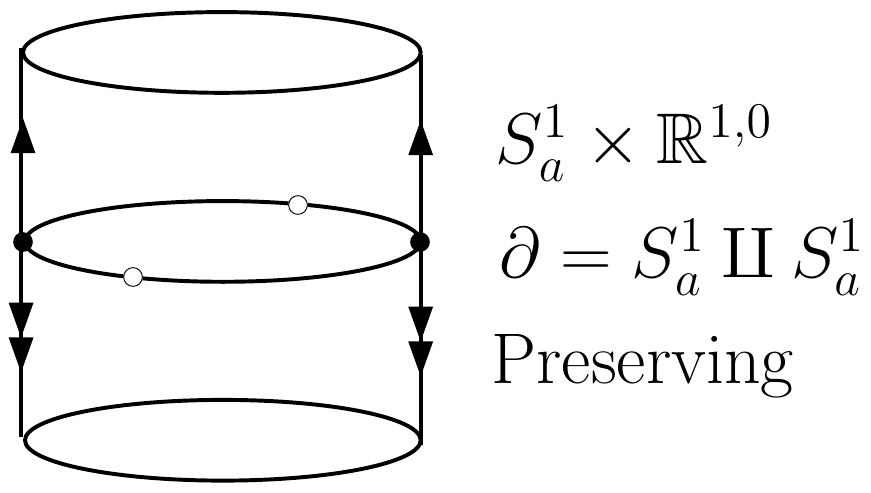}}
\end{picture}

\begin{picture}(300,170)(0,-10)
\put(0,130){\fbox{$S^{1,1}$-normal bundles}}
\put(0,0){\includegraphics[scale=0.5]{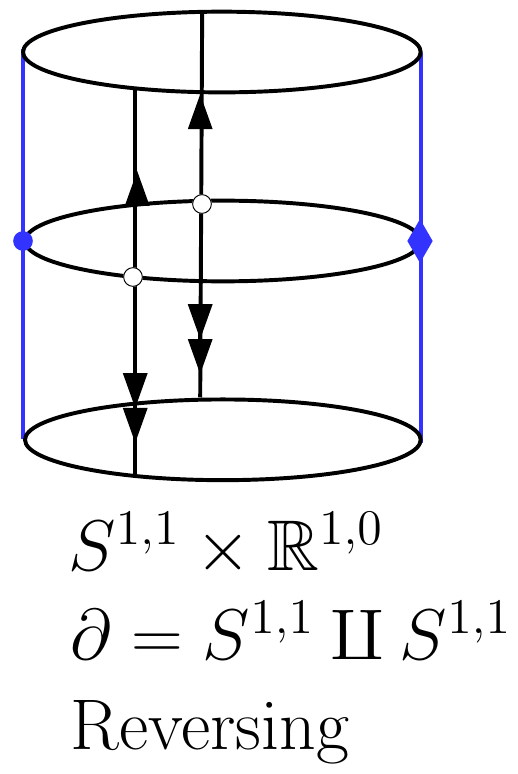}}
\put(110,0){\includegraphics[scale=0.5]{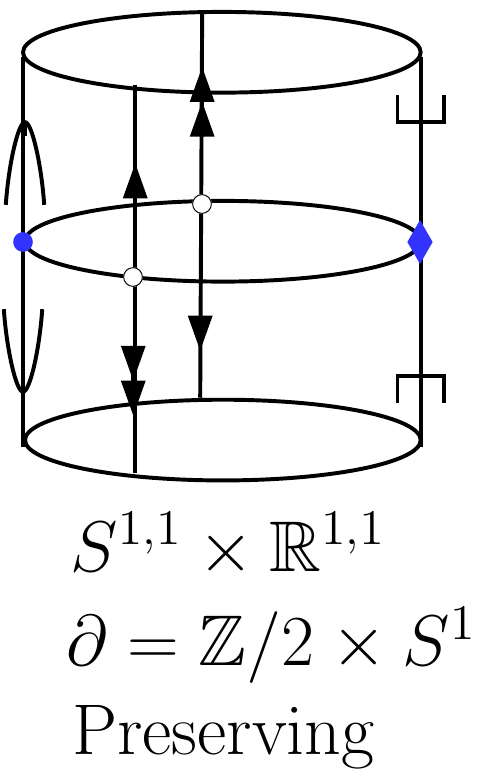}}
\put(220,0){\includegraphics[scale=0.5]{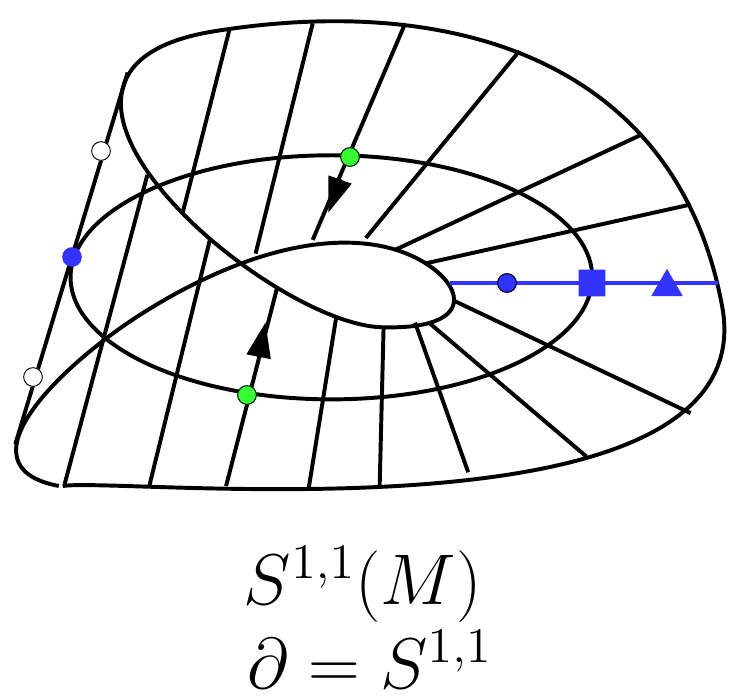}}
\end{picture}

For $S^{1,1}(M)$, note that there are two fixed points on the inner
circle (which is a copy of $S^{1,1}$); the fiber over one has the trivial
action, and the fiber over the other has nontrivial action.  
If $C$ is a circle type ($S^{1,1}$, $S^{1,0}$, or $S^1_a$) then we
will use the phrases ``$C$-tube'' and ``$C$-antitube'' to denote
copies of $C\times \R^{1,0}$ and $C\times \R^{1,1}$, respectively.

\begin{remark}
We did not include $S^{1,0}\times \R^{1,0}$ in our list because
it has trivial action, and so this space occurs inside a connected
$C_2$-space $X$  only if the action on $X$ is trivial.  But this
brings up an important point: there are three possible normal bundles
for $S^{1,0}$ and three for $S^{1,1}$, but in contrast for $S^1_a$ there are only
two.  There is no $C_2$-equivariant structure
on the  M\"obius bundle over $S^1_a$, by a routine argument.
\end{remark}

In addition to the ``cylinders'' (liberally interpreted)
that we just considered, it is also useful to think about equivariant
{\it caps\/}, i.e.  equivariant versions of $2$-disks.  There are
three of them where the action is nontrivial:

\begin{picture}(300,90)(0,-5)
\put(0,0){\includegraphics[scale=0.5]{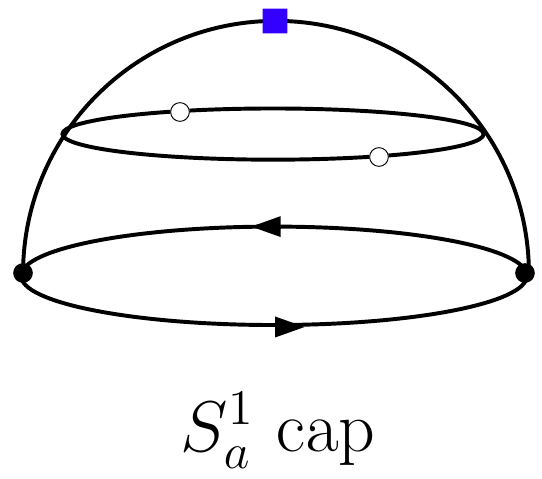}}
\put(120,0){\includegraphics[scale=0.5]{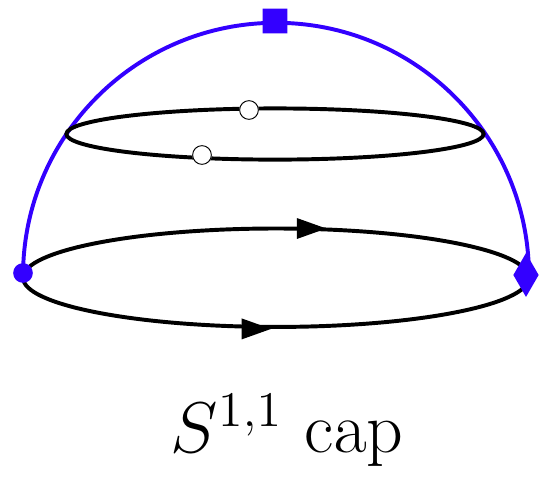}}
\put(240,0){\includegraphics[scale=0.5]{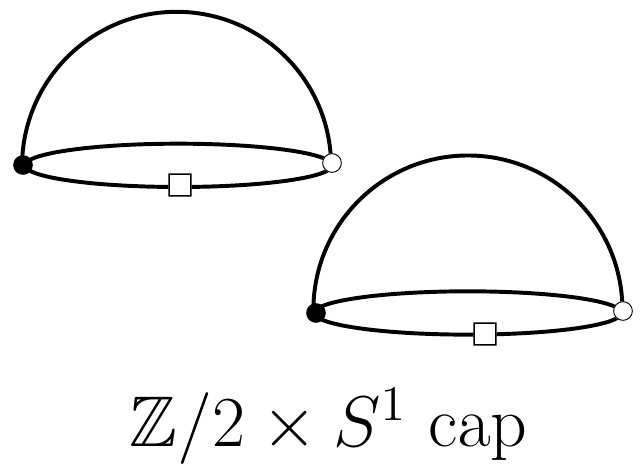}}
\end{picture}

\noindent
Note that from the point of view of surgery both $S^1(M)$ and
$S^{1,1}(M)$ also function like caps, in that they give us ways to ``cap
off'' an equivariant boundary circle.

\subsection{The generalized doubling construction }

Again start with a surface $S$ and remove $d$ disjoint open disks to
produce a space $X$.  Let $Y=(X\times \{0\})\amalg (X\times \{1\})$.
For each boundary component $\cO$ of $X$, attach a cylinder connecting
$\cO\times \{0\}$ to $\cO\times \{1\}$.  If we use $S^{1,0}$-antitubes
for all these cylinders, we get the doubling construction $\Doub(S,C)$
as previously discussed.  But we can also use $S^{1,1}$-antitubes and
$S^{1}_a$-antitubes, or any combination of these three types.  Let
$\Doub(S,a\colon S^{1,0}, b\colon S^{1,1}, c\colon S^{1}_a)$ denote such a construction,
where $a$, $b$, and $c$ count the number of each type of tube used
(so that $a+b+c=d$).  Note that $\beta(\Doub(S,a\colon S^{1,0},
b\colon S^{1,1},
c\colon S^{1}_a))=2\beta(S)+2(a+b+c-1)$.  

When $a=c=0$ and $b=1$ we will write $\Doub(X,S^{1,1})$, and similarly
for the other evident cases.  Generalizing Remark~\ref{re:doub}, note that
\[\Doub(X,S^{1,1})\iso S^{2,2}\esum X \quad\text{and}\quad 
\Doub(X,S^{1}_a)\iso S^2_a\esum X.
\]

\subsection{Surgeries}

Let $X$ be a $2$-manifold with $C_2$-action.  Given an equivariant
disk $D$ that is disjoint from its conjugate $\sigma D$, we can remove
the interiors of $D$ and $\sigma D$ from $X$ and then glue in an
equivariant cylinder whose boundary is $\Z/2\times S^1$: there are three
such cylinders, namely the $S^{1,0}$-, $S^1_a$-, and
$S^{1,1}$-antitubes.  We will write $X+(S^{?}_?-\text{antitube})$ to
denote the result of this process, with appropriate adornments on the
$S$. 

Conversely, if we find a copy of one of these antitubes inside $X$
then we can cut out the middle portion of the tube
and sew in two
$\Z/2\times S^1$-caps to replace it.  We will refer to any of these
procedures---whether sewing in an antitube or removing 
one---as $S^{1,0}$-, $S^1_a$-, or $S^{1,1}$-{\it surgery\/}, as
appropriate.

There is another type of surgery that will also be very useful to us.
If $x$ is an isolated fixed point in $X$, then locally around $x$ the
space looks like an $S^1_a$-cap.  We can remove the interior of this
cap from $X$ and then sew in a copy of $S^1(M)$: this will be
called \mdfn{$FM$-surgery} (since it replaces a Fixed point with a
M\"obius band).  The opposite process of removing a copy of $S^1(M)$
and replacing it with an $S^1_a$-cap will sometimes be called
\mdfn{$MF$-surgery}, though we will be a bit lax about $FM$ versus $MF$.

There are other kinds of surgeries one can perform, involving the
other cylinders and caps, but we will not make  use of these.  

\begin{remark}[$S^{1,1}$-surgery around fixed points]
\label{re:S11-fp}
Consider a $C_2$-action on a surface $X$, and let $a$ and $b$ be distinct
isolated fixed points.  By Corollary~\ref{co:nicepath}, there 
is a simple path
$\alpha$ from $a$ to $b$ in $X$ 
having the property that
$\alpha$ and $\sigma\alpha$ do not have any points in common except
the endpoints.  
In particular, $\alpha$  does not meet
$X^{C_2}$ except at the endpoints.  

Taken together, the pair $(\alpha,\sigma\alpha)$ gives an equivariant embedding
$S^{1,1}\ra X$.   Taking a tubular  neighborhood of the image gives an
$S^{1,1}$-antitube inside of $X$ having $a$ and $b$ as its fixed
points.  

If we remove $\alpha$ and $\sigma\alpha$ from $X$ there are two possibilities:
either this disconnects $X$ or it doesn't.  In the former case, $X$ is
isomorphic to the generalized doubling construction
$\Doub(S,1:S^{1,1})$ 
for an appropriate surface $S$.  
In the latter case, we can
put a $\Z/2\times S^1$-cap on the two open ends of the cut cylinder to
produce an equivariant space of smaller genus, where the number of
fixed points was reduced by two.

Note that different choices for the path $\alpha$ can lead to
different surgery scenarios; see Remark~\ref{re:strange} for an example.
\end{remark}

\subsection{Surgeries and isomorphisms}

In general, one has to be careful about ``cancelling'' surgeries
that appear 
inside isomorphism statements.  But here is one case where it works:

\begin{prop}
\label{pr:1,0-cancel}
Let $X$ and $Y$ be two $C_2$-spaces with only isolated fixed points.
Let $C\geq 0$ and assume $X+C[S^{1,0}-\text{antitube}]\iso
Y+C[S^{1,0}-\text{antitube}]$.  Then $X\iso Y$ (as $C_2$-spaces).  
\end{prop}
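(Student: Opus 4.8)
The plan is to recover $X$ and $Y$ by cutting $Z_X:=X+C[S^{1,0}-\antitube]$ and $Z_Y:=Y+C[S^{1,0}-\antitube]$ open along their ovals, and then to check that the given isomorphism is compatible with these cuts. Since $X$ and $Y$ have only isolated fixed points they have no ovals, and the $2C$ disks removed when performing the surgeries avoid the fixed sets. Inspecting fixed sets then shows that $Z_X^{C_2}$ consists of the isolated fixed points of $X$ together with exactly $C$ two-sided ovals, each being the core of one of the glued-in $S^{1,0}$-antitubes; likewise for $Z_Y$. In particular any equivariant isomorphism $\phi\colon Z_X\to Z_Y$ restricts to a bijection between the $C$ ovals of $Z_X$ and the $C$ ovals of $Z_Y$.

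First I would normalize $\phi$ near the ovals. For each oval $\cO$ of $Z_X$ write $\cO'=\phi(\cO)$, and choose a small equivariant tubular neighborhood $N(\cO')$ of $\cO'$ sitting inside the glued-in antitube that contains $\cO'$, with the various $N(\cO')$ pairwise disjoint; also choose the ``standard'' small tubular neighborhood $N(\cO)$ of $\cO$ inside the antitube containing $\cO$. Then $\phi^{-1}(N(\cO'))$ and $N(\cO)$ are both equivariant tubular neighborhoods of the oval $\cO$, so by uniqueness of equivariant tubular neighborhoods (up to equivariant ambient isotopy) there is an equivariant self-isomorphism $G$ of $Z_X$, isotopic to the identity, with $G(N(\cO))=\phi^{-1}(N(\cO'))$ for all ovals $\cO$ at once. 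Replacing $\phi$ by $\phi\circ G$, we may assume $\phi(N(\cO))=N(\phi(\cO))$ for every oval $\cO$. Restricting, $\phi$ now gives an equivariant isomorphism from $Z_X$ with the interiors of all the $N(\cO)$ deleted to $Z_Y$ with the interiors of all the $N(\cO')$ deleted.

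Next I would identify these cut-open spaces. An $S^{1,0}$-antitube with the interior of a tubular neighborhood of its core deleted is a pair of cylinders interchanged by the involution, one boundary circle of each being glued into the ambient surface; capping the other (free) boundary circle of each with a $\Z/2\times S^1$-cap replaces the antitube-minus-core-neighborhood by the pair of conjugate disks that was removed when the surgery was performed. Doing this simultaneously for all $C$ antitubes of $Z_X$ thus reverses the $C$ surgeries and yields a space isomorphic to $X$, and likewise for $Z_Y$ and $Y$. (Independence of the result from the choices of tubular neighborhoods and of the cappings is the usual surgery-invariance argument; cf. Corollary~\ref{co:surg-inv}.) Finally, $\phi$ carries the free boundary circles of the $N(\cO)$ onto those of the $N(\cO')$, and any equivariant homeomorphism between such disjoint unions of copies of $\Z/2\times S^1$ extends equivariantly over the $\Z/2\times S^1$-caps (cone off each circle, using the involution to relate the two copies in each conjugate pair). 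Extending $\phi$ over the caps therefore produces an equivariant isomorphism $X\iso Y$.

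The step I expect to be the crux is the normalization of $\phi$: arranging that the given isomorphism carries a tubular neighborhood of each oval onto a tubular neighborhood of its image. This is precisely where the hypothesis that $X$ and $Y$ have \emph{only} isolated fixed points enters, since it guarantees that every oval of $Z_X$ and of $Z_Y$ is one of the newly created antitube cores; hence cutting along all the ovals on one side corresponds under $\phi$ to cutting along all the ovals on the other side, and both cuts undo the same $C$ surgeries. Everything else is the standard equivariant tubular-neighborhood and isotopy machinery already used elsewhere in the paper.
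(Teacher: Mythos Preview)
Your proof is correct and follows essentially the same approach as the paper: use the hypothesis to identify the ovals of $Z_X$ and $Z_Y$ with the antitube cores, observe that the isomorphism carries ovals to ovals, arrange that it also carries tubular (collared) neighborhoods to tubular neighborhoods, then cut and cap to recover $X$ and $Y$. The only cosmetic difference is that the paper simply asserts one can \emph{choose} a collared neighborhood of each oval in $\hat{X}$ whose image under the given isomorphism is a collared neighborhood in $\hat{Y}$, whereas you first fix neighborhoods on both sides and then isotope $\phi$ to match them; these are equivalent maneuvers.
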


\begin{proof}
Let $\hat{X}=X+C[S^{1,0}-\text{antitube}]$ and 
$\hat{Y}=Y+C[S^{1,0}-\text{antitube}]$.  Choose an equivariant
isomorphism $\hat{X}\ra \hat{Y}$.  Because the homeomorphism preserves
the fixed sets, it must send the ovals in the antitubes of $\hat{X}$ 
to the ovals in the antitubes for $\hat{Y}$.  We can then choose a
collared neighborhood for each oval in $\hat{X}$ that maps to a
collared neighborhood of the image oval in $\hat{Y}$.  Let $\tilde{X}$
be obtained from $X$ by removing these neighborhoods and adding
$\Z/2\times S^1$-caps, and similarly for $\tilde{Y}$.  Then our
equivariant isomorphism $\hat{X}\ra \hat{Y}$ induces an equivariant
isomorphism $\tilde{X}\ra \tilde{Y}$.  But clearly $X\iso \tilde{X}$
and $Y\iso \tilde{Y}$.  
\end{proof}

Let $\cA$ denote a type of antitube ($S^{1,0}$, $S^{1,1}$, or
$S^1_a$).  If $X$ is an equivariant $2$-manifold containing an
$\cA$-antitube, write $X-[\cA-\antitube]$ for the space obtained by
removing the antitube and sewing in a $\Z/2\times S^1$-cap.  The
following is easy, but will be often used:

\begin{prop}
\label{pr:surg-subtract}
Let $X$ and $Y$ be equivariant $2$-manifolds, both containing an
$\cA$-antitube.  If
$X-[\cA-\antitube]\iso Y-[\cA-\antitube]$ then $X\iso Y$.  
\end{prop}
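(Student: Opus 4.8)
The statement asserts that if two equivariant surfaces $X$ and $Y$ each contain an $\cA$-antitube, and removing that antitube from each yields isomorphic spaces, then $X$ and $Y$ were already isomorphic. The plan is to run the same argument as in Proposition~\ref{pr:1,0-cancel}, but now reading it in the reverse direction: rather than cancelling antitubes from an isomorphism, we will reconstruct an isomorphism from the one between the cut-down spaces by re-attaching the antitubes. The key observation is that the $\cA$-antitube is, by definition, a copy of $C\times \R^{1,1}$ for $C$ one of the three circle types, and that this building block is glued onto $X-[\cA-\antitube]$ along a pair of conjugate $\Z/2\times S^1$-boundary circles, which in the cut-down space $X-[\cA-\antitube]$ have been filled in by a standard pair of conjugate $\Z/2\times S^1$-caps.

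First I would fix notation: write $\hat X = X-[\cA-\antitube]$ and $\hat Y = Y-[\cA-\antitube]$, and fix an equivariant isomorphism $\varphi\colon \hat X \ra \hat Y$. Inside $\hat X$ there is a distinguished pair of conjugate $\Z/2\times S^1$-caps—the ones sewn in when the antitube was removed—and similarly inside $\hat Y$. The content is that $\varphi$ can be chosen (or modified) to carry the caps of $\hat X$ to the caps of $\hat Y$. This is where one invokes the standard uniqueness statements for equivariant embedded disks and collars: an equivariant $\Z/2\times S^1$-cap in a surface is, up to equivariant ambient isotopy, determined by its (equivariant) boundary circle, and a disjoint-from-conjugate embedded disk is equivariantly unique. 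These are precisely the ``usual kinds of arguments'' already cited for Corollary~\ref{co:surg-inv} and used implicitly in Proposition~\ref{pr:1,0-cancel}; I would cite Corollary~\ref{co:surg-inv} (or the isotopy results in the appendix) rather than reprove them.

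Granting that, the proof finishes quickly. Modify $\varphi$ by an equivariant isotopy of $\hat Y$ so that it sends the distinguished caps of $\hat X$ homeomorphically onto the distinguished caps of $\hat Y$, respecting the $\Z/2$-labelling of the two caps in each pair and matching up boundary parametrizations. Now excise the open caps from both $\hat X$ and $\hat Y$: the restriction of the modified $\varphi$ gives an equivariant isomorphism between $\hat X$-minus-caps and $\hat Y$-minus-caps which is the identity-type matching on the boundary circles $\Z/2\times S^1$. Re-gluing the $\cA$-antitube $C\times \R^{1,1}$ onto each along these boundary circles—via the identity on the antitube—then extends the map to an equivariant isomorphism $X \ra Y$, since both $X$ and $Y$ are obtained from their cut-down versions by attaching the same antitube along matched boundary data. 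Hence $X\iso Y$.

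The main obstacle is the boundary-matching step: one must know not only that $\varphi$ can be arranged to send caps to caps, but that it can be arranged to agree with the standard gluing parametrization on the boundary circles, so that the two antitube attachments are literally identified. This is a collar-uniqueness argument (equivariant collars of $\Z/2\times S^1$ are unique up to isotopy), and again I would defer to the foundational surgery results of Section~\ref{se:background} and Appendix~A rather than carry out the isotopies explicitly; everything else is bookkeeping.
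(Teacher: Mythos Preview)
Your argument is correct and uses the same ingredient as the paper (Corollary~\ref{co:surg-inv}), but you take a slightly more roundabout route. You propose to isotope $\varphi$ so that the distinguished caps of $\hat X$ land on the distinguished caps of $\hat Y$, and then you worry about matching boundary parametrizations and collars. The paper avoids all of this: it simply observes that $\varphi$ carries the caps $D,\sigma D$ of $\hat X$ to \emph{some} pair of conjugate disks $f(D),f(\sigma D)$ in $\hat Y$, not necessarily the distinguished ones, and that sewing an $\cA$-antitube onto $\hat Y$ along $f(D),f(\sigma D)$ gives a space isomorphic to $X$. Then Corollary~\ref{co:surg-inv} is invoked once, at the level of the end result, to say that this surgery on $\hat Y$ is isomorphic to the surgery on the distinguished caps, which is $Y$. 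So the paper applies surgery invariance to the \emph{output} rather than to the \emph{isomorphism}, which sidesteps the collar and boundary-matching issues entirely.
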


\begin{proof}
Let $\hat{X}$ denote $X-[\cA-\antitube]$, and similarly for
$\hat{Y}$.  Let $f\colon \hat{X}\ra \hat{Y}$ be an isomorphism.  The
space $\hat{X}$ has a pair of conjugate disks $D$ and $\sigma D$
corresponding to the cap that was sewn in, and these map to a pair of
conjugate disks $f(D)$ and $f(\sigma D)$ in $Y$.  The space $X$ is
obtained from $\hat{X}$ by doing surgery on these disks and sewing in
an $\cA$-antitube, so $f$ yields an isomorphism from $X$ to the space
obtained from $Y$ by doing the same surgery to $f(D)$ and $f(\sigma
D)$.  These latter disks are not necessarily the same as the caps we sewed in
when we made $\hat{Y}$, but by Corollary~\ref{co:surg-inv}
 it doesn't matter: any two
$\cA$-surgeries on $\hat{Y}$ yield isomorphic spaces, so we conclude
$X\iso Y$.  
\end{proof}


\section{Classifying Free actions}
\label{se:free}

In this section we classify all the free actions on $T_g$ and $N_r$.  
The techniques are classical and have been used by several authors;
see \cite{S} and \cite{A}, for example.

\medskip
Here is the  main classification result for free $C_2$-actions:

\begin{thm}[Classification of free actions]
\label{th:free-classify}
\mbox{}\par
\begin{enumerate}[(a)]
\item When $g$ is even 
there is a unique free $C_2$-structure on $T_g$; it is
represented by the antipodal action.
\item When $g$ is odd, there are two free $C_2$-structures on $T_g$: one
that is orientation-preserving, and one that is orientation-reversing.
The first is represented by a $180$-degree rotation about the central
hole, whereas the latter is represented by the antipodal action.
\item There are no free $C_2$-structures on $N_r$ when $r$ is odd.
\item There is exactly one free $C_2$-structure on $N_2$, represented
by $S^2_a+[DCC]$.  
\item For $s\geq 2$ there are exactly two free $C_2$ structures on
$N_{2s}$, represented by the two $C_2$-spaces
$S^2_a+s[DCC]$ and 
and $T_1^{\anti}+(2s-2)[DCC]$.  
\end{enumerate}
\end{thm}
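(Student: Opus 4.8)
The plan is to analyze free $C_2$-actions by passing to the quotient and using covering space theory. If $X$ is a closed surface with a free $C_2$-action, then $X/C_2$ is a closed surface and $X \to X/C_2$ is a double cover, classified by a nonzero element of $H^1(X/C_2;\Z/2) = \Hom(\pi_1(X/C_2),\Z/2)$; conversely every surface $B$ together with a surjection $\pi_1(B)\to\Z/2$ determines a free double cover. So classifying free $C_2$-structures on a fixed total space amounts to: enumerating the pairs $(B,\phi)$ with $\phi\colon\pi_1(B)\twoheadrightarrow\Z/2$ whose associated cover has the right homeomorphism type, then deciding when two such pairs give isomorphic $C_2$-spaces. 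The Euler characteristic constraint $\chi(X)=2\chi(B)$ immediately restricts the possible $B$: for $X=T_g$ we need $\chi(B)=1-g$, so $B\iso T_{(g+1)/2}$ (only if $g$ odd) or $B\iso N_{g+1}$; for $X=N_r$ we need $\chi(B)=1-\frac r2$, which forces $r$ even and then $B\iso N_{(r+2)/2}$ or, if $\frac r2$ is even, $B\iso T_{(r+2)/4}$. This already proves part (c).

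Next I would handle the total-space constraint: given $B$ and $\phi$, is the double cover orientable or not, and of which genus? Write $B = N_{r+1}$ (resp.\ $T_{(g+1)/2}$, etc.). The cover $X_\phi$ is orientable iff $\phi$ equals the first Stiefel--Whitney class $w_1(B)$ (the orientation character); this is the only orientation character, so among all $\phi$ there is at most one orientable cover. Combined with the Euler characteristic count, this pins down the homeomorphism type of $X_\phi$ in each case, and shows for instance that over $B=N_{g+1}$ the non-orientable covers are all $\iso T_g$ only when... here one must be a little careful, and I expect this bookkeeping — matching $(B,\phi)$ to the homeomorphism type of $X_\phi$ via $w_1$ and $\chi$ — to be the main obstacle, since the genus of a non-orientable cover of a non-orientable surface is not determined by $\chi$ alone without knowing orientability. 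The cleanest route is probably to instead use the explicit models already built in Section~\ref{se:background}: $S^2_a+r[DCC]$, $T_1^{\anti}+(r-2)[DCC]$, $T_1^{\rot}+(r-2)[DCC]$, $T_g^{\anti}$, $T_g^{\rot}$, verify directly (via Propositions~\ref{pr:Tg-free}, \ref{pr:Tga-char} and the quotient computations $T_g^{\rot}/C_2\iso T_{(1+g)/2}$, $T_g^{\anti}/C_2\iso N_{g+1}$, together with Remark~\ref{re:addcross}) that each has the claimed underlying space, and then argue these exhaust all free structures.

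For the counting/exhaustion, I would proceed as follows. The orbit space data $(B,\phi)$ up to the obvious notion of equivalence (homeomorphism of $B$ carrying $\phi$ to $\phi'$, i.e.\ a homeomorphism commuting with the deck transformation upstairs) is in bijection with free $C_2$-structures on the given $X$, since any equivariant homeomorphism descends to the quotient and any homeomorphism of quotients respecting the classifying maps lifts. So it remains to count orbits of surjections $\pi_1(B)\to\Z/2$ under the mapping class group of $B$. For $B$ a torus $T_h$, $H^1(T_h;\Z/2)$ has $2^{2h}-1$ nonzero classes but the symplectic mapping class group acts transitively on nonzero vectors, giving exactly one orbit — this yields the single action $T_g^{\rot}$ (with $g$ odd, $h=(g+1)/2$) and, after adding crosscaps, one of the two structures on $N_{2s}$. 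For $B = N_{h}$ a non-orientable surface, $H^1(N_h;\Z/2)\iso(\Z/2)^h$ carries a (possibly degenerate) quadratic form from $w_1$, and the mapping class group of $N_h$ acts on nonzero covectors with orbits distinguished by (i) whether $\phi = w_1$ (orientable cover) and (ii) the value of the $\Z/2$-valued "type" invariant of the corresponding cover — concretely, orbits of nonzero linear functionals under $O(q)$. This produces exactly: one orbit giving an orientable cover (when $h$ is such that $w_1\neq 0$, i.e.\ always, giving $T_{h-1}^{\anti}$-type spaces), and one or more orbits giving non-orientable covers. Carrying out this orbit count for $N_h$ — that there is exactly one "orientable-cover" orbit and the non-orientable-cover orbits collapse to give precisely the enumeration in (a), (d), (e) — is the substantive case analysis; the low-genus base cases ($B=N_2$, explaining why $N_2$ has a unique structure while $N_{2s}$ for $s\ge2$ has two) must be checked by hand. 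Finally I would invoke Proposition~\ref{pr:surg-subtract} / Corollary~\ref{co:surg-inv} to confirm that distinct orbits really do give non-isomorphic $C_2$-spaces (they have different $Q$-signs: $N_{2s}$'s two structures have quotients $N_{s+1}$ versus — wait, both quotients are non-orientable here, so instead one distinguishes them by, e.g., whether the cover is $S^2_a\esum T_\bullet$ or $T_1^{\anti}\esum T_\bullet$, an invariant of $\pi_1$), completing the proof.
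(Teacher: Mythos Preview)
Your overall strategy is exactly the paper's: reduce free $C_2$-structures on $X$ to pairs $(B,\phi)$ with $B=X/C_2$ and $\phi$ a nonzero class in $H^1(B;\Z/2)$, then count orbits of $\phi$ under $\Aut(B)$ and match the explicit models to the orbits. So the architecture is right. But two substantive steps are missing or muddled, and without them the argument does not close.

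First, you never establish that $\Aut(B)\to\Aut(H^1(B;\Z/2))$ has image equal to the full isometry group of the cup-product form. You implicitly use this when you say ``the symplectic mapping class group acts transitively on nonzero vectors'' and when you reduce the $N_h$ case to ``orbits under $O(q)$''. For $T_h$ the surjectivity onto $\Sp(2h,\Z/2)$ is classical; for $N_h$ the surjectivity onto $O(h,\Z/2)$ is less standard and needs an argument (the paper proves it via explicit Dehn twists, following McCarthy--Pinkall). Without this, you cannot pass from $\Aut(B)$-orbits to linear-algebraic orbits.

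Second, your orbit analysis over $B=N_h$ is not carried out. The relevant structure on $H^1(N_h;\Z/2)$ is the cup-product symmetric bilinear form (not a ``quadratic form from $w_1$''); with respect to an orthonormal basis the isometry group is $O(h,\Z/2)$. One then shows algebraically that for $h\ge 3$ there are exactly four orbits on $\F_2^{\,h}$: namely $0$, the all-ones vector $\Omega$, and two orbits distinguished by the parity $c(v)=v\cdot v$. Your observation that $\phi=w_1$ gives the orientable cover is correct (indeed $w_1=\Omega$ in this basis), but the claim that the remaining nonzero orbits are separated by a ``$\Z/2$-valued type invariant'' needs to be made precise and proved---that invariant is $c(v)$. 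Having done this, one still has to match the two non-orientable covers to the two models $S^2_a+s[DCC]$ and $T_1^{\anti}+(s-1)[DCC]$; your final paragraph correctly notices that the $Q$-sign will not do this (both quotients are $N_{s+1}$), and then trails off. The paper resolves this by explicitly computing the characteristic class of each model in an orthonormal basis and seeing that they land in different $O$-orbits (one has $c=1$, the other $c=0$). That computation, or some equivalent invariant, is essential and is the piece you are missing.
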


\begin{remark}
Note that  
$T_1^{\rot}+s[DCC]$  is a free $C_2$-structure on $N_{2+2s}$, and so
the above theorem implies it is equivariantly isomorphic to one of 
$S^2_a+(1+s)[DCC]$ or
$T_1^{\anti}+s[DCC]$; but the theorem does not specify 
which one.  We will
see the answer in Proposition~\ref{pr:rot=anti} below.    
\end{remark}

It will take a while for us to prove Theorem~\ref{th:free-classify}.  
We will start with a very
general but coarse result in the next section, and then apply it to the case of
surfaces.  

\subsection{General results on classifying free actions}
Let us first recall the natural bijections
\[ (\text{principal $\Z/2$-bundles over $Y$}) \longleftrightarrow
[Y,B\Z/2] \longleftrightarrow H^1(Y;\Z/2).
\]
Given a principal $\Z/2$-bundle $P\ra Y$, the corresponding element of
$H^1(Y;\Z/2)$ is called its characteristic class and will be denoted
$\Lambda_P$.  
To describe it,
assume that $Y$ is path-connected and choose a basepoint $b$ in $Y$.
Define a map $\lambda_P\colon \pi_1(Y,b)\ra \Z/2$ by letting
$\lambda_P(\sigma)=0$ if the loop $\sigma$ lifts to a loop in $P$, and
$\lambda_P(\sigma)=1$ otherwise.  One can readily check that this is a
group map, and we therefore get the factorization
\[ \xymatrix{
\pi_1(Y,b) \ar[d]_{\lambda_P} \ar[r] & H_1(Y)\ar@{.>}[dl]^{\Lambda_P} \\
\Z/2
}
\]
since $H_1(Y)$ is the abelianization of $\pi_1(Y,b)$.  
So we have produced an element of $\Hom(H_1(Y),\Z/2)$, which is 
naturally isomorphic to $H^1(Y;\Z/2)$.  A little thought shows that
the construction of $\Lambda_P$ is independent of the choice of
basepoint $b$.  

Let $Y$ be a fixed path-connected space.  Let $\cS(Y)$ be the set of isomorphism
classes of free $C_2$-spaces $X$ that are path-connected and have
 the property that $X/C_2\iso Y$
(note that a choice of this isomorphism is not part of our data).  

\begin{prop}
\label{pr:cover-classify}
There is a bijection between $\cS(Y)$ and the set of nonzero orbits in
$H^1(Y;\Z/2)/\Aut(Y)$.  
\end{prop}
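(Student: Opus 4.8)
The plan is to reinterpret a free $C_2$-space $X$ with $X/C_2\iso Y$ as a principal $\Z/2$-bundle over $Y$, and then to track carefully the effect of \emph{not} fixing the isomorphism $X/C_2\iso Y$.

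First I would note that for a space with a free $C_2$-action (in our setting everything is a manifold, so the action is properly discontinuous) the quotient map $q_X\colon X\ra X/C_2$ is a covering space with fibre $\Z/2$ whose nontrivial deck transformation is $\sigma$; hence $q_X$ is a principal $\Z/2$-bundle. Conversely, a principal $\Z/2$-bundle $P\ra Y$ is precisely a free $C_2$-space $P$ together with a chosen identification $P/C_2\iso Y$. Thus, given $[X]\in\cS(Y)$, a choice of isomorphism $\phi\colon X/C_2\ra Y$ turns $X$ into a principal $\Z/2$-bundle over $Y$, and the bijections recalled before the statement assign to it a characteristic class $\Lambda\in H^1(Y;\Z/2)$. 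Since $X$ is path-connected, the double cover $q_X$ is connected; a connected double cover has nonzero characteristic class, while the zero class corresponds to the split cover $Y\amalg Y$ with the swap action, which is disconnected and hence not in $\cS(Y)$. So exactly the nonzero classes arise.

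Next I would define $\Phi\colon\cS(Y)\ra H^1(Y;\Z/2)/\Aut(Y)$ by sending $[X]$ to the $\Aut(Y)$-orbit of the class $\Lambda$ just produced, and check that this is well defined. If $\phi'$ is a second choice of isomorphism $X/C_2\ra Y$, then $\psi=\phi'\circ\phi^{-1}\in\Aut(Y)$, and naturality of the characteristic class shows the two classes differ by $\psi^*$, hence lie in one orbit; if $f\colon X\ra X'$ is an equivariant homeomorphism, then it descends to $\bar f\colon X/C_2\ra X'/C_2$ and $\phi'\circ\bar f\circ\phi^{-1}\in\Aut(Y)$ again identifies the two classes up to an automorphism. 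So $\Phi$ is independent of all choices and lands in the set of nonzero orbits.

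Finally I would verify $\Phi$ is a bijection. For surjectivity, a nonzero $u\in H^1(Y;\Z/2)$ is the characteristic class of a principal $\Z/2$-bundle whose total space $P_u$ is connected, and $P_u$, viewed as a free $C_2$-space with $P_u/C_2\iso Y$, represents a class of $\cS(Y)$ mapping to the orbit of $u$. For injectivity, if $[X]$ and $[X']$ map to the same orbit, then after modifying the chosen isomorphism $X'/C_2\ra Y$ by a suitable element of $\Aut(Y)$ the bundles over $Y$ coming from $X$ and $X'$ have equal characteristic class, hence are isomorphic as principal $\Z/2$-bundles; such an isomorphism is in particular an equivariant homeomorphism $X\ra X'$, so $[X]=[X']$. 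The one step that requires genuine (but standard) care is the very first one—that a free $C_2$-action on a manifold really does produce a covering, hence a bona fide principal $\Z/2$-bundle—after which everything is formal manipulation of the classifying-space/characteristic-class bijection and its naturality.
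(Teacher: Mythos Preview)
Your proof is correct and follows essentially the same approach as the paper: assign to $[X]\in\cS(Y)$ the characteristic class of the principal $\Z/2$-bundle obtained from a choice of isomorphism $X/C_2\iso Y$, observe that different choices differ by $\Aut(Y)$, and construct the inverse by building the bundle from a given class. You are a bit more explicit than the paper in checking well-definedness under equivariant homeomorphism and in verifying injectivity, but the argument is the same.
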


\begin{proof}
Let $X$ be a $C_2$-space representing an isomorphism class in
$\cS(Y)$.  Choose an isomorphism $Y\ra X/C_2$.  Pulling $X\ra X/C_2$ back along
this isomorphism gives a principal $\Z/2$-bundle over $Y$, which has a
characteristic class  $\Lambda_X\in H^1(Y;\Z/2)$.  Since $X$ is path-connected
the bundle is not trivial, and so $\Lambda_X$ is not
zero.   The class $\Lambda_X$ depends on the choice of isomorphism $Y\ra
X/C_2$, but another choice differs from this one by an element of
$\Aut(Y)$.  So we get a well-defined function $\cS(Y)\ra
H^1(Y;\Z/2)/\Aut(Y)$.

In the other direction, any element $u$ of $H^1(Y;\Z/2)$ is the
characteristic class of a principal $\Z/2$-bundle $E\ra Y$.  The space
$E$ with its inherent $\Z/2$-action then gives us an element of
$\cS(Y)$.  One readily checks that we have a bijection.
\end{proof}

\begin{remark}
Galois theory tells us that $2$-fold covers of $Y$ are classified by
index $2$ subgroups of $\pi_1(Y,*)$, and so one could conceivably
approach the above classification problem 
by using $\pi_1$ instead of $H^1$.  However,
there is a technical problem here because $\Aut(Y)$ does not act in a
natural way on $\pi_1(Y,*)$, due to the fact that automorphisms are
not required to fix the basepoint.  This problem is surmountable, but
it is easier to just use $H^1$ as we did above.  Note that giving a
nonzero element of $H^1$ is the same as giving a surjective map
$H_1(Y)\ra \Z/2$, which is the same as giving an index two subgroup of
$H_1(Y)$.  By the Hurewicz Theorem, the latter is equivalent to giving
an index two subgroup of $\pi_1(Y,*)$ (where $*$ is any chosen basepoint).  
\end{remark}

\begin{example}
\label{ex:T_2-antipodal}
Consider the genus two torus $T_2$ with its antipodal action.  The
quotient space is a torus with a crosscap, as demonstrated in the
following picture:

\begin{picture}(300,110)(0,-20)
\put(-30,0){\includegraphics[scale=0.5]{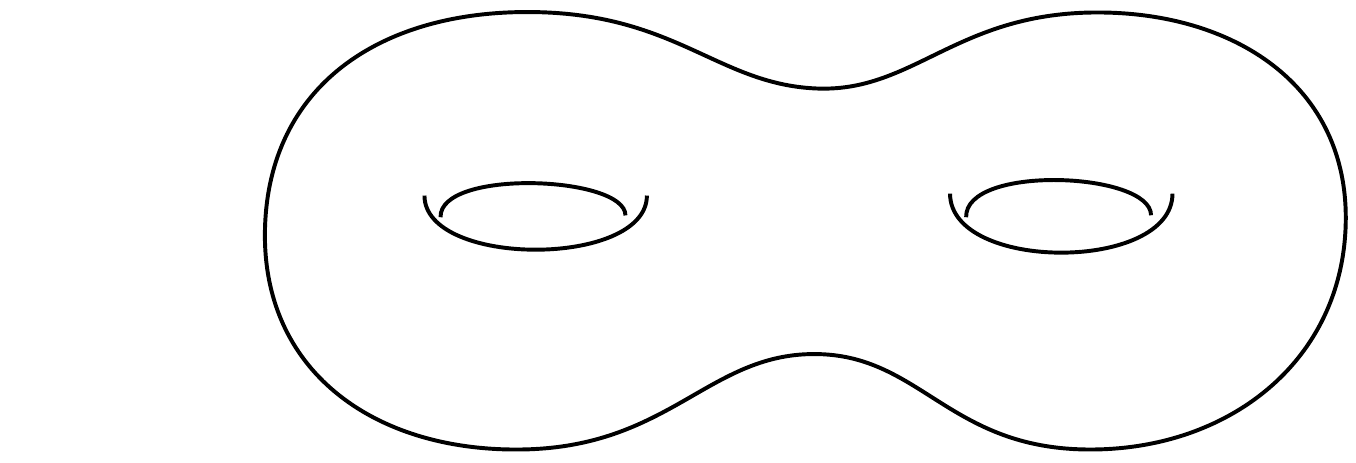}}
\put(190,30){$\lra$}
\put(190,0){\includegraphics[scale=0.55]{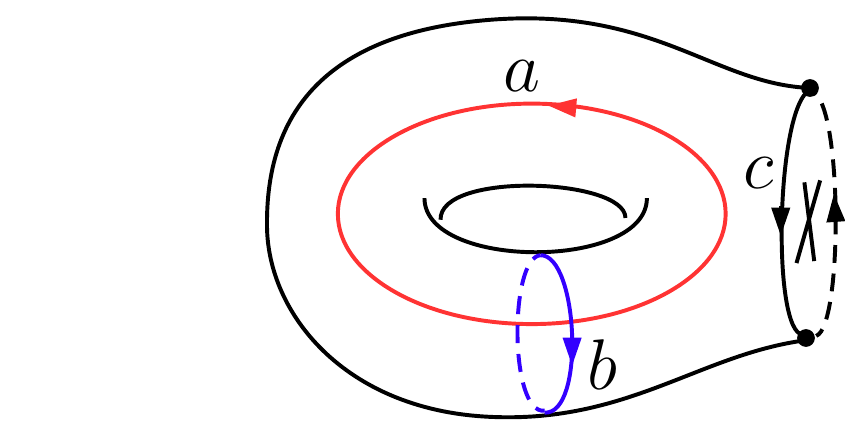}}
\end{picture}

So $T_2/C_2\iso N_3$, and $H_1(N_3;\Z_2)$ is generated by the elements
$a$, $b$, and $c$ from the picture.  Both $a$ and $b$ lift to loops
under the projection $T_2\ra T_2/C_2$, whereas $c$ does not.  
So under the bijection of Proposition~\ref{pr:cover-classify} the
$C_2$-space $T_2$ corresponds to the map $H_1(N_3;\Z_2)\ra \Z/2$
sending $a\mapsto 0$, $b\mapsto 0$, $c\mapsto 1$.  
\end{example}

Let us now return to develop a bit more of the general theory.
The action of $\Aut(Y)$ on $H^1(Y;\Z/2)$ is a group homomorphism 
\begin{myequation}
\label{eq:H^1-rep}
 \Aut(Y) \ra \Aut(H^1(Y;\Z/2)).
\end{myequation}
Our next goal will be to understand the image of this map when $Y$ is a closed
$2$-manifold, as this will allow us to compute the orbits.
To this end, let $\cI(Y)\subseteq \Aut(Y)$ be the
(normal) subgroup of automorphisms that are isotopic to the identity.  The
\dfn{full mapping class group} of $Y$ is $\cM(Y)=\Aut(Y)/\cI(Y)$.  
Note that the action of $\Aut(Y)$ on $H^1(Y)$ factors through an
action of $\cM(Y)$, since $\cI(Y)$ acts trivially on $H^1(Y)$.  

When $Y$ is a closed $2$-manifold, 
the map in (\ref{eq:H^1-rep}) turns out to be surjective only when
$Y=S^2$.   
Indeed, the cup product gives a nondegenerate form on $H^1(Y;\Z/2)$,
and the action of $\Aut(Y)$ must preserve this form.  In the case
where $Y$ is orientable of genus $g$, this form is symplectic and the map
(\ref{eq:H^1-rep}) therefore factors through the symplectic group
$\Sp(2g,\Z/2)$.  In the case when $Y\iso N_r$ the form is
orthogonal,  and the map (\ref{eq:H^1-rep}) therefore
factors through an orthogonal group $O(r,\Z/2)$.  

We are thereby led to consider the two maps
\begin{myequation}
\label{eq:T_g}
 \cM(T_g) \lra  \Iso(H^1(T_g;\Z/2),\langle \blank,\blank\rangle)\iso \Sp(2g,\Z/2)
\end{myequation}
and
\begin{myequation}
\label{eq:N_s}
 \cM(N_r) \lra \Iso(H^1(N_r;\Z/2),\langle \blank,\blank\rangle)\iso
O(r,\Z/2).
\end{myequation}
Here $O(r,\Z/2)$ is the orthogonal group of $r\times r$ matrices $A$ with
entries in $\Z/2$ satisfying $AA^T=I_s$, 
and the isomorphism with our isometry group
depends on a choice of
orthonormal basis for $H^1(N_r;\Z/2)$.  
Likewise, the isomorphism with the sympectic group depends on a choice of
symplectic basis for $H^1(T_g;\Z/2)$,

The following result must be classical in the theory of mapping class
groups.  We include the proof for lack of a reference.

\begin{thm}
\label{th:mcg-H^1}
The homomorphisms of (\ref{eq:T_g}) and (\ref{eq:N_s}) are both
surjective.  
\end{thm}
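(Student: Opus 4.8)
The plan is to exhibit enough explicit diffeomorphisms of $T_g$ (resp. $N_r$) to hit a generating set of $\Sp(2g,\Z/2)$ (resp. $O(r,\Z/2)$), and to do this one circle at a time using Dehn twists and related surgery moves. The cleanest route is to fix a standard handle/crosscap decomposition and write down, for each generator of the target group, a diffeomorphism realizing it on $H^1$. First I would recall that $\Sp(2g,\Z/2)$ is generated by symplectic transvections $v\mapsto v + \langle v,w\rangle w$ ranging over $w$ in a symplectic basis (indeed over the $2g$ standard basis vectors), and similarly that $O(r,\Z/2)$ — the orthogonal group for the standard diagonal form $\langle e_i,e_i\rangle = 1$ — is generated by the orthogonal transvections (reflections) $v\mapsto v + \langle v,w\rangle w$ for $w$ with $\langle w,w\rangle = 1$, which over $\Z/2$ means $w$ ranging over vectors of odd weight; in fact the reflections in the standard basis vectors $e_i$ together with a few low-weight reflections suffice (e.g. reflections in $e_i+e_j+e_k$), and one can check a small generating set by hand.

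Next I would realize each generator geometrically. For $T_g$ presented as a connected sum of $g$ tori with standard symplectic basis $a_1,b_1,\dots,a_g,b_g$ of $H^1$: the Dehn twist along the curve dual to $a_i$ induces the transvection $v\mapsto v+\langle v,a_i\rangle a_i$, the Dehn twist along the curve dual to $b_i$ induces $v\mapsto v+\langle v,b_i\rangle b_i$, and a ``handle slide'' / twist along a curve representing $a_i+a_{i+1}$ (running through two adjacent handles) gives the transvection in that class; together with the obvious permutations of handles these generate $\Sp(2g,\Z/2)$. For $N_r$ presented as a sphere with $r$ crosscaps, with $H^1(N_r;\Z/2)$ having the standard basis $c_1,\dots,c_r$ of crosscap classes and diagonal cup-product form: permuting crosscaps realizes the symmetric group $\Sigma_r\subset O(r,\Z/2)$, the Dehn twist along a two-sided curve bounding a M\"obius band around crosscap $c_i$ acts trivially but the relevant move is the ``crosscap slide'' (Y-homeomorphism), which drags one crosscap through another; I would check that a crosscap slide of $c_i$ over $c_j$ induces on $H^1$ the reflection associated to $c_i+c_j$ composed with a transposition, and that an ordinary Dehn twist along a two-sided non-separating curve in $N_r$ gives the transvection in the class of that curve (which can be arranged to be any even-weight vector, hence also $c_i+c_j$). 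Combining crosscap slides and Dehn twists along suitable curves one gets the reflections in $c_i+c_j+c_k$, and I would verify these plus $\Sigma_r$ generate $O(r,\Z/2)$ — this is a short, purely group-theoretic check over $\Z/2$.

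The main obstacle, and the step I expect to require the most care, is the non-orientable case: first, correctly computing the action of the crosscap slide (Y-homeomorphism) on $H^1(N_r;\Z/2)$ — this is where sign/orientation subtleties disappear over $\Z/2$ but the bookkeeping of which classes move is still delicate — and second, pinning down a genuinely small generating set of $O(r,\Z/2)$ that is exactly matched by available homeomorphisms (one must avoid needing reflections in vectors whose weight obstructs representing them by embedded two-sided curves). The orientable case is essentially standard (Dehn twists surject onto $\Sp(2g,\Z/2)$ — indeed onto $\Sp(2g,\Z)$), so I would present it briefly and spend the bulk of the argument on $N_r$. As a sanity check throughout I would use the already-established identifications such as $T_g^{\anti}/C_2 \iso N_{g+1}$ and the behavior of $H_1$ under crosscap addition from Remark~\ref{re:addcross} to confirm that the induced maps on cohomology are what the geometry predicts.
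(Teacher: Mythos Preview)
Your orientable case is fine and matches the paper: Dehn twists realize symplectic transvections, and these generate $\Sp(2g,\Z/2)$ (the paper phrases this as the composite $\cM^+(T_g)\twoheadrightarrow\Sp(2g,\Z)\twoheadrightarrow\Sp(2g,\Z/2)$, citing \cite{FM} and \cite{T}).

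The non-orientable case, however, has a genuine algebraic error. Over $\F_2$ the map $T_w\colon v\mapsto v+\langle v,w\rangle w$ is an isometry of the diagonal form if and only if $\langle w,w\rangle=0$, i.e.\ $w$ has \emph{even} weight, not odd. Indeed
\[
\langle T_w u,T_w v\rangle
=\langle u,v\rangle+\langle u,w\rangle\langle v,w\rangle\,\langle w,w\rangle,
\]
and when $\langle w,w\rangle=1$ one even has $T_w(w)=0$, so $T_w$ is not invertible. In particular your proposed ``reflections'' in $e_i$ and in $e_i+e_j+e_k$ are not elements of $O(r,\Z/2)$ at all, so the generating set you aim for does not exist. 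This also means crosscap slides cannot combine with Dehn twists to produce such reflections.

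The paper's fix is exactly along the lines you sketch, but with the parity corrected. It takes the Dehn twist $\tau_c$ about a two-sided simple closed curve $c$ that hops across \emph{four} crosscaps, so $[c]=\alpha_1+\alpha_2+\alpha_3+\alpha_4$ has even weight and $(\tau_c)_*$ is the transvection in $[c]$; together with the crosscap permutations this is shown (in Corollary~\ref{co:O-generate}, proved by a short inductive argument on $O(n,\Z/2)$) to generate the full orthogonal group. So your overall strategy is right, but you need the weight-4 transvection, not weight-1 or weight-3 ``reflections'', and you need the accompanying group-theoretic lemma that permutations plus this single transvection suffice.
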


\begin{proof}
Let $\cM^+(T_g)\subseteq \cM(T_g)$
denote the subgroup consisting of orientation-preserving automorphisms.
Note that there is a short exact sequence of groups
\[ 1\ra \cM^+(T_g) \inc \cM(T_g) \ra \Z/2 \ra 1 \]
where the surjective map records the determinant of the induced map on
$H^1(T_g;\Z)$.  We have a commutative square
\[ \xymatrix{
\cM^+(T_g) \ar[r]\ar[d] &
\Sp(2g,\Z) \ar[d]\\
\cM(T_g) \ar[r] & \Sp(2g,\Z/2).
}
\]

The right vertical map is surjective; this is fairly easy to prove by hand,
but it also follows from \cite[Theorem 8.5]{T}.
The top horizontal
map is also known to be surjective; see 
\cite[Theorem 6.4]{FM}.  So the lower horizontal
map is surjective as well.

The result for $N_r$ is the subject of the paper \cite{MP}, but
that paper was never published and the online version has some
cosmetic blemishes.  
So we include a sketch of the proof here.  Note, however, that the
argument is entirely taken from \cite{MP}.   

Model $N_r$ as a sphere with $r$ crosscaps, as in the following
picture (where the boundary of the given disk is identified to a
point):

\begin{picture}(300,100)
\put(40,10){\includegraphics[scale=0.6]{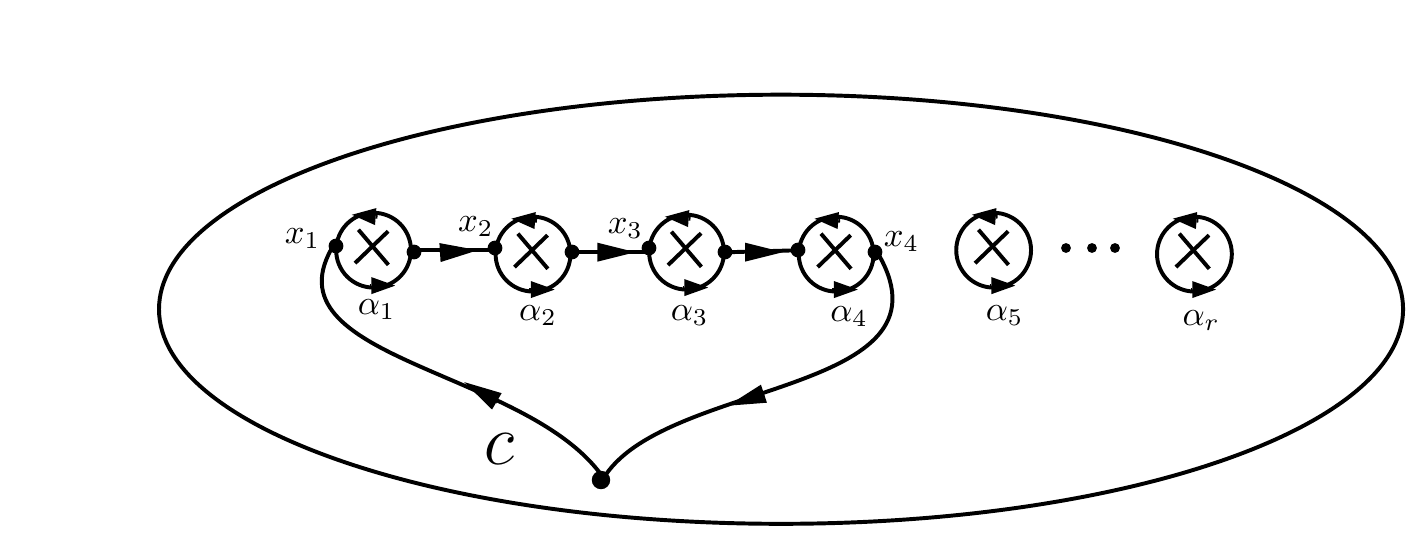}}
\end{picture}

\noindent
Note that $\alpha_1,\ldots,\alpha_r$ is an orthonormal basis for
$H_1(N_r;\Z/2)$.  When $r\geq 4$ let
$c$ be the indicated path, which sequentially reaches points
$x_1$, $x_2$, $x_3$, and $x_4$, hopping over the crosscaps (for
example, the path $\alpha_1$ is not a subpath of $c$).  Then
$c=\alpha_1+\alpha_2+\alpha_3+\alpha_4$ in $H_1(N_r;\Z/2)$.

The path $c$ is two-sided (as it crosses an even number of crosscaps),
so we can consider the Dehn twist $\tau_c$ associated to $c$.  On homology it
induces the map 
$x\mapsto x+\langle x,c\rangle c$.  So $(\tau_c)_*(\alpha_i)=\alpha_i$ for
$i>4$, whereas $(\tau_c)_*(\alpha_i)=\alpha_i+c$ for $i\leq 4$.  

It is clear that there are elements of $\cM(N_r)$ that transpose any
two crosscaps and leave the others fixed (if it is not clear, use the
Dehn twist about an analog of $c$ that hops across exactly two
crosscaps).  So all the permutation matrices are in the image of
$\cM(N_r)\ra O(r,\Z/2)$.  By Corollary~\ref{co:O-generate} below (this forward
reference is awkward but convenient), the group
$O(r,\Z/2)$ is generated by these permutation matrices together with
$(\tau_c)_*$ (when $r\geq 4$).  So $\cM(N_r)\ra O(r,\Z/2)$ is surjective.
\end{proof}

\begin{cor}
\label{co:classify}
The set $\cS(T_g)$ is in bijective correspondence with the set of
nonzero orbits in $(\Z/2)^{2g}/\Sp(2g,\Z/2)$.  Likewise, the set $\cS(N_r)$
is in bijective correspondence with the nonzero orbits in
$(\Z/2)^r/O(r,\Z/2)$.  
\end{cor}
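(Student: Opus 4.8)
The plan is to combine the two bijections already established: Proposition~\ref{pr:cover-classify} identifies $\cS(Y)$ with the nonzero orbits in $H^1(Y;\Z/2)/\Aut(Y)$, and Theorem~\ref{th:mcg-H^1} tells us that the image of $\Aut(Y)$ in $\Aut(H^1(Y;\Z/2))$ is exactly the relevant isometry group. So the corollary should be essentially a formal consequence, and the only real content is bookkeeping about which isometry group appears and how orbits transfer along the identifications.

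First I would apply Proposition~\ref{pr:cover-classify} with $Y=T_g$ (respectively $Y=N_r$) to get a bijection between $\cS(T_g)$ and the set of nonzero orbits of the $\Aut(T_g)$-action on $H^1(T_g;\Z/2)$. Next I would observe, as in the discussion preceding~(\ref{eq:T_g}), that this action factors through $\cM(T_g)$ (since $\cI(T_g)$ acts trivially) and then through the isometry group $\Iso(H^1(T_g;\Z/2),\langle\blank,\blank\rangle)$, because automorphisms preserve the cup product form. Fixing a symplectic basis for $H^1(T_g;\Z/2)$ identifies this isometry group with $\Sp(2g,\Z/2)$ and identifies $H^1(T_g;\Z/2)$ with $(\Z/2)^{2g}$ compatibly, so the orbits correspond. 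Then Theorem~\ref{th:mcg-H^1} says the composite $\cM(T_g)\to\Sp(2g,\Z/2)$ is surjective, so the orbits of the $\cM(T_g)$-action coincide with the orbits of the full $\Sp(2g,\Z/2)$-action. Chaining these bijections gives the claim for $T_g$. The argument for $N_r$ is identical, using an orthonormal basis for $H^1(N_r;\Z/2)$ to identify the isometry group with $O(r,\Z/2)$ and $H^1(N_r;\Z/2)$ with $(\Z/2)^r$, and invoking the surjectivity of $\cM(N_r)\to O(r,\Z/2)$ from Theorem~\ref{th:mcg-H^1}.

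The one small point worth spelling out is the compatibility of the orbit bijection: if a group $G$ acts on a set $S$ through a quotient $G\to G'$ with the same image as some group $H$ acting on $S$, then the $G$-orbits and $H$-orbits in $S$ coincide as subsets of $S$; and a bijection $S\to S'$ that intertwines a $G$-action on $S$ with a $G$-action on $S'$ carries orbits to orbits bijectively. Here both facts are trivial, but they are exactly what lets us pass from ``nonzero orbits in $H^1(Y;\Z/2)/\Aut(Y)$'' to ``nonzero orbits in $(\Z/2)^{2g}/\Sp(2g,\Z/2)$'' (resp.\ $(\Z/2)^r/O(r,\Z/2)$).

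I do not expect a genuine obstacle here: all the hard work is in Proposition~\ref{pr:cover-classify} and Theorem~\ref{th:mcg-H^1}, which we are entitled to assume. If anything, the mild annoyance is making sure the identification of the abstract isometry group with the matrix group $\Sp$ or $O$ is done consistently with the identification of $H^1$ with $(\Z/2)^{2g}$ or $(\Z/2)^r$, so that a single choice of basis handles both at once; but this is routine and I would dispatch it in a sentence rather than belabor it.
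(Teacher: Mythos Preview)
Your proposal is correct and takes exactly the same approach as the paper, which simply writes ``This is immediate from Proposition~\ref{pr:cover-classify} and Theorem~\ref{th:mcg-H^1}.'' You have unpacked the bookkeeping (factoring through $\cM(Y)$, choosing bases to identify the isometry group with the matrix group, and matching orbits via the surjectivity) more explicitly than the paper does, but the content is identical.
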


\begin{proof}
This is immediate from 
Proposition~\ref{pr:cover-classify} and Theorem~\ref{th:mcg-H^1}. 
\end{proof}

\subsection{Algebraic calculations}

Consider the vector space $\F_2^n$ with the dot product.  
In this section we will write $O(n)$ for $O(n,\F_2)$.  
For any
vector $v=[v_1,\ldots,v_n]\in \F_2^n$, define the \dfn{content} of 
$v$ to be $c(v)=\sum_i v_i \in \F_2$.  Note that
\[ c(v)=\sum_i v_i^2 = v\cdot v
\]
and so the action of $O(n)$ preserves the content.  

The vector $\Omega=[1,1,\ldots,1]$ is the unique vector in $\F_2^n$ having
the property that $\Omega\cdot x=x\cdot x$ for all vectors $x$.
 As such, $\Omega$ must be
preserved by $O(n)$.

\begin{lemma}
\label{le:one}
When $n\geq 3$ there are exactly four orbits of $O(n)$ on $\F_2^n$,
represented by the elements
\[ 0=[0,0,\ldots,0], \quad a_1=[1,0,\ldots, 0], \quad a_2=[1,1,0,\ldots,0],\quad
\Omega=[1,1,\ldots,1].
\]
When $n=2$ there are exactly three orbits, represented by $0$, $a_1$,
and $\Omega$.  If $v$ is a vector whose coordinates have at least one $0$
and at least one $1$, then the orbit of $v$ is determined by the
parity of the number of $1$s.  
\end{lemma}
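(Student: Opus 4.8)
The plan is to peel off the two obvious fixed points $0$ and $\Omega$, observe that the content $c(v)=v\cdot v$ is the only remaining invariant that is visible for free, and then show by an explicit construction of elements of $O(n)$ that the content is in fact a complete invariant for every vector other than $0$ and $\Omega$. First I would record the easy structural facts. Since the permutation matrices lie in $O(n)$, the only vectors fixed by all of $O(n)$ are the constant vectors $0$ and $\Omega$, which are distinct because $n\ge 2$ and hence give two singleton orbits; every other vector has both a $0$ and a $1$ among its coordinates, i.e. has Hamming weight $w$ with $1\le w\le n-1$. As noted above, the content $c(v)=v\cdot v$ is $O(n)$-invariant, and it equals the parity of the number of $1$'s in $v$. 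So the whole lemma reduces to the following claim: for $n\ge 2$, any $v$ with $1\le w(v)\le n-1$ can be moved by an element of $O(n)$ to $a_1=[1,0,\dots,0]$ when $w(v)$ is odd and to $a_2=[1,1,0,\dots,0]$ when $w(v)$ is even. Note that for $n=2$ the even case is vacuous (there $a_2=\Omega$), which is exactly why the count drops to three orbits; for $n=3$ one may alternatively just observe that $O(3,\F_2)=S_3$.

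The heart of the matter is producing elements of $O(n)$ that change the Hamming weight: permutation matrices only permute coordinates, and the usual reflections degenerate in characteristic $2$. Instead I would use the $4\times 4$ matrix $R_4=J_4-I_4$, all ones minus the identity. A one-line computation gives $R_4 R_4^T=R_4^2=J_4^2+I_4=I_4$, since $J_4^2=4J_4\equiv 0$, so $R_4\in O(4,\F_2)$. Conjugating $R_4\oplus I_{n-4}$ by permutations yields, for each $4$-element set $T$ of coordinates, an element $R_T\in O(n,\F_2)$ that fixes the coordinates outside $T$ and, on the coordinates in $T$, replaces the pattern by its complement precisely when $T$ carries an odd number of $1$'s. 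In particular, if $w(v)\ge 3$ then $v$ has at least three $1$-coordinates and (since $w(v)\le n-1$) at least one $0$-coordinate, and, taking $n\ge 4$, we may choose $T$ to consist of three $1$-coordinates and one $0$-coordinate; then $w(R_T v)=w(v)-2$. Iterating this drives the weight down to $1$ or $2$ according to its parity, never hitting $0$ or $n$, and a final permutation matrix puts the result into the required form $a_1$ or $a_2$.

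Finally I would assemble the count. For $n\ge 3$ the orbits are exactly $\{0\}$, $\{\Omega\}$, the orbit of $a_1$ (all content-$1$ vectors other than $\Omega$), and the orbit of $a_2$ (all content-$0$ vectors other than $0$); these four are pairwise distinct, since $0$ and $\Omega$ are the only fixed points and the content distinguishes $a_1$ from $a_2$, and by the claim they exhaust $\F_2^n$. For $n=2$ there is no non-fixed vector of content $0$, so the list collapses to $\{0\},\{a_1\},\{\Omega\}$, i.e. three orbits. The only genuinely non-formal step — and thus the main obstacle — is the construction in the middle paragraph: one must exhibit a single non-permutation orthogonal element such as $J_4-I_4$, after which everything is bookkeeping about Hamming weights.
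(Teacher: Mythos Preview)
Your proof is correct and follows essentially the same approach as the paper's: both arguments use permutation matrices to reduce to representatives indexed by Hamming weight, then invoke the single non-permutation orthogonal element $J_4-I_4$ (which the paper describes coordinatewise as $b_i\mapsto b_1+b_2+b_3+b_4-b_i$) to collapse weights differing by $2$ into a common orbit. Your presentation via weight reduction is slightly more systematic, but the content is the same; the only small point is that you implicitly use the fact that $\Omega$ is fixed by all of $O(n)$, which the paper establishes just before the lemma.
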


\begin{proof}
Write $b_1,\ldots,b_n$ for the standard basis of $\F_2^n$.
Transposing any two basis elements is an isometry, so over $\F_2$ the 
symmetric matrices are all orthogonal.  In particular, the
vectors $b_1,\ldots,b_n$ are all in the same orbit of $O(n)$;
the vectors $b_i+b_j$ ($i\neq j$) are all in the same orbit, the
vectors $b_i+b_j+b_k$ ($i$, $j$, $k$ all distinct) are all in the same
orbit, etc.  This proves that there are at most $n+1$ orbits,
represented by the vectors
\[ 0, \quad b_1, \quad b_1+b_2, \quad b_1+b_2+b_3,\quad \ldots,\quad b_1+b_2+\cdots+b_n.
\]
We also know that $\{0\}$ and $\{b_1+\cdots+b_n\}$ are singleton
orbits.
The $n=2$ case is now complete.  For the $n=3$ case we merely observe that
the content function shows that $b_1$ and
$b_1+b_2$ are in distinct orbits, so this case is also done.  

Now suppose $n\geq  4$.  Then the linear map 
\[ \begin{cases}
b_1 \mapsto b_2+b_3+b_4, \qquad 
b_2 \mapsto b_1+b_3+b_4, \\
b_3\mapsto b_1+b_2+b_4,\qquad
b_4\mapsto b_1+b_2+b_3, \\
b_i\mapsto b_i \  (i\geq 5)
\end{cases}
\]
is readily checked to be an isometry.  This shows that $b_2+b_3+b_4$
and $b_1$ are in the same orbit, and so $b_1+b_2+b_3$ and $b_1$ are
also in the same orbit; this completes the $n=4$ case.
Moreover, when $n>4$ we have that $b_1+b_2+b_3+b_5$ is in the same
orbit as $b_1+b_5$, $b_1+b_2+b_3+b_5+b_6$ is in the same orbit as
$b_1+b_5+b_6$, and so forth.  This completes the proof.
\end{proof}

For the following corollary, if $A$ is a $k\times k$ matrix and $B$ is an
$l\times l$ matrix write $A\oplus B$ for the $(k+l)\times (k+l)$
block diagonal matrix
$\begin{bsmallmatrix} A & 0
\\ 0 & B\end{bsmallmatrix}$.

\begin{cor}
\label{co:O-generate}
Let $n\geq 1$.  Then $O(n)$ is 
generated by the permutation
matrices together with (in the case $n\geq 4$) the single matrix
$A\oplus I_{n-4}$ where $A=\begin{bsmallmatrix} 
0 & 1 & 1 & 1 \\
1 & 0 & 1 & 1 \\
1 & 1 & 0 & 1 \\
1 & 1 & 1 & 0 \end{bsmallmatrix}$.
\end{cor}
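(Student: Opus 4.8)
The statement to prove is Corollary~\ref{co:O-generate}: for $n\geq 1$, the group $O(n)=O(n,\F_2)$ is generated by the permutation matrices together with (when $n\geq 4$) the single extra matrix $A\oplus I_{n-4}$, where $A$ is the $4\times 4$ ``off-diagonal'' matrix displayed above. The plan is to combine the orbit count from Lemma~\ref{le:one} with a standard counting argument that compares the order of $O(n,\F_2)$ to the order of the subgroup generated by the proposed generators.

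First I would set $H\leq O(n)$ to be the subgroup generated by all permutation matrices and (when $n\geq 4$) the matrix $g_0:=A\oplus I_{n-4}$. Note that $H$ contains a copy of the symmetric group $\Sigma_n$ via the permutation matrices, and that the computation in the proof of Lemma~\ref{le:one} already exhibits $g_0$ (and all its conjugates by permutations) as isometries, so indeed $H\subseteq O(n)$. The goal is to show $H=O(n)$. I would proceed by induction on $n$. The base cases $n=1,2,3$ are immediate: $O(1)$ is trivial, and for $n=2,3$ one checks directly (e.g.\ $O(2,\F_2)=\Sigma_2$, and $O(3,\F_2)$ has order $6$ and is generated by permutations — this can be read off the list of symmetric orthogonal matrices, or from the orbit count in Lemma~\ref{le:one}).

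For the inductive step with $n\geq 4$, I would argue as follows. By Lemma~\ref{le:one}, $O(n)$ acts on $\F_2^n$ with exactly four orbits, and $H$ already acts with the same orbits (the permutation matrices move $b_1$ around its coordinate positions, and $g_0$ plus permutations realizes the remaining identifications from the lemma's proof — in particular $b_1$ and $b_1+b_2+b_3$ lie in one $H$-orbit). In particular $H$ and $O(n)$ have the same orbit $\cO$ of, say, the vector $b_1$; this orbit consists of all weight-odd, non-$\Omega$ vectors when $n$ is even, but more simply I would just use the orbit of a single standard basis vector $b_n$, whose stabilizer in $O(n)$ is exactly $O(n-1)$ acting on the hyperplane $b_n^\perp=\{x: x\cdot b_n=0\}$ — no, one must be slightly careful over $\F_2$ since $b_n\in b_n^\perp$; the correct statement is that the stabilizer of $b_n$ preserves the complementary non-degenerate subspace spanned by $b_1,\dots,b_{n-1}$ and hence is identified with $O(n-1,\F_2)$. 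Then by the orbit–stabilizer theorem, $|O(n)| = |\cO_{O(n)}(b_n)|\cdot |O(n-1)|$ and $|H| = |\cO_H(b_n)|\cdot |\mathrm{Stab}_H(b_n)|$; since the orbits agree and $\mathrm{Stab}_H(b_n)\supseteq H\cap O(n-1)$ which by the inductive hypothesis (the permutations of $b_1,\dots,b_{n-1}$ and the matrix $A\oplus I_{n-5}$ all fix $b_n$) equals $O(n-1)$, we get $|H|\geq |O(n)|$, forcing $H=O(n)$.

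**Main obstacle.** The delicate point — the one I'd want to verify carefully rather than wave at — is the claim that $\mathrm{Stab}_{O(n)}(b_n)$ is naturally $O(n-1,\F_2)$ and that $H\cap \mathrm{Stab}(b_n)$ contains the full copy of $O(n-1)$ furnished by induction. Over $\F_2$ the form is degenerate when restricted to lines (every basis vector is isotropic: $b_n\cdot b_n=1$ actually, so $b_n$ is \emph{anisotropic}, which is what saves us), so one must check that $\F_2^n = \langle b_n\rangle \perp \langle b_1,\dots,b_{n-1}\rangle$ really is an orthogonal direct-sum decomposition into non-degenerate pieces, and that an isometry fixing $b_n$ must preserve the orthogonal complement $\{x : x\cdot b_n = 0\} = \langle b_1+b_n, \dots, b_{n-1}+b_n\rangle$ — this is a non-degenerate $(n-1)$-dimensional quadratic/bilinear space isometric to the standard $\F_2^{n-1}$, so its isometry group is $O(n-1,\F_2)$ and the inductive hypothesis applies. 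Once this identification is pinned down, everything else is the routine orbit–stabilizer bookkeeping sketched above, and the corollary follows. (An alternative to the counting argument, which avoids computing $|O(n)|$, is the transitivity/induction argument: every element of $O(n)$ can be multiplied by an element of $H$ to fix $b_n$, since $H$ acts transitively on the orbit of $b_n$ — which is $O(n)$'s orbit by Lemma~\ref{le:one} — and then one invokes the inductive hypothesis on the stabilizer; I would probably present it this way, as it is cleaner and needs only the orbit agreement, not the group orders.)
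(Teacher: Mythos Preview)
Your approach is correct and essentially the same as the paper's: both set $H$ to be the subgroup generated by the proposed elements, use the proof of Lemma~\ref{le:one} to see that $H$ acts transitively on the $O(n)$-orbit of a standard basis vector, and then reduce to the stabilizer $O(n-1)$ by induction (the paper phrases this via a ``maximal number of fixed basis vectors'' argument, but it is the same idea). One small slip in your obstacle discussion: since $b_i\cdot b_n=0$ for $i<n$ and $b_n\cdot b_n=1$, the orthogonal complement $b_n^\perp$ is exactly $\langle b_1,\dots,b_{n-1}\rangle$, as you had it the first time---not $\langle b_1+b_n,\dots,b_{n-1}+b_n\rangle$.
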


\begin{proof}
This is the main content of \cite[Theorem 1.1]{MP}.  
Let
$e_1,\ldots,e_n$ be an orthonormal basis for $\F_2^n$.  
Note that if $M\in O(n)$ then 
$c(Me_i)$ must be odd for each $i$, since $\langle Me_i,Me_i\rangle =
\langle e_i,e_i\rangle =1$.  

Let $H\subseteq
O(n)$ be the subgroup generated by the permutation matrices and
(in the case $n\geq 4$) the matrix $A\oplus I_{n-4}$.  
The fact that $H=O(n)$ is trivial when $n=1$.

We proceed by induction, so assume $n\geq 2$.  
If $O(n)\neq H$, we can choose an $M\in O(n)-H$ such that
$k=\#\{i\,|\, Me_i=e_i\}$ is as large as possible.  If $k=r$ then
$M=\Id$, which contradicts $M\notin H$; so $k<r$.  By composing $M$ with
a permutation matrix we can assume $Me_i=e_i$ for $1\leq i\leq k$,
and therefore $M=I_{k}\oplus M'$ where $M'\in O(n-k)$.  
If $k>0$ then $M'$ belongs to $H$ by induction, and so $M\in H$.  
So we must have $k=0$.  

The proof of Lemma~\ref{le:one} actually shows
that $H$ acts transitively on the vectors $v$ in $\F_2^n-\{\Omega\}$ that
have $c(v)=1$.  Consider $v=Me_1$, and note that $c(v)=1$.  We cannot
have $v=\Omega$, as $M\Omega=\Omega$ and $\Omega\neq e_1$ (since
$r>1$).  
So there is a matrix $B\in H$ such that $Bv=e_1$.  The composite $BM$
therefore fixes $e_1$, but is not in $H$.  This contradicts the fact
that $M$ was chosen to make $k$ maximum.       
\end{proof}

The group $\Sp(2g,\Z/2)$ acts on
$(\Z/2)^{2g}$ via left multiplication. 
The following result describes the orbits:

\begin{lemma}
\label{le:two}
There are exactly two orbits of $\Sp(2g,\Z/2)$ acting on $(\Z/2)^{2g}$: 
one is the singleton orbit consisting of the zero vector,
and the other is the set of all nonzero vectors.
\end{lemma}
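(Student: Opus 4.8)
The plan is to prove the equivalent statement that $\Sp(2g,\Z/2)$ acts transitively on the set of \emph{nonzero} vectors of $(\Z/2)^{2g}$. Every linear automorphism fixes $0$, so $\{0\}$ is a singleton orbit; and for $g\geq 1$ the set of nonzero vectors is nonempty; hence transitivity on nonzero vectors yields exactly the two claimed orbits. Fix a symplectic basis, so that the form $\langle\,\cdot\,,\cdot\,\rangle$ is alternating (in particular $\langle x,x\rangle=0$ for all $x$) and nondegenerate, and recall it is automatically symmetric since we are in characteristic $2$.

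The main tool is the \emph{symplectic transvection}: for a nonzero $w\in(\Z/2)^{2g}$ define $T_w\colon(\Z/2)^{2g}\ra(\Z/2)^{2g}$ by $T_w(x)=x+\langle x,w\rangle w$. First I would check that $T_w\in\Sp(2g,\Z/2)$. Using $\langle w,w\rangle=0$ and symmetry of the form, one computes
\[ \langle T_w x,\,T_w y\rangle=\langle x,y\rangle+\langle x,w\rangle\langle w,y\rangle+\langle y,w\rangle\langle x,w\rangle+\langle x,w\rangle\langle y,w\rangle\langle w,w\rangle=\langle x,y\rangle, \]
and $T_w\circ T_w=\Id$, so $T_w$ is invertible. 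Thus $T_w$ is a symplectic transformation.

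Next comes the key computation: if $v,v'$ are nonzero with $\langle v,v'\rangle=1$, then $v\neq v'$ (since $\langle v,v\rangle=0$), so $w:=v+v'\neq 0$, and
\[ T_w(v)=v+\langle v,\,v+v'\rangle(v+v')=v+\bigl(\langle v,v\rangle+\langle v,v'\rangle\bigr)(v+v')=v+(v+v')=v'. \]
So any two nonzero vectors pairing to $1$ lie in a common orbit. It remains to treat a pair $v,v'$ of nonzero vectors with $\langle v,v'\rangle=0$ (the case $v=v'$ being trivial). Here I would produce an auxiliary vector $u$ with $\langle v,u\rangle=\langle v',u\rangle=1$ (note such a $u$ is automatically nonzero) and then apply the previous step twice to obtain $v\mapsto u\mapsto v'$. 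To build $u$: the set $\{u:\langle v,u\rangle=1\}$ is a coset of the hyperplane $v^{\perp}$, and on this coset the function $u\mapsto\langle v',u\rangle$ is affine; it is constant exactly when $\langle v',h\rangle=0$ for all $h\in v^{\perp}$, i.e.\ when $v'\in(v^{\perp})^{\perp}$. By nondegeneracy $(v^{\perp})^{\perp}$ is the line spanned by $v$, which over $\Z/2$ is just $\{0,v\}$; since $v'\notin\{0,v\}$, the function is non-constant and hence takes the value $1$ somewhere on the coset, giving the desired $u$.

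The only genuinely delicate point — the step I would be most careful about — is this last existence claim, where nondegeneracy of the symplectic form is used to guarantee that the relevant affine function is non-constant (equivalently, that $(v^{\perp})^{\perp}=\langle v\rangle$). Everything else reduces to mechanical computations in characteristic $2$.
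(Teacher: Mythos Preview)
Your proof is correct and complete. The transvection argument is the standard textbook proof, and every step checks out.

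The paper takes a different, more hands-on route. It first handles $g=1$ by exhibiting two explicit $2\times 2$ symplectic matrices sending $[1,0]$ to $[0,1]$ and $[1,1]$. For general $g$ it then uses the block inclusion $\Sp(2)\times\cdots\times\Sp(2)\hookrightarrow\Sp(2g)$ together with permutations of the symplectic blocks to reduce to at most $g+1$ candidate orbit representatives of the form $[T,T,\ldots,T,O,\ldots,O]$; finally it writes down one explicit $4\times 4$ symplectic matrix that merges consecutive representatives. Your approach is cleaner and more conceptual: a single uniform tool (the transvection $T_w$) handles every pair of nonzero vectors, and the argument works verbatim over any field. The paper's approach has the virtue of being completely explicit and of mirroring the structure of the adjacent orthogonal computation in Lemma~\ref{le:one}, which is presumably why the author chose it.
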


\begin{proof}
We first consider the case $g=1$.  The matrices
\[ 
\begin{bmatrix} 
0 & 1 \\
1 &  1 
\end{bmatrix}\quad\text{and}\quad
\begin{bmatrix} 
1 &  0 \\
1 &  1 
\end{bmatrix}
\]
are both symplectic and carry $[1,0]$ to $[0,1]$ and $[1,1]$,
respectively.  So all three of these elements are in the same orbit,
and this case is done.

For the general case, let $e_1,\ldots,e_{2g}$ be a symplectic basis
for $(\Z/2)^{2g}$, where our convention is that
\[ \langle e_{2i-1},e_{2i}\rangle =1 = \langle e_{2i},e_{2i-1}\rangle
\] 
for all $i\in \{1,\ldots,g\}$, and all other pairings between basis
elements are zero.  We will refer to each pair $\{e_{2i-1},e_{2i}\}$
as a ``symplectic block''.  

We can denote elements of $(\Z/2)^{2g}$ as
$v=[B_1,\ldots,B_g]$ where each $B_i\in (\Z/2)^2$, via  the convention 
\[ v=(B_1)_1e_1 + (B_1)_2 e_2 + (B_2)_1 e_3 + (B_2)_2 e_4 +\cdots
\]
In other words, $B_i$ contains the coordinates of $v$ with respect to
the $i$th symplectic block.  

Write $\Sp(n)$ for $\Sp(n,\Z/2)$.  Inclusion of block diagonal
matrices gives a
group homomorphism $\Sp(2)\times \cdots \times \Sp(2)\ra
\Sp(2g)$,  and the
$g=1$ case now shows that there are at most $2^g$ orbits on
$(\Z/2)^{2g}$, represented by vectors $[B_1,\ldots,B_g]$ with each
$B_i\in \{[0,0],[1,0]\}$.  Additionally, permutations of the
symplectic blocks are all elements of $\Sp(2g)$ and so we can do a bit
better: there are at most $g+1$ orbits, represented by the vectors
\begin{myequation}
\label{eq:T-list}
 [O,O,\ldots,O], \ [T,O,\ldots,O], \ [T,T,O,\ldots,O], \ldots, \
[T,T,\ldots,T]
\end{myequation}
where $T=[1,0]$ and $O=[0,0]$.  

Next we observe that when $g=2$ the following matrix is symplectic:
\[ A=\begin{bmatrix}
1 & 1 & 1 & 1 \\
0 & 0 & 0 & 1 \\
1 & 0 & 0 & 1 \\
0 & 1 & 0 & 1
\end{bmatrix}.
\]
For $g\geq 2$, the direct sum $A\oplus \id_{(\Z/2)^{2g-4}}$ takes
$[T,O,\ldots,O]$ 
to $[T,T,O,\ldots,O]$.  The direct sum $\id_{(\Z/2)^2}\oplus A\oplus
\id_{(\Z/2)^{2g-6}}$ takes $[T,T,O,\ldots,O]$ to $[T,T,T,O,\ldots,O]$, and
continuing in this way we see that all of the nonzero vectors in
(\ref{eq:T-list}) are in the same orbit.
\end{proof}

\subsection{Two-fold coverings of surfaces}
Putting Corollary~\ref{co:classify} together with Lemma~\ref{le:one}
and Lemma~\ref{le:two}, we immediately see that  up to isomorphism
there is only one free $C_2$-action on a surface whose quotient is
$T_g$, and for $r\geq 3$ there are three free $C_2$-actions on surfaces
whose quotient is $N_r$.  It remains to explicitly identify these.  To
this end, we start by considering some examples.

\begin{example}
\label{ex:T_2-anti-cc}
First, consider the antipodal action on $T_2$.  As we saw
in Example~\ref{ex:T_2-antipodal}, the quotient is isomorphic to a torus with a
crosscap (a copy of $N_3$), having basis $\{a,b,c\}$ for
$H_1(N_3;\Z/2)$.  Here is the picture of $T_2/C_2$ again:

\begin{picture}(300,90)
\put(80,0){\includegraphics[scale=0.6]{T2quotNEW.pdf}}
\end{picture}

\noindent
We also saw that the $C_2$-space $T_2$ is classified
by the linear functional $\lambda$ defined by $a,b\mapsto 0$, $c\mapsto 1$.  

So far this is all fine, but the basis $\{a,b,c\}$ is not an
orthonormal basis for $H_1(N_3;\Z/2)$.  Indeed, one has
\[ 0=a\cdot a=b\cdot b=b\cdot c=a\cdot c, \quad
1=a\cdot b=c\cdot c.
\]
In order to tie in with our classification results we need to use a
different basis.  A moment's thought verifies that $\{a+c,b+c,a+b+c\}$ is
an orthonormal basis.  Our linear function $\lambda$ sends all of these
elements to $1$; so in the notation of Lemma~\ref{le:one} our
characteristic class for the antipodal action on $T_2$ is the orbit of
$[1,1,1]$.  
\end{example}

\begin{example}
It is worth looking at $T_3$ with its antipodal action as well.  Here
is the quotient space $T_3/C_2$:

\begin{picture}(300,100)(0,0)
\put(80,-40){\includegraphics[scale=0.5]{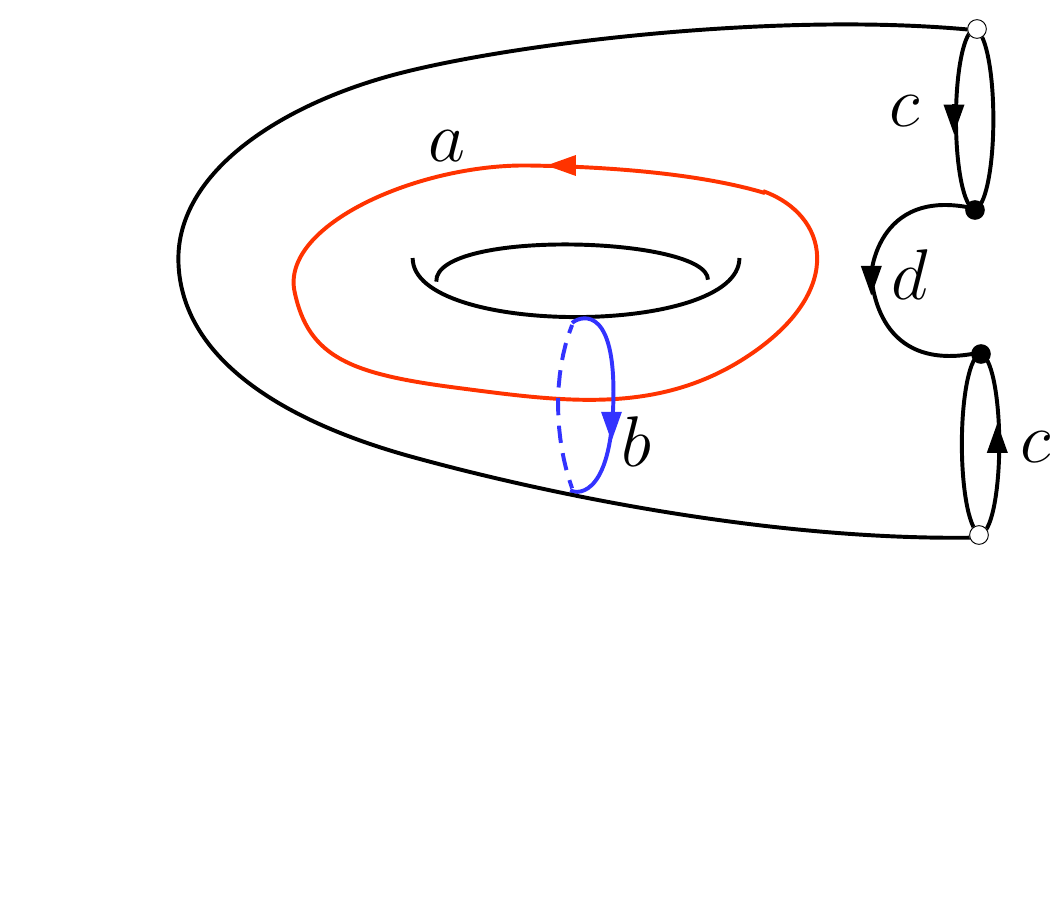}}
\end{picture}

\noindent
Note that the loop $c$ goes all the way around the drawn circle, not
half-way around; the open dot and solid dot are not identified.
So $a$, $b$, and $c$ lift to loops in $T_3$, whereas $d$ does not.  

The intersection products are given by
\[ 0=a\cdot a=b\cdot b=c\cdot c=a\cdot c=a\cdot d=b\cdot d, \quad
1=a\cdot b=d\cdot d=c\cdot d.
\]
One readily checks that $a+d$, $b+d$, $a+b+d$, $c+d$ is an orthonormal
basis.  The values of our characteristic class are therefore
$[1,1,1,1]$.  
\end{example}

The last two examples are representative of all cases.  We leave it to
the reader to check that for $T_g$ with its antipodal action the
characteristic class is always $[1,1,\ldots,1]$.

\begin{example}
\label{ex:T_0,2s}
Consider the space $X=S^2_a+s[DCC]$ and its quotient space $Q=X/C_2$
shown below:

\begin{picture}(300,110)(0,-10)
\put(80,-10){\includegraphics[scale=0.6]{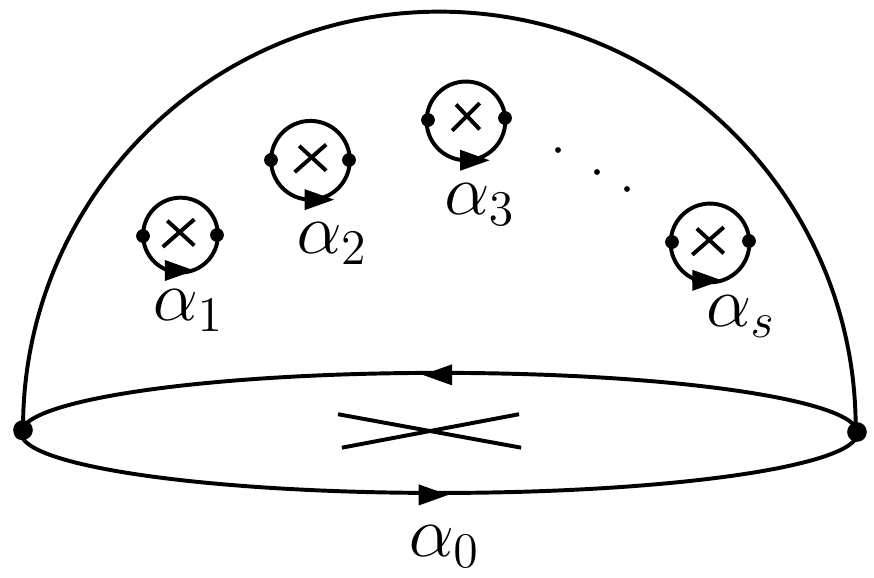}}
\end{picture}

\noindent
Note that there are $s+1$ crosscaps in this picture.
As our basis for $H_1(Q;\Z/2)$ we take
$\alpha_0,\alpha_1,\ldots,\alpha_s$, as shown.
This is an orthonormal basis with respect to the intersection from.  
The loops $\alpha_1,\ldots,\alpha_s$ lift to loops in $X$, but
$\alpha_0$ does not.  So our characteristic class is represented by
the vector $[1,0,0,\ldots,0]\in \F_2^{s+1}$.  
\end{example} 

\begin{example}
\label{ex:T_1,2s}
Next consider $X=T_1^{\anti}+s[DCC]$ and its quotient space $Q=X/C_2$:

\begin{picture}(300,110)(0,-10)
\put(60,-65){\includegraphics[scale=0.6]{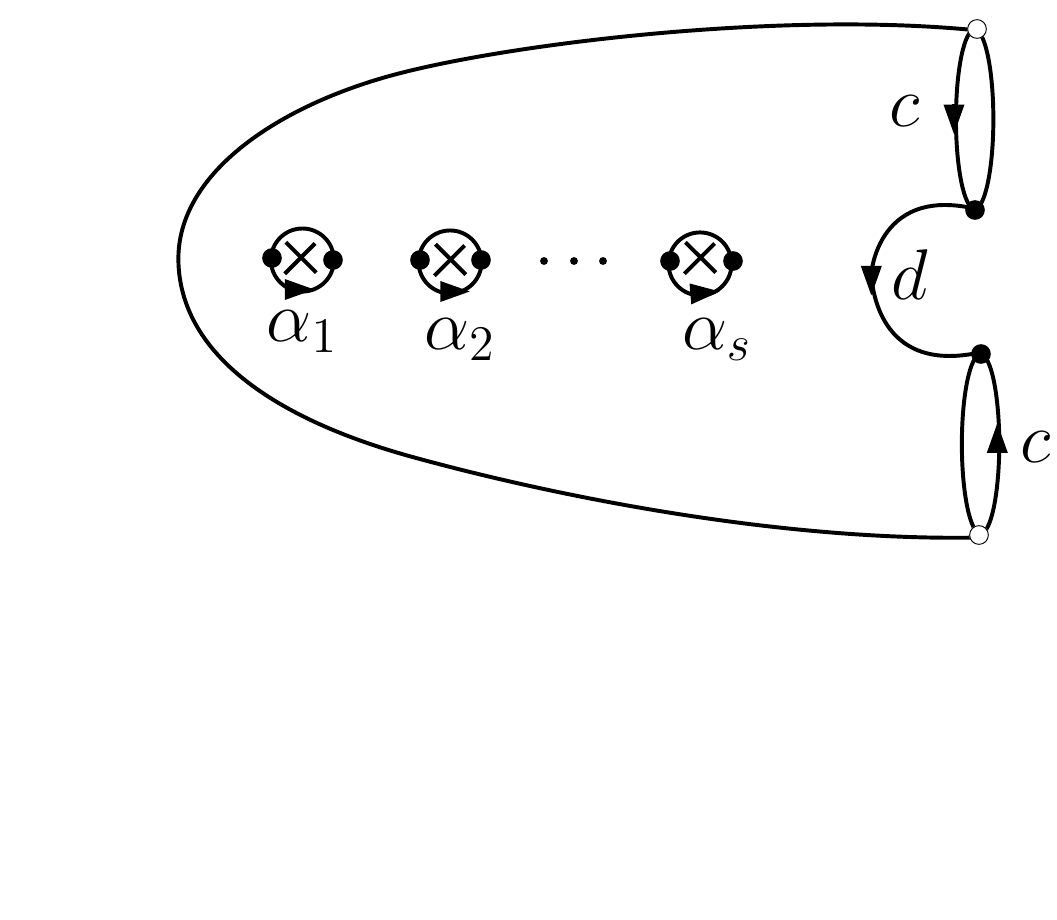}}
\end{picture}

\noindent
Note that $\alpha_1,\ldots,\alpha_s,c$ all lift to loops in
$X$, but $d$ does not.  
Here we again have the problem that $c,d,\alpha_1,\ldots,\alpha_s$
is not an orthonormal basis for $H_1(Q;\Z/2)$.  
We instead use
$c+d,d,\alpha_1,\ldots,\alpha_s$.  The values of our
characteristic class are then $[1,1,0,0,\ldots,0]\in \F_2^{s+2}$. 
\end{example}

\begin{example}
As the final example we consider
 $X=T_2^{\anti}+s[DCC]$ and its quotient space $Q=X/C_2$:

\begin{picture}(300,110)(0,-10)
\put(60,10){\includegraphics[scale=0.6]{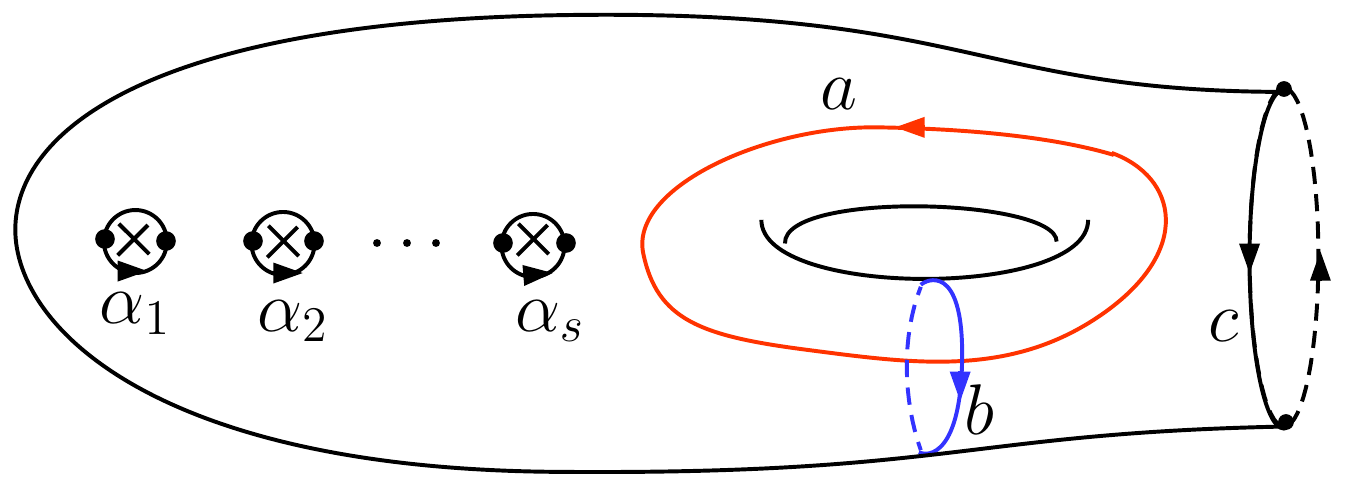}}
\end{picture}

\noindent
Here $\alpha_1,\ldots,\alpha_s,a,b$ all lift to loops in
$X$, but $c$ does not.  The basis
 $a,b,c,\alpha_1,\ldots,\alpha_s$
is not orthonormal, but similarly to 
Example~\ref{ex:T_2-anti-cc}  
we instead use
$a+c,b+c,a+b+c,\alpha_1,\ldots,\alpha_s$.  The values of our
characteristic class are then $[1,1,1,0,\ldots,0]\in \F_2^{s+3}$. 
\end{example}

Hopefully the pattern in the above examples is now clear.  In each
case the extra crosscaps play essentially no role, as the cycles
moving around them are orthonormal and are orthogonal to where the
``main action'' is.  One finds that the characteristic class for
$T_g^{\anti}+s[DCC]$ is really the same as for $T_{g}^{\anti}$ except with $s$
extra zeros on the end.  But as the characteristic class for
$T_{g}^{\anti}$ is $[1,1,\ldots,1]\in \F_2^{g+1}$, this implies the following:

\begin{prop}
\label{pr:Tgs-char}
The
characteristic class for $T_g^{\anti}+s[DCC]$ is the vector
$[1,1,\ldots,1,0,0,\ldots,0]$
in 
$\F_2^{g+1+s}$, where there are $s$ zeros appearing.
\end{prop}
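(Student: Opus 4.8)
The plan is to split the statement into the case $s=0$, where it simply records the characteristic class of the free action $T_g^{\anti}$ on the orientable surface $T_g$, and the case $s\geq 1$, where the extra dual crosscaps are shown to contribute exactly $s$ zeros at the end of the coordinate vector.

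For $s=0$ one must check that $\Lambda_{T_g^{\anti}}$ equals $[1,\ldots,1]\in\F_2^{g+1}$ with respect to an orthonormal basis; for $g\leq 1$ this is immediate, and for $g=2,3$ it is done by hand in Example~\ref{ex:T_2-anti-cc} and the example following it. Rather than drawing a picture for each $g$, I would argue uniformly: the projection $T_g^{\anti}\to T_g^{\anti}/C_2\iso N_{g+1}$ is a connected double cover whose total space is orientable while the base is not, hence it is the orientation double cover, which is classified by $w_1(N_{g+1})\in H^1(N_{g+1};\F_2)$. By the Wu formula $\langle w_1,x\rangle=x\cdot x$ for $x\in H_1$, so $w_1$ is precisely the vector $\Omega$ singled out in Section~\ref{se:free} (the unique class with $\Omega\cdot x=x\cdot x$ for all $x$), and in any orthonormal basis this is $[1,\ldots,1]$. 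That $N_\lambda$ is orientable iff $\lambda=w_1$ follows from $\ker\bigl(p^*\colon H^1(N)\to H^1(N_\lambda)\bigr)=\langle\lambda\rangle$ for a double cover, together with $w_1(N)\neq 0$.

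For $s\geq 1$, set $X=T_g^{\anti}+s[DCC]$ and $Q=X/C_2$. Each $[DCC]$ removes two conjugate disks and glues in two conjugate M\"obius bands swapped by $\sigma$, so in the quotient it adds a single ordinary crosscap; thus $Q\iso N_{g+1+s}$ and $H_1(Q;\F_2)=V_0\oplus\langle\alpha_1,\ldots,\alpha_s\rangle$, where $V_0\iso H_1(N_{g+1};\F_2)$ comes from the $T_g^{\anti}$ part and $\alpha_i$ is the core of the $i$th new crosscap. The loops $\alpha_i$ are one-sided and supported in pairwise disjoint crosscap neighborhoods disjoint from a chosen set of representatives for $V_0$, so $\alpha_i\cdot\alpha_j=\delta_{ij}$ and $\alpha_i\cdot v=0$ for all $v\in V_0$; since orthogonality to a spanning set is orthogonality to the whole span, an orthonormal basis $f_1,\ldots,f_{g+1}$ of $V_0$ (the one from the $s=0$ analysis, on which $\Lambda$ is identically $1$) together with $\alpha_1,\ldots,\alpha_s$ is an orthonormal basis of $H_1(Q;\F_2)$. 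Finally, over each new crosscap neighborhood the cover $X\to Q$ is trivial, being two disjoint M\"obius bands interchanged by $\sigma$, so each $\alpha_i$ lifts to a loop in $X$; hence $\Lambda_X(\alpha_i)=0$ (recall $\Lambda_X$ vanishes on a class exactly when the corresponding loop lifts to $X$, from the discussion preceding Proposition~\ref{pr:cover-classify}). Combined with $\Lambda_X(f_j)=1$, the characteristic class is $[1,\ldots,1,0,\ldots,0]$ with $g+1$ ones and $s$ zeros. As a sanity check, this vector has content $g+1\bmod 2$, so it lies in the orbit of $a_1$ for $g$ even and of $a_2$ for $g$ odd (Lemma~\ref{le:one}), consistent with applying Proposition~\ref{pr:Tga-char} and Examples~\ref{ex:T_0,2s}--\ref{ex:T_1,2s}.

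The step I expect to be the main obstacle is making the splitting $H_1(Q;\F_2)=V_0\oplus\langle\alpha_1,\ldots,\alpha_s\rangle$ honest: precisely identifying the sub-surface of $Q$ carrying $V_0$ and verifying that the classes realizing $\Lambda_{T_g^{\anti}}$ can be represented away from the new crosscaps, i.e.\ turning the informal ``the extra crosscaps play no role'' into a localization-of-surgery argument of the kind already used in Section~\ref{se:general}. The $w_1$/orientation-cover argument is what keeps the $s=0$ input clean; the rest is routine bookkeeping.
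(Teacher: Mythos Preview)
Your proposal is correct. The paper does not give a formal proof of this proposition: it computes the cases $g\leq 3$ by hand in the preceding examples, remarks that ``the extra crosscaps play essentially no role'', and asserts the general pattern.

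Your treatment of $s=0$ via the orientation cover and $w_1$ is a genuinely different and cleaner route than the paper's. The paper finds the characteristic class of $T_g^{\anti}$ for small $g$ by drawing the quotient, choosing an ad hoc orthonormal basis, and checking which loops lift; it then leaves the general $g$ to the reader. Your observation that $T_g^{\anti}\to N_{g+1}$ is the orientation double cover (being the unique connected double cover with orientable total space), hence classified by $w_1(N_{g+1})=\Omega=[1,\ldots,1]$, handles all $g$ uniformly with no pictures. The paper's approach is self-contained and concrete but not actually complete for general $g$; yours is uniform and short at the cost of importing the Wu-type identity $\langle w_1,x\rangle=x\cdot x$ (equivalently, that $w_1$ detects one-sidedness).

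For $s\geq 1$ your argument is the paper's informal one, made precise. The splitting you flag as a possible obstacle is not one: $Q\cong N_{g+1}\#(\RP^2)^{\# s}$, and connected sum gives an orthogonal decomposition of $(H_1,\cdot)$ since cycles in the two pieces can be taken disjoint; the cover $X\to Q$ restricts to $T_g^{\anti}\to N_{g+1}$ away from the new crosscaps (up to removed disks, which carry no $H_1$), so $\Lambda_X|_{V_0}=\Lambda_{T_g^{\anti}}$. No further machinery is needed.
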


\begin{example}
\label{ex:T1rot}
As one more example, we consider the $C_2$-space
$X=T_1^{\rot}+[DCC]$.  Then $X/C_2$ is a torus with a crosscap, which
is isomorphic to $N_3$.  We depict this space below (where the two
circles that look like boundary components are actually identified):

\begin{picture}(300,100)(0,40)
\put(60,0){\includegraphics[scale=0.5]{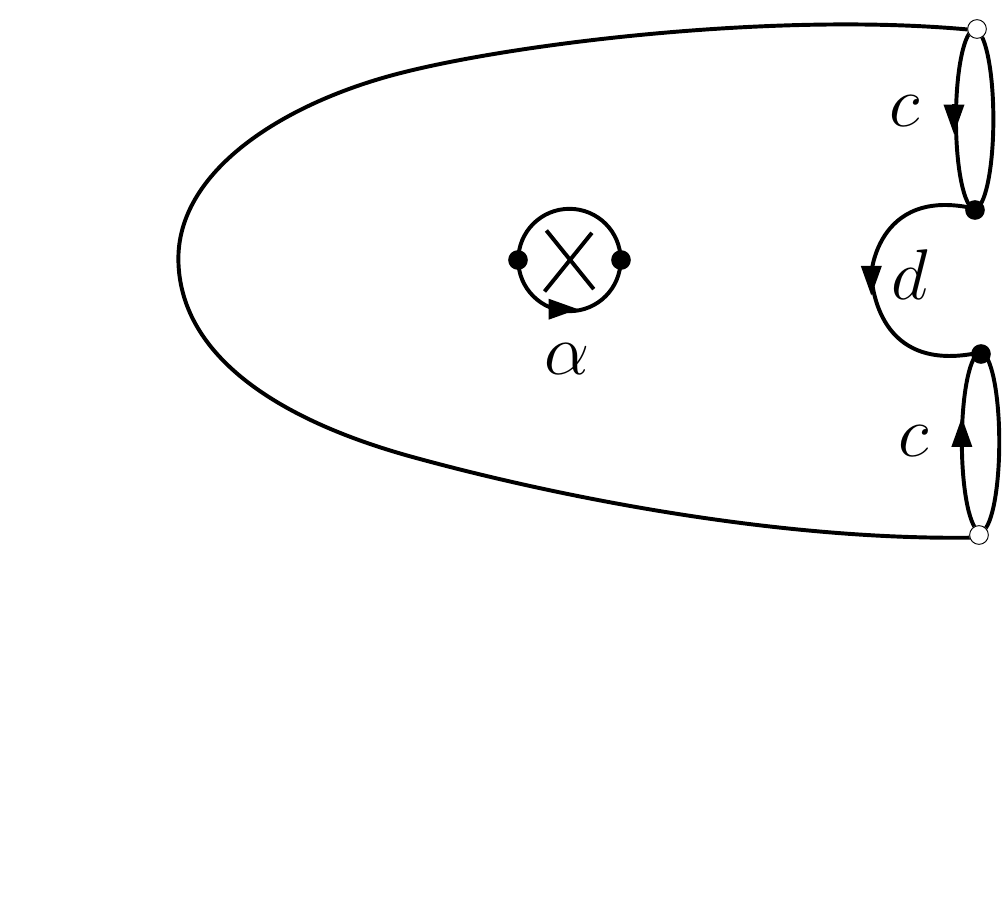}}
\end{picture}

\noindent
The elements $\alpha$, $c$, and $d$ are a basis for $H_1$, and we have
\[ \alpha\cdot \alpha=c\cdot d=1, \quad \alpha\cdot c=\alpha\cdot
d=c\cdot c=d\cdot d=0.
\]
So $\alpha+c+d$, $\alpha+d$, $\alpha+c$ is an orthonormal basis.  
Both $\alpha$ and $c$ lift to loops in $X$, but $d$ does not.  So the
characteristic class for $X$ is $[1,1,0]$.  This is the same
characteristic class as $T_1^{\anti}+[DCC]$, and so we conclude that
these are isomorphic $C_2$-spaces.  The result below is a routine
extension of this:
\end{example}

\begin{prop}
\label{pr:rot=anti}
When $g$ is odd there are $C_2$-equivariant isomorphisms
\[ T_g^{\rot}+s[DCC] \iso T_1^{\anti}+(g+s-1)[DCC]. 
\]
\end{prop}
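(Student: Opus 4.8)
The plan is to reduce everything to the case $g=1$, which is essentially already handled by Example~\ref{ex:T1rot}, and then bootstrap.

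First I would dispose of the general odd $g$ assuming the statement for $g=1$. By the last formula in Proposition~\ref{pr:Tga-char}, for odd $g$ and $s\geq 1$ one has $T_g^{\rot}+s[DCC]\iso T_1^{\rot}+(g+s-1)[DCC]$, and since $g+s-1\geq 1$ the $g=1$ case (with $s$ replaced by $g+s-1$) identifies the right-hand side with $T_1^{\anti}+(g+s-1)[DCC]$. So the whole proposition follows once we know $T_1^{\rot}+s[DCC]\iso T_1^{\anti}+s[DCC]$ for every $s\geq 1$.

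For this last assertion the base case $s=1$ is exactly Example~\ref{ex:T1rot}: both $T_1^{\rot}+[DCC]$ and $T_1^{\anti}+[DCC]$ are free $C_2$-actions with quotient $N_3$, and the characteristic-class computation given there shows each is classified by the orbit of $[1,1,0]$, hence the two are equivariantly isomorphic. To pass from $s=1$ to arbitrary $s\geq 1$ I would use that $[DCC]$-surgery respects equivariant isomorphism: given any equivariant isomorphism $f\colon A\to B$ between $C_2$-manifolds with nontrivial action, a choice of disk $D\subseteq A$ disjoint from $\sigma D$ together with the disk $f(D)$ on the $B$-side lets $f$ extend over the glued-in conjugate pairs of M\"obius bands to an equivariant isomorphism $A+[DCC]\to B+[DCC]$; by the independence of the $[DCC]$-construction on these choices (Corollary~\ref{co:surg-inv}) we get $A+[DCC]\iso B+[DCC]$, and iterating $k$ times gives $A+k[DCC]\iso B+k[DCC]$ for all $k\geq 0$. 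Applying this with $A=T_1^{\rot}+[DCC]$, $B=T_1^{\anti}+[DCC]$, and $k=s-1$ yields $T_1^{\rot}+s[DCC]\iso T_1^{\anti}+s[DCC]$, completing the argument.

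Alternatively, and this is presumably the ``routine extension'' the text alludes to, one can compute the characteristic class of $T_1^{\rot}+s[DCC]$ directly. Its quotient is $T_1$ with $s$ crosscaps, i.e.\ $N_{s+2}$, with basis $\alpha,c,d$ as in Example~\ref{ex:T1rot} together with the $s-1$ orthonormal cycles $\alpha_2,\dots,\alpha_s$ around the extra crosscaps; those cycles lift to loops in the total space and are orthogonal to $\alpha,c,d$, so after orthonormalizing to $\alpha+c+d,\ \alpha+d,\ \alpha+c,\ \alpha_2,\dots,\alpha_s$ one reads off the characteristic class $[1,1,0,\dots,0]\in\F_2^{s+2}$. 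This agrees with the characteristic class of $T_1^{\anti}+s[DCC]$ from Proposition~\ref{pr:Tgs-char} (taking $g=1$), so Corollary~\ref{co:classify} again delivers the isomorphism. The one step needing care on this second route is checking that the extra crosscap cycles are genuinely orthogonal to the ``old'' part of $H_1$ and that the non-lifting class $d$ is unaffected by adding them; that bookkeeping is the main obstacle here, and it is exactly what the first argument avoids.
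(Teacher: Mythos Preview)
Your proof is correct and follows essentially the same route as the paper: establish the $g=1$, $s=1$ case via Example~\ref{ex:T1rot}, extend to all $s\geq 1$ by adding more crosscaps, and reduce general odd $g$ to $g=1$ via $T_g^{\rot}\iso T_1^{\rot}\esum T_{(g-1)/2}$ (which is what underlies the formula you cite from Proposition~\ref{pr:Tga-char}). Your alternative characteristic-class computation is also valid and is indeed the ``routine extension'' the text alludes to.
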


\begin{proof}
For $g=1$ and $s=1$ this is the content of Example~\ref{ex:T1rot}.
For $g=1$ and $s>1$ the result follows immediately from the $s=1$ case
by adding more
crosscaps.  Finally, the general case then follows
from the $g=1$ case by using
Proposition~\ref{pr:Tg-free} to write $T_g^{\rot}\iso T_1^{\rot}\esum
T_{(g-1)/{2}}$. Then
\[ T_g^{\rot}+s[DCC]\iso T_1^{\rot}+(g-1+s)[DCC]\iso
T_1^{\anti}+(g-1+s)[DCC].
\]
\end{proof}

\subsection{Classification theorems}
The following result completely classifies all free actions on
surfaces with a specified quotient space:

\begin{prop}
\label{pr:classify-base}
\mbox{}\par
\begin{enumerate}[(a)]
\item
For $g\geq 1$ the set $\cS(T_g)$ contains exactly one element,
represented by the 180-degree rotation on $T_{2g-1}$.  
\item For $r\geq 3$ the set $\cS(N_r)$ has three elements.  One is represented
by the antipodal map on $T_{r-1}$, and the others are represented by the
$C_2$-spaces $S^2_a+(r-1)[DCC]$ and $T_1^{\anti}+(r-2)[DCC]$.  
\item The set $\cS(N_1)$ has one element, represented by the antipodal
map on $S^2$.  The set $\cS(N_2)$ has two elements, represented by
$T_1^{\anti}$ and $S^2_a+[DCC]$.
\end{enumerate}
\end{prop}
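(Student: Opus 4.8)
The plan is to push everything through Corollary~\ref{co:classify}, which identifies $\cS(T_g)$ with the nonzero orbits of $\Sp(2g,\Z/2)$ on $(\Z/2)^{2g}$ and $\cS(N_r)$ with the nonzero orbits of $O(r,\Z/2)$ on $\F_2^r$, and then to pin down which orbit each explicitly-named $C_2$-space represents by reading off its characteristic class from Proposition~\ref{pr:Tgs-char}.

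First the counting. By Lemma~\ref{le:two}, $\Sp(2g,\Z/2)$ has a single nonzero orbit on $(\Z/2)^{2g}$ for every $g\geq 1$, so $|\cS(T_g)|=1$. By Lemma~\ref{le:one}, $O(r,\Z/2)$ has exactly three nonzero orbits on $\F_2^r$ when $r\geq 3$ (represented by $a_1$, $a_2$, $\Omega$) and exactly two when $r=2$ (represented by $a_1$, $\Omega$), so $|\cS(N_r)|=3$ for $r\geq 3$ and $|\cS(N_2)|=2$. For $r=1$ one notes directly that $O(1,\Z/2)$ is the trivial group, so $\F_2^1$ has two orbits and $|\cS(N_1)|=1$.

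Now the identification. For part (a) it is enough to produce a single path-connected free $C_2$-space with quotient $T_g$: the rotation space $T_{2g-1}^{\rot}$ exists because $2g-1$ is odd, it is free and connected, and $T_{2g-1}^{\rot}/C_2\iso T_{(1+(2g-1))/2}=T_g$ by the Euler-characteristic argument of Section~\ref{se:cons-free}; since $|\cS(T_g)|=1$ this is the unique class. Likewise $S^2_a$ is free, connected, and $S^2_a/C_2\iso\RP^2=N_1$, so it is the unique element of $\cS(N_1)$. For the remaining cases I check that the three listed spaces land in $\cS(N_r)$---each is free (as $X\esum M$ is free whenever $X$ is), each has connected underlying surface, and on quotients $T_g^{\anti}/C_2\iso N_{g+1}$ while each $[DCC]$ becomes the addition of one crosscap (immediate from the definition of $\esum$), so $(S^2_a+(r-1)[DCC])/C_2\iso N_r$ and $(T_1^{\anti}+(r-2)[DCC])/C_2\iso N_r$---and then compute characteristic classes via Proposition~\ref{pr:Tgs-char}: writing $S^2_a=T_0^{\anti}$ it gives $[1,0,\dots,0]=a_1$ for $S^2_a+(r-1)[DCC]$, $[1,1,0,\dots,0]=a_2$ for $T_1^{\anti}+(r-2)[DCC]$, and $[1,1,\dots,1]=\Omega$ for $T_{r-1}^{\anti}$. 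These are the three distinct nonzero orbits of Lemma~\ref{le:one}, so they represent the three elements of $\cS(N_r)$; the $r=2$ case of the same computation gives $a_1$ for $S^2_a+[DCC]$ and $\Omega$ for $T_1^{\anti}$, which are the two nonzero orbits in $\F_2^2$, proving part (c).

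The only real obstacle is bookkeeping: one must keep the index shifts straight (the quotient of $T_g^{\anti}$ is $N_{g+1}$, not $N_g$; $S^2_a+(r-1)[DCC]$ has underlying surface $N_{2r-2}$ but quotient $N_r$) and apply Proposition~\ref{pr:Tgs-char} correctly in the boundary case $g=0$, where $T_0^{\anti}$ is to be read as $S^2_a$---which is exactly the content of Example~\ref{ex:T_0,2s}. Beyond that there is no hard step; all the substance has already been isolated in the orbit computations and the characteristic-class formula.
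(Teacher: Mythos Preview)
Your argument is correct and follows essentially the same route as the paper: both reduce to Corollary~\ref{co:classify}, count orbits via Lemmas~\ref{le:one} and~\ref{le:two}, and then identify representatives by their characteristic classes. The one small difference is that the paper singles out $T_{r-1}^{\anti}$ by observing it is the orientation double cover of $N_r$ (hence the unique orientable element of $\cS(N_r)$), whereas you treat it uniformly with the other two via Proposition~\ref{pr:Tgs-char} applied with $s=0$; this is a perfectly valid alternative and the paper itself remarks that the orientation-cover argument is ``not essential.''
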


Note that part (c) can in some ways be regarded as an instance of (b),
where one simply ignores the spaces in the list that don't make sense
(or are repetitions) when $r=1$ and $r=2$.  

\begin{proof}
Corollary~\ref{co:classify} and Lemma~\ref{le:two} imply that
$\cS(T_g)$ has exactly one element.  The rotation action on $T_{2g-1}$
{\it is\/} a free $C_2$-action whose quotient is isomorphic to $T_g$ (for
example because the quotient is orientable with Euler characteristic
$\frac{1}{2}\chi(T_{2g-1})$), and so this rotation action represents
the unique element of $\cS(T_g)$.

For (b), we again note that Corollary~\ref{co:classify} and
Lemma~\ref{le:one} imply that $\cS(N_r)$ has exactly three elements.
The antipodal action on $T_{r-1}$ has a non-orientable quotient of
Euler characteristic equal to $\frac{1}{2}(2-2(r-1))=2-r$, and so it
must be isomorphic to $N_r$.  So this antipodal action represents one
element of $\cS(N_r)$.  

The following paragraph is not essential to our argument, but it is
worth noting that it is easy to see that the antipodal action on
$T_{r-1}$ is the only orientable element of $\cS(N_r)$.  Let
$\tilde{N_r} \ra N_r$ be the orientation cover (a point of which is a
point in $N_r$ together with a local orientation at that point).  If
$X\ra N_r$ is any $2$-fold cover where $X$ is orientable, then there
is a canonical map $X\ra \tilde{N_r}$ giving an isomorphism of
covers of $N_r$.  Being a covering map, this is necessarily
$C_2$-equivariant and therefore an equivariant isomorphism.  Applying
this to $X=T_{r-1}$ with the antipodal action, or to any other free
action on $T_{r-1}$, one sees they are all isomorphic to the action on
$\tilde{N_r}$.

By the last paragraph, the remaining two elements of $\cS(N_r)$ must
be represented by $C_2$-actions on non-orientable surfaces.  Since
the Euler characteristic of such a surface must be
$2\chi(N_r)=2(2-r)=2-(2r-2)$, we conclude we are looking for
$C_2$-actions on $N_{2r-2}$.   Now we simply observe that
$S^2_a+(r-1)[DCC]$ and $T_1^{\anti}+(r-2)[DCC]$ are two such spaces and their
characteristic classes have been computed to be different;
see Example~\ref{ex:T_0,2s}, Example~\ref{ex:T_1,2s}, and
Lemma~\ref{le:one}.

The proof for part (c) is similar, and left to the reader.  
\end{proof}

The main classification theorem for free actions is now an easy consequence
of Proposition~\ref{pr:classify-base}:

\begin{proof}[Proof of Theorem~\ref{th:free-classify}]
Suppose we have a free $C_2$-action on $T_g$, and let $Q=T_g/C_2$.
Then $Q$ is a closed $2$-manifold and
$\chi(Q)=\frac{1}{2}\chi(T_g)=1-g$.  If $g$ is even then $Q$ must be
non-orientable and so $Q\iso N_{g+1}$.  We have computed
$\cS(N_{g+1})$, and we know that it contains only one element whose
underlying space is $T_g$: this is precisely $T_{g}^{\anti}$.  This
proves (a).

When $g$ is odd we can have $Q\iso T_{(1+g)/2}$ or $Q\iso N_{1+g}$.  In
the latter case the reasoning is the same as in the last paragraph.
In the former case, we have computed $\cS(T_{(1+g)/2})$ and know that
it contains exactly one element, represented by $T_{g}^{\rot}$.  This
proves (b).

Now suppose we have a free $C_2$-action on $N_s$, and again let
$Q=N_s/C_2$.  Since an orientable surface cannot be covered by a
non-orientable surface, $Q$ must be non-orientable.  
We also have $\chi(Q)=\frac{1}{2}(2-s)$, and so this
forces $s$ to be even.  We have now proven (c).  

Finally, if $s=2t$ then $\chi(Q)=1-t=2-(1+t)$ and so $Q\iso N_{t+1}$.
We have computed $\cS(N_{t+1})$ and know that when $t\geq 2$ it
contains only two elements whose underlying space is $N_{2t}$, namely
$S^2_a+t[DCC]$ and $T_1^{\anti}+(t-1)[DCC]$.  This proves (e), and the
same analysis applies for (d).
\end{proof}


\section{$C_2$-actions on tori}
\label{se:orient}

Our goal in this section is to understand all $C_2$-actions on the
genus $g$ torus $T_g$.   There are two main cases, depending on
whether the action is orientation-preserving or reversing.
We start by defining two special classes of
actions,
called the {\it spit\/} action and the 
{\it reflection\/} action,   and then we will promptly state the main result.

\subsection{Special constructions}
\label{se:special}
For $r\leq \frac{g}{2}$ let
$T_{g,r}^{\spit}$ be the following $C_2$-space:

\begin{picture}(300,140)
\put(70,0){\includegraphics[scale=0.7]{sail2.pdf}}
\end{picture}

\noindent
The action is the $180$-degree rotation about the dotted axis (the
``spit'').  Note that the number of doughtnut holes lying along the
spit is $g-2r$, and so the number of fixed points is
$2(g-2r)+2=2+2g-4r$.  Usually it is more convenient to remember the
number of fixed points in the notation, and so we will also write
$T_g^{\spit}[F]$ for the version of this construction with $F$ fixed
points (meaning that $F=2+2g-4r$ here).  

Likewise, for $r\leq \frac{g}{2}$ we define $T_{g,r}^{\refl}$ to be
the $C_2$-space depicted by:

\begin{picture}(300,140)
\put(70,0){\includegraphics[scale=0.7]{sail3.pdf}}
\end{picture}

\noindent
The action is reflection in the indicated plane.  Here the fixed set is a
disjoint union of circles, numbering one more than the doughnut
holes that meet the plane.  So the fixed set consists of $g-2r+1$
circles.  Again, usually it is more convenient to have the number of
ovals in the notation: so we will write $T_g^{\refl}[C]$ for the
version of this construction with $C$ ovals (so $C=g-2r+1$).  

Finally, let us define a third type of $C_2$-space denoted
$T^{\anti}[g,r]$.  This is obtained by starting with
a genus $g$ torus with antipodal action, cutting out
the interiors of $r$ disjoint disks together with their conjugates,
and finally identifying the boundary points with
their antipodes:

\begin{picture}(300,140)
\put(60,20){\includegraphics[scale=0.6]{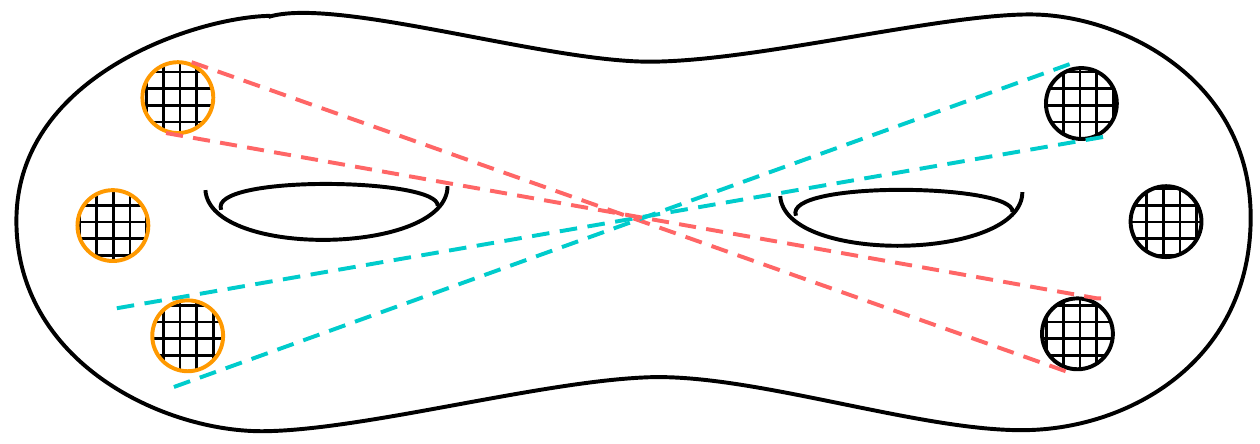}}
\put(287,56){$T^{\anti}[2,3]$}
\end{picture}
 
\noindent
Note that $T^{\anti}[g,r]\iso T_{g+r}$, and the fixed set consists of
$r$ disjoint circles.  Note as well that
$T^{\anti}[g,r]-T^{\anti}[g,r]^{C_2}$ is path-connected, so this
equivariant space is different from the $T_{?,?}^{\refl}$ spaces
defined above.

\begin{remark}
\label{re:T-surgery}
The $C_2$-spaces introduced above all have surgery-based descriptions
as well:
\addtocounter{subsection}{1}
\begin{align}
T_g^{\spit}[F] &\iso 
\Bigl [S^{2,2}+\bigl (\tfrac{F}{2}-1\bigr
)[S^{1,1}-\antitube]\Bigr ] \esum T_{\bigl (\frac{2+2g-F}{4}\bigr )}
\\
\notag
&\iso 
\Doub\Bigl (T_{\bigl (\frac{2+2g-F}{4}\bigr )};S^{1,1}\Bigr )+
(\tfrac{F}{2}-1\bigr
)[S^{1,1}-\antitube]
\end{align}
\addtocounter{subsection}{1}
\begin{align}
 T_g^{\refl}[C]&\iso 
\Bigl [S^{2,1}+\bigl (C-1\bigr
)[S^{1,0}-\antitube]\Bigr ] \esum T_{\bigl (\frac{1+g-C}{2}\bigr )} \\
\notag
&\iso 
\Doub\Bigl (T_{\bigl (\frac{1+g-C}{2}\bigr )};S^{1,0}\Bigr )+
(C-1\bigr
)[S^{1,0}-\antitube]
\end{align}
and
\addtocounter{subsection}{1}
\begin{align}
 T^{\anti}[g,r]\iso T_g^{\anti}+r[S^{1,0}-\antitube].
\end{align}
\end{remark}

\begin{prop}
\label{pr:neededonce}
For $g\geq 0$ and $0\leq n\leq \frac{2+2g}{4}$, there are equivariant
isomorphisms
\[ T_g^{\spit}[2+2g-4n]\iso \begin{cases}
S^{2,2}+g[S^{1,1}-\antitube] & \text{if
  $n=0$},\\[0.1in]
T_{2n-1}^{\rot}+(g+1-2n)[S^{1,1}-\antitube] & \text{if $n>0$.}
\end{cases}
\]
\end{prop}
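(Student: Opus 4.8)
The plan is to use the surgery descriptions of Remark~\ref{re:T-surgery} to reduce the whole statement to a single genus-$2$ base case, and to settle that case by removing a suitably chosen $S^{1,1}$-antitube.

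Applying Remark~\ref{re:T-surgery} with $F=2+2g-4n$, and using $\tfrac F2-1=g-2n$ and $\tfrac{2+2g-F}{4}=n$, we get
\[
T_g^{\spit}[2+2g-4n]\iso\bigl[S^{2,2}+(g-2n)[S^{1,1}-\antitube]\bigr]\esum T_n.
\]
For $n=0$ this is $\bigl[S^{2,2}+g[S^{1,1}-\antitube]\bigr]\esum T_0$, and since $T_0\iso S^2$ and $X\esum S^2\iso X$ (the $\esum$ merely removes two conjugate disks and glues two disks back), the $n=0$ case follows. For $n\geq1$ I would write $T_n\iso T_1\#T_{n-1}$, invoke Remark~\ref{re:esum}, and use that the various surgery and $\esum$ operations involved, when carried out in disjoint regions, commute (a form of the choice-independence of Corollary~\ref{co:surg-inv}), to rewrite the right-hand side as $\bigl[(S^{2,2}\esum T_1)\esum T_{n-1}\bigr]+(g-2n)[S^{1,1}-\antitube]$. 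Granting the base case $S^{2,2}\esum T_1\iso T_1^{\rot}+[S^{1,1}-\antitube]$, commuting the antitube past the $\esum T_1$'s and iterating $T_1^{\rot}\esum T_k\iso T_{2k+1}^{\rot}$ (Proposition~\ref{pr:Tg-free}) then produces $T_{2n-1}^{\rot}+(g+1-2n)[S^{1,1}-\antitube]$.

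So everything comes down to $S^{2,2}\esum T_1\iso T_1^{\rot}+[S^{1,1}-\antitube]$, i.e.\ $T_2^{\spit}[2]\iso T_1^{\rot}+[S^{1,1}-\antitube]$. Put $X=S^{2,2}\esum T_1$; then $X^{C_2}$ is two isolated points $a,b$ (no ovals) and $X/C_2\iso(S^{2,2}/C_2)\#T_1\iso T_1$. Following Remark~\ref{re:S11-fp}, I would choose an admissible simple path $\alpha$ from $a$ to $b$ with $\alpha\cap\sigma\alpha=\{a,b\}$ for which the simple closed curve $\alpha\cup\sigma\alpha$ is \emph{non-separating}; a tubular neighborhood of it is an $S^{1,1}$-antitube, and removing this antitube (capping with $\Z/2\times S^1$-caps) leaves a \emph{connected} surface $Y$ carrying a \emph{free} $C_2$-action, with $\beta(Y)=\beta(X)-2=2$. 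Since the antitube and the caps each have a disk for their quotient, $Y/C_2$ is $X/C_2\iso T_1$ with a disk removed and re-glued, hence orientable; so $Y$ is a free $C_2$-action on a genus-$1$ surface with orientable quotient, which forces $Y\iso T_1^{\rot}$ by Proposition~\ref{pr:classify-base}(a) (equivalently Theorem~\ref{th:free-classify}(b)). As $\bigl(T_1^{\rot}+[S^{1,1}-\antitube]\bigr)-[S^{1,1}-\antitube]\iso T_1^{\rot}$ as well, Proposition~\ref{pr:surg-subtract} gives $X\iso T_1^{\rot}+[S^{1,1}-\antitube]$.

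The main obstacle is producing the non-separating curve $\alpha\cup\sigma\alpha$: Corollary~\ref{co:nicepath} and Remark~\ref{re:S11-fp} supply an admissible $\alpha$, but a priori $\alpha\cup\sigma\alpha$ might separate (the ``disconnecting'' alternative of Remark~\ref{re:S11-fp}, which here only re-exhibits $X$ as $S^{2,2}\esum T_1$). To avoid this, note that $\sigma$ interchanges the two handles of $S^{2,2}\esum T_1$, so $\sigma_*\neq\Id$ on $H_1(X;\Z/2)$ and $\im(\sigma_*+\Id)\neq0$; concatenating $\alpha$ with a loop $\ell$ at $a$ changes the $\Z/2$-homology class of $\alpha\cup\sigma\alpha$ by $(\sigma_*+\Id)[\ell]$, so a suitable $\ell$ makes the class nonzero (hence the curve non-separating), after which a routine surface-topology adjustment restores admissibility. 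Finally, the boundary value $F=2+2g-4n=0$, possible only for $g$ odd (at $n=\tfrac{g+1}{2}$), is read with the convention $T_g^{\spit}[0]:=T_g^{\rot}$, making the formula a tautology; alternatively, a $180^\circ$ rotation whose axis misses the surface is free and orientation-preserving, hence $\iso T_g^{\rot}$ by Theorem~\ref{th:free-classify}(b).
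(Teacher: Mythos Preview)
Your proof is correct and takes a genuinely different route from the paper's, though both share the same reduction step.

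Both arguments use the surgery description of Remark~\ref{re:T-surgery} (and the commuting of antitubes with $\esum$) to reduce to a base case of the form $T_{2n}^{\spit}[2]\iso T_{2n-1}^{\rot}+[S^{1,1}-\antitube]$. The paper proves this for \emph{all} $n\geq 1$ in one stroke, by a direct geometric picture: start from the standard model of $T_{2n-1}^{\rot}$ with its central rotation axis, sew in an $S^{1,1}$-antitube so that its own $180^\circ$ axis lines up with that of the torus, and observe that the resulting space is transparently $S^{2,2}\esum T_n=T_{2n}^{\spit}[2]$. You instead reduce further, writing $T_n\iso T_1\#T_{n-1}$ and iterating via Proposition~\ref{pr:Tg-free}, down to the single case $n=1$; then you resolve $T_2^{\spit}[2]\iso T_1^{\rot}+[S^{1,1}-\antitube]$ by removing a non-separating $S^{1,1}$-antitube and invoking the classification of free actions (Theorem~\ref{th:free-classify}) together with Proposition~\ref{pr:surg-subtract}. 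Your approach trades a picture for already-developed machinery; the paper's is shorter but less systematic.

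The one place where your argument is underspecified is the existence of an admissible $\alpha$ with $\alpha\cup\sigma\alpha$ non-separating. The homology computation is correct (replacing $\alpha$ by $\alpha\cdot\ell$ changes the class of $\alpha\cup\sigma\alpha$ by $(\sigma_*+\Id)[\ell]$, and $\sigma_*\neq\Id$ here), but the ``routine adjustment'' restoring simplicity and the condition $\alpha\cap\sigma\alpha=\{a,b\}$ does real work. You can sidestep it by an explicit construction: in $S^{2,2}\esum T_1$, let $\alpha$ run from one pole of the sphere into one handle, across it, and back to the other pole; then $\sigma\alpha$ traverses the conjugate handle, $\alpha\cap\sigma\alpha=\{a,b\}$ holds, and $[\alpha\cup\sigma\alpha]$ is the sum of the two handle-longitudes, which is nonzero in $H_1(T_2;\Z/2)$. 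This is precisely the red curve appearing later in Remark~\ref{re:strange}.
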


\begin{proof}
In the case $n=0$ this is obvious.  The general case follows
immediately once we establish $T_g^{\spit}[2]\iso
T_{g-1}^{\rot}+[S^{1,1}-\antitube]$ when $g$ is even and $g\geq 2$.  
Here the proof is geometric.  Observe that the $C_2$-action on a
standard $S^{1,1}$-antitube is 180-degree rotation about the spit
passing through the two fixed points.  
Start with the standard model of $T_{g-1}^{\rot}$, with its central
axis of rotation, and sew in an $S^{1,1}$-antitube so that its axis of
rotation matches the one of the torus.  The resulting space is
transparently isomorphic to $S^{2,2}\esum T_{g/2}$, which is a model
for $T_g^{\spit}[2]$.  The following pictures demonstrate the case
$g=4$:

\begin{picture}(300,150)(0,-60)
\put(-50,10){\includegraphics[scale=0.45]{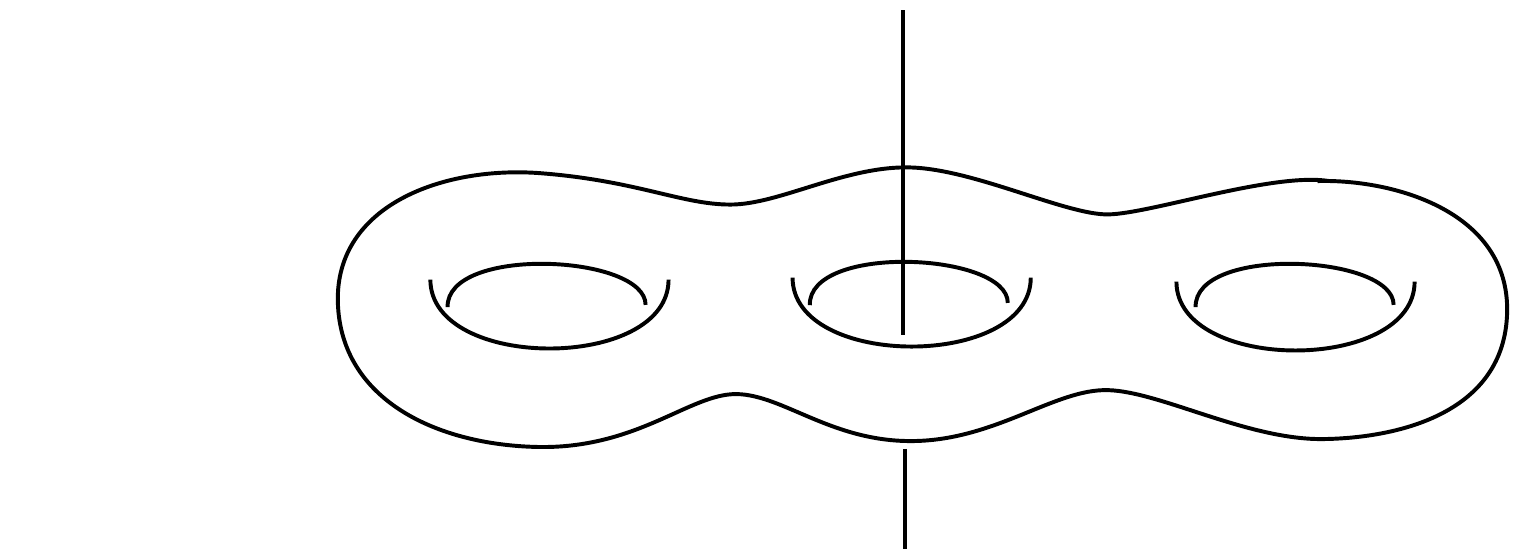}}
\put(160,40){$\longrightarrow$}
\put(150,10){\includegraphics[scale=0.45]{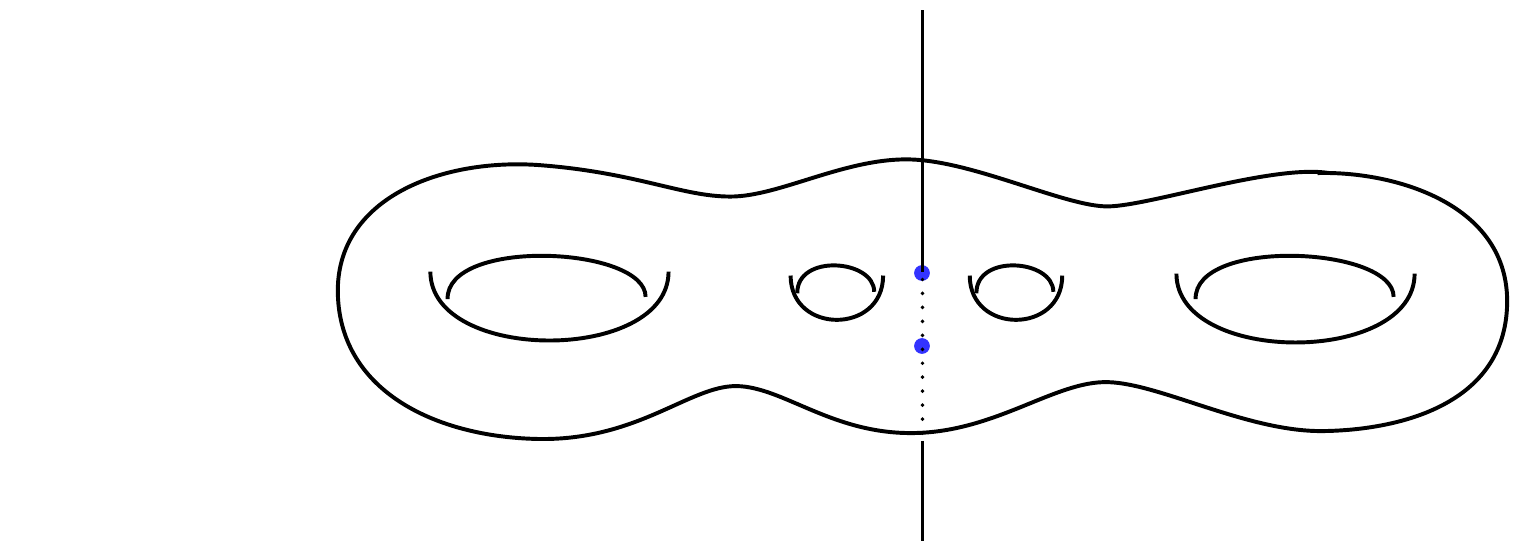}}
\put(68,-50){\includegraphics[scale=0.45]{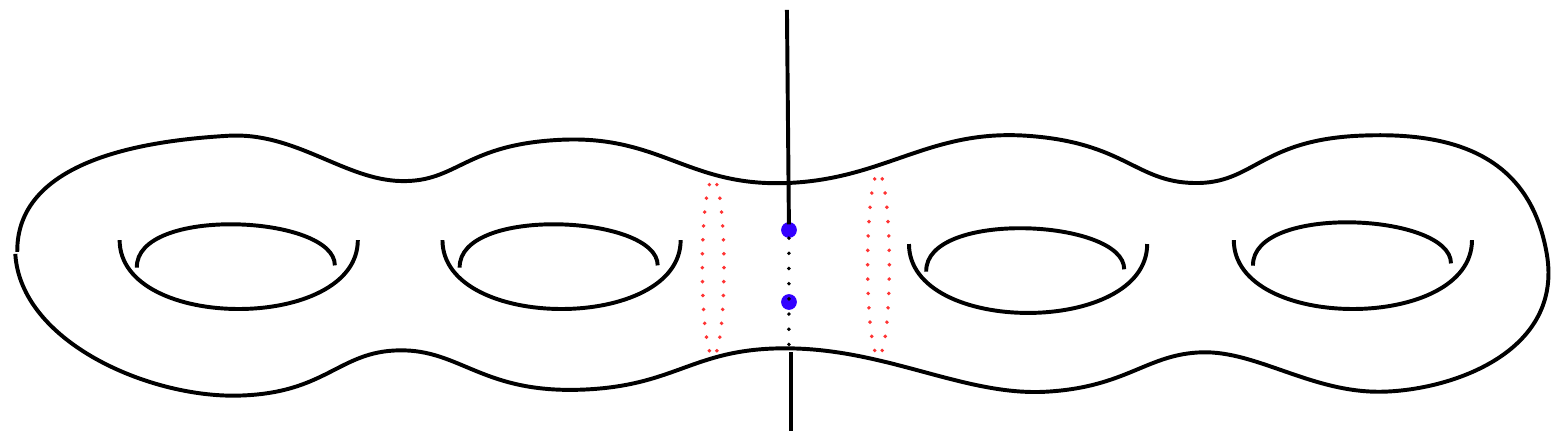}}
\put(280,-5){$\swarrow$}
\end{picture}
\end{proof}

Here is the main theorem concerning $C_2$-actions on $T_g$:

\begin{thm}
\label{th:T_g-action}
For $g\geq 0$ there are exactly $4+2g$ distinct involutions on $T_g$.
These are:
\begin{enumerate}[(i)]
\item The $2+\lceil \frac{g}{2}\rceil$ orientation-preserving actions,
namely 
\begin{itemize}
\item The trivial action,
\item $T_{g,r}^{\spit}$ for $0\leq r\leq \frac{g}{2}$, (equivalently,
$T_{g}^{\spit}[F]$ for $2\leq F\leq 2+2g$ and $F\equiv 2+2g$
mod $4$),  
\item  $T_{g}^{\rot}$ (when $g$ is odd).
\end{itemize}
\item The $2+g+\lfloor \frac{g}{2}\rfloor$ orientation-reversing
actions, namely
\begin{itemize}
\item $T_{g,r}^{\refl}$ for $0\leq r\leq \frac{g}{2}$, 
(equivalently, $T_g^{\refl}[C]$ for $1\leq C\leq g+1$ and $C\equiv g+1$
mod $2$), 
\item $T^{\anti}[u,g-u]$ for $0\leq u\leq g$, also known as
$T_u^{\anti}+(g-u)[S^{1,0}-\antitube]$.  
\end{itemize}
\end{enumerate}
\end{thm}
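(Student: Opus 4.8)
The plan is to split the argument according to whether the involution $\sigma$ preserves or reverses the orientation of $T_g$; since $T_g$ is connected and orientable, $\sigma$ acts on $H^2(T_g;\Z)\iso\Z$ by $\pm1$, and this sign is an isomorphism invariant. The trivial action (the unique action with $2$-dimensional fixed set) is dealt with immediately, so suppose $\sigma$ is nontrivial. If $\sigma$ is orientation-preserving, then at a fixed point the differential of $\sigma$ is an orientation-preserving linear involution of $\R^2$, hence is $-I$ (the identity would make $X^{C_2}$ open and, by connectedness, force $\sigma$ trivial); so $X^{C_2}$ is a finite set of $F$ points. If $\sigma$ is orientation-reversing, the differential at a fixed point is a reflection, so $X^{C_2}$ is a disjoint union of $C$ circles (ovals), each two-sided since $T_g$ is orientable, and there are no isolated fixed points.

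For the orientation-preserving case, if $\sigma$ is free then Theorem~\ref{th:free-classify} forces $X\iso T_g^{\rot}$, possible exactly when $g$ is odd. If $\sigma$ has fixed points, then applying $\chi(X)=2\chi(X/C_2)-F$ to the branched double cover $T_g\to T_g/C_2$ (whose base is a closed orientable surface $T_h$) gives $F=2+2g-4h$; in particular $F$ is even, $2\le F\le 2+2g$, and $F\equiv 2+2g\pmod4$. I then induct on $g$ using the $S^{1,1}$-antitube surgery of Remark~\ref{re:S11-fp}: choosing two fixed points and removing an $S^{1,1}$-antitube through them either disconnects $X$ — in which case $X\iso\Doub(S,1\colon S^{1,1})$ with $S$ orientable (because $X$ is) and $\beta(S)=g$, so $g$ is even, $F=2$, and $X\iso T_g^{\spit}[2]$ — or it does not, in which case $X\iso Y+[S^{1,1}-\antitube]$ with $Y$ a connected orientation-preserving action on $T_{g-1}$ having $F-2$ fixed points. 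By induction $Y$ is $T_{g-1}^{\spit}[F-2]$, or (when $F=2$, $g$ even) the free action $T_{g-1}^{\rot}$; the surgery descriptions of Remark~\ref{re:T-surgery} and Proposition~\ref{pr:neededonce} then identify $Y+[S^{1,1}-\antitube]$ with $T_g^{\spit}[F]$, completing part (i).

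For the orientation-reversing case I split according to whether $\sigma$ is separating — equivalently, according to the $Q$-sign, since $T_g^{\refl}[C]$ is separating with positive $Q$-sign while $T^{\anti}[u,g-u]$ is non-separating with negative $Q$-sign. If $\sigma$ is separating, Proposition~\ref{pr:doub} gives $X\iso\Doub(S,C)$ with $S$ orientable and $\beta(S)=g+1-C$, so $S\iso T_{(g+1-C)/2}$; this forces $1\le C\le g+1$ and $C\equiv g+1\pmod2$, and iterating $\Doub(T_h,j)\iso\Doub(T_h,j-1)+[S^{1,0}-\antitube]$ identifies $\Doub(T_{(g+1-C)/2},C)$ with the space $T_g^{\refl}[C]$ of Remark~\ref{re:T-surgery}. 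If $\sigma$ is non-separating and free ($C=0$), Theorem~\ref{th:free-classify} gives $X\iso T_g^{\anti}=T^{\anti}[g,0]$. If $\sigma$ is non-separating with $C\ge1$, then no oval is separating (a separating oval would make $X$ a double, hence separating), so an annular neighborhood of any oval is an $S^{1,0}$-antitube whose removal and capping leaves a connected surface; thus $X\iso Y+[S^{1,0}-\antitube]$ with $Y$ a connected orientation-reversing action on $T_{g-1}$ having $C-1$ ovals, and $Y$ is again non-separating (attaching an $S^{1,0}$-antitube to a double yields a double). By induction $Y\iso T^{\anti}[g-C,\,C-1]=T_{g-C}^{\anti}+(C-1)[S^{1,0}-\antitube]$ (base case $C=1$: $Y\iso T_{g-1}^{\anti}$), so $X\iso T_{g-C}^{\anti}+C[S^{1,0}-\antitube]=T^{\anti}[g-C,C]$, completing part (ii).

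Finally, each action in the list really does have underlying surface $T_g$ — clear from the explicit models and, for the surgery presentations, from an Euler characteristic count — and the listed actions are pairwise non-isomorphic, being distinguished in turn by their orientation behavior, by whether the fixed set is $2$-dimensional, and then by the invariants $F$, $C$, and the $Q$-sign; adding them up gives $2+\lceil g/2\rceil$ orientation-preserving and $2+g+\lfloor g/2\rfloor$ orientation-reversing actions, for a total of $4+2g$. I expect the main obstacle to be the bookkeeping inside the two inductions: checking that the surgery complement stays connected outside the explicitly identified doubling situations, that the induced action on the smaller torus remains orientation-preserving (respectively orientation-reversing and non-separating), and that $Y+[\text{antitube}]$ is indeed the advertised member of the $T_g^{\spit}$, $T_g^{\refl}$, or $T^{\anti}$ family — the last being a routine but somewhat fiddly rearrangement of the surgery descriptions in Remark~\ref{re:T-surgery}.
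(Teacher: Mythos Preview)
Your proposal is correct and follows essentially the same route as the paper's proof: split by orientation type, handle the separating orientation-reversing case via Proposition~\ref{pr:doub}, and treat the remaining cases by inductive antitube surgery ($S^{1,1}$ around pairs of fixed points, $S^{1,0}$ around ovals), invoking the free classification at the base and the surgery descriptions of Remark~\ref{re:T-surgery}/Proposition~\ref{pr:neededonce} to identify the results. Your contrapositive argument that $Y$ remains non-separating (since adding an $S^{1,0}$-antitube to a double yields a double) is a nice touch the paper leaves implicit; otherwise the arguments coincide.
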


We will need to develop some preliminary results before giving the
proof.  Note, however, that  that the 
following corollary is an immediate consequence:

\begin{cor}
\label{co:P3-orient}
An involution on $T_g$ is completely determined, up to equivariant
isomorphism, by the following invariants:
\begin{enumerate}[(i)]
\item Whether it is orientation-preserving or reversing;
\item The number of isolated fixed points;
\item The number of circles in the fixed set;
\item Whether $T_g-(T_g)^{C_2}$ is path-connected or not.  
\end{enumerate}
Note that (i) is superfluous information as long as the fixed set is
nonempty.  
\end{cor}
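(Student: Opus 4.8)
The plan is to read the corollary off the classification in Theorem~\ref{th:T_g-action}: tabulate the four proposed invariants for each of the $4+2g$ actions listed there and check that the resulting tuples are pairwise distinct. Write $F$ and $C$ for the numbers of isolated fixed points and of ovals. Invariant (i) requires no work, since Theorem~\ref{th:T_g-action} already partitions the actions into an orientation-preserving block (the trivial action, the spit actions $T_g^{\spit}[F]$, and, when $g$ is odd, $T_g^{\rot}$) and an orientation-reversing block (the reflection actions $T_g^{\refl}[C]$ and the spaces $T^{\anti}[u,g-u]\iso T_u^{\anti}+(g-u)[S^{1,0}-\antitube]$).

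I would then work inside each block. In the orientation-preserving block: the trivial action is the unique one with $(T_g)^{C_2}=T_g$, so it is singled out immediately; the remaining actions all have $C=0$, with $F=0$ for $T_g^{\rot}$ and $F\geq 2$ for every spit action, so invariant (ii) already separates them. In the orientation-reversing block: every action has $F=0$, and $T_g^{\anti}=T^{\anti}[g,0]$ is the unique member with $C=0$, since both $T_g^{\refl}[C]$ and $T^{\anti}[u,g-u]$ with $u<g$ have $C\geq 1$. For the members with $C\geq 1$ I would invoke the separating/non-separating dichotomy: a reflection action is a doubling construction (Remark~\ref{re:T-surgery}), hence separating, while each $T^{\anti}[u,g-u]$ is non-separating (as noted in Section~\ref{se:special}); so invariant (iv) distinguishes the two families, and invariant (iii) distinguishes the members within each family. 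Assembling the two blocks shows that all $4+2g$ actions carry distinct tuples of invariants, which proves the corollary. The closing remark that (i) is superfluous once the fixed set is nonempty is then immediate from the table, as every nonfree action is already pinned down by (ii)--(iv) alone.

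There is no real obstacle here: the argument is pure bookkeeping once the relevant invariants of the five classes are recorded. The only inputs beyond Theorem~\ref{th:T_g-action} itself are the two separation statements ($T_g^{\refl}[C]$ separating and $T^{\anti}[u,g-u]$ non-separating), and both are already established in Section~\ref{se:special}, either by inspection of the defining pictures or via the doubling descriptions in Remark~\ref{re:T-surgery} together with Proposition~\ref{pr:doub}.
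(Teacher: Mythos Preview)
Your proposal is correct and matches the paper's approach: the paper simply declares the corollary ``an immediate consequence'' of Theorem~\ref{th:T_g-action} without further argument, and your bookkeeping is exactly the verification that this entails. One small point worth tightening: when you say the trivial action ``is singled out immediately'' because $(T_g)^{C_2}=T_g$, you are appealing to something not literally on the list (i)--(iv); to stay within the stated invariants, just note that for the trivial action $T_g-(T_g)^{C_2}=\emptyset$, which is not path-connected, whereas for $T_g^{\rot}$ the complement is all of $T_g$ and hence path-connected, so invariant (iv) separates them.
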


%
%
%

\subsection{General considerations}

An involution on $T_g$ will be either orientation-preserving or
orientation-reversing, and of course this can be detected by looking
in a neighborhood of any single point. 
The fixed set of an involution on $T_g$ must consist of isolated
points and circles, and it cannot consist of both at once:

\begin{lemma}
\label{le:isolated-or-ovals}
The fixed set of a nontrivial involution on $T_g$ must either be a finite set of
points or else a disjoint union of copies of $S^1$.  If the fixed set
is nonempty and finite, the action is orientation-preserving.  If the
fixed set is a nonempty disjoint union of circles, the action is
orientation-reversing.  
\end{lemma}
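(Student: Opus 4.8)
The plan is to analyze the local structure of an involution at a fixed point. Near any fixed point $p$ of a smooth involution $\sigma$ on a surface, we may linearize: by averaging a Riemannian metric we can assume $\sigma$ acts linearly on the tangent space $T_pT_g$, so $d\sigma_p$ is an order-two element of $\GL(T_pT_g)\iso \GL_2(\R)$. The only such elements (up to conjugacy) are $\Id$ (impossible, since $p$ would then not be isolated and $\sigma$ would be the identity near $p$, hence everywhere by connectedness), $-\Id$, the reflection $\mathrm{diag}(1,-1)$, and the reflection $\mathrm{diag}(-1,1)$ (conjugate to the previous). So locally $\sigma$ looks like either the antipodal map on a disk (fixed set $=\{p\}$, orientation-preserving) or a reflection of a disk across a diameter (fixed set $=$ an arc through $p$, orientation-reversing). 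This is exactly the dichotomy recorded in the statement: a fixed point is either isolated and of ``rotation type,'' or lies on a one-dimensional fixed submanifold and is of ``reflection type.''

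First I would establish this local normal form and observe that the ``type'' (isolated versus curve, equivalently orientation-preserving versus reversing near $p$) is a \emph{locally constant} function on the fixed set $(T_g)^{C_2}$ in the following sense: the set of points with a disk-neighborhood on which $\sigma$ is antipodal is open, and likewise the set of points lying in the interior of a fixed arc is open; these two sets partition $(T_g)^{C_2}$. Next I would upgrade ``orientation-preserving near $p$'' to a \emph{global} statement: whether $\sigma$ preserves or reverses orientation is detected at a single point (since $T_g$ is connected and orientable, $\sigma^*$ on $H_2(T_g;\Z)\iso\Z$ is $\pm 1$, and the sign agrees with the local behavior of $d\sigma$ at any point, fixed or not). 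Therefore all fixed points have the \emph{same} type: if $\sigma$ is orientation-preserving every fixed point is isolated, so $(T_g)^{C_2}$ is a finite set; if $\sigma$ is orientation-reversing every fixed point is of reflection type, so $(T_g)^{C_2}$ is a closed $1$-manifold, i.e.\ a disjoint union of circles. This simultaneously proves that the fixed set cannot contain both kinds of points and pins down the orientation behavior in each nonempty case.

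The remaining loose end is the case $\sigma$ orientation-preserving with $(T_g)^{C_2}=\emptyset$ and $\sigma$ orientation-reversing with $(T_g)^{C_2}=\emptyset$: the lemma only asserts something when the fixed set is nonempty, so these require no argument, but I would note for completeness that the first is the free rotation-type action $T_g^{\rot}$ (possible only for $g$ odd) and the second the antipodal action $T_g^{\anti}$. To close the loop on the case division, I would also remark that compactness of $T_g$ forces the finite-set case to be genuinely finite (a discrete closed subset of a compact space), so there is nothing to worry about there.

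The main obstacle is not deep but is the step that deserves care: justifying the linearization of $\sigma$ near a fixed point (the Bochner-type averaging argument, or equivalently invoking that a smooth finite group action is locally smoothly conjugate to its linearization) and then correctly enumerating the conjugacy classes of involutions in $\GL_2(\R)$ with the right reading of each — in particular ruling out $d\sigma_p=\Id$ via connectedness, and checking that $d\sigma_p=-\Id$ really does force $p$ isolated (the fixed set of the linear model is just the origin, and linearization transports this to a punctured-neighborhood statement). Everything after that — the locally-constant-type observation and the global orientation argument via the action on $H_2$ — is routine.
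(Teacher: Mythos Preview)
Your proof is correct and follows essentially the same approach as the paper: both argue that the local model near a fixed point determines whether the involution is orientation-preserving (isolated fixed point, the $S^1_a$-cap in the paper's language, your $d\sigma_p=-\Id$) or orientation-reversing (fixed arc, the $S^{1,0}\times\R^{1,1}$ tube, your reflection), and since this is a global property on a connected orientable surface the two types cannot coexist. The paper's version is terse and simply asserts the local models, whereas you spell out the Bochner-type linearization and the $\GL_2(\R)$ classification that justify them; your version is the more complete one.
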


\begin{proof}
If there is an isolated fixed point, then locally around this point
the action must look like an $S^1_a$-cap; so the action is
orientation-preserving around this point, and therefore
orientation-preserving overall.  In contrast, if there is a fixed
circle then locally around the circle the action looks like
$S^{1,0}\times \R^{1,1}$, and so it is orientation-reversing.
Since an involution cannot be both orientation preserving and
reversing, the fixed set cannot consist of both isolated points and
copies of $S^1$.
\end{proof}

\begin{prop}
\label{pr:F-bounds}
Suppose an orientation-preserving involution on $T_g$ 
has finite fixed set of size $n$.  Then
$n\leq 2+2g$ and $n\equiv 2+2g$ mod $4$.  
\end{prop}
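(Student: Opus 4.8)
The plan is to pass to the orbit surface and compare Euler characteristics. Write $X=T_g$, let $\sigma$ be the involution, and let $X^{C_2}=\{x_1,\dots,x_n\}$. Since we work smoothly and $\sigma$ is orientation-preserving with each $x_i$ isolated, near $x_i$ the action is smoothly conjugate to an orthogonal linear involution of $\R^2$ whose only fixed point is the origin; the only orientation-preserving such involution is $-\Id$, i.e. rotation by $\pi$ (this local model is already recorded in the proof of Lemma~\ref{le:isolated-or-ovals}). Choose disjoint $\sigma$-invariant disk neighborhoods $D_i$ of the $x_i$ on which $\sigma$ acts this way; then each $D_i/C_2$ is again a disk. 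Hence $Q:=X/C_2$ is a closed surface and $X\ra Q$ is a branched double cover, branched precisely over the $n$ points $[x_i]$. Over the complement of those points the map is an honest $2$-fold covering with orientable total space and orientation-preserving deck transformation, so $Q$ minus finitely many points is orientable; since a M\"obius band in $Q$ could be isotoped off that finite set, $Q$ itself is orientable, say $Q\iso T_h$ for some $h\ge 0$.

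Now I count Euler characteristics. Put $X'=X-\bigcup_i\intr(D_i)$, so $\chi(X')=\chi(X)-n=(2-2g)-n$. The $C_2$-action on $X'$ is free, hence $X'\ra X'/C_2$ is a genuine $2$-fold cover and $\chi(X'/C_2)=\tfrac12((2-2g)-n)$. On the other hand $Q$ is obtained from $X'/C_2$ by gluing back the $n$ disjoint disks $D_i/C_2$ along their boundary circles, so $\chi(Q)=\chi(X'/C_2)+n=\tfrac12(2-2g+n)$. Equating with $\chi(T_h)=2-2h$ gives $4-4h=2-2g+n$, i.e. $n=2+2g-4h$. Since $h\ge 0$ this forces $n\le 2+2g$, and reducing modulo $4$ gives $n\equiv 2+2g \pmod 4$, as claimed.

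The only genuine input here beyond bookkeeping is the orientability of $Q$: without it one merely gets $Q\iso N_r$ with $r=1+g-\tfrac n2$, which recovers only the mod-$2$ congruence $n\equiv 2g\equiv\beta\pmod 2$ of Scherrer's Proposition~\ref{pr:Scherrer} — the orientation-preserving hypothesis is exactly what upgrades this to a congruence mod $4$. (The bound $n\le 2+2g$ by itself also follows at once from Scherrer, or from Smith theory.) So the main point to verify carefully is that $Q$ is a closed orientable surface, which is the routine combination of smooth linearization at the fixed points with the descent of the orientation through the free part of the action; I expect this to be the only step requiring care. An alternative derivation of the mod-$4$ congruence uses the Lefschetz fixed point formula: each fixed point has local index $+1$, so $n=L(\sigma)=2-\tr(\sigma^*\mid H_1(X;\Q))$, and since $\sigma^*$ preserves the intersection pairing its $\pm 1$-eigenspaces are symplectic, hence even-dimensional, forcing $\tr(\sigma^*\mid H_1)\equiv 2g\pmod 4$ and the same conclusion.
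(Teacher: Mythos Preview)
Your proof is correct and follows essentially the same approach as the paper: both arguments pass to the quotient, use that the orientation-preserving hypothesis makes $Q$ orientable, and then compare Euler characteristics via the branched double cover to obtain $n=2+2g-4h$. The paper removes the fixed points themselves rather than disk neighborhoods, but this is a cosmetic difference; your alternative Lefschetz argument also appears in the paper as a remark immediately following the main proof.
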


\begin{proof}
Let $F\subseteq T_g$ denote the fixed set.  Then $T_g-F \ra
(T_g-F)/C_2$ is a $2$-fold covering space, so
\[ 2-2g-n=\chi(T_g-F)=2\cdot \chi((T_g-F)/C_2).
\]
Since the action is orientation-preserving, the quotient space
$(T_g-F)/C_2$ is an orientable manifold with boundary: it is
$T_{g'}-F$ for some $g'$.  So our Euler characteristic identity
becomes
\[ 2-2g-n=2[(2-2g')-n]=4-4g'-2n \]
and so $n=2+2g-4g'$.  
\end{proof}

\begin{remark}
For good measure we give a second proof of the above result, using the
Lefschetz Fixed Point Theorem.  Indeed, that theorem implies that
$n=1-\tr(\sigma_*)+1$ where $\sigma_*$ is the map on $H_1(T_g;\Z)$
induced by $\sigma$.  This can be represented by a $2g\times 2g$
matrix whose square is the identity.  So the Jordan canonical form of
$\sigma_*$ will have only $1$ and $-1$ along the diagonal, say $s$
copies of $1$ and $2g-s$ copies of $-1$.  Then $\tr(\sigma_*)=2s-2g$,
and $n=2+2g-2s$.  To end the proof we note that since $\sigma$ is
orientation-preserving the map $\sigma_*$ is symplectic (it preserves
the cup product), and so it has determinant $1$; hence $2g-s$ must be
even, and so $s$ is even.
\end{remark}

The following result goes back to Harnack \cite{H}:

\begin{prop}[Harnack's Theorem]
\label{pr:C-bounds}
Let $\sigma$ be an orientation-reversing action on $T_g$, and let $C$
be the number of ovals in the fixed set.  If the action is separating
then $C\leq 1+g$ and $C\equiv 1+g$ (mod $2$).  If the action is
non-separating then $C\leq g$.  
\end{prop}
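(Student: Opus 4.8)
The plan is to transfer everything to the quotient surface $Q = T_g/C_2$, run an Euler-characteristic count, and then separately relate ``separating'' to the orientability of $Q$.

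First I would record the shape of $Q$. Since $\sigma$ is orientation-reversing, Lemma~\ref{le:isolated-or-ovals} says the fixed set $X^{C_2}$ is a disjoint union of exactly $C$ circles and nothing else. Near each oval the action is the local model $S^{1,0}\times\R^{1,1}$, whose quotient is a half-open collar $S^{1}\times[0,\infty)$, while away from $X^{C_2}$ the action is free; hence $Q$ is a compact surface-with-boundary, its boundary being the image of $X^{C_2}$ — a disjoint union of $C$ circles — and its interior being $(T_g-X^{C_2})/C_2$. The Euler characteristic is then forced: from the additivity identity $\chi(X)=2\chi(X/C_2)-\chi(X^{C_2})$ (obtained by excising a closed tubular neighborhood $\bar N$ of the ovals, noting $\chi(\bar N)=0$, that $X-\intr \bar N$ is a free $C_2$-space, and that $\bar N/C_2$ is a union of annuli) together with $\chi(X^{C_2})=0$ and $\chi(T_g)=2-2g$, I get $\chi(Q)=1-g$.

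Next I would case on whether $Q$ is orientable. If $Q$ is orientable it is a genus-$h$ surface with $C$ boundary circles, so $2-2h-C=1-g$, giving $C=g+1-2h$; since $h\ge 0$ this yields $C\le g+1$ with $C\equiv g+1\pmod 2$. If $Q$ is non-orientable it is $N_k$ with $C$ boundary circles, so $2-k-C=1-g$, giving $C=g+1-k\le g$ since $k\ge 1$. It remains to match these two alternatives with the separating/non-separating dichotomy. In the separating case $T_g-X^{C_2}$ breaks into two path components swapped by $\sigma$ (indeed Proposition~\ref{pr:doub} presents $T_g$ as a doubling $\Doub(S,C)$), so the quotient map carries one component homeomorphically onto the interior of $Q$; that component is an open subsurface of the orientable $T_g$, hence $Q$ is orientable. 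In the non-separating case $T_g-X^{C_2}$ is connected, so $T_g-X^{C_2}\to Q-\partial Q$ is a connected double cover with deck transformation $\sigma$; if $Q$ were orientable I could pull an orientation of $Q-\partial Q$ back to a $\sigma$-invariant orientation of $T_g-X^{C_2}$, but that surface also carries the orientation restricted from $T_g$, and two orientations of a connected surface differ only by a global sign, so $\sigma$ would preserve the restricted orientation as well — contradicting that $\sigma$ is orientation-reversing. Thus non-separating forces $Q$ non-orientable, and the two bounds follow. The Euler-characteristic bookkeeping is routine; the step that needs care is this last dictionary — in particular the orientation-transport argument and the verification that $Q$ is genuinely a surface with exactly $C$ boundary circles.
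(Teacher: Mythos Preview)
Your proof is correct, but takes a different route from the paper, especially in the non-separating case.

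The paper handles the two cases by separate, more elementary arguments. For the separating case it essentially does what you do (each half of $T_g$ is an orientable surface with $C$ boundary circles, and Euler characteristic gives $C=1+g-2g'$), so the difference there is only cosmetic: you work with the quotient $Q$, the paper works directly with one of the two halves of $T_g$, and in the separating case these are the same thing. For the non-separating case, however, the paper does not go through the quotient at all; it simply invokes the classical characterization of genus: any collection of $g+1$ disjoint simple closed curves on $T_g$ disconnects it, so if $X-X^{C_2}$ is connected then there are at most $g$ ovals.

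Your approach is more uniform: you run the same Euler-characteristic computation on $Q$ in both cases and then prove the dictionary ``separating $\Leftrightarrow$ $Q$ orientable''. This buys you something the paper's proof does not: you have effectively established that for orientation-reversing involutions on $T_g$, the $\epsilon$-invariant and the $Q$-sign coincide, which the paper uses elsewhere but does not prove at this point. The cost is the orientation-transport argument in the non-separating direction, which is a bit more delicate than the one-line genus fact the paper uses.
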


\begin{proof}
For the separating case, the two path components are homeomorphic
copies of an
orientable $2$-manifold  with boundary consisting of $C$ circles.  So
the Euler characteristic of one of these path components is $2-2g'-C$,
where $g'$ is the genus.
The torus $T_g$ is obtained by gluing these two copies together along
their boundary, so we have
\[ 2-2g=\chi(T_g)=2(2-2g'-C).\]
So $1-g=2-2g'-C$, or $C=1+g-2g'$.  The conclusions are immediate.  

For the non-separating case, we simply recall one interpretation of
the genus: removing any set of $1+g$ disjoint closed curves from $T_g$ must
disconnect the manifold.  Therefore the non-separating hypothesis
implies that  $C\leq g$.  
\end{proof}

\begin{remark}
All of the possibilities for $C$ left open by Proposition~\ref{pr:C-bounds}
actually occur, as we have seen from the constructions in
Section~\ref{se:special}.
\end{remark}

%

\subsection{The proof of the main classification theorem in the
  orientable case}
Our main techniques will be $S^{1,0}$-surgery around ovals and
$S^{1,1}$-surgery around pairs of fixed points, as in
Remark~\ref{re:S11-fp}.

\begin{lemma}
\label{le:S^2}
Up to isomorphism, the only $C_2$-actions on $S^2$ are $S^{2,0}$,
$S^{2,1}$, $S^{2,2}$, and $S^2_a$.
\end{lemma}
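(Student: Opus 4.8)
The plan is to classify $C_2$-actions on $S^2$ by first analyzing the fixed set, then using it to pin down the action. First I would observe that the fixed set $(S^2)^{C_2}$ is a disjoint union of isolated points and ovals, and as in Lemma~\ref{le:isolated-or-ovals} it cannot contain both an isolated point and an oval (an isolated fixed point forces the action to be orientation-preserving near it, an oval forces orientation-reversing). There is then a short case analysis on the shape of the fixed set. If the fixed set is empty, the action is free and $S^2/C_2$ is a closed surface of Euler characteristic $1$; the only such surface is $\RP^2$, and since free actions with a given quotient are classified by $\cS(\RP^2)=\cS(N_1)$, which by Proposition~\ref{pr:classify-base}(c) has exactly one element, the action must be $S^2_a$.

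Next I would handle the case where the fixed set is a nonempty union of ovals. An oval $\cO$ in $S^2$ is automatically two-sided (every simple closed curve in $S^2$ is two-sided and separating), and $S^2-\cO$ has two disk components. This quickly forces there to be exactly one oval: a second oval would, by Proposition~\ref{pr:conn}, have to touch both components of $S^2-\cO$, contradicting that $S^2-\cO$ is already disconnected into two disks whose closures meet only along $\cO$. So $C=1$, the action is separating, and by the doubling analysis (Proposition~\ref{pr:doub} and its corollary) $S^2\iso \Doub(D^2,1)=S^{2,1}$, the reflection across the equator.

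Then I would treat the case of isolated fixed points. Here the action is orientation-preserving, and by Proposition~\ref{pr:F-bounds} the number $F$ of fixed points satisfies $F\leq 2+2g=2$ and $F\equiv 2$ mod $4$, forcing $F=2$. So the action is an orientation-preserving involution of $S^2$ with exactly two fixed points; I would argue directly (e.g.\ by taking a simple arc between the two fixed points as in Remark~\ref{re:S11-fp}, or by the standard fact that an orientation-preserving smooth involution of $S^2$ with two fixed points is conjugate to a rotation) that this must be $S^{2,2}$, the $180$-degree rotation. Finally I would assemble these cases to conclude that $S^{2,0}$ (the trivial action), $S^{2,1}$, $S^{2,2}$, and $S^2_a$ are the only possibilities, and note they are pairwise non-isomorphic since their fixed sets (empty vs.\ one oval vs.\ two points vs.\ empty, distinguished from $S^2_a$ by the trivial action having fixed set all of $S^2$) and orientation behavior differ. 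The main obstacle is the isolated-fixed-point case: showing that an orientation-preserving involution of $S^2$ with two fixed points is \emph{conjugate} to the standard rotation requires a small amount of genuine smoothing/isotopy work rather than just an Euler characteristic count, so I would lean on the surgery framework (cutting out an equivariant disk neighborhood of the fixed pair along a conjugate-invariant arc and recognizing the complement as an equivariant disk) to make it rigorous.
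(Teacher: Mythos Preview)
Your proposal is correct and follows essentially the same case-by-case approach as the paper: split on the shape of the fixed set, use the free-action classification for the empty case, bound the number of ovals and identify $S^{2,1}$, and use the $S^{1,1}$-antitube surgery of Remark~\ref{re:S11-fp} for the two-fixed-point case. The only notable difference is that for the oval count the paper invokes Harnack's Theorem (Proposition~\ref{pr:C-bounds}) rather than Proposition~\ref{pr:conn}; your use of the latter is slightly imprecise (it concerns components of $X-X^{C_2}$, not of $S^2-\cO$), but the fix is immediate since two disjoint ovals in $S^2$ would give three components of $S^2-X^{C_2}$, contradicting Proposition~\ref{pr:conn}.
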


\begin{proof}
Let $X$ denote the $2$-sphere with a nontrivial $C_2$-action.  If the
action is orientation-preserving then the fixed set is finite by
Lemma~\ref{le:isolated-or-ovals}.  The number of fixed points is at most $2$ and
equivalent to $2$ modulo $4$ by Proposition~\ref{pr:F-bounds}, so there are
exactly two fixed points.  As in Remark~\ref{re:S11-fp}, there is an
$S^{1,1}$-antitube in $X$ surrounding these fixed points.  Since $X$
has genus zero, removing this antitube must separate $X$ into two
components.  The only possibility is that $X$ consists of this
antitube together with a $\Z/2\times S^1$-cap, showing that $X\iso 
S^{2,2}$.  

Next suppose that the action on $X$ is orientation-reversing.  If there are
no fixed points then we know $X\iso S^2_a$ by
Theorem~\ref{th:free-classify}, since there is only one free action on
$S^2$.  If there are fixed points then the fixed set is a union of
finitely-many ovals by Lemma~\ref{le:isolated-or-ovals}, and the
number of ovals is at most $1$ by Harnack's Theorem.  
So there is exactly one oval, and a tubular neighborhood of this oval must be an
$S^{1,0}$-antitube.  Just as in the orientation-preserving case, the
fact that $X$ has genus zero implies that it must consist of this
antitube together with a $\Z/2\times S^1$-cap.  That is, $X\iso S^{2,1}$.
\end{proof}

\begin{proof}[Proof of Theorem~\ref{th:T_g-action}]
Let $X$ denote $T_g$ with a nontrivial $C_2$-action.
First consider the case where the action is orientation-reversing.
The action
is either separating or non-separating.  If it is separating, 
then by Proposition~\ref{pr:doub}  $X\iso\Doub(S,C)$ for
some orientable surface $S$.  The doubled space is evidently homeomorphic to
$T_{g}^{\refl}[C]$, and so this case is complete.

Next assume the action on $X$ is non-separating.  We proceed by
induction on the genus.  When $g=0$ there are only four $C_2$-actions
on $S^2$ by Lemma~\ref{le:S^2} and the only one that is both
orientation-reversing and non-separating is $S^2_a\iso
T^{\anti}[0,0]$.  Now assume that $g\geq 1$, and let $C$ be the number
of components in the fixed set of $X$.  If $C=0$ then we know by
Theorem~\ref{th:free-classify} that $X\iso T_g^{\anti}=T^{\anti}[g,0]$.  When
$C\geq 1$, pick one of the ovals to remove, 
replacing it with a
$\Z/2\times S^1$-cap: write $X-[S^{1,0}-\antitube]$ for
this new space.
The action here is
still orientation-reversing and non-separating, and the genus has
decreased to $g-1$.  So by induction there is a $C_2$-equivariant isomorphism
\[ X-[S^{1,0}-\antitube]\iso T^{\anti}[u,g-1-u]
\]
for some $0\leq u\leq g-1$.  But then
\[ X-[S^{1,0}-\antitube]\iso
T^{\anti}[u,g-1-u]\iso T^{\anti}[u,g-u]-[S^{1,0}-\antitube]
\]
and so Proposition~\ref{pr:surg-subtract} immediately shows $X\iso
T^{\anti}[u,g-u]$.

Finally, we deal with the case where the action on $X$ is
orientation-preserving.  We know in this case that the fixed set is
finite and consists of an even number of points by
Proposition~\ref{pr:F-bounds}.  
We proceed by
induction on the number of fixed points $F$.  If $F=0$ then we know by
Theorem~\ref{th:free-classify}
 that $g$ is odd and $X$ is $C_2$-equivariantly isomorphic to
 $T_g^{\rot}$.
Since $F$ must be odd, the next case is $F=2$.    As 
in Remark~\ref{re:S11-fp} we can find an $S^{1,1}$-antitube in $X$
that passes through the two fixed points.  If slicing the tube
disconnects the space then $g$ is even and our $C_2$-space is
$\Doub(T_{\frac{g}{2}},1:S^{1,1})$, but this is
$T^{\spit}_{g}[2]$.  If slicing the tube does not disconnect the space
then we can do $S^{1,1}$-surgery to obtain a new space
$X-[S^{1,1}-\antitube]$ of genus $g-1$.  This new space has no fixed
points and the action is still orientation-preserving, so by induction $g-1$ is odd 
and $X-[S^{1,1}-\antitube]\iso T_{g-1}^{\rot}$.  It follows that
\[ X\iso T_{g-1}^{\rot}+[S^{1,1}-\antitube]\iso T_g^{\spit}[2]
\]
where we have used Proposition~\ref{pr:neededonce} for the second isomorphism.
If it seems strange that both cases led to the space $T_g^{\spit}[2]$,
see Remark~\ref{re:strange} below.

The final case is when $F\geq 4$.
Pick distinct fixed points $a$ and
$b$ and again use Remark~\ref{re:S11-fp} to produce  an $S^{1,1}$-antitube
surrounding these points.  If slicing this tube disconnects the space,
our $C_2$-space would be
$\Doub(T_{\frac{g}{2}},1:S^{1,1})$; but this is not possible as
$F>2$.  So we can slice the tube and do 
$S^{1,1}$-surgery to obtain a new connected space
$X-[S^{1,1}-\antitube]$.  The action here
is still
orientation-preserving, the genus has gone down to $g-1$, and the number of
fixed points is $F-2$.  So
by induction we know there is a $C_2$-equivariant isomorphism 
\[ 
X-[S^{1,1}-\antitube] \iso 
T_{g-1}^{\spit}[F-2].
\] 
But
\[ T_{g-1}^{\spit}[F-2]\iso T_{g}^{\spit}[F]-[S^{1,1}-\antitube] 
\]
and therefore Proposition~\ref{pr:surg-subtract} implies $X\iso T_{g}^{\spit}[F]$.  
\end{proof}

\begin{remark}
\label{re:strange}
In the above proof we saw something slightly strange.  In
$T^{\spit}_g[2]$ one way of doing $S^{1,1}$-surgery about the two
fixed points results in a disconnected space, whereas another way of
doing the surgery results in a connected space.    This of course
happens all the time, but this is the first place we have had to
confront the phenomenon.  The picture below shows the case $g=2$.

\begin{picture}(300,80)
\put(70,5){\includegraphics[scale=0.5]{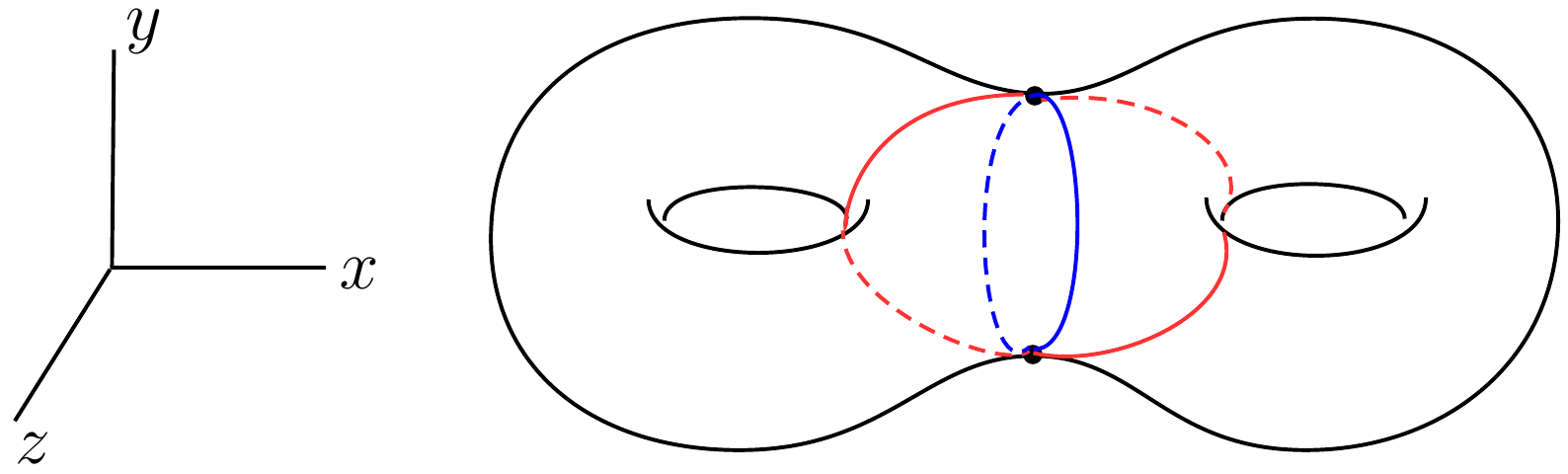}}
\end{picture}

\noindent
The action is rotation about the $z$-axis, which is where
the two marked points lie.  
Removing a tubular neighborhood of the blue $S^{1,1}$ disconnects the
space, whereas removing a tubular neighborhood of the red $S^{1,1}$
does not.  

\end{remark}


\section{Invariants}

In this section we return to some generalities, as a prelude to our
classification of $C_2$-actions in the non-orientable case.  We
investigate the basic invariants of $C_2$-actions, how they are
affected by certain types of surgery, and some general restrictions
that exist on such invariants.

\medskip

If $C_2$ acts on a surface $X$ then the fixed set $X^{C_2}$ necessarily
consists of isolated fixed points together with copies of $S^1$.  When
$X$ is orientable the classification of $C_2$-actions turns
out to be simple because both cannot happen at the same time: the fixed set
can contain either isolated fixed points or copies of $S^1$, but not
both.  In the non-orientable case this no longer holds true;
there are many more $C_2$-actions around, and classifying them is
much more involved.


For the result below, recall the invariants $F$, $C$, $C_+$, $C_-$, $\beta$, and the
$Q$-sign,
attached to any $C_2$-action on a $2$-manifold.  These were defined in
Section~\ref{se:intro}.

\begin{prop}
\label{pr:surg-invariance}
The following quantities are 
invariant under $S^{1,0}$-surgery,
$S^{1,1}$-surgery, and $FM$-surgery:
\begin{enumerate}[(i)]
\item $F+2C-\beta$,
\item The $Q$-sign,
\item The residue of $F-C_-$ modulo $2$.
\end{enumerate}  
\end{prop}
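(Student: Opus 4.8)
The plan is to treat the three surgeries one at a time, recording in each case the effect on the basic invariants $F$, $C_+$, $C_-$, $\beta$, and the $Q$-sign, and then reading off the three claims. Since all of these depend only on the pair of $C_2$-spaces involved, not on which is ``before'' and which is ``after'', and since (working within connected closed surfaces, as is the standing convention) sewing in an antitube or replacing an isolated fixed point by a copy of $S^1(M)$ leaves the surface connected, it suffices to let $X'$ be $X$ with the relevant piece \emph{inserted}; write $\Delta$ for the change of a quantity in passing from $X$ to $X'$. The first step is to compute $\Delta F$, $\Delta C_+$, $\Delta C_-$ by inspection: a conjugate pair of disks is cut out of the free part of $X$ (two disjoint disks cannot meet $X^{C_2}$), so no pre-existing fixed point or oval is altered, and one only needs the fixed set of the inserted piece. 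An $S^{1,0}$-antitube $S^{1,0}\times\R^{1,1}$ contributes the fixed circle $S^{1,0}\times\{0\}$, with trivial normal $\R^{1,1}$-bundle, i.e.\ a new two-sided oval, so $(\Delta F,\Delta C_+,\Delta C_-)=(0,1,0)$; an $S^{1,1}$-antitube $S^{1,1}\times\R^{1,1}$ contributes the two fixed points $(S^{1,1})^{C_2}\times\{0\}$ and no oval, so $(\Delta F,\Delta C_+,\Delta C_-)=(2,0,0)$; and an $FM$-surgery deletes an isolated fixed point and inserts $S^1(M)$, whose fixed set is the one-sided core circle of the M\"obius band, so $(\Delta F,\Delta C_+,\Delta C_-)=(-1,0,1)$.

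For $\Delta\beta$ I would use the identity $\beta=2-\chi$ for a connected closed surface, computing $\Delta\chi$ by writing $X$, respectively $X'$, as the complement $W$ of the surgered region glued along circles to the removed, respectively inserted, piece; since a circle times an interval has Euler characteristic zero, $\Delta\chi=\chi(\text{inserted})-\chi(\text{removed})$. For the antitube surgeries the removed piece is a pair of disks ($\chi=2$) and the inserted piece is a compact antitube $S^1\times[-1,1]$ ($\chi=0$), giving $\Delta\chi=-2$ and $\Delta\beta=2$; for $FM$ the removed piece is an $S^1_a$-cap ($\chi=1$) and the inserted piece is a M\"obius band ($\chi=0$), giving $\Delta\chi=-1$ and $\Delta\beta=1$. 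Collecting the data (with $\Delta C=\Delta C_++\Delta C_-$),
\[
\begin{array}{c|cccc}
\text{surgery} & \Delta F & \Delta C & \Delta C_- & \Delta\beta \\ \hline
S^{1,0} & 0 & 1 & 0 & 2 \\
S^{1,1} & 2 & 0 & 0 & 2 \\
FM & -1 & 1 & 1 & 1
\end{array}
\]
one sees at once that $\Delta(F+2C-\beta)=0$ in each row and that $\Delta(F-C_-)\in\{0,2,-2\}$, hence $\equiv 0\pmod 2$, in each row. This establishes (i) and (iii).

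For (ii) the key observation is that each surgery replaces a $\sigma$-invariant region $R\subseteq X$ by a region $R'$ with the same (free) boundary, with common complement $W=X\setminus\intr R=X'\setminus\intr R'$, and that the quotients $R/C_2$, $R'/C_2$ are always orientable: a conjugate pair of disks has quotient a disk; a compact $S^{1,0}$-antitube has quotient an annulus (its two ends are swapped, hence descend to one circle, while the fixed oval descends to the other boundary circle); a compact $S^{1,1}$-antitube has quotient a disk (Euler characteristic $\tfrac12(0+2)=1$, a single boundary circle from its two swapped ends, the two fixed points becoming interior points); an $S^1_a$-cap has quotient a disk; and $S^1(M)$ has quotient an annulus. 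Since $X/C_2$ and $X'/C_2$ are obtained from the same $W/C_2$ by gluing the disk $R/C_2$, respectively the disk-or-annulus $R'/C_2$, along the same boundary circle, $X'/C_2$ is homeomorphic either to $X/C_2$ (when $R'/C_2$ is a disk, i.e.\ $S^{1,1}$-surgery) or to $X/C_2$ with an open disk removed (when $R'/C_2$ is an annulus, i.e.\ $S^{1,0}$- and $FM$-surgery, the extra annulus being a collar). In either case $X'/C_2$ is orientable iff $X/C_2$ is, so the $Q$-sign is preserved; this completes (ii) and the proof.

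The routine portion is the arithmetic summarized in the table. The step requiring real care is (ii): one must correctly identify the quotient of each model piece, tracking in particular how its boundary circles and its fixed set behave under the quotient map, and only then invoke the elementary facts that regluing a disk reproduces the surface and that deleting an open disk neither creates nor destroys orientability.
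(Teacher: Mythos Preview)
Your proof is correct and follows essentially the same approach as the paper's: both compute the effect of each surgery on $F$, $C$, $C_-$, and $\beta$ to verify (i) and (iii), and both identify the quotient of each inserted piece (disk or annulus) to verify (ii). Your version is simply more explicit, supplying the Euler-characteristic bookkeeping for $\Delta\beta$ and spelling out why $R'/C_2$ is a disk in the $S^{1,1}$ case and an annulus in the $S^{1,0}$ and $FM$ cases, where the paper just asserts these facts.
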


\begin{proof}
All three statements are trivial to check.  
For example, for (i) note that $FM$-surgery decreases $F$ by
one while increasing both $\beta$ and $C$ by one.  Similarly,
adding an $S^{1,1}$-antitube increases $F$ by two, increases $\beta$
by two, and has no effect on $C$.  Adding an $S^{1,0}$-antitube
increases $C$ by one and $\beta$ by two.  

We leave the reader to think about (iii), which is similar.  

For (ii), note that at the level of quotients 
$S^{1,0}$-surgery has the effect
of removing an open disk and gluing on one end of a cylinder.  
$FM$-surgery turns out to have the same effect, since $S^1(M)/C_2$ is
a cylinder.  
Similarly, $S^{1,1}$-surgery has
the effect of  removing an open disk (in the quotient) and then gluing
back on a cap.  None of these procedures change orientability.  
\end{proof}

\begin{cor}
\label{co:Qsign-rule}
The $Q$-sign of the equivariant space
\[ X+q[DCC]+r[S^{1,0}-\antitube]+s[S^{1,1}-\antitube]+t[FM]
\]
is negative if $q>0$, and is equal to the $Q$-sign of $X$ when $q=0$.
\end{cor}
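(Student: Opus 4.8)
The plan is to treat the two cases $q=0$ and $q>0$ separately. When $q=0$, the space in question is obtained from $X$ by a finite sequence of $S^{1,0}$-surgeries, $S^{1,1}$-surgeries, and $FM$-surgeries: adjoining an $S^{1,0}$-antitube or an $S^{1,1}$-antitube is an instance of $S^{1,0}$- respectively $S^{1,1}$-surgery, and each $[FM]$ is an $FM$-surgery. By Proposition~\ref{pr:surg-invariance}(ii) every such surgery leaves the $Q$-sign unchanged, so iterating over the multiplicities $r$, $s$, $t$ shows that the $Q$-sign of $X+r[S^{1,0}-\antitube]+s[S^{1,1}-\antitube]+t[FM]$ equals that of $X$.

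For $q>0$ I would first use the fact that the equivariant isomorphism type of a surgered space does not depend on the order in which the surgeries are performed (stated in Section~\ref{se:intro}; see also Corollary~\ref{co:surg-inv}) to rewrite the space as $W+q[DCC]$, where $W=X+r[S^{1,0}-\antitube]+s[S^{1,1}-\antitube]+t[FM]$. Then, exactly as in the proof of Proposition~\ref{pr:DCC-esum}, I would iterate Remark~\ref{re:esum} to get $W+q[DCC]=(\cdots((W\esum N_1)\esum N_1)\cdots)\esum N_1\iso W\esum N_q$, so it suffices to show $(W\esum N_q)/C_2$ is non-orientable. Unwinding the definition of $\esum$: writing $N_q'$ for $N_q$ with an open disk removed and $W'$ for $W$ with two disjoint conjugate open disks $D$ and $\sigma D$ removed, the space $W\esum N_q$ is built from $W'$ by gluing in $N_q'\times\{0\}$ and $N_q'\times\{1\}$, interchanged by the involution. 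Passing to the quotient, $D\amalg\sigma D$ descends to a single open disk $\bar D\subseteq W/C_2$ and the pair $N_q'\times\{0,1\}$ descends to a single copy of $N_q'$, so $(W\esum N_q)/C_2$ is $W/C_2$ with $\bar D$ deleted and $N_q'$ glued back along $\partial\bar D$; that is, $(W\esum N_q)/C_2\iso (W/C_2)\# N_q$. Since $q\geq 1$ the surface $N_q=(\RP^2)^{\# q}$ is non-orientable, hence so is any connected sum with it, and therefore the $Q$-sign is negative.

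The argument is essentially routine, so I do not expect a serious obstacle; the one point deserving care is the identification $(W\esum N_q)/C_2\iso (W/C_2)\# N_q$. This is immediate from the definition of $\esum$ precisely because $D$ and $\sigma D$ are disjoint (so the quotient map is injective on each, yielding the single disk $\bar D$) and the two glued copies of $N_q'$ are swapped freely by the involution. One should also note that $W$ is connected — each of the three surgery types, applied to a connected $2$-manifold, produces a connected $2$-manifold — so that the connected-sum notation is unambiguous; alternatively one could perform the $[DCC]$ operations first, conclude negativity of the $Q$-sign at that stage, and then apply the $q=0$ case to the remaining surgeries.
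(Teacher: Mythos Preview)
Your proof is correct and follows essentially the same approach as the paper. The paper's argument is terser---it simply observes that adding dual crosscaps to $X$ adds a single crosscap to the quotient (your identification $(W\esum N_q)/C_2\iso (W/C_2)\# N_q$ spelled out), and then invokes Proposition~\ref{pr:surg-invariance}(ii) for the $q=0$ case---but the content is the same; your alternative ordering (do the $[DCC]$ first, then use the $q=0$ case) is exactly how the paper implicitly structures it.
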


\begin{proof}
For the first statement, just note that adding dual crosscaps to $X$
has the affect of adding a single crosscap to the quotient.  
The second statement is just Proposition~\ref{pr:surg-invariance}(ii).
\end{proof}

\begin{lemma}
\label{le:iso-fp}
There does not exist a $C_2$-action on a $2$-manifold $X$ having exactly
one fixed point.
\end{lemma}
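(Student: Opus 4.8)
The plan is to argue by contradiction using the classification of free $C_2$-spaces by $\Z/2$-bundles from Section~\ref{se:free}, applied to the complement of a small invariant disk around the putative lone fixed point.

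So suppose $X$ is a closed $2$-manifold with a $C_2$-action and $X^{C_2}=\{x\}$. Since $X^{C_2}$ consists of a single point, that point is an isolated fixed point, so locally around $x$ the action looks like an $S^1_a$-cap (a linear involution of $\R^2$ fixing only the origin is conjugate to the negation map $v\mapsto -v$, i.e.\ rotation by $\pi$). Choose a corresponding invariant disk $D\subseteq X$ with $\bd D\iso S^1_a$, and set $Y=X-\intr(D)$. Then $Y$ is a connected compact surface with $\bd Y\iso S^1_a$, and the $C_2$-action on $Y$ is free because a neighborhood of the only fixed point has been removed. Passing to quotients, $\bar Y:=Y/C_2$ is a connected compact surface, $Y\ra\bar Y$ is a principal $\Z/2$-bundle, and---since $\bd Y\iso S^1_a$ is a single circle with free action---$\bd\bar Y=\bd Y/C_2$ is a single circle and $\bd Y\ra\bd\bar Y$ is the connected double cover of $S^1$.

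Next I would extract the contradiction. Exactly as in Section~\ref{se:free}, the bundle $Y\ra\bar Y$ has a characteristic class $\lambda\in H^1(\bar Y;\Z/2)$, and its restriction to $\bd\bar Y$ is the characteristic class of the restricted bundle $\bd Y\ra\bd\bar Y$; since the latter is the connected double cover of $S^1$, we get $\lambda\restr{\bd\bar Y}\neq 0$ in $H^1(\bd\bar Y;\Z/2)\iso\Z/2$. On the other hand, over the field $\Z/2$ the restriction map $H^1(\bar Y;\Z/2)\ra H^1(\bd\bar Y;\Z/2)$ is dual to $H_1(\bd\bar Y;\Z/2)\ra H_1(\bar Y;\Z/2)$, and this last map is zero: in the long exact sequence of the pair $(\bar Y,\bd\bar Y)$ the relative $\Z/2$-fundamental class in $H_2(\bar Y,\bd\bar Y;\Z/2)$ maps under $\bd$ to $[\bd\bar Y]$, so $[\bd\bar Y]=0$ in $H_1(\bar Y;\Z/2)$ by exactness. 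Hence every element of $H^1(\bar Y;\Z/2)$ restricts to $0$ on $\bd\bar Y$, contradicting $\lambda\restr{\bd\bar Y}\neq 0$. Therefore no such $X$ exists.

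The only input here that is not purely formal is the local normal form at the isolated fixed point, which is what guarantees $\bd D\iso S^1_a$ and hence that the boundary cover $\bd Y\ra\bd\bar Y$ is connected; given that, the rest is bookkeeping with the $\Z/2$-bundle classification and the long exact sequence of a pair. The step I would be most careful about is checking that $\bar Y$ really has a single boundary circle, so that $[\bd\bar Y]$ is the full boundary class and the exactness argument applies---but this is immediate since $\bd Y$ is the connected space $S^1_a$.
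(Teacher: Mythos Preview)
Your proof is correct, and it takes a genuinely different route from the paper's argument. Both proofs begin the same way---cut out an invariant disk around the putative lone fixed point and study the resulting free action on the surface-with-boundary $Y$---but they diverge from there. The paper first disposes of the orientable case using Proposition~\ref{pr:F-bounds} (an orientation-preserving involution has an even number of fixed points), then in the non-orientable case uses Euler characteristic bookkeeping to pin down $r$ and the quotient $Y/C_2$, caps off to a closed surface, and finally reaches a contradiction by comparing the topological type of $Y$ with what the classification of $\Z/2$-bundles over $N_{(r+1)/2}$ allows.

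Your argument is shorter and avoids all of that casework: you observe directly that the characteristic class of the principal bundle $Y\ra \bar Y$ must restrict nontrivially to $\bd\bar Y$ (since $\bd Y\iso S^1_a$ is the connected double cover), yet the restriction map $H^1(\bar Y;\Z/2)\ra H^1(\bd\bar Y;\Z/2)$ is forced to vanish because $[\bd\bar Y]=0$ in $H_1(\bar Y;\Z/2)$ (it bounds the relative $\Z/2$-fundamental class). This is a clean homological obstruction that works uniformly, without needing to know whether $X$ is orientable or what its genus is. The paper's approach, by contrast, buys you explicit identification of all the spaces involved, which is consistent with its overall surgery-and-classification philosophy; your approach trades that explicitness for brevity and generality.
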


\begin{proof}
Assume $X$ has exactly one fixed point.
If the underlying space of $X$
is orientable then the action is orientation-preserving, and
so by 
Proposition~\ref{pr:F-bounds} the number of fixed points is even.  So
$X\iso N_r$, for some $r\geq 1$.

Let $Y$ be the space
obtained from $X$ by cutting out a small open
disk around the fixed point, so that the boundary of $Y$ is a copy of
$S^1_a$.  Then $\chi(Y)=2-r-1=1-r$, and the $C_2$-action on $Y$ is
free.  So $1-r=\chi(Y)=2\chi(Y/C_2)$, hence $r$ is odd.  

Clearly $Y/C_2$ is a $2$-manifold whose boundary is a circle.  Since
$Y$ is non-orientable, so is $Y/C_2$.  An Euler characteristic
argument shows $Y/C_2$ is $N_{\frac{r+1}{2}}$ with an open disk
removed.
Make this identification and regard $Y/C_2$ as a subspace of
$N_{\frac{r+1}{2}}$.    

Since $Y/C_2\inc N_{\frac{r+1}{2}}$ is
an isomorphism on $H^1(\blank;\Z/2)$, every principal $C_2$-bundle
on $Y/C_2$ is pulled back from one on 
$N_{\frac{r+1}{2}}$.   By an Euler characteristic argument,
the only $C_2$-bundle on $N_{\frac{r+1}{2}}$
having
non-orientable total space is $N_{r-1}$, and so we conclude that the
underlying space of $Y$ is $N_{r-1}$ with two open disks removed.
However, $Y$ started out as an  $N_r$ with an open disk removed.  This is a
contradiction.  
\end{proof}

\begin{prop}
\label{pr:surg-reduce}
Let $X$ be a $C_2$-equivariant $2$-manifold where the action is
non-free, nontrivial, and where $X$ is not of the form $\Doub(S,1:S^{1,0})$ or
$\Doub(S,1:S^{1,1})$ for any surface $S$.  
If $\beta(X)>0$ then
 $X$ can be obtained via $S^{1,1}$-, $S^{1,0}$-, or $FM$-surgery
from an equivariant $2$-manifold of smaller $\beta$-genus.  
\end{prop}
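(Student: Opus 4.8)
The plan is to exploit non-freeness to obtain a nonempty fixed set and then run a case analysis on its components. Since the action is nontrivial, $X^{C_2}$ is a disjoint union of isolated fixed points and ovals, each oval being one-sided or two-sided, and since the action is non-free at least one such component is present. I would treat three (exhaustive) possibilities — $X$ has a one-sided oval; $X$ has no one-sided oval but has a two-sided oval; $X^{C_2}$ consists only of isolated fixed points — and in each case exhibit $X$ as the result of one of the three named surgeries performed on a connected $2$-manifold of strictly smaller $\beta$-genus.

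Suppose first $X$ has a one-sided oval $\mathcal O$. A tubular neighborhood of $\mathcal O$ is a copy of $S^1(M)$ (Section~\ref{se:cylcap}), so $MF$-surgery — remove this M\"obius band and cap the resulting $S^1_a$ boundary with an $S^1_a$-cap — produces a space $Y$ with $X\iso Y+[FM]$. Because $S^1(M)$ has a single boundary circle, $Y$ is connected, and an Euler-characteristic count gives $\beta(Y)=\beta(X)-1$, which is $\geq 0$ since $\beta(X)>0$. Next suppose $X$ has no one-sided oval but does have a two-sided oval $\mathcal O$; then a tubular neighborhood of $\mathcal O$ is an $S^{1,0}$-antitube. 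If $\mathcal O$ were separating, then the corollary to Proposition~\ref{pr:doub} asserting that a surface with a separating oval is $\Doub(S,1)$ would force $X\iso\Doub(S,1)=\Doub(S,1:S^{1,0})$, which the hypothesis excludes; hence $\mathcal O$ is non-separating and $X$ stays connected after deleting an open annular neighborhood of $\mathcal O$. Then $S^{1,0}$-surgery yields a connected $Y$ with $X\iso Y+[S^{1,0}-\antitube]$ and $\beta(Y)=\beta(X)-2$.

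In the remaining case $X^{C_2}$ is a set of isolated fixed points, with at least two of them by Lemma~\ref{le:iso-fp}; pick two, $a$ and $b$. As in Remark~\ref{re:S11-fp}, using Corollary~\ref{co:nicepath} to choose a path from $a$ to $b$ meeting its conjugate only at its endpoints, one obtains an $S^{1,1}$-antitube inside $X$ whose fixed points are $a$ and $b$. If removing this antitube disconnected $X$ then, again by Remark~\ref{re:S11-fp}, $X$ would have the form $\Doub(S,1:S^{1,1})$, which is excluded; hence the complement is connected, and $S^{1,1}$-surgery produces a connected $Y$ with $X\iso Y+[S^{1,1}-\antitube]$ and $\beta(Y)=\beta(X)-2$. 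Since a free action is the only way to have $X^{C_2}=\emptyset$, these cases are exhaustive and the proof is complete.

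The bookkeeping — each surgery drops $\beta$ by $1$ or $2$, and the reduced spaces are connected — is routine. The one point needing genuine care, and the only place the two exclusions in the statement are actually used, is recognizing that the $S^{1,0}$- and $S^{1,1}$-surgeries fail to output a smaller \emph{connected} surface precisely when $X$ is one of the doubled spaces $\Doub(S,1:S^{1,0})$ or $\Doub(S,1:S^{1,1})$ — which is exactly what Proposition~\ref{pr:doub} (with its corollary) and Remark~\ref{re:S11-fp} provide.
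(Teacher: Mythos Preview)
Your argument is correct and follows essentially the same route as the paper's proof: both perform a case analysis on the fixed set, using $MF$-surgery on a one-sided oval, $S^{1,0}$-surgery on a two-sided oval (non-separating by the corollary to Proposition~\ref{pr:doub}, since otherwise $X\iso\Doub(S,1:S^{1,0})$), and $S^{1,1}$-surgery around a pair of isolated fixed points via Remark~\ref{re:S11-fp} (with disconnection forcing $X\iso\Doub(S,1:S^{1,1})$). The only difference is organizational---the paper splits first on $C=0$ versus $C>0$, whereas you prioritize one-sided ovals---but the content is identical.
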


\begin{proof}
Suppose first that $C=0$.  By Lemma~\ref{le:iso-fp} 
 we then have $F>1$.  
Choose two isolated fixed points $a$ and $b$, and obtain an
$S^{1,1}$-antitube surrounding them as in Remark~\ref{re:S11-fp}.  
Since $X$ is not of the form $\Doub(S,1:S^{1,1})$, cutting this
antitube does not disconnect the surface.  So we can do 
$S^{1,1}$-surgery on this antitube to reduce the genus.  

Next suppose $C>0$, and let $\cO$ be an oval.  If $\cO$ is one-sided
then we can do $MF$-surgery around $\cO$ to reduce the genus.
If $\cO$ is two-sided then our hypotheses guarantee that 
it is non-separating, so we can do
$S^{1,0}$-surgery around $\cO$ to  again reduce the genus.    
\end{proof}

\begin{cor}
\label{co:basic}
Start with the $C_2$-equivariant $2$-spheres, the free $C_2$-actions
on surfaces, the spaces $\Doub(S,1)$, and the spaces
$\Doub(S,1:S^{1,1})$.  Then
every nontrivial $C_2$-equivariant $2$-manifold can be built from these basic
spaces via repeated $S^{1,0}$-, $S^{1,1}$-, and $FM$-surgeries.
\end{cor}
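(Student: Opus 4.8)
The plan is a straightforward induction on $\beta(X)$, with Proposition~\ref{pr:surg-reduce} carrying essentially all of the weight. For the base case, note that $S^2$ is the only closed $2$-manifold with $\beta=0$; so if $\beta(X)=0$ then $X$ carries one of the four $C_2$-actions enumerated in Lemma~\ref{le:S^2}, and each of these is one of our basic $C_2$-equivariant $2$-spheres. Thus the base case is immediate.

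For the inductive step I would fix $X$ with $\beta(X)>0$ and nontrivial action, and assume the result for every nontrivial $C_2$-equivariant $2$-manifold of strictly smaller $\beta$-genus. First I would dispose of the cases in which $X$ is itself one of the listed basic spaces: if the action is free, $X$ is one of the basic free actions; and if $X\iso\Doub(S,1)$ or $X\iso\Doub(S,1:S^{1,1})$ for some surface $S$ then again $X$ is basic (recall $\Doub(S,1)=\Doub(S,1:S^{1,0})$). In every remaining case $X$ satisfies precisely the hypotheses of Proposition~\ref{pr:surg-reduce} — non-free, nontrivial, $\beta(X)>0$, and not of either excluded doubling form — which then produces a connected equivariant $2$-manifold $Y$ with $\beta(Y)<\beta(X)$ together with a single $S^{1,0}$-, $S^{1,1}$-, or $FM$-surgery carrying $Y$ to $X$. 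Applying the inductive hypothesis to $Y$ and then performing that one extra surgery exhibits $X$ as built from the basic spaces.

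The only point needing a moment's care — and the nearest thing to an obstacle — is to verify that $Y$ is again \emph{nontrivial}, so that the inductive hypothesis genuinely applies to it. This is forced by the shape of the reduction: in passing from $X$ back to $Y$ one sews into the connected surface $Y$ either a pair of conjugate $\Z/2\times S^1$-caps (in the $S^{1,0}$- and $S^{1,1}$-cases) or a single $S^1_a$-cap (in the $FM$-case), and on that region $\sigma$ acts nontrivially; since $Y$ is connected, this already makes its whole action nontrivial. What remains is pure bookkeeping: matching the two exceptional doubling families excluded in Proposition~\ref{pr:surg-reduce} against the declared list of basic spaces, and observing that the three reduction moves used there are exactly the surgeries $S^{1,0}$, $S^{1,1}$, $FM$ permitted in the statement. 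No genuine calculation is required.
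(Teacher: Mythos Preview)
Your proof is correct and follows exactly the approach the paper intends; the paper's own proof is simply the word ``Immediate,'' since the corollary is meant to be a direct consequence of Proposition~\ref{pr:surg-reduce} via the obvious induction on $\beta$. Your write-up just makes that induction explicit, including the useful observation that the reduced space $Y$ still carries a nontrivial action.
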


\begin{proof}
Immediate.
\end{proof}

The first statement of the following result is \cite[Theorem 10]{S}:

\begin{thm}[Scherrer's Theorem]
\label{th:invariants}
Let $X$ be a $2$-manifold with a nontrivial $C_2$-action.  Then
$F+2C\leq \beta+2$ and $F\equiv \beta\equiv C_-\ (\text{mod}\ 2)$.  
Moreover, if the $Q$-sign of $X$ is negative then
$F+2C\leq \beta$.  
\end{thm}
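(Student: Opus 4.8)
The plan is to prove Scherrer's Theorem by induction on $\beta(X)$, using the surgery-reduction machinery of Proposition~\ref{pr:surg-reduce} together with Corollary~\ref{co:basic}. The base cases are the ``irreducible'' spaces: the equivariant $2$-spheres, the free actions, and the doubled spaces $\Doub(S,1)$ and $\Doub(S,1:S^{1,1})$. For the spheres one checks the inequality and the parity conditions directly from Lemma~\ref{le:S^2}: $S^{2,0}$ is trivial (excluded), $S^{2,1}$ has $(F,C)=(0,1)$ with $\beta=0$ so $F+2C=2=\beta+2$ and the $Q$-sign is positive, $S^{2,2}$ has $(F,C)=(2,0)$ with $\beta=0$ so again $F+2C=2=\beta+2$ and $Q$ is positive, and $S^2_a$ is free with $F=C=0$. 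For the free actions $F=C=0$ and all conditions are vacuous (note $\beta$ is always even for a free action since the Euler characteristic halves). For $\Doub(S,C)$ and $\Doub(S,1:S^{1,1})$ one uses the Euler-characteristic formulas recorded in Section~\ref{se:general}: $\beta(\Doub(S,C)) = 2\beta(S) + 2(C-1)$, so $F + 2C = 0 + 2C = \beta(X) + 2 - 2\beta(S) \le \beta(X)+2$, with equality iff $S$ has $\beta(S)=0$; all ovals are two-sided in a doubled space so $C_- = 0$, and $\beta$ is even, matching $F \equiv C_- \equiv \beta$. The $Q$-sign of a doubled space is positive iff $S$ is orientable, and when it is negative ($S$ non-orientable) we have $\beta(S) \ge 1$ so $F+2C = 2C \le \beta(X)$ as required.

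For the inductive step, suppose $X$ is nontrivial with $\beta(X) > 0$ and is not one of the base cases. By Proposition~\ref{pr:surg-reduce}, $X$ is obtained from some equivariant $2$-manifold $X'$ with $\beta(X') < \beta(X)$ by an $S^{1,0}$-, $S^{1,1}$-, or $FM$-surgery. First I would check that $X'$ can be taken to be connected: the only way a reducing surgery produces a disconnected space is when cutting an antitube separates the surface, but Proposition~\ref{pr:surg-reduce} explicitly selects the surgery so that this does not happen (it excludes the $\Doub$ cases precisely for this reason, and a one-sided oval never separates by Corollary~\ref{co:conn} and Proposition~\ref{pr:conn}). If the action on $X'$ is trivial or free then $X'$ is itself a base case (a trivial action has $F+2C$ equal to whatever the fixed set is — but actually a trivial action on a connected surface is excluded from the hypotheses, and a trivial action can only reduce to by surgery if... ) — more carefully: if $X'$ happens to be trivial, then $X$ has $\beta(X) = \beta(X') + 1$ or $+2$ and a single reducing surgery, and one checks this handful of cases by hand; if $X'$ is free or trivial or $\Doub$ the base-case estimates apply to it. Otherwise $X'$ satisfies the inductive hypothesis. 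In either case, apply Proposition~\ref{pr:surg-invariance}: the quantity $F + 2C - \beta$ is unchanged by all three surgeries, so $F(X) + 2C(X) - \beta(X) = F(X') + 2C(X') - \beta(X') \le 2$, and the residues of $\beta$ and of $F - C_-$ mod $2$ are preserved, giving $F \equiv \beta \equiv C_-$ once we know this holds for $X'$ (and the two residues agree because $F - C_- \equiv 0$ is equivalent to $F \equiv C_-$, while $F \equiv \beta$ follows from an independent check that $\beta - F$ is preserved — which also follows since $\beta - F = (2C - (F+2C-\beta))$ and both pieces behave controllably; alternatively just note $\beta \equiv F$ is classical Smith theory, or track it directly through each surgery).

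For the sharpened inequality when $Q$ is negative: by Proposition~\ref{pr:surg-invariance}(ii) the $Q$-sign is also preserved by these three surgeries, so if $X$ has negative $Q$-sign then so does $X'$. If $X'$ is a base case with negative $Q$-sign it is $S^2_a$ (free, $F=C=0$, bound vacuous) or a $\Doub(S,\cdot)$ with $S$ non-orientable (handled above, $F+2C \le \beta$), or a free action (vacuous); if $X'$ satisfies the inductive hypothesis then $F(X') + 2C(X') \le \beta(X')$, and invariance of $F+2C-\beta$ upgrades this to $F(X) + 2C(X) \le \beta(X)$. The main obstacle I anticipate is bookkeeping around the base cases of the induction — specifically making sure that the reduced space $X'$ from Proposition~\ref{pr:surg-reduce} is always either connected-and-in-the-inductive-range or else itself one of the explicitly-handled base spaces, and in particular handling the possibility that $X'$ is a disjoint union or has a trivial/free action on some component. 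This is a finite check but it is where the argument could quietly go wrong, so I would state it as an explicit lemma (``every nontrivial $C_2$-manifold with $\beta > 0$ reduces, by one $S^{1,0}$-, $S^{1,1}$-, or $FM$-surgery, either to a connected nontrivial $C_2$-manifold of strictly smaller $\beta$-genus or to one of the enumerated base spaces'') extracted from Proposition~\ref{pr:surg-reduce} and Corollary~\ref{co:basic}, and verify it carefully before invoking it.
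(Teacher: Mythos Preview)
Your proposal is correct and follows the same route as the paper's proof: induction on $\beta$, with the equivariant spheres, the free actions, and the doubled spaces $\Doub(S,1)$ and $\Doub(S,1:S^{1,1})$ handled directly, and all remaining spaces reduced via Proposition~\ref{pr:surg-reduce} combined with the invariance of $F+2C-\beta$, the $Q$-sign, and $F-C_-$ mod $2$ from Proposition~\ref{pr:surg-invariance}. Your anticipated obstacle dissolves on inspection: by construction Proposition~\ref{pr:surg-reduce} always yields a \emph{connected} $2$-manifold of smaller $\beta$-genus, and since the sewn-in $\Z/2\times S^1$-cap (or $S^1_a$-cap, in the $MF$ case) itself carries a nontrivial $C_2$-action, the reduced space $X'$ is never trivial---it may be free, but that is already one of your base cases.
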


\begin{proof}
For the moment ignore the statement about the $Q$-sign.  By ``the
relations'' we will mean the inequality and the congruences, and the
proof of these
will proceed via induction on $\beta$.  When $\beta=0$ there are
three possibilities for $X$: $S^2_a$, $S^{2,1}$, and $S^{2,2}$.  The
relations can be checked by hand for each of these.

Next assume $\beta>0$.  There are four cases to be handled
separately.  If $X\iso \Doub(S,1)$ then $F=C_-=0$, $C=1$, and $\beta$
is even: so the relations hold.  If $X\iso \Doub(S,1:S^{1,1})$ then
$F=2$, $C=C_-=0$, and $\beta$ is again even: so the relations hold.  
If $X$ is free then $F=C=0$ and $\beta$ must be even by 
Theorem~\ref{th:free-classify}, so the
relations are again verified.  

In the final case, where $X$ is none of the above things, then by 
Proposition~\ref{pr:surg-reduce} we can do $S^{1,0}$-, $S^{1,1}$-, or 
$MF$-surgery on $X$ to reduce the $\beta$-genus.  By
Proposition~\ref{pr:surg-invariance} this surgery does not change $F+2C-\beta$ or
the mod $2$ residues of $F-\beta$ and $F-C_-$.  So we are done by induction.

Now consider the final statement about the $Q$-sign.  This is again
done by induction on $\beta$, following exactly the pattern of the
above.  In the $\beta=0$ case one must have $X=S^2_a$, and the
inequality is checked by hand.  For the induction step we must run
through the four cases again.  If $X=\Doub(S,1)$ then $S$ must be
nonorientable since the $Q$-sign of $X$ is negative; so $\beta(S)\geq
1$, hence $\beta(X)\geq 2$,
and the inequality is immediate.  The analysis for
$X=\Doub(S,1:S^{1,1})$ is identical, and when $X$ is free the
inequality is trivial.  The remaining case, where $X$ is none of these
things, proceeds exactly as before.  
\end{proof}

\subsection{$C_2$-actions on projective spaces and Klein bottles}
As an application of what we have done so far, we can completely
classify all the $C_2$-actions on $\RP^2$ and on a Klein bottle.

\begin{cor}
Up to isomorphism there is exactly one nontrivial $C_2$-structure on $\RP^2$, namely
$S^{2,2}+[FM]$.  
\end{cor}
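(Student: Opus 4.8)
The plan is to handle existence directly and then prove uniqueness by first pinning down all the numerical invariants of an arbitrary nontrivial $C_2$-action on $\RP^2$ via Scherrer's Theorem, and then stripping off the M\"obius band around the unique one-sided oval.

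For existence I would observe that $S^{2,2}$ has $\beta=0$ and underlying space $S^2$, while $[FM]$-surgery replaces an isolated fixed point by a one-sided oval, raises $\beta$ by one, and --- since it glues in a M\"obius band --- turns the orientable ambient surface into a non-orientable one. Hence $S^{2,2}+[FM]$ has $\beta=1$ and non-orientable underlying space, i.e.\ it is a $C_2$-action on $N_1=\RP^2$; it is nontrivial because it still has an isolated fixed point (equivalently, it has a one-sided oval, which a trivial action on $\RP^2$ does not).

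For uniqueness, let $X$ be $\RP^2$ with a nontrivial $C_2$-action, so $\beta(X)=1$. Scherrer's Theorem (Theorem~\ref{th:invariants}) then gives $F+2C\le 3$ and $F\equiv C_-\equiv 1 \pmod 2$. The arithmetic forces everything: $C_-$ odd yields $C\ge C_-\ge 1$, so $F+2\le F+2C\le 3$ and $F\le 1$; with $F$ odd this gives $F=1$; then $1+2C\le 3$ gives $C=1$, hence $C_-=1$ and $C_+=0$ (in particular the action is not free). So $X$ has exactly one isolated fixed point and exactly one oval $\cO$, and $\cO$ is one-sided.

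The last step is $MF$-surgery around $\cO$. An equivariant tubular neighborhood of $\cO$ is a copy of $S^1(M)$ (the unique M\"obius-type equivariant normal bundle, cf.\ Section~\ref{se:cylcap}), with boundary a copy of $S^1_a$, and an Euler characteristic count shows its complement in $\RP^2$ is a disk --- which carries the remaining isolated fixed point. Capping that $S^1_a$ boundary with an $S^1_a$-cap produces a connected $C_2$-equivariant $2$-sphere $Y$ with exactly two isolated fixed points and no ovals; by Lemma~\ref{le:S^2} the only such space is $S^{2,2}$. Since $FM$-surgery is well defined up to equivariant isomorphism (cf.\ Corollary~\ref{co:surg-inv}), reversing it yields $X\iso Y+[FM]\iso S^{2,2}+[FM]$. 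I do not expect a genuine obstacle: the substantive inputs (Scherrer's inequalities and the classification of $C_2$-spheres) are already in hand, and the only place in the argument itself that warrants a line of justification is the claim that removing the M\"obius neighborhood of $\cO$ leaves a disk, so that $Y$ really is a sphere.
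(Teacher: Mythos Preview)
Your proof is correct and follows essentially the same approach as the paper: pin down the taxonomy $[1,1:(0,1)]$ via Scherrer's Theorem, perform $MF$-surgery on the one-sided oval to obtain a $\beta$-genus-zero space with two isolated fixed points, and identify it as $S^{2,2}$ via Lemma~\ref{le:S^2}. You add an explicit existence paragraph and an Euler-characteristic justification that the complement of the M\"obius neighborhood is a disk, both of which the paper leaves implicit.
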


\begin{proof}
By Theorem~\ref{th:invariants} we must have $F+2C\leq 3$ and $F\equiv
C_-\equiv 1 \ (\text{mod}\ 2)$.  The only solution is $F=1$ and
$(C_+,C_-)=(0,1)$.  Now do $MF$-surgery on the one-sided oval.  This
produces a $C_2$-space of genus zero, with two fixed points.  This
space must of course be $S^{2,2}$ (using Lemma~\ref{le:S^2}),
which shows that our original space was $S^{2,2}+[FM]$.
\end{proof}

The situation for Klein bottles is much more interesting.  
Here $\beta=2$, so
we must have $F+2C\leq 4$ and $F\equiv C_-\equiv 0\ (\text{mod}\  2)$.  
The possible taxonomies are therefore the following:

\vspace{0.1in}

\bgroup
\def\arraystretch{1.5}
\begin{tabular}{cccc}
 $[0,0:(0,0)]$ & $[0,1:(1,0)]$ & $[0,2:(2,0)]$ & $[0,2:(0,2)]$ 
\\
$[2,0:(0,0)]$ & $[2,1:(1,0)]$ & $[4,0:(0,0)]$.
\end{tabular}
\egroup

\vspace{0.1in}

\noindent
Morover, we know we can make all equivariant Klein bottles using
$S^{1,0}$-, $S^{1,1}$-, and $FM$-surgery starting from the basic
spaces listed in Corollary~\ref{co:basic}.  By genus considerations we
can only start with the three $2$-spheres, the spaces $\Doub(S,1)$
and $\Doub(S,1:S^{1,1})$ where $S=\RP^2$, and the unique free
action on the Klein bottle, $S^2_a+[DCC]$.  We can only do surgery on the
$2$-spheres, since in the remaining cases the genus would become too
large.

One last thing before we just list all the possibilities.  The action
on $S^{2,1}$ is orientation-reversing, and the action on an
$S^{1,0}$-antitube is also orientation-reversing.  The space
$S^{2,1}+[S^{1,0}-\text{antitube}]$ is therefore orientable, and so it
is a torus not a Klein bottle.  In general, we will get a Klein bottle
only when the orientation type of our sphere opposes the
orientation-type of the antitube we are adding to it.  With this in
mind, here is a list of all possible nontrivial $C_2$-actions on Klein
bottles:

\vspace{0.1in}

\bgroup
\def\arraystretch{1.5}
\begin{tabular}{c||ccccc}
Space & $F$ & $C$ & $C_+$ & $C_-$ & $Q$-sign \\
\hline
free action & 0 & 0 & 0 & 0 & - \\
$\Doub(\RP^2,1)$ & 0 & 1 & 1 & 0 & -\\
$\Doub(\RP^2,1:S^{1,1})$ & 2 & 0 & 0 & 0 & -\\
$S^2_a+[S^{1,1}-\text{antitube}]$ & 2 & 0 & 0 & 0 & -\\
$S^{2,1}+[S^{1,1}-\text{antitube}]$ & 2 & 1 & 1 & 0 & +\\
$S^{2,2}+[S^{1,0}-\text{antitube}]$ & 2 & 1 & 1 & 0 & +\\
$S^{2,2}+2[FM]$ & 0 & 2 & 0 & 2 & + \\ 
\end{tabular}
\egroup

\vspace{0.1in}

Note that the $[2,2:(2,0)]$ and $[4,0:(0,0)]$ taxonomies are not 
realizable.   Also note
that what our analysis shows is that there are {\it at most\/} seven
possible nontrivial $C_2$-actions on a Klein bottle.  There are two
places in our list where we might have actions that are isomorphic,
because they have the same taxonomy.  In these cases it is not hard to
see by hand that in fact the actions {\it are\/} isomorphic, so that
there are exactly five nontrivial $C_2$-actions on the Klein bottle.  
We leave this final piece to the reader, but see Corollary~\ref{co:fundiso}
and the proof of Theorem~\ref{th:fundiso} for help if needed.

\section{$C_2$-actions on non-orientable surfaces}
\label{se:non-orient}

Given nonnegative integers $r$, $F$, $C$, $C_+$, $C_-$, let
$N_r[F,C:(C_+,C_-)]$ denote the set of all isomorphism classes of
$C_2$-spaces whose underlying space is $N_r$ and whose taxonomy is
$[F,C:(C_+,C_-)]$.  Our goal will be to completely describe this set,
both listing all of the elements and giving explicit formuas for
their number.

\subsection{Some fundamental isomorphisms}

\begin{thm}
\label{th:fundiso}
There are equivariant isomorphisms
\[ S^{2,2}+[DCC] \iso S^2_a + [S^{1,1}-\antitube]\iso
S^{2,2}+[S^1_a-\antitube]
\]
and
\[ S^2_a+[DCC]+[S^{1,1}-\antitube] \iso T_1^{\anti} + [S^{1,1}-\antitube].
\]
\end{thm}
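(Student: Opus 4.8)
The plan is to reduce the asserted identity to a statement about the two free genus-$2$ actions and then settle it with the $S^{1,1}$-surgery machinery. First I would rewrite the left-hand side. Since the isomorphism type of a surgered space does not depend on the order of the surgeries, $S^2_a+[DCC]+[S^{1,1}-\antitube]\iso\bigl(S^2_a+[S^{1,1}-\antitube]\bigr)+[DCC]$, and by the isomorphism $S^2_a+[S^{1,1}-\antitube]\iso S^{2,2}+[DCC]$ from the first line of the theorem this is $S^{2,2}+2[DCC]$. Using $X\esum N_1=X+[DCC]$, Remark~\ref{re:esum}, and $\Doub(X,S^{1,1})\iso S^{2,2}\esum X$, one also has $S^{2,2}+2[DCC]\iso S^{2,2}\esum N_2\iso\Doub(N_2,S^{1,1})$. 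So it remains to prove $T_1^{\anti}+[S^{1,1}-\antitube]\iso S^{2,2}+2[DCC]$. Note that both sides are copies of $N_4$ with exactly two isolated fixed points, no ovals, and negative $Q$-sign (the left side by Proposition~\ref{pr:surg-invariance}(ii), the right side by Corollary~\ref{co:Qsign-rule}).

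The engine will be Proposition~\ref{pr:surg-subtract}. Since $(S^2_a+[DCC])+[S^{1,1}-\antitube]\iso S^{2,2}+2[DCC]$ by the previous paragraph, and removing the ``obvious'' $S^{1,1}$-antitube from $(S^2_a+[DCC])+[S^{1,1}-\antitube]$ returns $S^2_a+[DCC]$, it suffices to find an $S^{1,1}$-antitube inside $T_1^{\anti}+[S^{1,1}-\antitube]$ through its two fixed points whose removal-and-capping also returns a space isomorphic to $S^2_a+[DCC]$. To analyze such antitubes I would use the discussion of Remark~\ref{re:S11-fp}. Choose an $S^{1,1}$-antitube through the two fixed points. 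If cutting it disconnects the surface, then $T_1^{\anti}+[S^{1,1}-\antitube]\iso\Doub(S,S^{1,1})$ for a surface $S$ with $\beta(S)=2$; non-orientability of the total space forces $S\iso N_2$, so the space is $\Doub(N_2,S^{1,1})\iso S^{2,2}+2[DCC]$ and we are done. Otherwise $S^{1,1}$-surgery produces a free $C_2$-space $Y'$ with $\beta(Y')=2$ whose $Q$-sign is again negative (Proposition~\ref{pr:surg-invariance}(ii)); by the classification of free actions (Theorem~\ref{th:free-classify}, Proposition~\ref{pr:classify-base}) the only such spaces are $S^2_a+[DCC]$ (non-orientable total space) and $T_1^{\anti}$ (orientable total space). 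If $Y'\iso S^2_a+[DCC]$, then $T_1^{\anti}+[S^{1,1}-\antitube]\iso(S^2_a+[DCC])+[S^{1,1}-\antitube]\iso S^{2,2}+2[DCC]$ by Proposition~\ref{pr:surg-subtract}.

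Hence everything reduces to one geometric point: the $S^{1,1}$-antitube through the two fixed points can be chosen so that it either disconnects the surface or has non-orientable complement — equivalently, it is \emph{not} true that every such antitube has connected, orientable complement. The obvious antitube (the one used to build $T_1^{\anti}+[S^{1,1}-\antitube]$) has connected orientable complement, namely a twice-punctured torus, so a different choice is needed. I would produce it by modifying the defining path $\alpha$: splice into $\alpha$ a loop whose class pairs nontrivially with $w_1(N_4)$ — concretely, route $\alpha$ once through the part of the attached antitube that is responsible for the non-orientability of $N_4$ — and then re-straighten it (as in Remark~\ref{re:S11-fp}) into a simple path meeting its conjugate only at the two endpoints. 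A tubular neighborhood of $\alpha\cup\sigma\alpha$ is then the desired antitube, provided one checks that its normal bundle is of type $\R^{1,1}$ (so it really is an $S^{1,1}$-antitube, not a M\"obius-type neighborhood), that its complement is connected, and that the complement is now non-orientable. This verification — exhibiting the second antitube and confirming these three properties — is the main obstacle; everything else is bookkeeping with $\beta$, the $Q$-sign, and the free-action classification. A less hands-on alternative is to observe that the orientability of the surgery complement is a $\Z/2$-valued invariant of the antitube, governed by the pairing of the core's homology class with $w_1(N_4)\ne 0$, hence not constant over the family of antitubes through the two fixed points, so the non-orientable outcome is realized.
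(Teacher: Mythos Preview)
Your proposal addresses only the second displayed isomorphism, taking the first line as given. But the first line is not a lemma proved elsewhere---it is the heart of this theorem, and the paper establishes it by an explicit cut-and-paste: one models $S^{2,2}+[DCC]$ as a square with identifications (carrying the $180^\circ$ rotation), and then exhibits concrete embedded copies of $S^{1,1}$ and of $S^1_a$ inside that square on which surgery visibly yields $S^2_a$ and $S^{2,2}$ respectively. You cannot invoke $S^2_a+[S^{1,1}-\antitube]\iso S^{2,2}+[DCC]$ in your opening paragraph without first doing this work.

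For the second line, the paper's route is entirely different and much shorter: it never hunts for an alternate antitube inside $T_1^{\anti}+[S^{1,1}-\antitube]$ at all, but simply combines the first line with the elementary geometric fact $S^2_a+[S^1_a-\antitube]\iso T_1^{\anti}$ to produce a chain of rearrangements. Your approach instead reduces to exhibiting an embedded $S^{1,1}$ through the two fixed points whose surgery complement is non-orientable (or disconnected), and here both of your suggested arguments have problems. The ``splice a loop and re-straighten'' idea is not justified: the straightening procedure of Proposition~\ref{pr:ML}/Corollary~\ref{co:nicepath} replaces stretches of $\alpha$ by stretches of $\sigma\alpha$, so it does \emph{not} preserve the homology class of $\alpha\cup\sigma\alpha$, and you therefore cannot conclude that your spliced loop survives. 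Your ``less hands-on alternative'' contains an actual error: the pairing $\langle w_1(N_4),[c]\rangle$ equals $[c]\cdot[c]$ and detects triviality of the normal bundle of $c$, not orientability of the surgery complement. The correct criterion is that the complement is orientable precisely when $[c]=\mathrm{PD}(w_1)$; so what you would really need is that a $\sigma$-invariant embedded $S^{1,1}$ through the two fixed points can be chosen with homology class different from $\mathrm{PD}(w_1)$---a statement about embedded representatives in a prescribed class, which is exactly the step you have not supplied.
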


\begin{proof}
For the isomorphisms on the first line we argue as follows.  The space
$S^{2,2}$ can be modeled as an $S^{1,1}$-antitube with caps added to
the top and bottom.  So $S^{2,2}+[DCC]$ can be modeled by cutting out
these caps and replacing them with crosscaps:

\begin{picture}(300,120)(20,0)
\put(50,0){\includegraphics[scale=0.65]{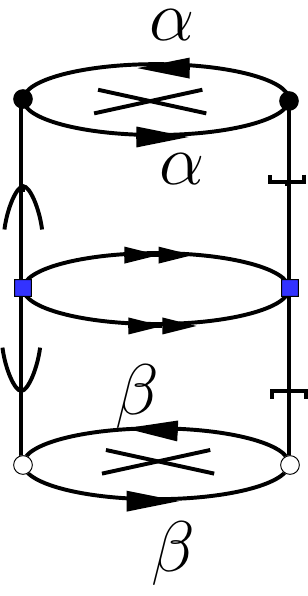}}
\put(160,60){\huge{$\iso$}}
\put(215,0){\includegraphics[scale=0.6]{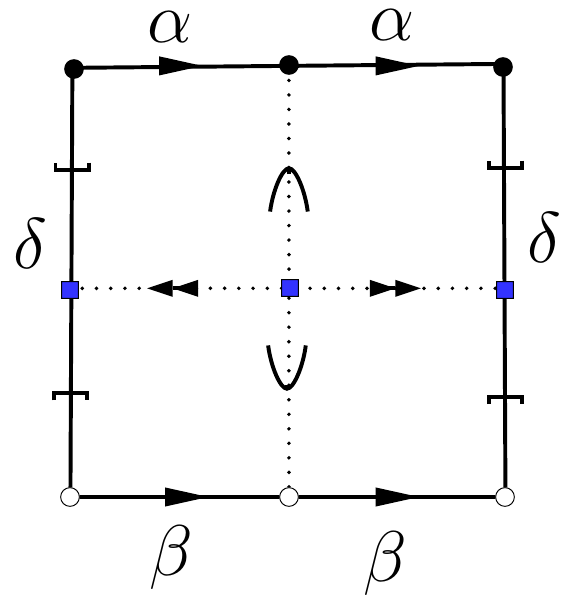}}
\end{picture}

\noindent
There are competing conventions in these pictures, so let us
explain.  For the most part we use dual symbols to depict the
$C_2$-action, but here we also need to depict identifications.  The
two edges labelled $\alpha$ are identified with each other, as are the
two edges labelled $\beta$.  The square on the right is obtained by
cutting the cylinder along a vertical seem and unrolling it, and so
the two edges labelled $\delta$ are identified.  Note that in this
square the $C_2$-action is 180-degree rotation about the center.  

The next thing we will do is cut up the square in two clever ways, as
depicted in the following diagrams:

\begin{picture}(300,120)(20,0)
\put(50,0){\includegraphics[scale=0.6]{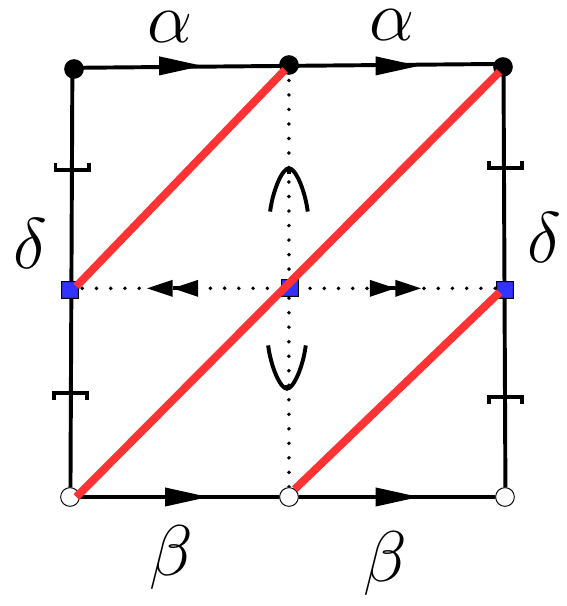}}
\put(215,0){\includegraphics[scale=0.6]{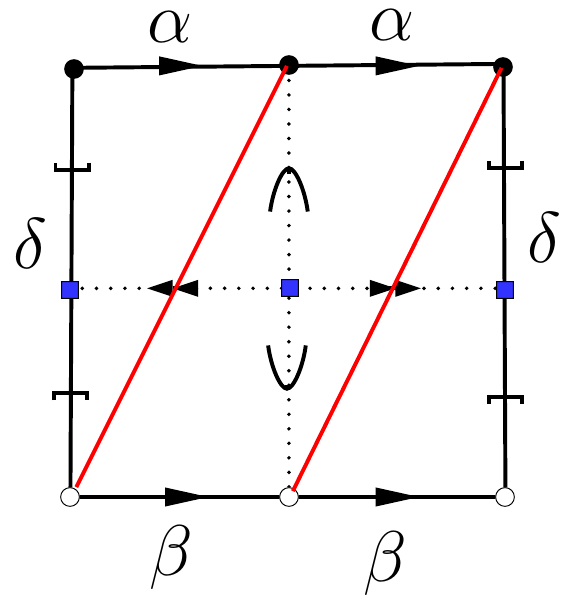}}
\end{picture}

\noindent
In the first picture the red lines depict a copy of $S^{1,1}$, and
removing a tubular neighborhood results in a connected space (this is
easy to check from the picture).  So doing $S^{1,1}$-surgery here
yields $S^2$, by genus considerations.  Since the action on this $S^2$
is clearly free, it is a copy of $S^2_a$ by Lemma~\ref{le:S^2}.  So we
have shown that $S^{2,2}+[DCC]\iso S^2_a+[S^{1,1}-\antitube]$.  

A similar argument applies to the second picture, where the red lines
depict a copy of $S^1_a$.  Doing surgery leaves the space connected,
and so it produces a $2$-sphere with two fixed points---which by
Lemma~\ref{le:S^2} must be $S^{2,2}$.  So we have shown that
$S^{2,2}+[DCC]\iso S^{2,2}+[S^1_a-\antitube]$.

For the second statement of the theorem, start with the easy isomorphism
$S^2_a+[S^{1,1}-\antitube]\iso S^{2,2}+[S^1_a-\antitube]$ (see
the pictures below).  Then 
\begin{align*}
S^2_a+[DCC]+[S^{1,1}-\antitube] &\iso
S^2_a+[S^{1,1}-\antitube]+[DCC]\\
&\iso S^{2,2}+[S^1_a-\antitube]+[DCC] \\
&\iso S^{2,2}+[DCC] + [S^1_a-\antitube] \\
&\iso S^2_a+[S^{1,1}-\antitube]+[S^1_a-\antitube]\\
&\iso \Bigl [S^2_a+[S^{1}_a-\antitube] \Bigr ] + [S^{1,1}-\antitube]
\\
&\iso T_1^{\anti}+[S^{1,1}-\antitube].
\end{align*} 
In the fourth isomorphism we have used the portion of the theorem
already proven.  The content of the first three isomorphisms can be
represented pictorially as follows:

\begin{picture}(300,120)(0,-10)
\put(0,0){\includegraphics[scale=0.45]{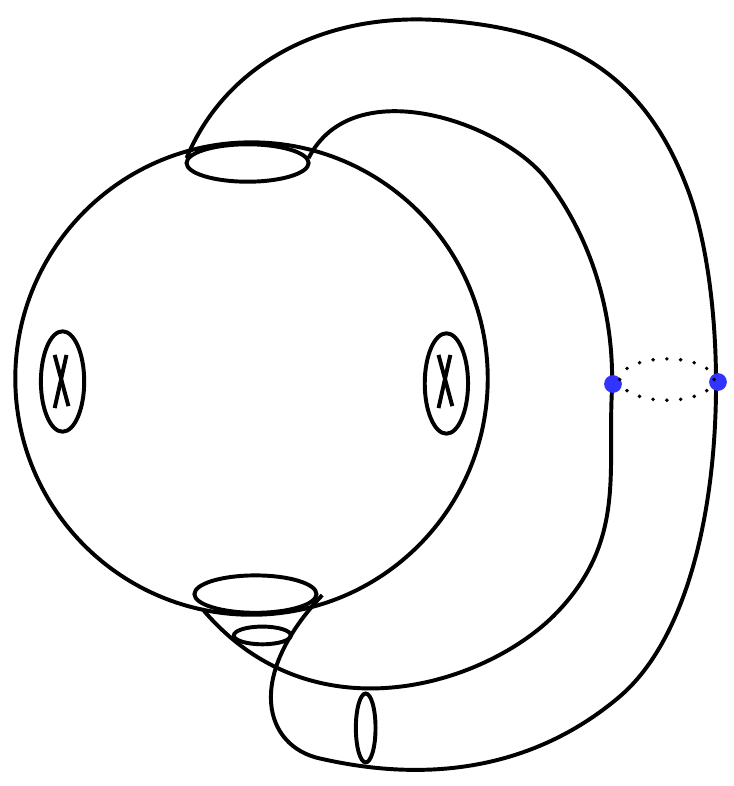}}
\put(105,50){$\iso$}
\put(120,0){\includegraphics[scale=0.45]{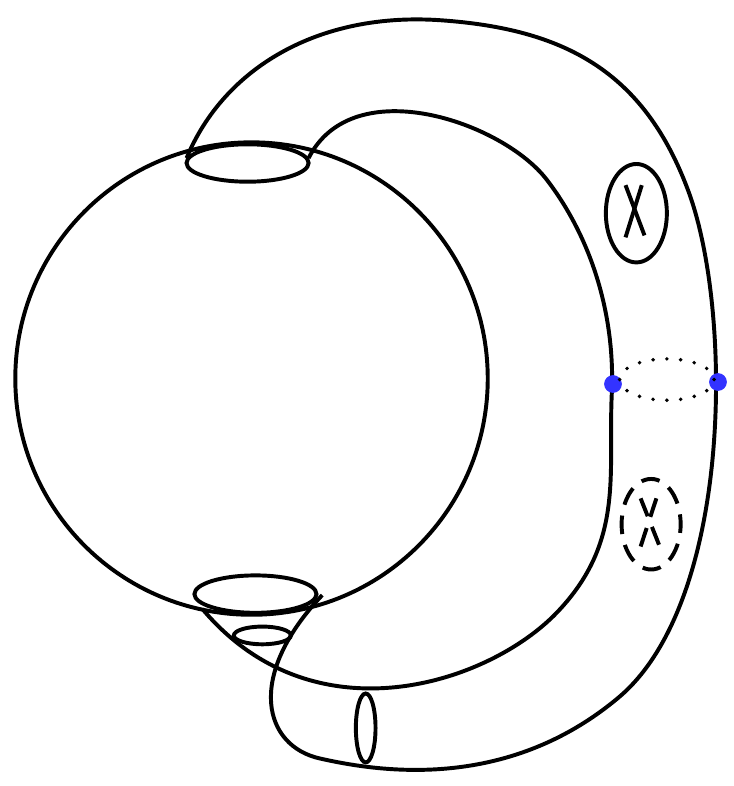}}
\put(223,50){$\iso$}
\put(240,-5){\includegraphics[scale=0.45]{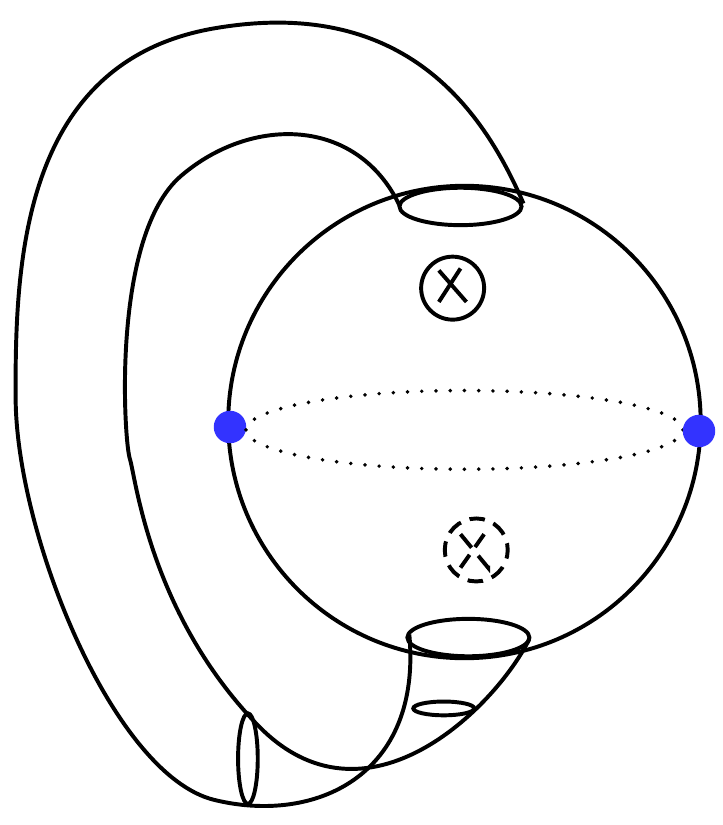}}
\end{picture}

\noindent
At the bottom of these pictures, the tubes are being twisted 180
degrees before being attached to the sphere.  
\end{proof}

\begin{cor}
\label{co:fundiso}
For $r\geq 1$ there are equivariant isomorphisms 
\[ \Doub(N_r,S^{1,1})\iso
S^{2,2}+r[DCC] \iso S^2_a + (r-1)[DCC] + [S^{1,1}-\antitube].
\]
\end{cor}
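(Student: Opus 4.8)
The plan is to deduce everything by chaining Theorem~\ref{th:fundiso} together with the bookkeeping identities for the $\esum$-construction, so that essentially no new geometry is needed.

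First I would identify the doubled space: by the formula recorded just after the definition of the generalized doubling construction in Section~\ref{se:general}, $\Doub(N_r,S^{1,1})\iso S^{2,2}\esum N_r$. Since $S^{2,2}$ carries a nontrivial $C_2$-action the operation $S^{2,2}\esum(\blank)$ makes sense, and by definition $S^{2,2}\esum N_1=S^{2,2}+[DCC]$. Writing $N_r\iso N_1^{\#r}$ and applying Remark~\ref{re:esum} repeatedly (equivalently, Proposition~\ref{pr:DCC-esum} with $X=S^{2,2}$ and $M=N_{r-1}$) then yields $S^{2,2}\esum N_r\iso S^{2,2}+r[DCC]$, which is the first isomorphism of the corollary.

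Next I would handle the second isomorphism. Theorem~\ref{th:fundiso} already gives $S^{2,2}+[DCC]\iso S^2_a+[S^{1,1}-\antitube]$; adjoining $r-1$ further dual crosscaps to both sides, and using that the equivariant isomorphism type of an iterated surgery does not depend on the order in which the pieces are attached (Corollary~\ref{co:surg-inv}) in order to slide the $[S^{1,1}-\antitube]$ past the $[DCC]$'s, gives $S^{2,2}+r[DCC]\iso S^2_a+(r-1)[DCC]+[S^{1,1}-\antitube]$. The case $r=1$ is just Theorem~\ref{th:fundiso} itself, with the empty string $0\cdot[DCC]$.

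I do not expect a genuine obstacle here: the substantive content, namely the surgery manipulations on the sphere-with-a-tube, was already dispatched in Theorem~\ref{th:fundiso}, and what remains is formal. The only points needing a little care are (i) confirming that $\Doub(N_r,S^{1,1})$ really does coincide with $S^{2,2}\esum N_r$, so that the disk-removal implicit in the doubling construction is correctly absorbed into $\esum$, and (ii) invoking the order-independence of the surgery and $\esum$ operations, which rests on Corollary~\ref{co:surg-inv} and Remark~\ref{re:esum}.
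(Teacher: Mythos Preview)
Your proposal is correct and matches the paper's own proof essentially line for line: the paper also writes $\Doub(N_r,S^{1,1})\iso S^{2,2}\esum N_r\iso S^{2,2}+r[DCC]$ for the first isomorphism, and then says the second is immediate from Theorem~\ref{th:fundiso}, which is exactly your argument of adding $(r-1)[DCC]$ to both sides of that theorem.
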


\begin{proof}
The first isomorphism holds because $\Doub(N^r,S^{1,1})\iso
S^{2,2}\esum N_r \iso S^{2,2}+r[DCC]$.  The second isomorphism is
immediate from Theorem~\ref{th:fundiso}.
\end{proof}

\begin{cor}
\label{co:fundiso2}
For any $g\geq 0$ there is an equivariant isomorphism 
\[ T_g^{\anti} + [S^{1,1}-\antitube]\iso
S^2_a+g[DCC]+[S^{1,1}-\antitube].
\]
\end{cor}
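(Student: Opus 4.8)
The plan is to bootstrap everything from Theorem~\ref{th:fundiso} and Corollary~\ref{co:fundiso}, after splitting $T_g^{\anti}$ off as an equivariant connected sum. Recall from Proposition~\ref{pr:Tg-free} that $T_{2h}^{\anti}\iso S^2_a\esum T_h$ and $T_{2h+1}^{\anti}\iso T_1^{\anti}\esum T_h$, so in every case $T_g^{\anti}\iso Z\esum T_h$, where $(Z,h)=(S^2_a,\tfrac{g}{2})$ if $g$ is even and $(Z,h)=(T_1^{\anti},\tfrac{g-1}{2})$ if $g$ is odd; note that $\beta(T_h)$ equals $g$ in the first case and $g-1$ in the second.

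Next I would move the $S^{1,1}$-antitube past the connected sum. The disks used for the $[S^{1,1}-\antitube]$ surgery and for the connected sum $\esum T_h$ can be chosen disjoint and conjugate-invariant, so the two operations commute up to equivariant isomorphism by the independence of surgeries on their location (Corollary~\ref{co:surg-inv}); hence $T_g^{\anti}+[S^{1,1}-\antitube]\iso (Z+[S^{1,1}-\antitube])\esum T_h$. Now Theorem~\ref{th:fundiso} evaluates the sphere-plus-tube factor: for $g$ even, $Z+[S^{1,1}-\antitube]=S^2_a+[S^{1,1}-\antitube]\iso S^{2,2}+[DCC]$; for $g$ odd, $Z+[S^{1,1}-\antitube]=T_1^{\anti}+[S^{1,1}-\antitube]\iso S^2_a+[DCC]+[S^{1,1}-\antitube]\iso \bigl(S^2_a+[S^{1,1}-\antitube]\bigr)+[DCC]\iso S^{2,2}+2[DCC]$, reordering surgeries once more and reusing Theorem~\ref{th:fundiso}. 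So in both cases we reach $S^{2,2}+k[DCC]$ with $k=1,\ \beta(T_h)=g$ (even case) or $k=2,\ \beta(T_h)=g-1$ (odd case); in particular $k+\beta(T_h)=g+1$.

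It then remains to compute $\bigl(S^{2,2}+k[DCC]\bigr)\esum T_h$. Viewing $S^{2,2}+k[DCC]$ as $\bigl(S^{2,2}+(k-1)[DCC]\bigr)+[DCC]$ and applying Proposition~\ref{pr:DCC-esum}, we get $\bigl(S^{2,2}+k[DCC]\bigr)\esum T_h\iso S^{2,2}+(k-1+\beta(T_h)+1)[DCC]=S^{2,2}+(g+1)[DCC]$. Therefore $T_g^{\anti}+[S^{1,1}-\antitube]\iso S^{2,2}+(g+1)[DCC]$ for all $g\geq 0$, and Corollary~\ref{co:fundiso} with $r=g+1$ identifies the right-hand side with $S^2_a+g[DCC]+[S^{1,1}-\antitube]$, which is the assertion. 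The only delicate point in this argument is the legitimacy of freely commuting equivariant surgeries and connected sums past one another; this is precisely the content of the surgery-invariance result (Corollary~\ref{co:surg-inv}), and no geometric input beyond Theorem~\ref{th:fundiso} is required.
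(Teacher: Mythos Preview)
Your proof is correct and follows essentially the same route as the paper's: split $T_g^{\anti}$ as $Z\esum T_h$ via Proposition~\ref{pr:Tg-free}, slide the $S^{1,1}$-antitube onto $Z$, apply Theorem~\ref{th:fundiso} to reduce to $S^{2,2}+k[DCC]$, then use Proposition~\ref{pr:DCC-esum} and Corollary~\ref{co:fundiso}. The only cosmetic difference is that the paper treats $g=0,1$ as separate base cases and leaves the odd case as an exercise, whereas you handle both parities uniformly and absorb the base cases into the general argument.
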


\begin{proof}
The case $g=0$ is trivial, and $g=1$ is part of Theorem~\ref{th:fundiso}.  When
$g\geq 2$ is even we can write $T_g^{\anti}\iso S^2_a\esum
T_{\frac{g}{2}}$ by Proposition~\ref{pr:Tg-free}.  Then we argue that
\begin{align*}
(S^2_a\esum T_{\frac{g}{2}}) + [S^{1,1}-\antitube]
&\iso \bigl (S^2_a + [S^{1,1}-\antitube] \bigr ) \esum T_{\frac{g}{2}} \\
&\iso
\bigl (S^{2,2}+[DCC]\bigr )\esum
T_{\frac{g}{2}} \qquad (\text{Theorem~\ref{th:fundiso}})\\
&\iso S^{2,2}+({1+g})[DCC] \qquad (\text{Proposition~\ref{pr:DCC-esum}})\\
&\iso S^2_a + g[DCC]+[S^{1,1}-\antitube]\quad(\text{Thm.~\ref{th:fundiso}}).
\end{align*}
\noindent
The case where $g$ is odd is similar, starting with $T_g^{\anti}\iso
T_1^{\anti}\esum T_{\frac{g-1}{2}}$.  
\end{proof}
\subsection{Classification in the case \mdfn{$C=0$}}

The following theorem has already been proven in our study of free
actions (see Theorem~\ref{th:free-classify}):

\begin{thm}
\label{th:N-free}
\[ N_r[0,0:(0,0)]=
\begin{cases}
\{ S^2_a \} & \text{when $r=0$},\\[0.1in]
\{ S^2_a+[DCC] \} & \text{when $r=2$},\\[0.1in]
\bigl \{ S^2_a + \tfrac{r}{2}[DCC], \
T_1^{\anti}+(\tfrac{r}{2}-1)[DCC]\bigr \} &
\text{when $r>2$ is even} \\[0.1in]
\emptyset & \text{otherwise}.   
\end{cases}
\]
Note that all the elements in $N_r[0,0:(0,0)]$ have negative $Q$-sign.
\end{thm}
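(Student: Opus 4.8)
The plan is to recognize Theorem~\ref{th:N-free} as a repackaging, in the notation $N_r[F,C:(C_+,C_-)]$, of the free-action classification already obtained as Theorem~\ref{th:free-classify}, together with a one-line observation about orientability of quotients. First I would record the (trivial but essential) point that an action has taxonomy $[0,0:(0,0)]$ exactly when $F=C=0$, i.e.\ exactly when the fixed set $X^{C_2}$ is empty; hence $N_r[0,0:(0,0)]$ is by definition the set of isomorphism classes of \emph{free} $C_2$-actions whose underlying space is $N_r$.

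With that identification made, the displayed formula is just read off from Theorem~\ref{th:free-classify}, after matching index conventions: a free $C_2$-action whose underlying surface is $N_r$ and which is built as $S^2_a+k[DCC]$ has $\beta$-genus $2k$, while one built as $T_1^{\anti}+k[DCC]$ has $\beta$-genus $2+2k$, so the requirement that the underlying space be $N_r$ pins down the exponent $k$ uniquely in each form. Part~(c) of Theorem~\ref{th:free-classify} then gives $N_r[0,0:(0,0)]=\emptyset$ for $r$ odd (the ``otherwise'' case); part~(d) gives the singleton $\{\,S^2_a+[DCC]\,\}$ for $r=2$; and part~(e), read in the underlying-genus parametrization, gives $\{\,S^2_a+\tfrac{r}{2}[DCC],\ T_1^{\anti}+(\tfrac{r}{2}-1)[DCC]\,\}$ for $r>2$ even. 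The only value not covered by Theorem~\ref{th:free-classify} is $r=0$, where $N_0=S^2$; for this I would instead invoke Lemma~\ref{le:S^2}, which lists all $C_2$-actions on $S^2$ and makes clear that $S^2_a$ is the unique fixed-point-free one.

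For the concluding sentence --- that every element of $N_r[0,0:(0,0)]$ has negative $Q$-sign --- I would reuse the covering-space argument from the proof of Theorem~\ref{th:free-classify}(c): if $X\iso N_r$ carries a free $C_2$-action then $X\to X/C_2$ is an honest (unbranched) double cover, and were $X/C_2$ orientable, pulling an orientation back along this cover would orient $X$, contradicting the non-orientability of $N_r$. Hence $X/C_2$ is non-orientable, which is precisely the statement that the $Q$-sign is negative. (For $r=0$ this is seen directly, since $S^2_a/C_2\iso\RP^2$; and for $r\geq2$ one could alternatively apply Corollary~\ref{co:Qsign-rule}, as each representative contains at least one $[DCC]$ summand.)

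I do not anticipate any genuine obstacle here: all the substantive content lives in Section~\ref{se:free}, and what is left is notational bookkeeping plus a standard covering argument. The only points meriting a moment's care are the boundary value $r=0$, which must be pulled separately from Lemma~\ref{le:S^2}, and the reconciliation of the ``$N_{2s}$''-indexing of Theorem~\ref{th:free-classify}(e) with the ``$N_r$''-indexing used here.
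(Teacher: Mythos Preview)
Your proposal is correct and matches the paper's own approach: the paper simply states that this theorem ``has already been proven in our study of free actions (see Theorem~\ref{th:free-classify})'' with no further argument, so your repackaging of Theorem~\ref{th:free-classify} is exactly what is intended. One minor remark: for the case $r=0$ you invoke Lemma~\ref{le:S^2}, which is fine, but it is also already contained in Theorem~\ref{th:free-classify}(a) with $g=0$ (reading $N_0=T_0=S^2$), so even that case falls under the same umbrella.
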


The next theorem builds from the above result and moves into new territory:

\begin{thm}
\label{th:F,0}
Let $F>0$.  
\begin{enumerate}[(a)]
\item
We have $N_0[2,0:(0,0)]=\{S^{2,2}\}$ and
$N_0[F,0:(0,0)]=\emptyset$ for $F\neq 2$.
\item For $r>0$ the set
$N_r[F,0:(0,0)]$ consists of the single element
\[ S^2_a+ \bigl (\tfrac{r-F}{2} \bigr )[DCC] + \tfrac{F}{2}\bigl
[S^{1,1}-\antitube\bigr ]
\]
when $F\leq r$ and $r$ and $F$ are both even; it is empty otherwise.
\end{enumerate}
\end{thm}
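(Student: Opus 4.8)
The plan is to dispose of part (a) in a line, then attack part (b) by first pinning down the numerical constraints and then running an induction on $r=\beta(X)$. \emph{Part (a)} is immediate from Lemma~\ref{le:S^2}: of the four actions $S^{2,0},S^{2,1},S^{2,2},S^2_a$ on $S^2$, the only nontrivial one with $C=0$ and $F>0$ is $S^{2,2}$, which has $F=2$; so $N_0[2,0:(0,0)]=\{S^{2,2}\}$ and the other sets are empty. For \emph{the constraints in (b)}, let $X$ lie on $N_r$ with taxonomy $[F,0:(0,0)]$, $F>0$. First I would check that the $Q$-sign is negative: since $C=0$ the fixed set is a finite point set, so $X\setminus X^{C_2}\to (X/C_2)\setminus\{\text{images of the fixed points}\}$ is an honest $2$-fold covering, and a cover of an orientable surface is orientable; since deleting finitely many points does not affect orientability, $X/C_2$ orientable would force $X$ orientable, which is false. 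With the $Q$-sign negative, Scherrer's Theorem~\ref{th:invariants} gives $F\le\beta=r$ together with $F\equiv r\pmod 2$ and $F\equiv C_-=0\pmod 2$, while Lemma~\ref{le:iso-fp} gives $F\ge 2$. Thus $r$ and $F$ are even and $2\le F\le r$, which is exactly the range where the claimed formula is defined; outside it the set is empty.

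\emph{Existence.} Set $Z=S^2_a+\tfrac{r-F}{2}[DCC]+\tfrac{F}{2}[S^{1,1}-\antitube]$. Counting $\beta$-genus gives $\beta(Z)=2\cdot\tfrac{r-F}{2}+2\cdot\tfrac{F}{2}=r$; none of the summands creates an oval, and only the antitubes raise $F$ (each by $2$), so $Z$ has taxonomy $[F,0:(0,0)]$. When $F<r$ there is a $[DCC]$, so the $Q$-sign is negative by Corollary~\ref{co:Qsign-rule}; when $F=r$ I would rewrite $Z$ using $S^2_a+[S^{1,1}-\antitube]\iso S^{2,2}+[DCC]$ from Theorem~\ref{th:fundiso} to expose a $[DCC]$, again obtaining a non-orientable space with negative $Q$-sign. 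Hence $Z\in N_r[F,0:(0,0)]$.

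\emph{Uniqueness, by induction on $r$.} The base $r=0$ is part (a). For $r>0$, let $X\in N_r[F,0:(0,0)]$. If $X\iso\Doub(S,1{:}S^{1,1})$ for some surface $S$, then $\beta(S)=r/2$ and $S$ cannot be orientable (that would make the total space $S^{2,2}\esum T_{r/4}$ orientable), so $S\iso N_{r/2}$ and Corollary~\ref{co:fundiso} identifies $X$ with $Z$ (and forces $F=2$). Otherwise $X$ is non-free, nontrivial, and not of the form excluded in Proposition~\ref{pr:surg-reduce} (the $S^{1,0}$-double has $F=0$, the $S^{1,1}$-double was just handled), so that result applies; since $C=0$ and $F\ge 2$ the reduction must be $S^{1,1}$-surgery around a pair of fixed points (Remark~\ref{re:S11-fp}), giving a connected $C_2$-manifold $Y$ with $\beta(Y)=r-2$, taxonomy $[F-2,0:(0,0)]$, negative $Q$-sign (Proposition~\ref{pr:surg-invariance}), and $X\iso Y+[S^{1,1}-\antitube]$. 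Now split on $Y$. If $Y$ is orientable, the orientable classification (Theorem~\ref{th:T_g-action}) shows a $C_2$-action on a torus with $C=0$ and negative $Q$-sign must be $T_g^{\anti}$, forcing $F-2=0$ and $Y\iso T_{(r-2)/2}^{\anti}$, whence $X\iso Z$ by Corollary~\ref{co:fundiso2}. If $Y$ is non-orientable and $F>2$, the inductive hypothesis identifies $Y$ with $S^2_a+\tfrac{r-F}{2}[DCC]+\tfrac{F-2}{2}[S^{1,1}-\antitube]$, and since $S^{1,1}$-surgery is well-defined up to isomorphism this gives $X\iso Z$. Finally, if $Y$ is non-orientable and $F=2$, then $Y$ is free, hence one of the at most two free actions on $N_{r-2}$ from Theorem~\ref{th:N-free}; adding the appropriate number of $[DCC]$'s to the isomorphism $S^2_a+[DCC]+[S^{1,1}-\antitube]\iso T_1^{\anti}+[S^{1,1}-\antitube]$ of Theorem~\ref{th:fundiso} shows both possibilities for $Y$ yield $X\iso Z$.

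\emph{Expected main obstacle.} The bookkeeping around the reduction step: one must verify the cut really produces a \emph{connected} $Y$ (handled by treating the $\Doub(\,\cdot\,,1{:}S^{1,1})$ case first) and, more importantly, one must track whether $Y$ is orientable, since $F>2$ and $F=2$ behave differently — for $F=2$ the reduced space drops to a free action that is genuinely non-unique, and collapsing all these possibilities (together with the possibly-orientable $T^{\anti}$ case) onto the single normal form $Z$ is precisely what the fundamental isomorphisms of Theorem~\ref{th:fundiso} and Corollaries~\ref{co:fundiso} and \ref{co:fundiso2} are there to do.
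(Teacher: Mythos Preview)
Your proof is correct and follows the same strategy as the paper's: $S^{1,1}$-surgery on a pair of fixed points, the $\Doub(\,\cdot\,,1{:}S^{1,1})$ case handled separately via Corollary~\ref{co:fundiso}, and the orientable/non-orientable $Y$ cases collapsed onto the normal form via Theorem~\ref{th:fundiso} and Corollaries~\ref{co:fundiso}, \ref{co:fundiso2}. The paper inducts on $F$ rather than $r$ (cosmetic, since both drop by $2$ at each step) and runs the $F=2$ case through an explicit $r=2$, $r=4$, $r>4$ split. Your one streamlining over the paper is to establish the negative $Q$-sign at the outset via the covering-space argument and then propagate it using Proposition~\ref{pr:surg-invariance}: this lets you dispose of the orientable-$Y$ possibilities $T^{\rot}$ and $T^{\spit}$ in one stroke (negative $Q$-sign on a torus with $C=0$ forces $T_g^{\anti}$) and avoid the paper's subcase split.
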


\begin{proof}
Part (a)  is trivial, using Lemma~\ref{le:S^2}.  
For (b) we proceed by induction on $F$.  Let
$X$ be in $N_r[F,0:(0,0)]$ where $r>0$ and $F>0$.  We know from
Theorem~\ref{th:invariants} that both
$r$ and $F$ are even.  If
$F=2$ attempt to do $S^{1,1}$-surgery around these two fixed points as in
Remark~\ref{re:S11-fp}.  This fails only if we have a doubled manifold
$\Doub(N_s,S^{1,1})$, where in this case $s=\frac{r}{2}$.
However, by Corollary~\ref{co:fundiso} this is isomorphic to the space
$S^2_a+(\tfrac{r}{2}-1)[DCC]+[S^{1,1}-\antitube]$, which is what we
wanted.  

Continuing with the case $F=2$, we must analyze what happens when
our $S^{1,1}$-surgery succeeds.  It gives that
$X\iso Y+[S^{1,1}-\antitube]$ where the action on $Y$ is
free and $\beta(Y)=r-2$.  If $r=2$ then $Y=S^2_a$ and we are done.  
If $r=4$ then there are cases depending on whether or not $Y$ is
orientable.  If $Y$ is non-orientable then $Y=S^2_a+[DCC]$ by
Theorem~\ref{th:N-free}, and we are
done.
If $Y$ is orientable then either $Y\iso T_1^{\anti}$ or $Y\iso
T_1^{\rot}$.  The latter is not possible since
$T_1^{\rot}+[S^{1,1}-\antitube]$ is orientable, whereas $X$ is not.  
So we conclude $X\iso T_1^{\anti}+[S^{1,1}-\antitube]$, which by
Theorem~\ref{th:fundiso} is isomorphic to $S^2_a+[DCC]+[S^{1,1}-\antitube]$.
So we are again done.  

For $r>4$, by Theorem~\ref{th:N-free}
there are two possibilities for $Y$ when it is
non-orientable: the spaces $S^2_a+\frac{r-2}{2}[DCC]$ and
$T_1^{\anti}+\frac{r-4}{2}[DCC]$.  When $Y$ is orientable there are
also two possibilities: $T_{\frac{r-2}{2}}^{\anti}$ and
$T_{\frac{r-2}{2}}^{\rot}$ (the latter only when $\frac{r-2}{2}$ is
odd).  However, the latter is not truly a possibility as it would lead
to an orientable $X$.  So we have shown that $X$ is isomorphic to one
of the three spaces
\begin{align*}
& S^2_a+\tfrac{r-2}{2}[DCC]+[S^{1,1}-\antitube] \\
& T_1^{\anti}+\tfrac{r-4}{2}[DCC]+[S^{1,1}-\antitube] \\
& T_{\frac{r-2}{2}}^{\anti} + [S^{1,1}-\antitube]
\end{align*}
The first two are isomorphic by Theorem~\ref{th:fundiso}.  The third
is isomorphic to the first two by Corollary~\ref{co:fundiso2}.
This finally completes the case when $F=2$.  

Now assume that $F>2$. Pick two isolated fixed points and again
attempt to do $S^{1,1}$-surgery.  This fails only if $X$ is
$\Doub(N_{r/2},S^{1,1})$, but this is not possible since $X$ has more
than two fixed points.  So the surgery gives that $X\iso
Y+[S^{1,1}-\antitube]$ where $Y$ is an equivariant $2$-manifold with $F-2$
fixed points and no ovals. 
If $Y$ is
orientable then the action is orientation-preserving (due to the
presence of isolated fixed points), which implies that $X$ is
orientable as well; this is a contradiction.  
So $Y$ is non-orientable, hence $Y\in
N_{r-2}[F-2,0:(0,0)]$ and we are done by induction.  
\end{proof}

\subsection{Classification in the case \mdfn{$F>0$ and $C>0$}.}
\mbox{}\par

We first deal with the case where $C_-=0$:

\begin{thm}
\label{th:F>0a}
Let $F>0$ and $C\geq 1$. 
\begin{enumerate}[(a)]
\item $N_r[F,C:(C,0)]$ is nonempty only if $r$ and $F$ are even,
$r\geq 2$,
and
$F+2C\leq r+2$, in which case it consists of at most two elements: at
most one element of positive $Q$-sign, and at most one element of
negative $Q$-sign.
\item Suppose that $r$ and $F$ are even and $F+2C\leq r+2$.  The element of negative
$Q$-sign is
\[ S^2_a+\tfrac{r-F-2C}{2}[DCC]+\tfrac{F}{2}[S^{1,1}-\antitube]+C[S^{1,0}-\antitube]
\]
and occurs
if and only if $F+2C\leq r$.

The element of positive $Q$-sign is 
\[ T_{\frac{r-2C}{2}}^{\spit}[F]+C[S^{1,0}-\antitube]
\]
and occurs if and only if $F+2C\equiv r+2$ mod $4$.
\end{enumerate}
\end{thm}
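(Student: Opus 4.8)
The plan is to proceed by induction on $C\geq 1$, cutting out an oval and using $S^{1,0}$-surgery to reduce to a space with one fewer oval whose classification is already known --- either from the inductive hypothesis (when $C\geq 2$) or from Theorem~\ref{th:F,0} together with Theorem~\ref{th:T_g-action} (when $C=1$).

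First I would dispose of the numerical constraints in (a). If $X\in N_r[F,C:(C,0)]$ with $F>0$ and $C\geq 1$, then Scherrer's Theorem~\ref{th:invariants} forces $r\equiv F\pmod 2$ (both even, since $\beta(X)=r$ and $F\equiv\beta$) and $F+2C\leq r+2$; as $F$ is a positive even integer we have $F\geq 2$, so $r\geq F+2C-2\geq 2$. Next, because $X$ carries isolated fixed points it is non-separating by Corollary~\ref{co:isfp=>ns}, so no oval of $X$ separates $X$. Fix an oval $\cO$; as $\cO$ is two-sided, a tubular neighborhood of $\cO$ is an $S^{1,0}$-antitube (see Section~\ref{se:cylcap}), and since $X-\cO$ is connected, performing $S^{1,0}$-surgery produces a \emph{connected} equivariant surface $Y := X-[S^{1,0}-\antitube]$ with $\beta(Y)=r-2$, with the same $F$ isolated fixed points, with $C-1$ two-sided ovals, and --- by Proposition~\ref{pr:surg-invariance}(ii) --- with the same $Q$-sign as $X$.

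The crux is the analysis of $Y$, whose underlying surface is $T_{(r-2)/2}$ or $N_{r-2}$. If $C\geq 2$ then $Y$ has \emph{both} isolated fixed points and ovals, so $Y$ cannot be a torus by Lemma~\ref{le:isolated-or-ovals}; hence $Y\in N_{r-2}[F,C-1:(C-1,0)]$, and the inductive hypothesis identifies $Y$, for each $Q$-sign, with one of the two listed spaces (using $\frac{(r-2)-F-2(C-1)}{2}=\frac{r-F-2C}{2}$ and $\frac{(r-2)-2(C-1)}{2}=\frac{r-2C}{2}$), so $X=Y+[S^{1,0}-\antitube]$ is the corresponding space of (b), with occurrence conditions inherited verbatim. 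If $C=1$, then $Y$ has $F>0$ isolated fixed points and no ovals, and two subcases arise. Either $Y$ is non-orientable, hence $Y\in N_{r-2}[F,0:(0,0)]$, which by Theorem~\ref{th:F,0} is nonempty (and then a single space of negative $Q$-sign) precisely when $F\leq r-2$, i.e. $F+2C\leq r$; adding back the antitube gives the displayed negative-$Q$-sign space. Or $Y$ is orientable, in which case its action is orientation-preserving (it has isolated fixed points), so Theorem~\ref{th:T_g-action} gives $Y\iso T^{\spit}_{(r-2)/2}[F]$, which exists precisely when $F\leq r$ (automatic from Scherrer) and $F\equiv r\pmod 4$, equivalently $F+2C\equiv r+2\pmod 4$; this produces the positive-$Q$-sign space. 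The $Q$-sign labels are then read off from the facts that $S^2_a$ has negative $Q$-sign (quotient $\RP^2$) while $T^{\spit}_g[F]$ has positive $Q$-sign (orientable quotient), and that these signs are unchanged by the $S^{1,0}$-, $S^{1,1}$- and $[DCC]$-operations involved (Proposition~\ref{pr:surg-invariance}(ii) and Corollary~\ref{co:Qsign-rule}).

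It remains to prove the uniqueness asserted in (a): at most one space of each $Q$-sign. Given $X,X'\in N_r[F,C:(C,0)]$ with the same $Q$-sign, each contains an $S^{1,0}$-antitube (a tubular neighborhood of an oval), and the surgered spaces $X-[S^{1,0}-\antitube]$ and $X'-[S^{1,0}-\antitube]$ have the same taxonomy $[F,C-1:(C-1,0)]$ and the same $Q$-sign, hence are isomorphic by the classification one level down (the inductive hypothesis, or Theorem~\ref{th:F,0}/Theorem~\ref{th:T_g-action}); Proposition~\ref{pr:surg-subtract} then yields $X\iso X'$. I expect the main obstacle to be the $C=1$ step: one must not tacitly assume $Y$ stays non-orientable after surgery, and the care lies in matching the orientable possibility $T^{\spit}_{(r-2)/2}[F]$ to the positive-$Q$-sign family and checking that its parity condition $F\equiv r\pmod 4$ is exactly the stated $F+2C\equiv r+2\pmod 4$ --- this is what makes the two occurrence conditions in (b) split as claimed.
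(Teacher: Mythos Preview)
Your proposal is correct and follows essentially the same route as the paper's own proof: remove a two-sided oval by $S^{1,0}$-surgery, reduce by induction on $C$, and in the base case $C=1$ split according to whether the surgered space $Y$ is orientable (giving $T^{\spit}_{(r-2)/2}[F]$, positive $Q$-sign, with the parity condition $F\equiv r\pmod 4$) or non-orientable (giving the $S^2_a+\cdots$ form via Theorem~\ref{th:F,0}, negative $Q$-sign). The only cosmetic differences are that you justify non-separation of the oval via Corollary~\ref{co:isfp=>ns} rather than the paper's ``not the entire fixed set'' remark, and that you make uniqueness explicit through Proposition~\ref{pr:surg-subtract} whereas the paper leaves it implicit in the induction; neither changes the substance of the argument.
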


\begin{proof}
Let $X\in N_r[F,C:(C,0)]$.  We know $r$ and $F$ are even and $F+2C\leq
r+2$ by Theorem~\ref{th:invariants}.  Since $F>0$ and $C>0$ we know
$r\neq 0$, so $r\geq 2$.   
Choose an oval in $X$.  It cannot be separating because it is not the
entire fixed set (since there are isolated fixed points), 
so  $S^{1,0}$-surgery shows $X\iso
Y+[S^{1,0}-\antitube]$ where $Y$ has taxonomy $[F,C-1:(C-1,0)]$ and
$\beta(Y)=r-2$.  

The rest of the proof is by induction on $C$.  Assume first that
$C=1$.  Then $Y$ has taxonomy $[F,0:(0,0)]$.  There are two cases,
depending on whether $Y$ is orientable or not.  If not, we know by
Theorem~\ref{th:F,0} that $F\leq r-2$ and $Y\iso
S^2_a+\frac{r-F-2}{2}[DCC]+\frac{F}{2}[S^{1,1}-\antitube]$, so we are
done.  If, on the contrary, $Y$ is orientable then since it has
isolated fixed points we have $Y\iso T_{\frac{r-2}{2}}^{\spit}[F]$ by
Theorem~\ref{th:T_g-action}.  Note that this can only happen when
$F\equiv 2+2(\frac{r-2}{2})$ modulo $4$.  This leads to the second
possibility for $X$, and  completes the $C=1$ analysis.

Suppose $C\geq 2$.  Then since $F>0$ and $C-1>0$, $Y$ has both ovals and
isolated fixed points---so it cannot be orientable.  Hence $Y\in
N_{r-2}[F,C-1:(C-1,0)]$ and we are done by induction.
\end{proof}

Next we turn to the case where $C_->0$:

\begin{thm}
\label{th:F>0b}
Suppose that $F\geq 0$ and $C_->0$.  
\begin{enumerate}[(a)]
\item  $N_r[F,C:(C_+,C_-)]$ is empty
unless $r\equiv F\equiv C_-$ (mod $2$) and $F+2C\leq r+2$.  It
contains at most two elements: at most one having negative $Q$-sign,
and at most one having positive $Q$-sign.  
\item Suppose $r\equiv F\equiv C_-$ (mod $2$) and $F+2C\leq r+2$.
The element of negative $Q$-sign is
\[ S^2_a + \tfrac{r-F-2C}{2}[DCC] + \tfrac{F+C_-}{2}[S^{1,1}-\antitube]+
C_+[S^{1,0}-\antitube]+C_-[FM]
\]
and occurs if and only if $F+2C\leq r$.  
The element of positive $Q$-sign is
\[ T_{\frac{r-C_--2C_+}{2}}^{\spit}[F+C_-]+C_+[S^{1,0}-\antitube]+C_-[FM] 
\]
and occurs if and only if $F+2C\equiv r+2$ (mod $4$).  
\end{enumerate}
\end{thm}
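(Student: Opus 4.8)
The plan is to peel off the one-sided ovals one at a time by $FM$-surgery, reducing to the cases $C_-=0$ already handled in Theorems~\ref{th:F,0} and \ref{th:F>0a} (together with the orientable classification, Theorem~\ref{th:T_g-action}). Since $C_->0$, the surface $X$ contains a one-sided oval, so --- as in the surgery decomposition discussed in Sections~\ref{se:intro} and \ref{se:general} --- we may write $X\iso X_1+[FM]$, where $X_1$ has $C_+$ two-sided ovals, $C_--1$ one-sided ovals, $F+1$ isolated fixed points, and $\beta(X_1)=r-1$; unlike $S^{1,0}$- or $S^{1,1}$-surgery, $FM$-surgery around a one-sided oval is always available, with no doubled-manifold obstruction. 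Iterating $C_-$ times produces an equivariant $2$-manifold $Z$ with taxonomy $[F+C_-,C_+:(C_+,0)]$, $\beta(Z)=r-C_-$, and an isomorphism $X\iso Z+C_-[FM]$. The necessary conditions of part (a), $r\equiv F\equiv C_-$ (mod $2$) and $F+2C\le r+2$, are exactly Scherrer's Theorem~\ref{th:invariants} (with $\beta=r$); in particular $F+C_-$ is even, hence $\ge 2$ since $C_-\ge 1$, which rules out any degeneracy from spit actions with too few fixed points.

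Next I would classify $Z$, which has $F+C_-\ge 2$ isolated fixed points and no one-sided ovals. If $Z$ is orientable, its action is orientation-preserving, so by Lemma~\ref{le:isolated-or-ovals} it has no ovals, forcing $C_+=0$, and Theorem~\ref{th:T_g-action} gives $Z\iso T^{\spit}_{(r-C_-)/2}[F+C_-]$. If $Z$ is non-orientable and $C_+=0$, then $Z\in N_{r-C_-}[F+C_-,0:(0,0)]$, classified by Theorem~\ref{th:F,0}. If $Z$ is non-orientable and $C_+>0$, then (having both ovals and isolated fixed points) $Z\in N_{r-C_-}[F+C_-,C_+:(C_+,0)]$, classified by Theorem~\ref{th:F>0a}. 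In every case $Z$ is isomorphic to one of at most two explicit spaces, at most one of negative $Q$-sign (built from $S^2_a$) and at most one of positive $Q$-sign (built from $T^{\spit}$). Since $FM$-surgery preserves the $Q$-sign (Proposition~\ref{pr:surg-invariance}(ii)), passing through $X\iso Z+C_-[FM]$ shows that $X$ too is isomorphic to one of at most two explicit spaces, at most one per $Q$-sign; this is part (a).

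The remaining work --- identifying the two candidates explicitly and their occurrence conditions --- is bookkeeping. Substituting $r'=r-C_-$, $F'=F+C_-$, $C'=C_+$ into the formulas of Theorems~\ref{th:F,0}, \ref{th:F>0a}, \ref{th:T_g-action}, appending $C_-[FM]$, and simplifying with $C=C_++C_-$, one checks that $Z+C_-[FM]$ is exactly one of the two spaces in part (b): e.g.\ the $[DCC]$-exponent $\tfrac{r'-F'-2C'}{2}$ becomes $\tfrac{r-F-2C}{2}$, the $[S^{1,1}]$-exponent $\tfrac{F'}{2}$ becomes $\tfrac{F+C_-}{2}$, the $S^2_a$-type condition $F'+2C'\le r'$ becomes $F+2C\le r$, and the $T^{\spit}$-type condition $F'+2C'\equiv r'+2$ (mod $4$) becomes $F+2C\equiv r+2$ (mod $4$). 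One also checks directly --- by tracking the effect of each surgery on $(F,C_+,C_-,\beta,Q)$: $[DCC]$ adds $2$ to $\beta$ and makes $Q$ negative; $[S^{1,1}-\antitube]$ adds $2$ to $F$ and $2$ to $\beta$; $[S^{1,0}-\antitube]$ adds $1$ to $C_+$ and $2$ to $\beta$; $[FM]$ sends $F\mapsto F-1$, $C_-\mapsto C_-+1$, $\beta\mapsto \beta+1$ --- that each candidate really has underlying space $N_r$ and taxonomy $[F,C:(C_+,C_-)]$ with the stated $Q$-sign, so that whenever its condition holds it indeed belongs to $N_r[F,C:(C_+,C_-)]$. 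The necessity of the two conditions is then automatic: a negative-$Q$-sign space satisfies $F+2C\le\beta=r$ by the last clause of Theorem~\ref{th:invariants}, and a positive-$Q$-sign space arises (via the reduction) from a $T^{\spit}$-type $Z$, whose existence forces $F+2C\equiv r+2$ (mod $4$). As the two candidates carry opposite $Q$-signs they are non-isomorphic, so $N_r[F,C:(C_+,C_-)]$ has exactly as many elements as conditions are satisfied.

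The main difficulty I expect is purely organizational --- keeping the sub-cases straight (orientable versus non-orientable $Z$; $C_+=0$ versus $C_+>0$; boundary cases like $r=C_-$, where $Z$ is forced to be $S^{2,2}=T^{\spit}_0[2]$) and verifying that the parameter substitutions between $Z$ and $X$ are consistent throughout. No new ideas are needed beyond the $FM$-surgery reduction, the three earlier classification theorems, and the $Q$-sign invariance in Proposition~\ref{pr:surg-invariance}.
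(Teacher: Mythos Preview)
Your proposal is correct and follows essentially the same approach as the paper: strip off all $C_-$ one-sided ovals via $MF$-surgery to reduce to a space $Z$ with taxonomy $[F+C_-,C_+:(C_+,0)]$, then invoke Theorems~\ref{th:F,0}, \ref{th:F>0a}, and \ref{th:T_g-action} according to whether $Z$ is orientable and whether $C_+=0$, using Proposition~\ref{pr:surg-invariance}(ii) to track the $Q$-sign. The only cosmetic difference is that the paper branches first on $C_+>0$ versus $C_+=0$ while you branch first on orientability of $Z$, but the case analysis and the resulting formulas are identical.
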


\begin{proof}
Let $X\in N_r[F,C:(C_+,C_-)]$.  Theorem~\ref{th:invariants} shows
that $r\equiv F\equiv C_-$ (mod $2$) and $F+2C\leq r+2$. 
Doing
$MF$-surgery around each one-sided oval in $X$ shows that $X\iso
Y+C_-[FM]$ where $Y$ has taxonomy $[F+C_-,C_+:(C_+,0)]$ and
$\beta(Y)=r-C_-$.  Note that the $Q$-signs of $X$ and $Y$ must
coincide, by Proposition~\ref{pr:surg-invariance}(ii).

If $C_+>0$ then $Y$ has both isolated fixed points and ovals, so $Y$
is non-orientable.  
Theorem~\ref{th:F>0a} implies that
there are at most two possibilities for $Y$, namely
\[ S^2_a +
\bigl (\tfrac{r-C_--(F+C_-)-2C_+}{2}\bigr)[DCC]+
\tfrac{F+C_-}{2}[S^{1,1}-\antitube]+C_+[S^{1,0}-\antitube] \]
and
\[ T^{\spit}_{\frac{r-C_--2C_+}{2}}[F+C_-] + C_+[S^{1,0}-\antitube].
\]
The first space occurs if and only if $(F+C_-)+2C_+\leq r-C_-$
(equivalently, $F+2C\leq r$), and the second occurs if and only if
$(F+C_-)+2C_+\equiv r-C_-+2$ mod $4$ (or equivalently, 
$F+2C\equiv r+2$ mod $4$).
The desired result is immediate.

Next suppose that $C_+=0$.  If $Y$ is orientable then we know by
Theorem~\ref{th:T_g-action} that
there is an isomorphism $Y\iso T_{\frac{r-C_-}{2}}^{\spit}[F+C_-]$, and that
$F+C_-\equiv 2+r-C_-$ (mod $4$).  If $Y$ is non-orientable then by
Theorem~\ref{th:F,0} 
we know $Y\iso S^2_a +
\frac{(r-C_-)-(F+C_-)}{2}[DCC]+\frac{F+C_-}{2}[S^{1,1}-\antitube]$ and
$F+C_-\leq r-C_-$.  
Again, the desired result is immediate.
\end{proof}

\subsection{Classification in the case \mdfn{$F=C_-=0$}}
This is the only remaining case.  

\begin{thm} 
\label{th:0C0}
Let $C>0$ and $r\geq 2$.  
\begin{enumerate}[(a)]
\item The set $N_r[0,C:(C,0)]$ is empty unless $r$ is even and $C\leq
\frac{r}{2}$.
\item When $r$ is even the set
contains at most four elements: at most three of
negative $Q$-sign, and at most one of positive $Q$-sign.  
\item Assume $r$ is even and $C\leq \frac{r}{2}$.  
The three elements of negative $Q$-sign are:
\begin{tabular}{ll}
$S^{2,1}+(\frac{r}{2}-C+1)[DCC]+(C-1)[S^{1,0}-\at]$ & \text{(always
  occurs)} \\
$S^2_a+(\frac{r}{2}-C)[DCC] + C[S^{1,0}-\at]$ & \text{(occurs if
  and only if $C<\frac{r}{2}$)} \\
$T_1^a + (\frac{r}{2}-C-1)[DCC]+C[S^{1,0}-\at]$ & \text{(occurs iff
   $C<\frac{r}{2}-1$)}.
\end{tabular}

\noindent
These spaces have distinct isomorphism types.
\item Still assume that $r$ is even and $C\leq \frac{r}{2}$.  The
element of positive $Q$-sign is
\[ T_{\frac{r}{2}-C}^{\rot} + C[S^{1,0}-\antitube] \]
and occurs if and only if  $2C\equiv r+2$ (mod $4$).  In
particular, note that $C<\frac{r}{2}$ here.  
\end{enumerate}
\end{thm}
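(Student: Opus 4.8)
The plan is to establish parts (a), (b), and the occurrence conditions in (c) and (d) simultaneously by induction on $C$, repeatedly peeling off one oval by $S^{1,0}$-surgery, and then to prove the distinctness claim in (c) separately. Fix $X\in N_r[0,C:(C,0)]$ with $C\geq 1$ and $r\geq 2$. Scherrer's Theorem~\ref{th:invariants} immediately gives $0=C_-\equiv\beta=r\pmod 2$, so $r$ is even, and $2C\leq r+2$; moreover, when the $Q$-sign of $X$ is negative the sharper inequality of Theorem~\ref{th:invariants} gives $2C\leq r$, i.e.\ $C\leq\frac r2$. The bound $C\leq\frac r2$ in the positive-$Q$-sign case, together with the emptiness assertion when $C>\frac r2$, will come out of the induction.

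To reduce, choose an oval $\cO$ of $X$; since $F=0$ and $C_-=0$ it is two-sided. If $\cO$ is separating, then by the corollary to Proposition~\ref{pr:doub} it is the only oval (so $C=1$) and $X\iso\Doub(S,1)\iso S^{2,1}\esum S$ (Remark~\ref{re:doub}); as $X=N_r$ is non-orientable we get $S\iso N_{r/2}$ and $X\iso S^{2,1}+\frac r2[DCC]$, the ``always occurs'' space for $C=1$. If $\cO$ is non-separating, then $S^{1,0}$-surgery yields $X\iso Y+[S^{1,0}-\antitube]$ where $Y$ has taxonomy $[0,C-1:(C-1,0)]$ and $\beta(Y)=r-2$; here the $Q$-sign and the separating/non-separating status of $X$ coincide with those of $Y$ (a routine check, since the complement of the fixed core of an $S^{1,0}$-antitube is a disjoint pair of half-open cylinders, which neither links the two ``sides'' of $Y$ nor alters the orientability of the quotient).

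Now induct on $C$. For $C=1$, the space $Y$ is a free action with $\beta(Y)=r-2$, hence by Theorem~\ref{th:free-classify} (equivalently Theorem~\ref{th:N-free}) one of $S^2_a+(\frac r2-1)[DCC]$, $T_1^{\anti}+(\frac r2-2)[DCC]$, $T^{\anti}_{\frac r2-1}$, or $T^{\rot}_{\frac r2-1}$ (the last two only when they exist). The non-orientable options produce the second and third negative-$Q$-sign spaces; $Y\iso T^{\anti}_{\frac r2-1}$ gives $X\iso T^{\anti}[\frac r2-1,1]\iso T_{r/2}$, which is orientable and therefore excluded; and $Y\iso T^{\rot}_{\frac r2-1}$ (possible exactly when $\frac r2-1$ is odd, i.e.\ $r\equiv 0\pmod 4$) gives $X\iso T^{\rot}_{\frac r2-1}+[S^{1,0}-\antitube]$, whose fixed-set complement is connected, so $X$ is non-separating; as the only orientable torus action with positive $Q$-sign and one oval, namely $T^{\refl}_{r/2}[1]$, is separating, this $X$ must be non-orientable, and it is the positive-$Q$-sign space. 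For $C\geq 2$, the space $Y$ has $C-1\geq 1$ two-sided ovals and no isolated fixed points; if $Y$ were orientable its action would be orientation-reversing (Lemma~\ref{le:isolated-or-ovals}) and then $X=Y+[S^{1,0}-\antitube]$ would be orientable (gluing an $S^{1,0}$-antitube onto an orientation-reversing surface preserves orientability, as already used for $S^{2,1}+[S^{1,0}-\antitube]$), contradicting $X=N_r$. Hence $Y\in N_{r-2}[0,C-1:(C-1,0)]$; by the inductive hypothesis $Y$ is one of the at most four listed spaces for $(r-2,C-1)$, and since removing one $S^{1,0}$-antitube from each of the four spaces listed for $(r,C)$ gives exactly the four listed for $(r-2,C-1)$, Proposition~\ref{pr:surg-subtract} identifies $X$ with one of the spaces listed for $(r,C)$. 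The occurrence conditions transport correctly ($C-1<\frac{r-2}2\Leftrightarrow C<\frac r2$; $C-1<\frac{r-2}2-1\Leftrightarrow C<\frac r2-1$; $2(C-1)\equiv(r-2)+2\Leftrightarrow 2C\equiv r+2\pmod 4$; $Q$-signs preserved), and each listed space is visibly realizable with the asserted taxonomy --- non-orientable because it is built from a non-orientable piece, or (for the $T^{\rot}$-space) by the separating argument above. This proves (a), (b), and the occurrence statements in (c) and (d).

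Finally, the distinctness claim in (c). Write $A$, $B$, $D'$ for the three negative-$Q$-sign spaces. All ovals of $B$ and of $D'$ have $S^{1,0}$-antitube tubular neighborhoods (their free parts carry no ovals), so removing all $C$ of these antitubes --- an operation well defined up to isomorphism by Corollary~\ref{co:surg-inv} --- recovers the free actions $S^2_a+(\frac r2-C)[DCC]$ and $T_1^{\anti}+(\frac r2-C-1)[DCC]$ on $N_{r-2C}$, which are non-isomorphic precisely when $C<\frac r2-1$, by Theorem~\ref{th:free-classify}; hence $B\not\iso D'$. And $A\iso\Doub(N_{\frac r2-C+1},C)$ is separating, whereas $B$ and $D'$ are non-separating (each arises from a connected free action by removing disks and attaching half-cylinders, neither of which disconnects the fixed-set complement), so the $\epsilon$-invariant separates $A$ from $B$ and from $D'$; thus the three are pairwise non-isomorphic. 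I expect the main obstacle to be the orientability/parity bookkeeping --- confirming that an orientable intermediate $Y$ forces $X$ to be orientable (hence is correctly discarded) while the $T^{\rot}$-based space is genuinely non-orientable --- together with the separation of $B$ from $D'$, which is exactly the subtlety that later forces the introduction of the $DD$-invariant; note too that sharpening Scherrer's bound from $C\leq\frac r2+1$ to $C\leq\frac r2$ relies on the induction, not on the inequality alone.
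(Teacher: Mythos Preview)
Your proof is correct and follows essentially the same route as the paper's: induct on $C$, peel off an oval by $S^{1,0}$-surgery, handle the $C=1$ base case by classifying the resulting free action $Y$, and for $C\geq 2$ argue that an orientable $Y$ would force $X$ to be orientable. Your distinctness argument is also the paper's (separating vs.\ non-separating for the first space, cancellation of the $S^{1,0}$-antitubes down to distinct free actions for the other two), though the correct internal reference for that cancellation step is Proposition~\ref{pr:1,0-cancel} rather than Corollary~\ref{co:surg-inv}.
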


\begin{proof}
If $X\in N_r[0,C:(C,0)]$ then $r\equiv 0$ (mod $2$) and $2C\leq r+2$
by Theorem~\ref{th:invariants}.  Assume that $C=1$.  If the unique
oval is separating then $X\iso \Doub(N_{\frac{r}{2}},1)\iso
S^{2,1}+\frac{r}{2}[DCC]$.  
If the oval
is non-separating then we can do surgery to see that $X\iso
Y+[S^{1,0}-\antitube]$ where the action on $Y$ is free.  
If $Y$ is orientable then it is either $T_{\frac{r-2}{2}}^{\anti}$ or
$T_{\frac{r-2}{2}}^{\rot}$, the latter only possible when
$\frac{r-2}{2}$ is odd.  The first option is not possible since it
would
imply that  $X$ is orientable.  Note that since $C=1$ the criterion that
$\frac{r-2}{2}$ is odd is equivalent to saying $2C\equiv r+2$ (mod
$4$).  

If $Y$ is
non-orientable then there at at most two possibilities for what it
could be, and they are as listed in 
Theorem~\ref{th:N-free}.  A little thought checks that these are
precisely the second two possibilities listed in (c) above.
This completes the proof when $C=1$.

Next proceed by induction.  If $C>1$ then choose an oval in $X$ and
try to do surgery.  The oval cannot be separating because $C>1$, and
so surgery shows $X\iso Y+[S^{1,0}-\antitube]$ where $Y$ has taxonomy
$[0,C-1:(C-1,0)]$.  
If $Y$ is orientable then since $C>1$ the involution on $Y$ is
orientation-reversing.  But then $X$ is orientable, which is a
contradiction.  
So $Y$ is non-orientable, and hence by induction we
know the possibilities for $Y$.  One readily checks that these then
yield the desired options and conditions for $X$.  

There is one thing left to justify, namely the claim that the three
spaces listed in (c) have distinct isomorphism types.  The first space
is separating whereas the latter two are not, so that takes care of
the first space.  If the second two spaces are isomorphic then by
Proposition~\ref{pr:1,0-cancel} we would conclude that
$S^2_a+(\frac{r}{2}-C)[DCC]\iso T_1^a+(\frac{r}{2}-C-1)[DCC]$.
However, the actions here are free and we already proved they were not
isomorphic by computing their characteristic classes; see
Proposition~\ref{pr:classify-base}(b).  (For another proof that the
second two spaces in (c) are not isomorphic, without reducing to the
free case, see Corollary~\ref{co:DD-sep}).  
\end{proof}

At this point we have finished the proofs for
Proposition~\ref{pr:taxonomy} and Theorem~\ref{th:P2}.  Those results
are merely concatenations of Theorems~\ref{th:N-free}, \ref{th:F,0},
and \ref{th:0C0}.

\begin{remark}
The classification proofs in this section are fairly unpleasant.  The
basic idea is very simple, though: given spaces of a fixed taxonomy, latch
onto a certain piece of structure and do surgery to reduce to a
smaller $\beta$-genus.  The trouble is that the surgery leads to
various cases: it might have produced a space with two components, it
might have produced a connected orientable space, and it might have
produced a connected non-orientable space.  Further cases arise
because sometimes in low $\beta$-genus the classification results are
slightly different (e.g. Theorem~\ref{th:N-free}).  It is really just this
constant presence of cases that makes the bookkeeping unpleasant.  
\end{remark}


\section{Counting $C_2$-actions on non-orientable surfaces}

To explain how to count the number of involutions on $N_r$, it will be
convenient to examine the tables in Appendix~\ref{se:tables}.  These
tables are obtained by first listing all taxonomies $[F,C:(C_+,C_-)]$
where $F+2C\leq r+2$ and $F\equiv r\equiv C_-$ (mod $2$).  Note that
$C=C_++C_-$ here, as always.  We then go down our list and find the
taxonomies with $F+2C\equiv r+2$ (mod $4$) and $C>0$: 
each of these yields one action of positive $Q$-sign.  Next, we find
the taxonomies in our list satisfying $F+2C\leq r$ and either $F>0$ or
$C_->0$: each of these yields one action of negative $Q$-sign.
Finally, when $r$ is even 
we look at the taxonomies $[0,C:(C,0)]$.  When $C=0$ there are
two actions of negative $Q$-sign (except when $r=2$, where there is
only one); when $1\leq C\leq \frac{r}{2}-2$
there are three actions of negative $Q$-sign; when $r>2$ and
$C=\frac{r}{2}-1$ there are two actions of negative $Q$-sign; and when
$C=\frac{r}{2}$ there is one action of  negative $Q$-sign. 
This accounts for all the possible actions.

Let $A(r)$ denote the denote the number of tuples $[F,C:(C_+,C_-)]$
of nonnegative integers (having $C=C_++C_-$) satisfying $F+2C\leq r$
and $F\equiv C_-\equiv r$ (mod $2$).  Similarly, let $B(r)$ denote the
number of tuples $[F,C:(C_+,C_-)]$ satisfying $F+2C\leq r+2$, $F\equiv
C_-\equiv r$ (mod $2$), and $F+2C
\equiv r+2$ (mod $4$).  

Let $\Phi(r)$ denote the number of nontrivial involutions on $N_r$ (up to
isomorphism).  Let $\Phi_+(r)$ and $\Phi_-(r)$ be the number of
nontrivial involutions with positive and negative $Q$-sign, respectively.
These quantities are topological, in contrast to the quantities $A(r)$
and $B(r)$ which are purely algebraic.  
The following table shows these numbers in a few cases (the cases
where $r$ is even and odd are separated, for reasons that will become
clear).

\vspace{0.2in}

\begin{tabular}{c||c|c|c|c|c|c|c||c||c|c|c|c|c|c|c|c|}
r & 2 & 4 & 6 & 8 & 10 & 12 & 14 &&  1 & 3 & 5 & 7 & 9 & 11 & 13 & 15\\
\hline
\hline
A(r) & 3 & 7 & 13 & 22 & 34 & 50 & 70 && 0 & 1 & 3 & 7 & 13 & 22 & 34 & 50\\
\hline
B(r) & 5 & 8 & 14 & 20 & 30 & 40 & 55&& 1 & 2 & 5& 8 & 14 & 20 & 30 & 40 \\
\hline
\hline
$\Phi_-(r)$ & 3 & 9 & 17 & 28 & 42 & 60 & 82 && 0 & 1 & 3 & 7 & 13 & 22 & 34 & 50\\
\hline
$\Phi_+(r)$ & 2 & 5 & 10 & 16 & 25 & 35 & 49 && 1 & 2 & 5& 8 & 14 & 20 & 30 & 40 \\
\hline
$\Phi(r)$ & 5 & 14 & 27 & 44 & 67 & 95 & 131 && 1 & 3 & 8 & 15 & 27 &
42 & 64 & 90
\end{tabular}

\vspace{0.2in}

\begin{prop}
\label{pr:Phi}
If $r$ is odd then $\Phi_-(r)=A(r)$ and $\Phi_+(r)=B(r)$.  
If $r$ is even then 
\[ \Phi_-(r)=A(r)+r-2\qquad\text{and}\qquad
\Phi_+(r)=B(r)-1-
\begin{cases} \frac{r+4}{4} & \text{if
  $r\equiv 0$ mod $4$,} \\[0.1in]
\frac{r+6}{4} & \text{if $r\equiv 2$ mod $4$.}
\end{cases}
\]
\end{prop}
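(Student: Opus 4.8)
The plan is to read both counts off the explicit classification produced in Theorems~\ref{th:N-free}, \ref{th:F,0}, \ref{th:F>0a}, \ref{th:F>0b} and~\ref{th:0C0} (this is the same information laid out in the counting recipe just before the statement), and then to do the arithmetic against $A(r)$ and $B(r)$. The organizing idea is that a taxonomy $[F,C:(C_+,C_-)]$ contributes a negative-$Q$ action roughly when it is counted by $A(r)$, and a positive-$Q$ action roughly when it is counted by $B(r)$; the work is to pin down the finitely many places where ``one taxonomy, one action'' fails.

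For $\Phi_-(r)$: inspecting the five theorems, every negative-$Q$ action has a taxonomy with $F+2C\le r$ obeying the Scherrer congruences, hence is counted by $A(r)$. Conversely, I would split the taxonomies counted by $A(r)$ into (i) those with $F>0$ or $C_->0$ and (ii) the taxonomies $[0,C:(C,0)]$ with $2C\le r$. Theorems~\ref{th:F,0}, \ref{th:F>0a} and~\ref{th:F>0b} attach exactly one negative-$Q$ action to each taxonomy in group (i), so group (i) contributes equally to $\Phi_-(r)$ and to $A(r)$. For group (ii): when $r$ is odd the congruence $C_-\equiv r$ rules this family out, so $\Phi_-(r)=A(r)$. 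When $r$ is even the family has $\tfrac{r}{2}+1$ members, and by Theorems~\ref{th:N-free} and~\ref{th:0C0}(c) the number of negative-$Q$ actions it carries is $1$ or $2$ at $C=0$, then $3$ for each $C$ with $1\le C\le\tfrac{r}{2}-2$, then $2$ at $C=\tfrac{r}{2}-1$, then $1$ at $C=\tfrac{r}{2}$ --- a total of $\tfrac{3r}{2}-1$. Subtracting the $\tfrac{r}{2}+1$ taxonomies leaves an excess of $r-2$, so $\Phi_-(r)=A(r)+(r-2)$ for $r$ even.

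For $\Phi_+(r)$: Theorems~\ref{th:N-free} and~\ref{th:F,0} show there is no positive-$Q$ action with $C=0$, so every positive-$Q$ action has a taxonomy with $C>0$; and in Theorems~\ref{th:F>0a}, \ref{th:F>0b} and~\ref{th:0C0} a positive-$Q$ action exists precisely when $F+2C\le r+2$, $F+2C\equiv r+2\pmod 4$ and the parity congruences hold, i.e.\ precisely when the taxonomy is counted by $B(r)$ --- with one exception. The exception is the taxonomy $[0,\tfrac{r}{2}+1:(\tfrac{r}{2}+1,0)]$, which arises only for $r$ even: it satisfies $F+2C=r+2$ and is counted by $B(r)$, but by Theorem~\ref{th:0C0}(a) the set $N_r[0,C:(C,0)]$ is empty once $C>\tfrac{r}{2}$, so it carries no action at all. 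I would then check that this is the only such defect: a $B(r)$-taxonomy with $C>0$ can fail to carry a positive-$Q$ action only when $F=C_-=0$ with $C>\tfrac{r}{2}$, and $2C\le r+2$ forces $C=\tfrac{r}{2}+1$ there. Hence $\Phi_+(r)$ equals the number of $B(r)$-taxonomies with $C>0$, minus $1$ when $r$ is even. Finally I would count the $B(r)$-taxonomies with $C=0$: these are the $[F,0:(0,0)]$ with $F$ even, $0\le F\le r+2$ and $F\equiv r+2\pmod 4$; for $r$ odd there are none (the congruence $C_-\equiv r$ fails), while for $r$ even counting this arithmetic progression gives $\tfrac{r+4}{4}$ terms if $r\equiv 0\pmod 4$ and $\tfrac{r+6}{4}$ terms if $r\equiv 2\pmod 4$. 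Assembling the pieces gives $\Phi_+(r)=B(r)$ for $r$ odd, and $\Phi_+(r)=B(r)-1-\tfrac{r+4}{4}$ or $B(r)-1-\tfrac{r+6}{4}$ for $r$ even according to $r\bmod 4$.

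The main obstacle is bookkeeping rather than mathematics: keeping the $C$-dependent multiplicities in the $F=C_-=0$ family straight while summing them to $\tfrac{3r}{2}-1$, and --- on the positive side --- recognizing that $B(r)$ over-counts the positive actions by exactly the single unrealizable taxonomy $[0,\tfrac{r}{2}+1:(\tfrac{r}{2}+1,0)]$, which is precisely the source of the ``$-1$'' in the even case, followed by the routine mod-$4$ case split for the $C=0$ taxonomies.
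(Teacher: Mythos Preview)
Your proof is correct and follows essentially the same approach as the paper: both arguments read the counts directly off the classification theorems and identify where the ``one taxonomy, one action'' correspondence breaks. The only differences are cosmetic organization---you compute the total $\tfrac{3r}{2}-1$ for the $[0,C:(C,0)]$ family and subtract the $\tfrac{r}{2}+1$ taxonomies to get the excess $r-2$, whereas the paper directly tallies the ``missed'' actions case by case; and for $\Phi_+$ you separate off the $C=0$ taxonomies first while the paper lists all overcounted taxonomies together---but the substance is identical.
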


\begin{proof}
The idea is that $A(r)$ and $B(r)$ are close to the number of $C_2$-actions
on $N_r$ having negative and positive $Q$-sign, respectively; but when
$r$ is even we
must make slight corrections.  To understand these, it will be
convenient to refer the reader again to the tables in
Appendix~\ref{se:tables}.  We first treat the case when $r$ is even,
despite the fact that this is the more complex case.  

The number $A(r)$ is an undercount for the actions with negative
$Q$-sign, in that it misses the ``extra'' actions of type
$[0,C:(C,0)]$; i.e., it counts these taxonomies as having only one
action, when in fact there are two or three.  To be precise, first
assume $r>2$.  Then when
$C=0$ we have missed one action; when $1\leq C\leq \frac{r}{2}-2$ we
have missed two actions; and when $C=\frac{r}{2}-1$ we have missed one
action.  So in total we have missed $1+2(\frac{r}{2}-1)+1=r-2$
actions.  Thus, $A(r)+r-2$ is the total number of actions on $N_r$ with
negative $Q$-sign.  When $r=2$ we have actually not missed any actions,
and so $A(r)+r-2$ is still the correct count.  

The number $B(r)$ is an overcount for the actions on $N_r$ with
positive $Q$-sign.  The taxonomy $[0,\frac{r+2}{2}:(\frac{r+2}{2},0)]$
is counted in $B(r)$, but does not actually correspond to an action.  
Likewise, the taxonomies $[F,0:(0,0)]$  for $F\leq r+2$ and $F\equiv r+2$
(mod $4$) are counted in $B(r)$ but do not correspond to actions.  
The number of such $F$ is given by $1+\frac{r+2}{4}$ when $4|r+2$ and
$1+\frac{r}{4}$ when $4|r$.  This yields the desired formula for
$\Phi_+(r)$.

Note that the exceptional cases that appeared in the last two
paragraphs all had $C_-=0$; these cannot appear when $r$ is odd, since
by Theorem~\ref{th:invariants} any action satisfies $C_-\equiv r$ mod $2$.
\end{proof}

\begin{lemma}
\label{le:AB}
The sequences $A$ and $B$ satisfy the recursion relations
\begin{align*}
 A(r+2) 
&= \begin{cases}
A(r)+\frac{1}{16}(r+4)(r+8) & \text{if $r\equiv 0$ mod $4$},\\
A(r)+\frac{1}{16}(r+6)^2 & \text{if $r\equiv 2$ mod $4$},\\
A(r)+\frac{1}{16}(r+1)(r+5) & \text{if $r\equiv 3$ mod $4$},\\
A(r)+\frac{1}{16}(r+3)^2 & \text{if $r\equiv 1$ mod $4$}\\
\end{cases}
\end{align*}
and
\begin{align*}
 B(r+4) 
&= \begin{cases}
B(r)+\frac{1}{16}(r+8)(r+12) & \text{if $r\equiv 0$ mod $4$},\\
B(r)+\frac{1}{16}(r+10)^2 & \text{if $r\equiv 2$ mod $4$,}\\
B(r)+\frac{1}{16}(r+5)(r+9) & \text{if $r\equiv 3$ mod $4$},\\
B(r)+\frac{1}{16}(r+7)^2 & \text{if $r\equiv 1$ mod $4$.}\\
\end{cases}
\end{align*}
\end{lemma}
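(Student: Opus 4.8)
The plan is to reduce both recursions to a single elementary counting formula. A tuple $[F,C:(C_+,C_-)]$ is really a triple $(F,C_+,C_-)$ of nonnegative integers, since $C=C_++C_-$. The preliminary observation is that the congruence $F\equiv r\pmod 2$ forces $F+2C\equiv r\pmod 2$, so no admissible tuple can have $F+2C=r+1$. Consequently the tuples counted by $A(r+2)$ but not by $A(r)$ are exactly those with $F+2(C_++C_-)=r+2$ and $F\equiv C_-\equiv r\pmod 2$; here the congruence conditions for $A(r)$ and $A(r+2)$ agree because $r+2\equiv r\pmod 2$. Similarly $B(r)$ and $B(r+4)$ impose identical congruence conditions (note $r+4\equiv r\pmod 2$ and $r+6\equiv r+2\pmod 4$), and the only value of $F+2C$ in the interval $(r+2,\,r+6]$ that is $\equiv r+2\pmod 4$ is $r+6$; hence the tuples counted by $B(r+4)$ but not by $B(r)$ are exactly those with $F+2(C_++C_-)=r+6$ and $F\equiv C_-\equiv r\pmod 2$. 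Writing $N(m)$ for the number of triples $(F,C_+,C_-)\in\Z_{\ge 0}^{3}$ with $F+2(C_++C_-)=m$, $F\equiv m\pmod 2$ and $C_-\equiv m\pmod 2$, we therefore have $A(r+2)-A(r)=N(r+2)$ and $B(r+4)-B(r)=N(r+6)$.

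The second step is to compute $N(m)$ in closed form. Since $F=m-2(C_++C_-)$ is determined and automatically has the right parity, $N(m)$ is the number of pairs $(C_+,C_-)$ of nonnegative integers with $C_++C_-\le\lfloor m/2\rfloor$ and $C_-\equiv m\pmod 2$. Summing over the admissible values of $C_-$, each contributing $\lfloor m/2\rfloor-C_-+1$ choices of $C_+$, gives an arithmetic sum that I would evaluate according to the residue of $m$ modulo $4$; the result is
\[
N(m)\;=\;\frac{1}{16}
\begin{cases}
(m+4)^2 & \text{if } m\equiv 0\pmod 4,\\
(m-1)(m+3) & \text{if } m\equiv 1\pmod 4,\\
(m+2)(m+6) & \text{if } m\equiv 2\pmod 4,\\
(m+1)^2 & \text{if } m\equiv 3\pmod 4.
\end{cases}
\]
Substituting $m=r+2$ and translating residues ($r\equiv0$ gives $m\equiv2$, $r\equiv1$ gives $m\equiv3$, $r\equiv2$ gives $m\equiv0$, $r\equiv3$ gives $m\equiv1$) yields the four displayed expressions for $A(r+2)-A(r)$, and substituting $m=r+6$ yields those for $B(r+4)-B(r)$.

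The only real work lies in the case analysis of the second step, and the main pitfall is arithmetic bookkeeping — keeping $\lfloor m/2\rfloor$ (not $m$) as the range of the outer index, and correctly deciding whether the largest admissible $C_-$ equals $\lfloor m/2\rfloor$ or one less, according to the parity involved. I would guard against errors by spot-checking against the table in the text: from $A(2)=3$ and $A(4)=7$ one gets $N(4)=4=(4+4)^2/16$, and from $B(1)=1$ and $B(5)=5$ one gets $N(7)=4=(7+1)^2/16$, both matching the formula. No ingredient beyond elementary counting is required.
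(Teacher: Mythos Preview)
Your proposal is correct and follows essentially the same approach as the paper: both identify the difference $A(r+2)-A(r)$ (resp.\ $B(r+4)-B(r)$) as a count of tuples on a single level set $F+2C=r+2$ (resp.\ $r+6$), and evaluate by summing over admissible $C_-$ (the paper phrases this via a function $f(C)$ counting $(C_+,C_-)$-decompositions and then sums over $C$, which is the same sum reindexed). Your packaging via a single function $N(m)$ is a mild organizational improvement, since it handles both $A$ and $B$ at once and avoids the paper's separate treatment of even and odd $r$.
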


\begin{proof}
We will actually prove that
\begin{align*} A(r+2) &=\begin{cases}
A(r)+2[1+2+\cdots+\tfrac{r+4}{4}] & \text{if $r\equiv 0$ mod $4$},
\\
A(r)+2[1+2+\cdots+\tfrac{r+6}{4}]-\tfrac{r+6}{4} & \text{if $r\equiv
  2$ mod $4$},\\
A(r)+2[1+2+\cdots+\tfrac{r+1}{4}] & \text{if $r\equiv 3$ mod $4$},\\
A(r)+2[1+2+\cdots+\tfrac{r+3}{4}]-\tfrac{r+3}{4} & \text{if $r\equiv 1$ mod $4$},\\
\end{cases} 
\end{align*}
and
\begin{align*}
B(r+4) &=
\begin{cases}
B(r)+2[1+2+\cdots+\tfrac{r+8}{4}] & \text{if $r\equiv 0$ mod $4$},
\\
B(r)+2[1+2+\cdots+\tfrac{r+10}{4}]-\tfrac{r+10}{4} & \text{if $r\equiv
  2$ mod $4$},\\
B(r)+2[1+2+\cdots+\tfrac{r+5}{4}] & \text{if $r\equiv 3$ mod $4$},
\\
B(r)+2[1+2+\cdots+\tfrac{r+7}{4}]-\tfrac{r+7}{4} & \text{if $r\equiv
  1$ mod $4$}.
\end{cases}
\end{align*}
These formulas readily yield the ones in the statement of the lemma.

The difference $A(r+2)-A(r)$ counts solutions to $F+2C=r+2$ where
$F\equiv C_-\equiv r$ (mod $2$).  Assume that $r$ is even.
Then the possibilities for the pair
$(F,C)$ are 
\[ (r+2,0),\ (r,1),\ (r-2,2),\ \ldots, (2,\tfrac{r}{2}),\
(0,\tfrac{r+2}{2}).
\]
For a given value of $C$, let $f(C)$ denote the number of pairs
$(C_+,C_-)$ such that $C=C_++C_-$ and $C_-$ is even.  Then we have
\[ A(r+2)-A(r)=f(0)+f(1)+f(2)+\cdots+f(\tfrac{r}{2})+f(\tfrac{r+2}{2}).
\]
But it is easy to see that $f(C)=\lfloor \frac{C}{2}\rfloor+1=
\lfloor \frac{C+2}{2}\rfloor$ for all $C$.  
So 
\[ A(r+2)-A(r)=1+1+2+2+\cdots+ \lfloor \tfrac{r+4}{4}\rfloor+
\lfloor \tfrac{r+6}{4}\rfloor.
\]
Looking at the cases $r\equiv 0$ (mod $4$) and $r\equiv 2$ (mod $4$)
separately, one readily obtains the desired formulas.  

We are not yet done with all the formulas for the $A$ function, but
let us put that on hold and consider $B$.  The difference
$B(r+4)-B(r)$ counts solutions to $F+2C=r+6$ with $F\equiv C_-\equiv
r$ (mod $2$).  When $r$ is even the possibilities for $(F,C)$ are
\[ (r+6,0),\ (r+4,1),\ (r+2,2),\ \ldots,\ \bigl (0,\tfrac{r+6}{2}\bigr
).
\]
For each value of $C$ in the above list, the number of possibilities
for $(C_+,C_-)$ is $\lfloor \frac{C+2}{2}\rfloor$ just as before.   So
\[ B(r+4)-B(r)=\sum_{C=0}^{\frac{r+6}{2}} \lfloor
\tfrac{C+2}{2}\rfloor
=1+1+2+2+3+3+\cdots+\lfloor \tfrac{r+10}{4}\rfloor
\]
and the desired formulas follow immediately.

The formulas when $r$ is odd follow by very similar arguments.
The main difference here is the function $f(C)$, which now counts
solutions to $C=C_++C_-$ where $C_-$ is odd.  It is easy to check that
this is given by $f(C)=\lfloor \frac{C+1}{2}\rfloor$.  For
$A(r+2)-A(r)$ one sums this function from $C=0$ to $C=\frac{r+1}{2}$,
and the rest of the argument is similar to above.  

Finally, when $r$ is odd one finds by the same arguments that
\[ B(r+4)-B(r)=\sum_{C=0}^{\frac{r+5}{2}} \lfloor
\tfrac{C+1}{2}\rfloor = 1+1+2+2+\cdots+\lfloor\tfrac{r+5}{4}\rfloor +
\lfloor\tfrac{r+7}{4}\rfloor
\]
and the desired formulas follow readily.
\end{proof}

\begin{prop}
\label{pr:A,B}
The sequences $A$ and $B$ are given by the formulas
\[ A(r)=\begin{cases}
\frac{1}{96}(r+3)(r+4)(r+8) & \text{if $r\equiv 0$ mod $4$},\\
\frac{1}{96}(r+2)(r+6)(r+7) & \text{if $r\equiv 2$ mod $4$},\\
\frac{1}{96}(r-1)(r+3)(r+4) & \text{if $r\equiv 1$ mod $4$},\\
\frac{1}{96}r(r+1)(r+5) & \text{if $r\equiv 3$ mod $4$},\\
\end{cases}
\]
and
\[
 B(r)=\begin{cases}
\frac{1}{192}(r+4)(r+8)(r+12) & \text{if $r\equiv 0$ mod $4$},\\
\frac{1}{192}(r+6)(r+8)(r+10) & \text{if $r\equiv 2$ mod $4$},\\
\frac{1}{192}(r+3)(r+5)(r+7) & \text{if $r\equiv 1$ mod $4$},\\
\frac{1}{192}(r+1)(r+5)(r+9) & \text{if $r\equiv 3$ mod $4$}.\\
\end{cases}
\]
\end{prop}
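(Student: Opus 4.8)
The plan is to deduce these closed formulas from the recursion relations of Lemma~\ref{le:AB} by induction on $r$. Since the $A$-recursion advances $r$ by $2$ (moving between the two residues within each parity class) and the $B$-recursion advances $r$ by $4$ (fixing the residue mod $4$), a finite set of base values suffices to seed each induction: for $A$ the values $A(1),A(2),A(3),A(4)$, and for $B$ the values $B(1),B(2),B(3),B(4)$ — all of which can be read off the table preceding the proposition (or recounted directly: $A(1)=0$, $A(2)=3$, $A(3)=1$, $A(4)=7$ and $B(1)=1$, $B(2)=5$, $B(3)=2$, $B(4)=8$). First I would check that the proposed formulas reproduce these base values; for instance $\tfrac{1}{96}(1-1)(1+3)(1+4)=0$, $\tfrac{1}{96}(2+2)(2+6)(2+7)=3$, $\tfrac{1}{192}(1+3)(1+5)(1+7)=1$, and so on.

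For the inductive step I would assume the formula holds at $r$ and verify it at $r+2$ (for $A$) or $r+4$ (for $B$), splitting into the four residue classes mod $4$ exactly as in Lemma~\ref{le:AB}. In each case one substitutes the inductive hypothesis, adds the quadratic correction term supplied by the lemma, and checks a polynomial identity in $r$. For example, in the case $r\equiv 0\pmod 4$ one must show
\[
\tfrac{1}{96}(r+3)(r+4)(r+8)+\tfrac{1}{16}(r+4)(r+8)
=\tfrac{1}{96}(r+4)(r+8)\bigl((r+3)+6\bigr)=\tfrac{1}{96}(r+4)(r+8)(r+9),
\]
and the right-hand side is exactly $\tfrac{1}{96}(r+2)(r+6)(r+7)$ with $r$ replaced by $r+2$, which is the claimed value of $A(r+2)$ since $r+2\equiv 2\pmod 4$. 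The remaining three cases for $A$ and all four cases for $B$ are the same kind of elementary identity, each reducing after factoring to a linear or quadratic equality in $r$ that one expands and compares.

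The only point requiring care — and essentially the only ``obstacle,'' such as it is — is the bookkeeping that matches residue classes correctly: a case of Lemma~\ref{le:AB} indexed by $r\equiv a\pmod 4$ produces a value of $A(r+2)$ whose argument lies in the class $a+2\pmod 4$, so it must be checked against the corresponding branch of the closed formula evaluated at $r+2$, not at $r$. Once this pairing is fixed there is no difficulty. As an alternative route that sidesteps the mod-$4$ shuffling for $A$, one can iterate the $A$-recursion twice to obtain, e.g., $A(r+4)-A(r)=\tfrac18(r+6)(r+8)$ for $r\equiv 0\pmod 4$ (and analogous quadratics for the other three classes); summing a quadratic shows that $A$ restricted to each residue class mod $4$ is a cubic polynomial in $r$, so that cubic is pinned down by a single additional value in the class, which the base cases supply. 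The same remark applies to $B$ directly, since its recursion already keeps $r$ in a fixed residue class mod $4$.
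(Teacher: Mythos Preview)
Your proposal is correct and is exactly the approach the paper takes: the paper's proof is the single line ``This follows from Lemma~\ref{le:AB} by a routine induction,'' and you have simply spelled out that routine induction in detail, including the residue-class bookkeeping and a sample verification.
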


\begin{proof}
This follows from Lemma~\ref{le:AB} by a routine induction.
\end{proof}

\begin{remark}
\label{re:factorizations}
The factorizations in the formulas from Proposition~\ref{pr:A,B} are
somewhat amazing, and of course the induction proof doesn't give a
satisfying explanation.  For $A$ there is also a cyclic pattern to the roots
in the various cases (an $x\ra x+3$ pattern modulo 9).  This pattern is present
in $B$ to some extent, but not as consistently.  In any case, it would
be interesting to see a more conceptual understanding of these
formulas.  
\end{remark}

\begin{cor} 
\label{co:A+B}
For all values of $r$ one has
\[ A(r)+B(r)=\begin{cases}
\frac{1}{64}(r+4)(r+6)(r+8) & \text{if $r\equiv 0$ mod $4$},\\
\frac{1}{64}(r+3)^3 & \text{if $r\equiv 1$ mod $4$},\\
\frac{1}{64}(r+6)^3 & \text{if $r\equiv 2$ mod $4$},\\
\frac{1}{64}(r+1)(r+3)(r+5) & \text{if $r\equiv 3$ mod $4$.}
\end{cases}
\]
\end{cor}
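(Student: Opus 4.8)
The plan is to derive the corollary directly from the closed-form expressions for $A(r)$ and $B(r)$ recorded in Proposition~\ref{pr:A,B}, splitting into the same four residue classes modulo $4$ and simply adding the two cubics in each case. The key observation that makes this painless is that within each residue class the formulas for $A(r)$ and $B(r)$ share a visible common quadratic factor, so after passing to the common denominator $192$ the computation collapses to adding a single linear expression and recognizing either a perfect cube or a product of three linear factors; in every case a factor of $3$ then cancels the denominator $192$ down to $64$.

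Concretely, I would start with $r\equiv 0\ (\mathrm{mod}\ 4)$, where $A(r)=\tfrac{1}{96}(r+3)(r+4)(r+8)$ and $B(r)=\tfrac{1}{192}(r+4)(r+8)(r+12)$. Writing both over the denominator $192$ and pulling out $(r+4)(r+8)$ gives $\tfrac{1}{192}(r+4)(r+8)\bigl(2(r+3)+(r+12)\bigr)=\tfrac{1}{192}\cdot 3(r+4)(r+6)(r+8)=\tfrac{1}{64}(r+4)(r+6)(r+8)$, as claimed. The other three cases are identical in spirit: for $r\equiv 1$ factor out $(r+3)$ and check that $2(r-1)(r+4)+(r+5)(r+7)=3(r+3)^2$; for $r\equiv 2$ factor out $(r+6)$ and check $2(r+2)(r+7)+(r+8)(r+10)=3(r+6)^2$; for $r\equiv 3$ factor out $(r+1)(r+5)$ and use $2r+(r+9)=3(r+3)$. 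Each of these one-line verifications produces exactly the corresponding branch of the stated formula.

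The ``hard part'' here is essentially nonexistent — this is a bookkeeping check rather than a genuine argument — so the only point requiring care is lining up which branch of Proposition~\ref{pr:A,B} governs $A$ and which governs $B$ for a given $r$; since both halves of that proposition are indexed by $r \bmod 4$ in the same way, no mismatch occurs. Accordingly I would present the corollary's proof as the four short algebraic computations above, each beginning ``by Proposition~\ref{pr:A,B}'' and ending with the displayed formula, with no further commentary needed.
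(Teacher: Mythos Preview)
Your proposal is correct and is exactly the approach the paper has in mind: the paper's proof reads in its entirety ``Left to the reader,'' so the intended argument is precisely to plug in the closed forms from Proposition~\ref{pr:A,B} and simplify case by case, which you carry out cleanly and accurately.
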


\begin{proof}
Left to the reader.
\end{proof}

\begin{thm}
For $r\geq 1$ the number of nontrivial $C_2$-actions on $N_r$ is given by the
following formulas:

\[\begin{cases}
\frac{(r+3)^3}{64}=\frac{1}{64}\bigl ( r^3+9r^2 +27r+27 \bigr ) &
\text{if $r\equiv 1$\ mod $4$}\\[0.2in]
\frac{(r+1)(r+3)(r+5)}{64}=\frac{1}{64}\bigl ( r^3+9r^2+23r+15\bigr )
& \text{if $r\equiv 3$\ mod $4$}\\[0.2in]
\frac{1}{64}\bigl ( r^3+18r^2+152r-64 \bigr )
&
\text{if $r\equiv 0$\ mod $4$}\\[0.2in]
\frac{1}{64}\bigl ( r^3+18r^2+156r -72 \bigr )
&
\text{if $r\equiv 2$\ mod $4$.}\\[0.2in]
\end{cases}
\]
\end{thm}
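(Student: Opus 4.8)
The plan is to write $\Phi(r) = \Phi_+(r) + \Phi_-(r)$ and read off both summands from the bookkeeping already assembled. When $r$ is odd, Proposition~\ref{pr:Phi} gives $\Phi_-(r) = A(r)$ and $\Phi_+(r) = B(r)$, so $\Phi(r) = A(r) + B(r)$, and Corollary~\ref{co:A+B} supplies the closed forms $\tfrac{1}{64}(r+3)^3$ when $r\equiv 1$ mod $4$ and $\tfrac{1}{64}(r+1)(r+3)(r+5)$ when $r\equiv 3$ mod $4$. Expanding $(r+3)^3 = r^3+9r^2+27r+27$ and $(r+1)(r+3)(r+5) = r^3+9r^2+23r+15$ gives exactly the two odd-$r$ formulas in the statement.

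For $r$ even there is a correction to apply. Proposition~\ref{pr:Phi} gives $\Phi_-(r) = A(r) + (r-2)$ and $\Phi_+(r) = B(r) - 1 - \delta(r)$, where $\delta(r) = \tfrac{r+4}{4}$ if $r\equiv 0$ mod $4$ and $\delta(r) = \tfrac{r+6}{4}$ if $r\equiv 2$ mod $4$. Hence $\Phi(r) = \bigl(A(r)+B(r)\bigr) + (r-3) - \delta(r)$. Substituting $A(r)+B(r) = \tfrac{1}{64}(r+4)(r+6)(r+8)$ (when $r\equiv 0$) or $\tfrac{1}{64}(r+6)^3$ (when $r\equiv 2$) from Corollary~\ref{co:A+B} and clearing the denominator $64$ yields, after a short computation, $\tfrac{1}{64}(r^3+18r^2+152r-64)$ and $\tfrac{1}{64}(r^3+18r^2+156r-72)$ respectively. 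I would carry out this arithmetic separately in each residue class rather than hunt for a single unified expression.

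All of the real content sits upstream. The substance of Proposition~\ref{pr:Phi} is identifying which taxonomies correspond to no action at all (the positive-$Q$ taxonomies $[F,0:(0,0)]$ and $[0,\tfrac{r+2}{2}:(\tfrac{r+2}{2},0)]$, responsible for the overcount in $B(r)$) and which correspond to more than one action (the negative-$Q$ taxonomies $[0,C:(C,0)]$, responsible for the undercount in $A(r)$); these phenomena occur only when $r$ is even, since by Theorem~\ref{th:invariants} every action has $C_-\equiv r$ mod $2$. The substance of the closed forms is the combinatorial evaluation of $A$ and $B$ in Lemma~\ref{le:AB} and Proposition~\ref{pr:A,B}. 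Granting all of these, the present theorem is mechanical, and the only step where a slip is likely is matching the even-$r$ corrections $r-2$ and $-1-\delta(r)$ against the factor of $64$; a check against the tabulated values preceding Proposition~\ref{pr:Phi} then confirms the arithmetic.
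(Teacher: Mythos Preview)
Your proposal is correct and follows essentially the same approach as the paper: invoke Proposition~\ref{pr:Phi} to write $\Phi(r)=A(r)+B(r)$ in the odd case and $\Phi(r)=A(r)+B(r)+(r-3)-\delta(r)$ in the even case, then plug in the closed forms from Corollary~\ref{co:A+B} and simplify. The paper's proof is terser but identical in substance.
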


\begin{proof}
Proposition~\ref{pr:Phi} gives $\Phi(r)=A(r)+B(r)$ when $r$ is odd,
and 
\[ \Phi(r)=A(r)+B(r)+r-3-\begin{cases} \tfrac{r+4}{4} & \text{if
  $r\equiv 0$ mod $4$},\\
\tfrac{r+6}{4} & \text{if $r\equiv 2$ mod $4$}.
\end{cases}
\]
Now use
Corollary~\ref{co:A+B}.
\end{proof}


\section{The $DD$-invariant and problem P3}
\label{se:DD}

The $DD$-invariant (short for {\it double Dickson\/} invariant) 
is a fairly simple, homological invariant for $C_2$-actions.  It 
is not a very powerful invariant, but it turns out
to detect a subtle difference between $C_2$-spaces that is not easily
seen via other means.  This will play a key role in the
last part of our
story.

To explain the basic idea behind the $DD$-invariant, note that an involution
$\sigma$ on a surface $X$ induces an involution $\sigma^*$ of
$H^1(X;\Z/2)$.  The cup product equips this vector space with a nondegenerate
symmetric bilinear form, and $\sigma^*$ is an isometry.  Write
$\Iso(H^1(X;\Z/2))$ for the group of isometries.  If $\theta$ is
another involution on $X$ and the $C_2$-spaces $(X,\sigma)$ and
$(X,\theta)$ are isomorphic, then $\sigma^*$ is conjugate to
$\theta^*$ inside $\Iso(H^1(X;\Z/2))$.  The $DD$-invariant of $\sigma$
is a $4$-tuple in $\N\times \Z/2\times \N\times \Z/2$ that completely
classifies the conjugacy classes of involutions in
$\Iso(H^1(X;\Z/2))$.  This invariant is purely algebraic, and
was introduced in \cite{D}.  

In this section we briefly introduce the technology needed to
understand the $DD$-invariant, and we perform the key calculation that
will be needed for the last stage of our classification results,
whose proof we also finish off here.
Certainly there is
potential for more work on how the $DD$-invariant interacts with the
$C_2$-equivariant surgery techniques discussed in this paper; 
we do not pursue this here.  

\medskip

\subsection{The algebraic \mdfn{$DD$}-invariant}
Let $(V,b)$ be a finite-dimensional vector space over $\F_2$ equipped
with a nondegenerate, symmetric bilinear form.  We say that $V$ is
\dfn{symplectic} if $b(v,v)=0$ for all $v\in V$, and otherwise we say
$V$ is \dfn{orthogonal}.  Every symplectic space is even-dimensional
and has a symplectic  basis, whereas every orthogonal space has an
orthonormal basis (see \cite[Proposition 2.1]{D}). 

Write $\Iso(V)$ for the group of isometries of $(V,b)$.  The behavior
of this group splits into three cases: $V$ is symplectic (SYMP), $V$
is orthogonal and odd-dimensional (ODDO), and $V$ is orthogonal and
even-dimensional (EVO).  See \cite{D} for a complete discussion.  
Our goal is to explain how to classify involutions in $\Iso(V)$, and
the answer is slightly different in the three cases.    
The first invariant is quite simple:

\begin{defn}
If $\sigma\in \Iso(V)$ is an involution then the \mdfn{$D$-invariant}
$D(\sigma)$ is defined to be the rank of $\sigma+\Id$.
\end{defn}

This is related to the classically-defined Dickson invariant.
It is clear that it is indeed an invariant of the conjugacy class of
$\sigma$, and
it is easy to prove that $0\leq D(\sigma)\leq \tfrac{\dim V}{2}$
always (see \cite[Proposition 3.1]{D}).  

There is a unique vector $\Omega\in V$ having
the property that $b(v,\Omega)=b(v,v)$ for all $v\in V$ \cite[Section 2]{D}.
The space $V$ is symplectic if and only if $\Omega=0$.  Any isometry
of $V$ must preserve $\Omega$, and hence also $\langle
\Omega\rangle^{\perp}$.   Note that when $\dim V$ is odd one has
$b(\Omega,\Omega)\neq 0$ and so
$V=\langle \Omega\rangle \oplus \langle \Omega\rangle^{\perp}$.  

Here is the next invariant:

\begin{defn}
If $\sigma\in \Iso(V)$ is an involution then the map $F_\sigma\colon V\ra \F_2$ given
by $v\mapsto b(v,\sigma v)$ is linear.  Define $\alpha(\sigma)$ by
\[ \alpha(\sigma)=
\begin{cases}
\rank F_\sigma & \text{if $V$ is even-dimensional} \\
\rank F_\sigma|_{\langle \Omega\rangle^{\perp}} & \text{if $V$ is
  odd-dimensional.}
\end{cases}
\]
Note that $\alpha(\sigma)\in \{0,1\}$ always.
\end{defn}

The reason for the cases in the above definition is that when $V$ is
odd-dimensional one always has $b(\Omega,\sigma
\Omega)=b(\Omega,\Omega)=1$, and so $\rank F_\sigma=1$ no matter what
$\sigma$ is.  

As explained in \cite{D}, in the SYMP and ODDO cases the two
invariants $D$ and $\alpha$ completely classify the conjugacy classes
of involutions.  In the EVO case we need to work a bit harder.  

Suppose now that $V$ is EVO, and that $\sigma\in \Iso(V)$ is an
involution.  Define $m\sigma\colon V\ra V$ by
\[ (m\sigma)(v)=v+b(v,v)\Omega.
\]
We call $m\sigma$ the \dfn{mirror} of $\sigma$.  
One easily checks that $m\sigma$ is still an isometry, is also an
involution, and $m(m\sigma)=\sigma$ (see \cite{D} for complete details). 
When $V$ is SYMP or ODDO we simply define $m\sigma=\sigma$.  

\begin{example}
Suppose that $V$ is EVO, with orthonormal basis $e_1,\ldots,e_n$.
Then $\Omega=\sum_i e_i$.  If $A$ is the matrix of $\sigma$ with
respect to this basis, then the matrix for $m\sigma$ is obtained from
$A$ by changing each $0$ entry to a $1$, and each $1$ entry to a $0$.
This is where the term ``mirror'' comes from.
\end{example}

\begin{defn}
Let $\sigma\in \Iso(V)$ be an involution.  Define $DD(\sigma)\in
\N\times \Z/2\times \N\times \Z/2$ to be the $4$-tuple
$DD(\sigma)=[D(\sigma),\alpha(\sigma),D(m\sigma),\alpha(m\sigma)]$.
We also write $\tD(\sigma)=D(m\sigma)$,
$\talpha(\sigma)=\alpha(m\sigma)$, and 
\[ DD(\sigma)=
[D(\sigma),\alpha(\sigma),\tD(\sigma),\talpha(\sigma)].
\]  
\end{defn}

Notice that when $V$ is SYMP or ODDO, the last two coordinates of the
$DD$-invariant are simply repetitions of the first two coordinates.
We have set things up this way only because it allows us to treat the three
cases for $V$ simultaneously.  For example,
some of the main results of \cite{D} can be stated as follows:

\begin{thm}
Two involutions $\sigma,\theta\in \Iso(V)$ are conjugate if and only
if $DD(\sigma)=DD(\theta)$.  
\end{thm}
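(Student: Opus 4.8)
The plan is to reduce the classification of involutions in $\Iso(V)$ to a small number of normal forms and then show that $DD$ separates those normal forms. Since the three cases SYMP, ODDO, and EVO behave differently, I would organize the proof around them, but with a uniform strategy: \textbf{(1)} decompose $V$ as an orthogonal direct sum of $\sigma$-invariant indecomposable pieces; \textbf{(2)} classify the indecomposable pieces; \textbf{(3)} show that the isomorphism type of the whole is determined by how many pieces of each type appear; \textbf{(4)} compute $DD$ on each normal form and verify it determines the multiplicities that matter.

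First I would set up the decomposition. Given an involution $\sigma$, write $W=\im(\sigma+\Id)$ and note $W\subseteq \ker(\sigma+\Id)=V^\sigma$ since $(\sigma+\Id)^2=\sigma^2+\Id=0$ over $\F_2$; so $D(\sigma)=\dim W\le \tfrac12\dim V$. The form $b$ restricted to $W$ is controlled by the radical of $b|_{V^\sigma}$, and one finds that $V$ breaks up into copies of a few ``atoms'': a trivial $1$-dimensional piece where $\sigma=\Id$, a $2$-dimensional hyperbolic-type piece on which $\sigma$ is the swap (contributing $1$ to $D$), and possibly a small anomalous piece detected by the bilinear form, namely the rank of $F_\sigma\colon v\mapsto b(v,\sigma v)$ (this is where $\alpha(\sigma)$ enters). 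The key lemma to prove is a Witt-type cancellation: two direct sums of atoms are isometric via a $\sigma$-equivariant map if and only if they have the same number of atoms of each type. This is the technical heart, and I expect \textbf{this cancellation/normal-form step to be the main obstacle}, because over $\F_2$ the interaction between the quadratic refinement $v\mapsto b(v,v)$ and the involution creates the EVO subtleties that force the mirror construction into the picture.

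Once the normal forms are in hand, the mirror map $m\sigma$ in the EVO case is, by the matrix description in the Example, the ``complement'' isometry; I would check directly that $m$ permutes normal forms and that $(D(\sigma),\alpha(\sigma))$ together with $(D(m\sigma),\alpha(m\sigma))$ pins down the multiplicity of every atom type. Concretely: $D(\sigma)$ counts the number of swap-atoms, $\alpha(\sigma)$ records whether the anomalous piece is present, and passing to $m\sigma$ supplies exactly the one extra bit of information needed in the EVO case to distinguish the two normal forms that share the same $(D,\alpha)$ but differ after complementation. In the SYMP and ODDO cases $m\sigma=\sigma$ by definition, so the last two coordinates of $DD$ are redundant and the classification collapses to the known statement that $(D,\alpha)$ suffices there.

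Finally I would assemble the two directions. For the forward implication, $D$, $\alpha$, and the mirror construction are manifestly conjugacy invariants (conjugation preserves $\sigma+\Id$ up to change of basis, preserves $b$, and commutes with $m$ since $m$ is defined purely from $b$ and $\Omega$, both isometry-invariant), so $DD(\sigma)=DD(\theta)$ whenever $\sigma$ and $\theta$ are conjugate. For the converse, $DD(\sigma)=DD(\theta)$ forces $\sigma$ and $\theta$ to have the same atomic decomposition by the previous paragraph, hence there is an isometry of $V$ intertwining them, i.e.\ they are conjugate in $\Iso(V)$. I would remark that all of this is carried out in detail in \cite{D} and that the statement here is just a repackaging, so the proof can afford to cite \cite{D} for the atom classification and the cancellation lemma and simply explain how the four coordinates of $DD$ encode the resulting multiplicities.
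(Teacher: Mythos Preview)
The paper does not actually prove this theorem: it is stated as one of the main results of \cite{D} and no proof is given. Your proposal ends up in the same place, explicitly suggesting that the proof cite \cite{D} for the atom classification and cancellation lemma; so at the level of what the paper does, you are aligned. The sketch you offer of how the argument in \cite{D} should go (decompose into $\sigma$-invariant orthogonal atoms, classify the atoms, establish Witt-type cancellation, then read off the multiplicities from $DD$) is a reasonable outline and is not contradicted by anything in the present paper, but since the paper itself simply defers to \cite{D}, there is nothing here to compare your sketch against.
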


\begin{remark}
For future reference, note that for the identity involution one has
\[ DD(\Id)=\begin{cases}
[0,0,0,0] & \text{if $V$ is SYMP,}\\
[0,1,0,1] & \text{if $V$ is ODDO,}\\
[0,1,1,0] & \text{if $V$ is EVO.}
\end{cases}
\]
\end{remark}

We will need the following simple calculation:

\begin{prop}
\label{pr:DD-sum-alg}
Let $V$ and $W$ be finite-dimensional vector spaces over $\F_2$, and
suppose that $V$ is equipped with  a
nondegenerate symplectic bilinear form and $W$ with a nondegenerate
orthogonal bilinear form.  Let $\sigma\in \Iso(V)$
be an involution.  Assume $W$ is even-dimensional with orthonormal
basis $x_1, y_1, x_2, y_2,\ldots,x_r,y_r$, and let $\theta\in \Iso(W)$
be the  involution satisfying $\theta(x_i)=y_i$ for all $i$.
Then $\sigma\oplus\theta$ is an involution on $V\oplus W$ and
\[ DD(\sigma\oplus\theta)=
\begin{cases}
[D(\sigma)+r,\alpha(\sigma),D(\sigma)+r-1,1] & \text{if $r$ is odd,}
\\
[D(\sigma)+r,\alpha(\sigma),D(\sigma)+r,1] & \text{if $r$ is even.}
\end{cases}
\]
\end{prop}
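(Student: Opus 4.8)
The plan is to compute each of the four coordinates of $DD(\sigma\oplus\theta)$ directly from the definitions, exploiting the block structure $V\oplus W$ and the fact that $V$ is symplectic (so $\Omega_V=0$) while $W$ is orthogonal even-dimensional (so $\Omega = \Omega_W = \sum_i(x_i+y_i)$ and $V\oplus W$ is EVO). First I would handle $D(\sigma\oplus\theta)$: since $(\sigma\oplus\theta)+\Id = (\sigma+\Id)\oplus(\theta+\Id)$ as a block-diagonal map, its rank is $D(\sigma)+\rank(\theta+\Id)$. On $W$ one has $(\theta+\Id)(x_i) = x_i+y_i = (\theta+\Id)(y_i)$, so the image is spanned by the $r$ vectors $x_i+y_i$, which are linearly independent; hence $\rank(\theta+\Id)=r$ and $D(\sigma\oplus\theta)=D(\sigma)+r$. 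Next, for $\alpha(\sigma\oplus\theta)$ — recalling $V\oplus W$ is even-dimensional, so no $\Omega^\perp$ restriction is needed — the linear form $F_{\sigma\oplus\theta}(v\oplus w) = b(v,\sigma v) + b(w,\theta w)$ splits as $F_\sigma \oplus F_\theta$ with respect to the direct sum. Since $b(x_i,\theta x_i)=b(x_i,y_i)=1$, the form $F_\theta$ is nonzero, so $\alpha$ of the sum is $1$ unless $F_\sigma$ is also nonzero in a way that cancels — but the direct-sum structure means $\rank(F_\sigma\oplus F_\theta)=1$ as long as at least one summand is nonzero, so $\alpha(\sigma\oplus\theta)=1$; hmm, wait, I should double check this gives $\alpha(\sigma)$ not $1$.

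Let me reconsider: the claimed answer has first $\alpha$-coordinate equal to $\alpha(\sigma)$, not $1$. The point is that $\rank(F_\sigma \oplus F_\theta)$ as a functional on $V\oplus W$ equals $\max$ of the two ranks when one views them on complementary summands — actually a direct sum of a zero functional and a nonzero functional has rank $1$, and a sum of two nonzero functionals on complementary spaces also has rank $1$. So this would always give $1$, contradicting the stated $\alpha(\sigma)$. I must be misreading the definition; I would recheck whether $\alpha$ is the rank of $F_\sigma$ as a bilinear-form-induced object or something subtler (perhaps it records $\dim\ker$ parity or involves the radical). Resolving exactly what $\alpha(\sigma)$ computes to for the specific $\sigma\oplus\theta$, and reconciling it with the asymmetry between the first and third coordinates of the answer, is the step I expect to be the main obstacle; it likely hinges on a careful reading of Section~\ref{se:DD}'s conventions (possibly $\alpha$ is only defined up to the ambient splitting and the symplectic summand $\sigma$ genuinely carries information the $W$-part does not see because $F_\theta$ restricted to the relevant subspace behaves specially).

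For the mirror coordinates, I would use the Example in the excerpt: since $V\oplus W$ is EVO with $\Omega = \Omega_W$ (as $\Omega_V=0$), the mirror $m(\sigma\oplus\theta)$ acts by $v\oplus w \mapsto (\sigma v)\oplus(\theta w) + b(w,w)\Omega_W$, i.e.\ it only modifies the $W$-coordinate and leaves $V$ alone; so $m(\sigma\oplus\theta) = \sigma \oplus (m\theta)$ where $m\theta$ is the mirror of $\theta$ inside the orthogonal space $W$. Then I would compute $D(m\theta)$ and $\alpha(m\theta)$ by hand: on the orthonormal basis, $m\theta$ sends $x_i \mapsto y_i + \Omega_W$ and $y_i\mapsto x_i+\Omega_W$ (using $b(x_i,x_i)=1$), and I would compute the rank of $m\theta+\Id$ — this is where the parity of $r$ enters, because $\Omega_W$ is the sum of all $2r$ basis vectors and the relations among the images $x_i+y_i+\Omega_W$ differ according to whether $r$ is even or odd (when $r$ is even, $\Omega_W$ lies in the span of the $x_i+y_i$, dropping the rank by one relative to the odd case). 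This should yield $D(m\theta) = r-1$ when $r$ is odd and $r$ when $r$ is even, giving $\tD(\sigma\oplus\theta) = D(\sigma) + D(m\theta)$ as claimed, and a similar short computation of $b(w, (m\theta)w)$ gives $\talpha = 1$ in both cases. Throughout I would cite \cite{D} and the Example for the behavior of mirrors under orthogonal direct sums, so that the only real content is the two small rank computations on $W$ and the $\alpha$-subtlety flagged above.
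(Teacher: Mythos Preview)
Your overall strategy is exactly the paper's: compute $D(\theta)$, $\alpha(\theta)$, $\tD(\theta)$, $\talpha(\theta)$ directly, then combine with $\sigma$ using the block structure (the paper phrases the combination step as an appeal to \cite[Theorem~5.4]{D}, but the content is the same direct-sum argument you give). The mirror computation is right: $m(\sigma\oplus\theta)=\sigma\oplus m\theta$ because $V$ is symplectic, and your rank count for $m\theta+\Id$ gives the correct parity dichotomy (your verbal explanation of \emph{which} parity causes the drop is slightly scrambled, but the stated conclusion $D(m\theta)=r-1$ for $r$ odd and $r$ for $r$ even is correct).

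The genuine gap is your $\alpha$-computation, and you correctly sensed something was off. The error is the line ``$b(x_i,\theta x_i)=b(x_i,y_i)=1$''. The basis $x_1,y_1,\ldots,x_r,y_r$ is \emph{orthonormal}, so distinct basis vectors are orthogonal: $b(x_i,y_i)=0$. Hence $F_\theta(x_i)=0$ and likewise $F_\theta(y_i)=b(y_i,x_i)=0$, so $F_\theta\equiv 0$ and $\alpha(\theta)=0$. Then $F_{\sigma\oplus\theta}(v\oplus w)=F_\sigma(v)+F_\theta(w)=F_\sigma(v)$, giving $\alpha(\sigma\oplus\theta)=\alpha(\sigma)$ exactly as the statement claims. (You may have been subconsciously pairing $x_i$ with $y_i$ as in a symplectic basis; here the pairing is orthogonal.) With this single correction your proof goes through and matches the paper's.
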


\begin{proof}
The matrix for $\theta$ with respect to the given basis on $W$ is the
block diagonal matrix with $r$ copies of $\begin{bsmallmatrix} 0 & 1
\\ 1 & 0\end{bsmallmatrix}$ along the diagonal.  It is immediate that
$\alpha(\theta)=0$ and $D(\theta)=r$.  It only takes a moment more to
compute the mirror of this matrix and calculate that
$\tD(\theta)=r$ if $r$ is even, 
$\tD(\theta)=r-1$ if $r$ is odd, and $\talpha(\theta)=1$ always.  

One can now either compute $DD(\sigma\oplus\theta)$ by brute force, or
else consult \cite[Theorem 5.4]{D} to say that
\[
DD(\sigma\oplus\theta)=[D(\sigma)+D(\theta),\max\{\alpha(\sigma),\alpha(\theta)\},
\tD(\sigma)+\tD(\theta),\max\{\talpha(\sigma),\talpha(\theta)\}].
\]
Now simply recall that $\tD(\sigma)=D(\sigma)$ and
$\talpha(\sigma)=\alpha(\sigma)$, since $V$ is symplectic.
\end{proof}

\subsection{Topological \mdfn{$DD$}-invariants}

Now let us return to the topological setting, where $(X,\sigma)$ is a
surface with $C_2$-action.  

\begin{defn}
The
\mdfn{$DD$-invariant} $DD(X)$ is defined to equal the (algebraic)
$DD$-invariant of the map $\sigma^*\in \Iso(H^1(X;\Z/2))$.  We
likewise write $D(X)$, $\alpha(X)$, $\tD(X)$, and $\talpha(X)$ 
 for the components of
the $DD$-invariant.  
\end{defn}

\begin{example}
\label{ex:Dinv-tori}
Of course $DD(S^2_a)=DD(S^{2,0})=DD(S^{2,1})=DD(S^{2,2})=[0,0,0,0]$.  For the
unique nontrivial action on $\RP^2$, the map 
$\sigma_*$ is the identity: so $DD(\RP^2)=[0,1,1,0]$.

There are six possible actions on $T_1$: 
\[ T_1^{\triv}, \ T_1^{\anti},
 \ 
T_1^{\rot},\  T_1^{\spit}[4],\  T_1^{\refl}[2],\ \text{and}\ 
S^2_a+[S^{1,0}-\antitube].
\]  
We leave the reader to
check that $\sigma_*$ equals the identity for the first five of these,
so that these all have $DD=[0,0,0,0]$.  For the sixth case, this space is shown
in the following picture:

\begin{picture}(300,110)
\put(100,0){\includegraphics[scale=0.45]{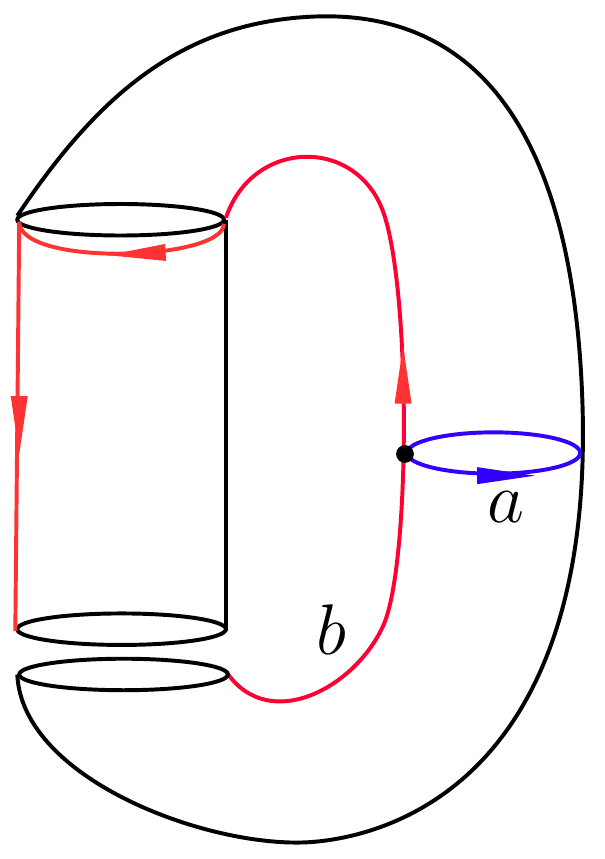}}
\end{picture}

\noindent
Note that the $S^2_a$ is drawn as a cylinder with antipodal action,
the $S^{1,0}-\antitube$ is attached to the top and
bottom, and the bottom end of the tube must be rotated 180 degrees before
attachment.  If $a$ and $b$
are the basis for $H^1$ shown in the picture then $\sigma_*(a)=a$ and
$\sigma_*(b)=a+b$.  The space $H^1$ is symplectic, and 
one readily computes that $DD\bigl (S^2_a+[S^{1,0}-\antitube]\bigr )=[1,1,1,1]$.

Similar calculations can be done for the six $C_2$-actions on the
Klein bottle (see the first table in Appendix~\ref{se:tables}).  We leave it as an
exercise for the reader to check the following computations:

\vspace{0.1in}

\begin{tabular}{c|c|c|c|c|c}
 $\scriptstyle{K^{\triv}}$ & $\scriptstyle{S^2_a+[DCC]}$ & 
$\scriptstyle{S^{2,1}+[DCC]}$ & $\scriptstyle{S^2_a+[S^{1,1}-AT]}$
& $\scriptstyle{S^{2,2}+[S^{1,0}-AT]}$ & $\scriptstyle{S^{2,2}+2[FM]}$
\\
\hline
 $[0,1,1,0]$ & $[1,0,0,1]$ & $[1,0,0,1]$ & $[1,0,0,1]$ &
$[0,1,1,0]$ 
& $[0,1,1,0]$
\end{tabular}

\vspace{0.1in}
\noindent
Observe that these examples show that the $DD$-invariant is relatively
weak in terms of its ability to differentiate equivariant spaces.  (Of
course $O(2,\Z/2)\iso \Z/2$ and so it is not surprising that there are
only two possible $DD$-invariants here).  
\end{example}

Here is a simple result that will be useful:

\begin{prop}
\label{pr:DD-DCC}
Let $X$ be an orientable $2$-manifold with involution $\sigma$.  Then
\[ DD(X+r[DCC])=
\begin{cases}
[D(X)+r,\alpha(X),D(X)+r-1,1 ] & \text{if $r$ is odd,} \\
[D(X)+r,\alpha(X),D(X)+r,1]  & \text{if $r$ is even.}
\end{cases}
\]
\end{prop}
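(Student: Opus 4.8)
The plan is to reduce everything to the purely algebraic computation of Proposition~\ref{pr:DD-sum-alg}. Since a ``$2$-manifold'' is connected and closed, an orientable $X$ is some $T_g$, and (we may assume the action on $X$ is nontrivial, as otherwise $X+r[DCC]$ is not defined) the underlying space of $X+r[DCC]=X\esum N_r$ is $T_g\#N_{2r}\iso N_{2g+2r}$ by Remark~\ref{re:addcross}. Thus $H^1(X+r[DCC];\Z/2)$ is an even-dimensional space with the cup-product form, and since a nonorientable surface carries a one-sided curve this form is orthogonal; so we are in the EVO case and all four coordinates of the $DD$-invariant are relevant.

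The heart of the argument is to exhibit $H^1(X+r[DCC];\Z/2)$, as an inner-product space with its induced involution, as an orthogonal direct sum $V\oplus W$ of the right kind. Recall that $X+r[DCC]$ is built by choosing $r$ conjugate pairs of disjoint disks $D_1,\sigma D_1,\dots,D_r,\sigma D_r$ in $X$ and gluing a M\"obius band into each; by Corollary~\ref{co:surg-inv} we are free to pick these disks conveniently, so I would choose them disjoint from a collection of curves $\gamma_1,\dots,\gamma_{2g}$ whose classes $v_1,\dots,v_{2g}$ form a symplectic basis of $H^1(X;\Z/2)$ (this basis is symplectic because $X$ is orientable). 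Let $x_i$ and $y_i$ be the core circles of the M\"obius bands glued into $D_i$ and $\sigma D_i$, viewed in $H_1$ and identified with their Poincar\'e duals. Then $v_1,\dots,v_{2g},x_1,y_1,\dots,x_r,y_r$ is a basis of $H^1(X+r[DCC];\Z/2)$ for dimension reasons. Using disjoint geometric representatives one reads off the intersection form: $v_i\cdot v_j$ is unchanged from $H^1(X)$, each core is one-sided so $x_i\cdot x_i=y_i\cdot y_i=1$, and every remaining product among the $v$'s, $x$'s and $y$'s vanishes. Hence $V:=\langle v_1,\dots,v_{2g}\rangle$ is the symplectic space $H^1(X;\Z/2)$ and $W:=\langle x_1,y_1,\dots,x_r,y_r\rangle$ is orthogonal with the displayed orthonormal basis. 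For the involution: the equivariant collapse map $X+r[DCC]\to X$ (crushing each conjugate pair of M\"obius bands to a conjugate pair of points) shows $\sigma^*$ preserves $V$ and acts on it as the original $\sigma^*_X$, while conjugation swaps $D_i$ with $\sigma D_i$, hence swaps the two glued bands, so $\sigma^*(x_i)=y_i$ for all $i$. Therefore $\sigma^*_{X+r[DCC]}=\sigma^*_X\oplus\theta$ with $\theta$ the swap involution of $W$.

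Finally I would simply invoke Proposition~\ref{pr:DD-sum-alg} with this $V$, $W$, $\sigma=\sigma^*_X$ and $\theta$: it gives $DD(\sigma^*_X\oplus\theta)=[D(\sigma^*_X)+r,\alpha(\sigma^*_X),D(\sigma^*_X)+r-1,1]$ for $r$ odd and $[D(\sigma^*_X)+r,\alpha(\sigma^*_X),D(\sigma^*_X)+r,1]$ for $r$ even, which is precisely the claim once we recall $D(\sigma^*_X)=D(X)$ and $\alpha(\sigma^*_X)=\alpha(X)$. The only genuinely delicate step is the middle paragraph, namely checking carefully that the M\"obius-band cores together with the chosen curves form a basis realizing the asserted orthogonal splitting and the asserted $\sigma^*$-action; the first and last steps are bookkeeping plus a citation of Proposition~\ref{pr:DD-sum-alg}.
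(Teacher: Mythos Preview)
Your proposal is correct and follows essentially the same approach as the paper: both reduce to Proposition~\ref{pr:DD-sum-alg} by exhibiting $H^1(X+r[DCC];\Z/2)$ as the orthogonal direct sum of the symplectic space $H^1(X;\Z/2)$ and an orthogonal $(\Z/2)^{2r}$ on which $\sigma^*$ swaps basis elements in pairs. The paper's proof is terse (it simply asserts the splitting is ``readily checked''), whereas you spell out the geometric basis and the verification of the intersection form and the $\sigma^*$-action; this extra care is fine and fills in exactly what the paper leaves to the reader.
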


\begin{proof}
Let $Y=X+r[DCC]$.  Then $H^1(Y;\Z/2)\iso H^1(X;\Z/2)\oplus
(\Z/2)^{2r}$.  The involution on $H^1(Y;\Z/2)$ is readily checked to
be the direct sum of the involution on $H^1(X;\Z/2)$ and the
involution on $(\Z/2)^{2r}$ that pairwise swaps basis elements.  Now
apply Proposition~\ref{pr:DD-sum-alg}.
\end{proof}

The following result gives two calculations that will be important in
our applications:

\begin{prop} 
\label{pr:DD-computation}
For $C\geq 1$ one has
$DD(S^2_a+C[S^{1,0}-\antitube])=[1,1,1,1]$ and $DD(T^a_1+C[S^{1,0}-\antitube])=[2,1,2,1]$.  
\end{prop}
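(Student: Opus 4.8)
The plan is to identify the two spaces with handle constructions from Section~\ref{se:special}: by Remark~\ref{re:T-surgery} one has $S^2_a+C[S^{1,0}-\antitube]\iso T^{\anti}[0,C]\iso T_C$ and $T_1^{\anti}+C[S^{1,0}-\antitube]\iso T^{\anti}[1,C]\iso T_{C+1}$. In particular both underlying surfaces are orientable, so $H^1(\blank;\Z/2)$ carries a nondegenerate symplectic form; hence $\sigma^*$ equals its own mirror, $m\sigma^*=\sigma^*$, so $\tD(\sigma^*)=D(\sigma^*)$ and $\talpha(\sigma^*)=\alpha(\sigma^*)$, and it suffices to compute the two integers $D(\sigma^*)=\rank(\sigma^*+\Id)$ and $\alpha(\sigma^*)=\rank F_{\sigma^*}$ for the induced isometry $\sigma^*$ of $H^1$.

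First I would fix an explicit symplectic basis adapted to the construction. Building $T^{\anti}[g,C]$ from $T_g^{\anti}$ (with $g\in\{0,1\}$) amounts to deleting $C$ conjugate pairs of disks $D_i,\sigma D_i$ and gluing $\partial D_i$ to $\partial\sigma D_i$ by $\sigma$; the glued circle $m_i$ is then a fixed oval, and a loop $\ell_i$ running once through the $i$-th handle closes up to a curve with $m_i\cdot\ell_i=1$. Together with a symplectic basis $a,b$ of $H^1(T_1^{\anti};\Z/2)$ in the second case --- on which $\sigma_*=\Id$ by Example~\ref{ex:Dinv-tori} --- the classes $\{m_i,\ell_i\}$ (resp.\ $\{a,b\}\cup\{m_i,\ell_i\}$) form a symplectic basis, and $\sigma^*m_i=m_i$ since $m_i$ is pointwise fixed.

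The heart of the matter is the computation of $\sigma^*\ell_i$. Representing $\ell_i$ by a path $\gamma$ from $p\in\partial D_i$ to $\sigma p\in\partial\sigma D_i$ chosen to avoid all the handles, one has $\sigma^*\ell_i+\ell_i=[\gamma\ast\sigma\gamma]$, and I would pick $\gamma$ so that $\gamma\cup\sigma\gamma$ is a $\sigma$-invariant simple closed curve: for $g=0$, a great circle through $\pm p$, which bounds a hemisphere. Since the antipodal map carries each deleted disk to the opposite hemisphere, that curve meets each $m_j$ exactly once, so $\sigma^*\ell_i+\ell_i=\Omega:=\sum_j m_j$ for every $i$; hence $\im(\sigma^*+\Id)=\langle\Omega\rangle$ and $F_{\sigma^*}(\ell_i)=\langle\ell_i,\Omega\rangle=\langle\ell_i,m_i\rangle=1$, giving $(D,\alpha)=(1,1)$ and $DD=[1,1,1,1]$; the base case $C=1$ here is exactly Example~\ref{ex:Dinv-tori}. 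For $g=1$ the analogous invariant curve additionally contributes a class $v\in\langle a,b\rangle$, which is nonzero because it is the transfer of a non-lifting loop of the double cover $T_1^{\anti}\to T_1^{\anti}/C_2$; moreover $\sigma^*$ now fails to fix one of $a,b$, the discrepancy being a nonzero sum of meridians distinct from $\Omega+v$. One then computes that $\im(\sigma^*+\Id)$ is spanned by $\Omega+v$ together with that sum of meridians, hence is $2$-dimensional, while still $F_{\sigma^*}(\ell_i)=\langle\ell_i,\Omega\rangle=1$; so $(D,\alpha)=(2,1)$ and $DD=[2,1,2,1]$.

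The hard part will be the bookkeeping just sketched: verifying that $\sigma^*\ell_i+\ell_i$ really is the \emph{full} sum $\Omega$ --- independently of $i$ and of where the handles are attached, which is where the isomorphism-invariance of the surgery (Corollary~\ref{co:surg-inv}) enters --- and, in the torus case, pinning down $v$ and the meridian correction on $a,b$ precisely enough to conclude $\dim\im(\sigma^*+\Id)=2$ rather than $1$. Both reduce to an elementary but fussy analysis of $1$-cycles on $S^2$, resp.\ $T_1$, with $2C$ open disks removed, together with the transfer sequence of the double cover $X\to X/C_2$; once the representatives above are fixed, the relations $\sigma^{*2}=\Id$ and $\langle\sigma^*u,\sigma^*w\rangle=\langle u,w\rangle$ force the remaining coefficients, and that is the device I would use to keep the case analysis --- and the number of pictures --- under control.
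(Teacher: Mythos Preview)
Your approach is the same as the paper's: both pick an explicit basis adapted to the handle decomposition and compute $\sigma^*$ directly, using that the spaces are orientable so the form is symplectic and $DD=[D,\alpha,D,\alpha]$.  The $S^2_a$ case is essentially correct; your choice of $\ell_i$ (one through each handle, all mapping to $\Omega$ under $\sigma^*+\Id$) differs from the paper's (one $b_1$ through the sphere, the remaining $b_i$ fixed) only by a change of basis, and both give $\im(\sigma^*+\Id)=\langle\Omega\rangle$.

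The $T_1^{\anti}$ case has a real gap, which you flag but do not close.  Two assertions are unjustified:
\begin{itemize}
\item that the $\langle a,b\rangle$-component $v$ of $\sigma^*\ell_i+\ell_i$ is nonzero.  Your ``transfer of a non-lifting loop'' remark does not explain why non-lifting in $T_1^{\anti}/C_2$ forces a nonzero $\langle a,b\rangle$-component of the invariant curve in the surgered surface; the relation between the transfer and this projection is not spelled out.
\item that one of $a,b$ has a \emph{nonzero} meridian discrepancy in the surgered space.  (You first write ``$\sigma_*=\Id$ on $a,b$'' citing Example~\ref{ex:Dinv-tori}, which is true in $H^1(T_1^{\anti})$ but misleading here, then assert the opposite in $H^1(Y)$ without argument.)
\end{itemize}
These two facts are exactly what distinguishes $D=2$ from $D=1$; the isometry and involution constraints you mention do not by themselves force either.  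The paper resolves this by brute geometric computation: it exhibits explicit cycles $a_0,b_0$ on the torus and $b_1$ through a handle, reads off $\sigma_*(a_0)=a_0+\sum a_i$ and $\sigma_*(b_1)=b_1+b_0+\sum a_i$ from the picture, and concludes $\im(\sigma_*+\Id)=\langle\sum a_i,\,b_0\rangle$.  Some such concrete computation seems unavoidable; your outline is correct, but the ``bookkeeping'' you defer is the entire content of the $D=2$ claim.
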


\begin{proof}
In both cases, the underlying space is orientable and so the
bilinear form on $H^1$ is symplectic.  So we just need to compute the
$D$- and $\alpha$-invariants.

The space $X=S^2_a+3[S^{1,0}-\antitube]$ has the following model:

\begin{picture}(300,120)
\put(50,0){\includegraphics[scale=0.5]{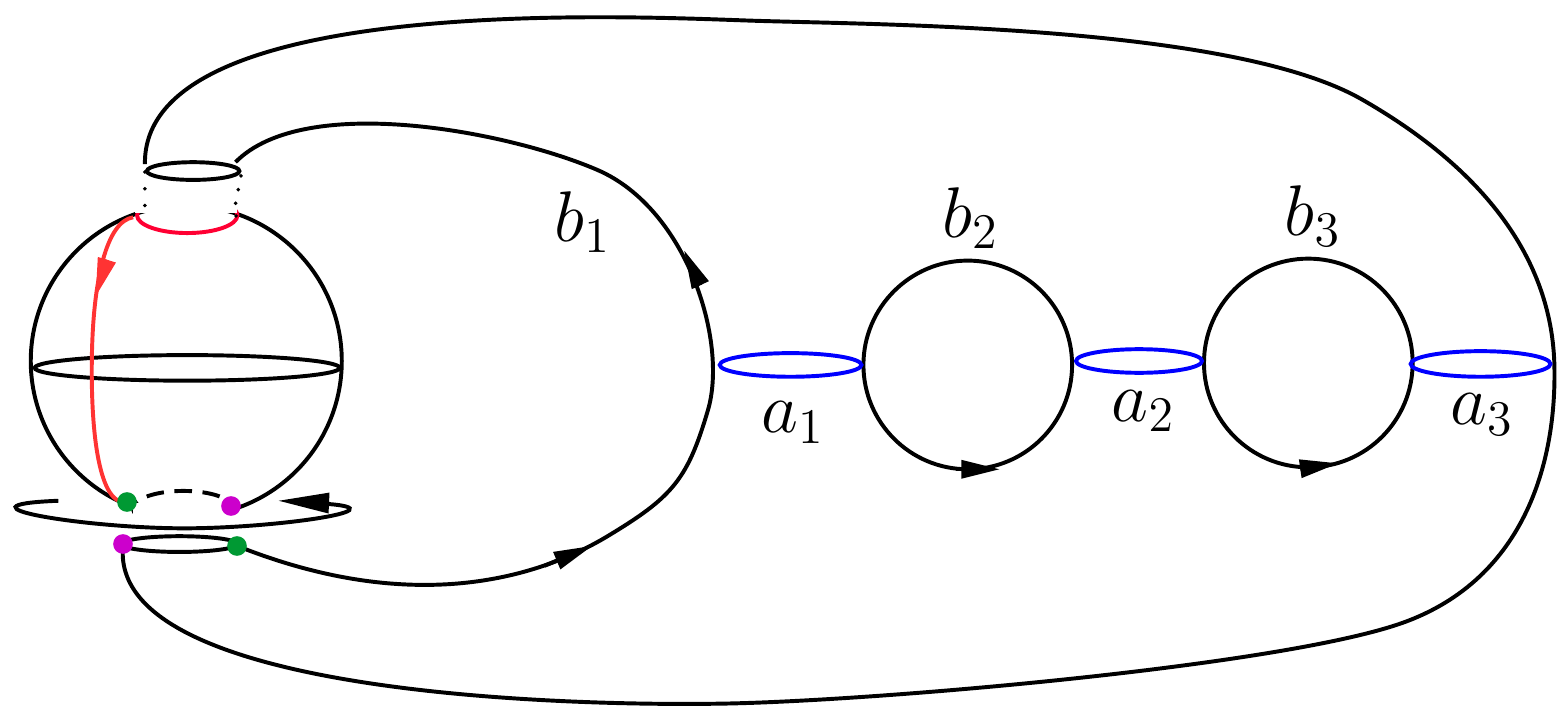}}
\end{picture}

\noindent
Here the sphere has the antipodal action, whereas on the toral
component the action is reflection across the equatorial plane.  The
toral component is glued to the sphere in the evident manner at the
top, but at the bottom it is glued after a $180$-degree rotation.  

The indicated cycles $a_i,b_i$ for $1\leq i\leq 3$ give a 
basis for the symplectic space $H_1(X;\Z/2)$ (but not a symplectic
basis).  
Note that the portion of $b_1$ on the sphere runs along the front part of
the top hole, then down the front part of the sphere.  
The cycles $a_i$ are all fixed by $\sigma_*$, and $b_i$ is
fixed for $i\geq 2$.  Finally, one checks that
$\sigma_*(b_1)=b_1+a_1+a_2+a_3$.  To see this, let $c$ denote the
portion of $b_1$ that runs along the sphere (shown in red).  Then
$\sigma_*(b_1)+b_1=c+\sigma_*(c)$.    But $\sigma_*(c)$ is the path
that runs along the back side of the bottom hole and then up the back
side of the sphere, so it is easy to see that $c+\sigma_*(c)$ is
homologous to the $1$-cycle that runs along the top hole.  This $1$-cycle 
is then clearly homologous to $a_1+a_2+a_3$.

We can now compute that $b_1\cdot \sigma_*(b_1)=b_1\cdot a_1=1$, so
$\alpha(X)=1$.  Moreover, 
$\im(\sigma_*+\Id)$ is spanned by $a_1+a_2+a_3$, thus $D(X)=1$.
The generalization to arbitrary $C\geq 1$ is clear.

The space $Y=T_1^a+3[S^{1,0}-\antitube]$ has the following model:

\begin{picture}(300,120)
\put(50,0){\includegraphics[scale=0.5]{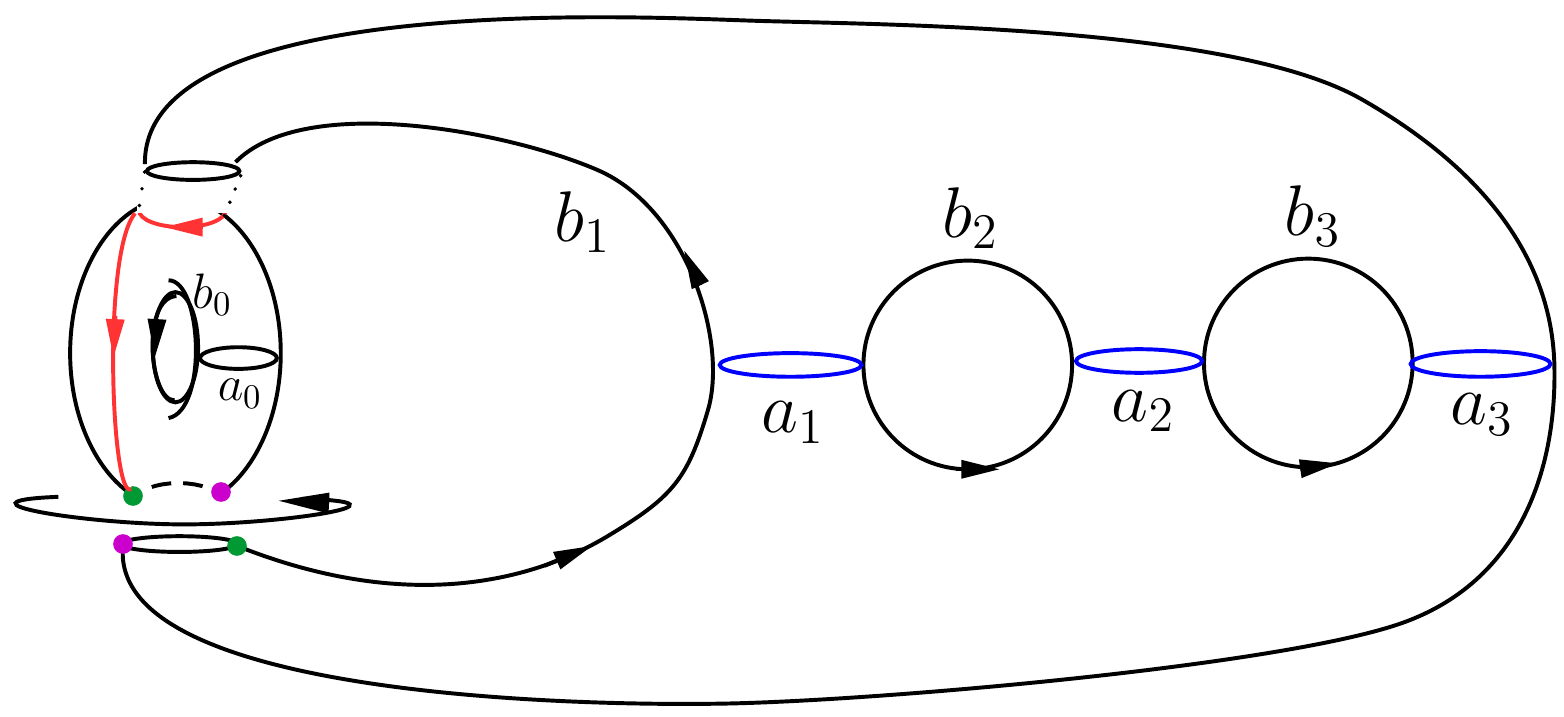}}
\end{picture}

\noindent
The conventions in this diagram are similar to those in the one drawn
for
 $X$.  Here the cycles
$a_i,b_i$ for $0\leq i\leq 3$ are a basis for $H_1(X;\Z/2)$, with the part
of $b_1$ running along the torus shown in red.  The cycles
$a_1,a_2,a_3,b_0,b_2,b_3$ are all fixed by $\sigma_*$, so it remains
to compute $\sigma_*(a_0)$ and $\sigma_*(b_1)$.  

The cycle $\sigma(a_0)$ is the parallel loop to $a_0$ on the opposite
branch of the torus, so clearly $\sigma(a_0)+a_0$ is cohomologous to
the loop $c$ running along the top hole of the torus---which is in turn
cohomologous to $a_1+a_2+a_3$.  So $\sigma_*(a_0)=a_0+a_1+a_2+a_3$.  
Likewise, $\sigma(b_1)+b_1$ is the path shown in red followed by its
antipode (the path that runs along the back side of the bottom hole,
then up the back side of the torus).  One readily checks that
$\sigma(b_1)+b_1$ is then cohomologous to $b_0+c$ (where $c$ is as
above), and therefore $\sigma_*(b_1)=b_1+b_0+a_1+a_2+a_3$.

From these formulas one readily computes that $(\sigma_* b_1)\cdot
b_1=1$, so $\alpha(Y)=1$.  Also $\im(\sigma_*+\Id)=\langle
a_1+a_2+a_3,b_0\rangle$, so $D(Y)=2$.  Again, the generalization to
arbitrary $C\geq 1$ is clear.
\end{proof}

\begin{cor} 
\label{co:DD-sep}
Let $C\geq 1$ and $r>2C$. \par\noindent Then
$DD\Bigl (S^2_a+(\tfrac{r}{2}-C)[DCC] + C[S^{1,0}-\at]\Bigr )=
[\tfrac{r}{2}-C+1,1,u,1]$
where
\[ u=  
\begin{cases}
\tfrac{r}{2}-C & \text{if $\frac{r}{2}-C$ is odd}, \\
\tfrac{r}{2}-C+1  & \text{if $\frac{r}{2}-C$ is even.} 
\end{cases}
\]
If $r>2C+2$ then
$
DD\Bigl (T_1^a + (\tfrac{r}{2}-C-1)[DCC]+C[S^{1,0}-\at]\Bigr )=
[\tfrac{r}{2}-C+1,1,u,1]
$
where 
\[u= \begin{cases}
\tfrac{r}{2}-C+1 & \text{if $\frac{r}{2}-C$ is odd}, \\
\tfrac{r}{2}-C  & \text{if $\frac{r}{2}-C$ is even.} 
\end{cases}
\]
Consequently, the $C_2$-space $S^2_a+(\tfrac{r}{2}-C)[DCC] +
C[S^{1,0}-\at]$ is not isomorphic to
$T_1^a + (\frac{r}{2}-C-1)[DCC]+C[S^{1,0}-\at]$.
\end{cor}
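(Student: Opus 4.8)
The plan is to derive Corollary~\ref{co:DD-sep} from Proposition~\ref{pr:DD-computation} together with the additivity behavior of the $DD$-invariant under adding dual crosscaps, Proposition~\ref{pr:DD-DCC}. Both spaces in question are orientable once the $[S^{1,0}-\antitube]$s are stripped away, so their $H^1$ is symplectic and the $DD$-invariant has the form $[D,\alpha,D',\alpha']$ with $D'$ determined by the parity of the number of added crosscaps. The starting point is that $DD(S^2_a+C[S^{1,0}-\antitube])=[1,1,1,1]$ and $DD(T_1^a+C[S^{1,0}-\antitube])=[2,1,2,1]$ by Proposition~\ref{pr:DD-computation}. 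Note that both base spaces are orientable, so Proposition~\ref{pr:DD-DCC} applies with these as the ``$X$''.

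First I would set $k=\frac{r}{2}-C$, so that the two spaces under consideration are $X_1=(S^2_a+C[S^{1,0}-\antitube])+k[DCC]$ (requiring $k\geq 0$, i.e. $r\geq 2C$) and $X_2=(T_1^a+C[S^{1,0}-\antitube])+(k-1)[DCC]$ (requiring $k-1\geq 0$, i.e. $r\geq 2C+2$). Then I would apply Proposition~\ref{pr:DD-DCC} to each. For $X_1$ we have $D(X)=1$, $\alpha(X)=1$, and $r$ crosscaps replaced by $k$ here, so $DD(X_1)=[1+k,1,1+k-1,1]=[k+1,1,k,1]$ if $k$ is odd and $[k+1,1,k+1,1]$ if $k$ is even. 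For $X_2$ we have $D(X)=2$, $\alpha(X)=1$, and the number of added crosscaps is $k-1$, so $DD(X_2)=[2+(k-1),1,2+(k-1)-1,1]=[k+1,1,k-1,1]$ if $k-1$ is odd (i.e. $k$ even) and $[k+1,1,k+1,1]$... wait, I must track parity carefully: $DD(X_2)=[2+(k-1),1,2+(k-1)-1,1]$ when $k-1$ is odd and $[2+(k-1),1,2+(k-1),1]$ when $k-1$ is even. Rewriting in terms of $k$: when $k$ is even, $k-1$ is odd, giving $\tD=k$; when $k$ is odd, $k-1$ is even, giving $\tD=k+1$. Substituting $k=\frac{r}{2}-C$ yields exactly the two formulas claimed in the statement.

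For the final non-isomorphism claim I would simply compare the third coordinate $\tD$ of the two invariants. When $k=\frac{r}{2}-C$ is odd, $X_1$ has $\tD=k$ while $X_2$ has $\tD=k+1$; when $k$ is even, $X_1$ has $\tD=k+1$ while $X_2$ has $\tD=k$. In both cases the $\tD$-components differ by exactly $1$, so $DD(X_1)\neq DD(X_2)$, and hence the two $C_2$-spaces cannot be equivariantly isomorphic (an equivariant isomorphism would induce a conjugacy of the corresponding involutions on $H^1$, which would force equality of the $DD$-invariants by the theorem of \cite{D} recalled above).

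The only real subtlety is bookkeeping: making sure that the parity of ``the number of crosscaps added'' is tracked consistently, since $X_1$ gets $\frac{r}{2}-C$ crosscaps added to a base with $D=1$ while $X_2$ gets $\frac{r}{2}-C-1$ crosscaps added to a base with $D=2$ — the shift by one in both the crosscap count and the base $D$-value is what makes the first two coordinates of the $DD$-invariants coincide while the third coordinates swap. I would also note at the start that Proposition~\ref{pr:DD-computation} gives the invariant for \emph{any} $C\geq 1$, so no separate base case is needed, and that the hypotheses $r>2C$ (resp.\ $r>2C+2$) are exactly what is needed to ensure the crosscap counts are nonnegative. There is no genuine obstacle here; it is a clean deduction from the two cited propositions.
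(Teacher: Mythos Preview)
Your proposal is correct and follows exactly the approach of the paper's own proof, which simply says the result is immediate from Proposition~\ref{pr:DD-computation} and Proposition~\ref{pr:DD-DCC}. You have supplied the bookkeeping that the paper leaves implicit: rewriting each space as an orientable base (namely $S^2_a+C[S^{1,0}-\antitube]$ with $D=1$, or $T_1^a+C[S^{1,0}-\antitube]$ with $D=2$) plus a number of dual crosscaps, then applying the additivity formula and tracking the parity of the crosscap count to read off $\tD$.
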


\begin{proof}
This follows immediately from Proposition~\ref{pr:DD-computation} and
Proposition~\ref{pr:DD-DCC}.
\end{proof}

\begin{remark}
Note that in Corollary~\ref{co:DD-sep} one really needs the $\tD$ invariant
to distinguish the spaces; the $D$, $\alpha$, and $\talpha$ invariants
fail to do the job.  
\end{remark}

\subsection{A complete set of invariants}

We now complete our classification of $C_2$-actions on $2$-manifolds,
by solving problem (P3) from the introduction.

\begin{thm}
Suppose given $C_2$-actions on closed $2$-manifolds $X$ and $Y$, where
$X$ and $Y$ have the same non-equivariant topological type.  Then
\begin{enumerate}[(a)]
\item If the invariants $F$, $C_+$,  $C_-$, $Q$, $\epsilon$, and $DD$
are the same for $X$ and $Y$, then $X\iso Y$ as $C_2$-spaces.
\item If $F+C_- >0$ or if $Q$ is positive, 
and 
the invariants $F$, $C_+$, $C_-$, $Q$ are the
same for $X$ and $Y$, then $X\iso Y$ as $C_2$-spaces.  
\item If $X$ is orientable and
the invariants $F$, $C_+$, $C_-$, $Q$ are the
same for $X$ and $Y$, then $X\iso Y$ as $C_2$-spaces.  
\end{enumerate}
\end{thm}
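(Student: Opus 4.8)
The plan is to read all three parts off the classification results of Sections~\ref{se:orient} and~\ref{se:non-orient} by a case analysis on the taxonomy and on whether the underlying space is orientable. Since an equivariant homeomorphism induces an isometry of $H^1(\blank;\Z/2)$ conjugating the two induced maps $\sigma^*$, all of $F$, $C_\pm$, $Q$, $\epsilon$, and $DD$ are genuine isomorphism invariants, so only the converse direction needs proof; and we may assume both actions are nontrivial, the trivial action being the unique one with $X^{C_2}=X$ and thus easily separated from the others. For part (c): if $X$ is orientable then so is $Y$, and Theorem~\ref{th:T_g-action} lists all $C_2$-actions on $T_g$. Checking that list against the table of signed taxonomies recorded just after Theorem~\ref{th:Tg} shows that no two distinct nontrivial actions share the quadruple $[F,C:(C_+,C_-),Q]$, so equality of these four invariants forces $X\iso Y$ (equivalently, one may quote Corollary~\ref{co:P3-orient-intro}).

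For part (b), since the hypotheses include equality of $F$, $C_+$, $C_-$, $Q$, part (c) lets us assume $X$ and $Y$ are non-orientable with common underlying space $N_r$. If $F+C_->0$, then according as $C=0$, $C>0$ with $C_-=0$, or $C_->0$, the set $N_r[F,C:(C_+,C_-)]$ is governed respectively by Theorem~\ref{th:F,0}, Theorem~\ref{th:F>0a}, or Theorem~\ref{th:F>0b}, each of which asserts that this set contains at most one element of each $Q$-sign. If instead $F=C_-=0$ while $Q$ is positive, Theorem~\ref{th:0C0}(d) gives at most one action of positive $Q$-sign. In every subcase the signed taxonomy determines the action, which is (b).

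For part (a), combining (b) and (c) reduces us to the case where $X$ and $Y$ are non-orientable with common space $N_r$, $F=C_-=0$, and $Q$ negative. By Theorem~\ref{th:0C0}(a) this forces $r$ even and $C\le\tfrac r2$, and Theorem~\ref{th:0C0}(c) enumerates the at most three possibilities: the separating doubled space $S^{2,1}+(\tfrac r2-C+1)[DCC]+(C-1)[S^{1,0}-\antitube]$, which always occurs; the non-separating space $S^2_a+(\tfrac r2-C)[DCC]+C[S^{1,0}-\antitube]$, which occurs iff $C<\tfrac r2$; and the non-separating space $T_1^{\anti}+(\tfrac r2-C-1)[DCC]+C[S^{1,0}-\antitube]$, which occurs iff $C<\tfrac r2-1$. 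The $\epsilon$-invariant distinguishes the doubled space from the other two, so if $\epsilon(X)=\epsilon(Y)$ records ``separating'' then both equal the doubled space and $X\iso Y$. If $\epsilon(X)=\epsilon(Y)$ records ``non-separating'' then $X$ and $Y$ are both among the last two spaces, which requires $C<\tfrac r2-1$, i.e., $r>2C+2$; then Corollary~\ref{co:DD-sep}, whose hypotheses $r>2C$ and $r>2C+2$ are met in this range, shows these two spaces have distinct $DD$-invariants, so $DD(X)=DD(Y)$ forces $X\iso Y$. This exhausts all cases.

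The step I expect to be the main obstacle is the bookkeeping at the close of part (a): one must check that the exact range of $C$ in which $\epsilon$ fails to separate the surviving possibilities (namely $C<\tfrac r2-1$, where both non-separating spaces are present) coincides with the range in which Corollary~\ref{co:DD-sep} applies, so that $DD$ truly completes the classification, with nothing slipping through at the boundary values $C=\tfrac r2$ and $C=\tfrac r2-1$. Everything else is a direct assembly of results already in hand.
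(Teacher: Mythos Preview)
Your proposal is correct and follows essentially the same route as the paper's own proof: part (c) via Corollary~\ref{co:P3-orient}, part (b) by assembling Theorems~\ref{th:F,0}, \ref{th:F>0a}, \ref{th:F>0b}, and \ref{th:0C0}(d), and part (a) by reducing to the three negative-$Q$-sign spaces of Theorem~\ref{th:0C0}(c), separating off the doubled space via $\epsilon$, and invoking Corollary~\ref{co:DD-sep} for the remaining pair. Your closing paragraph correctly identifies and resolves the boundary bookkeeping at $C=\tfrac r2$ and $C=\tfrac r2-1$; the paper glosses over this, so you are if anything more explicit. One shared subtlety worth noting: both your argument and the paper's cite Theorem~\ref{th:0C0} and Corollary~\ref{co:DD-sep}, which carry the hypothesis $C>0$ (resp.\ $C\geq 1$), so the free case $C=0$ is not literally covered by those citations---there one should instead appeal to Theorem~\ref{th:N-free} and compute directly from Proposition~\ref{pr:DD-DCC} that $D\bigl(S^2_a+\tfrac r2[DCC]\bigr)=\tfrac r2\neq \tfrac r2-1=D\bigl(T_1^{\anti}+(\tfrac r2-1)[DCC]\bigr)$.
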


\begin{proof}
We have already proven (b) and (c): (b) synthesizes results from
Theorems~\ref{th:F,0}, \ref{th:F>0a}, and \ref{th:F>0b}, whereas (c)
is just Corollary~\ref{co:P3-orient}.   To prove (a) we need to analyze
the case where $X$ is non-orientable, $F=C_-=0$, and $Q$ is negative.
By Theorem~\ref{th:invariants} this occurs only when $X\iso N_r$ where $r$ is
even, and Theorem~\ref{th:0C0} says that
in this case there is exactly one element of positive
$Q$-sign and at most three elements of negative $Q$-sign.  The latter
three elements are
\begin{align*}
& S^{2,1}+(\tfrac{r}{2}-C+1)[DCC]+(C-1)[S^{1,0}-\at] \\
& S^2_a+(\tfrac{r}{2}-C)[DCC] + C[S^{1,0}-\at] \\
& T_1^a + (\tfrac{r}{2}-C-1)[DCC]+C[S^{1,0}-\at]
\end{align*}
The first of these is separating, whereas the latter two are not: so
$\epsilon$ distinguishes the first from the latter two.  By
Corollary~\ref{co:DD-sep} the $DD$-invariant distinguishes the
second from the third.
\end{proof}


\section{Connections with the mapping class group}
\label{se:MCG}

The problem we have pursued in this paper, of describing isomorphism
classes of $C_2$-actions on a $2$-manifold $X$, has some relation to
the
problem of finding elements of order at most $2$ in the mapping class group
$\cM(X)$.  Certainly  a $C_2$-action on $X$ yields such an element in the
mapping class group.  They are different problems,
though, and in this section we will give some examples showing the
differences.  

\medskip

If $G$ is a group, let $G_{\btwo}$ denote the elements of $G$ having
order at most $2$.  Then $G$ acts on $G_{\btwo}$ by conjugation; write
$G_{\btwo}/\ssim$ for the set of orbits.
Let $\Homeo(X)$ denote the group of self-homeomorphisms of $X$, and
set $\Invol(X)=\Homeo(X)_{\btwo}/\ssim$.  Then $\Invol(X)$ coincides with the set
of isomorphism classes of $C_2$-actions on $X$.  

The projection $\Homeo(X)\ra \cM(X)$ induces a map $\Gamma_X\colon \Invol(X)\ra
\cM(X)_{\btwo}/\ssim$.  We give a few examples investigating this map.

\begin{example}[The $2$-sphere]
Here one has $\cM(S^2)=\Z/2$; the mapping class of an automorphism
simply measures  whether it is orientation-preserving or reversing.  So
$\cM(S^2)_{\btwo}/\ssim$ has two elements.  We know, however, that
$\Invol(S^2)$ has four elements: two of them ($S^{2,0}$ and $S^{2,2}$)
map to the identity mapping class, and the other two ($S^2_a$ and
$S^{2,1}$) map to the non-identity element.
\end{example}

\begin{example}[The torus]
\label{ex:MCG-torus}
Here we have $\cM(T_1)\iso \GL_2(\Z)$, with the isomorphism given by
the action of mapping classes on $H_1(T_1)$.  That is, the natural map
$\cM(T_1)\ra \Aut(H_1(T_1;\Z))$ is an isomorphism.

Some algebraic work (see Section~\ref{se:GL2} below) 
reveals that $\GL_2(\Z)_{\btwo}/\ssim$ has four
elements, represented by $I$, $-I$, $\begin{bsmallmatrix} 1 & 0 \\ 0 &
-1\end{bsmallmatrix}$, and $\begin{bsmallmatrix} 0 & 1\\ 1&
0\end{bsmallmatrix}$.  
Each of $I$ and $-I$ are the only elements in their orbit.  All other
elements of $\GL_2(\Z)_{\btwo}$ have the form 
$\begin{bsmallmatrix} a & b\\ c & -a\end{bsmallmatrix}$ where
$bc=1-a^2$, and such a matrix
 is in the orbit of $\begin{bsmallmatrix} 1&0 \\ 0 &
-1\end{bsmallmatrix}$ if and only if $a$ is odd and both $b$ and $c$
are even; otherwise it is in the orbit of $\begin{bsmallmatrix} 0 &
1\\ 1 & 0\end{bsmallmatrix}$.  These facts take a little work to
check, but they are not hard: see Section~\ref{se:GL2} below.

The following table lists the six $C_2$-actions on $T_1$ together with
their image in $\GL_2(\Z)_{\btwo}/\ssim$.  Here we just computed the
action of each involution on $H_1(T_1)$ and used the algebraic rules
from the preceeding paragraph.

\vspace{0.2in}

\bgroup
\def\arraystretch{1.5}
\begin{tabular}{c|c|c|c|c|c}
$T_1^{\text{triv}}$ & $T_1^{\anti}$ & $T_1^{\rot}$ & $T_1^{\spit}[4]$ &
$T_1^{\refl}[2]$ & $S^2_a+[S^{1,0}-\antitube]$ \\
\hline
$I$ & $\begin{bsmallmatrix} 1 & 0 \\ 0 & -1\end{bsmallmatrix}$ & $I$ &
$-I$ &
$\begin{bsmallmatrix} 1 & 0 \\ 0 & -1\end{bsmallmatrix}$
& $\begin{bsmallmatrix} 0 & 1 \\ 1 & 0\end{bsmallmatrix}$
\end{tabular}
\egroup

\end{example}

\vspace{0.1in}

\begin{example}[The Klein bottle]
Here we have $\cM(K)\iso \Z/2\times \Z/2$ by \cite[Lemma 5]{L}.  
In fact, a careful look at \cite{L}  reveals that the evident map 
\[ \Psi\colon \cM(K)\ra \Aut(H_1(K;\Q))_{\btwo}
\times O(H_1(K;\Z/2))
\]
is an isomorphism, where $O$ denotes the orthogonal group with respect
to the intersection form (this orthogonal group  is
readily checked to be $\Z/2$).  Recall $H_1(K;\Q)\iso \Q$,
$\Aut(\Q)\iso \Q^*$, and $(\Q^*)_{[2]}=\{1,-1\}$.  
Because $\cM(K)$ is abelian, and all
elements have order at most $2$, 
 $\cM(K)_{\btwo}/\ssim \,=\cM(K)$.  Since both $\Aut(H_1(K;\Q))_{\btwo}$
and $O(H_1(K;\Z/2))$ are isomorphic to $\Z/2$, we will in both cases
use $1$ and $-1$ to represent the identity and the unique non-trivial
element, respectively.
    
The following table lists the six $C_2$-actions on the Klein bottle
together with their images in $\cM(K)$ (or more precisely, their
images under $\Psi$).

\vspace{0.1in}

\bgroup
\def\arraystretch{1.5}
\begin{tabular}{c|c|c|c|c|c}
$\scriptstyle{S^2_a+[DCC]}$ & $\scriptstyle{S^{2,1}+[DCC]}$ & $\scriptstyle{S^2_a+[S^{1,1}-AT]}$ & $\scriptstyle{S^{2,2}+[S^{1,0}-AT]}$ &
 $\scriptstyle{S^{2,2}+2[FM]}$ & $\scriptstyle{K^{\triv}}$ \\
\hline
$(-1,-1)$ & $(1,-1)$
& $(-1,-1)$ & $(-1,1)$
  & $(1,1)$
& $(1,1)$
\end{tabular}
\egroup
\end{example}

\noindent
This case is a little harder than the torus, so we give some hints to
these calculations.  The picture below shows a Klein bottle
represented as a sphere
with two crosscaps, where the boundary of the disk in our picture
should be squashed to a point.
The loops $\alpha_1$ and $\alpha_2$ 
are an orthogonal basis for $H_1(K;\Z/2)$, and either one by itself
constitutes
a basis for $H_1(K;\Q)$.  Note that $2(\alpha_1+\alpha_2)=0$ in
$H_1(K;\Z)$, and so $\alpha_1=-\alpha_2$ in $H_1(K;\Q)$.  

\begin{picture}(300,100)
\put(100,5){\includegraphics[scale=0.5]{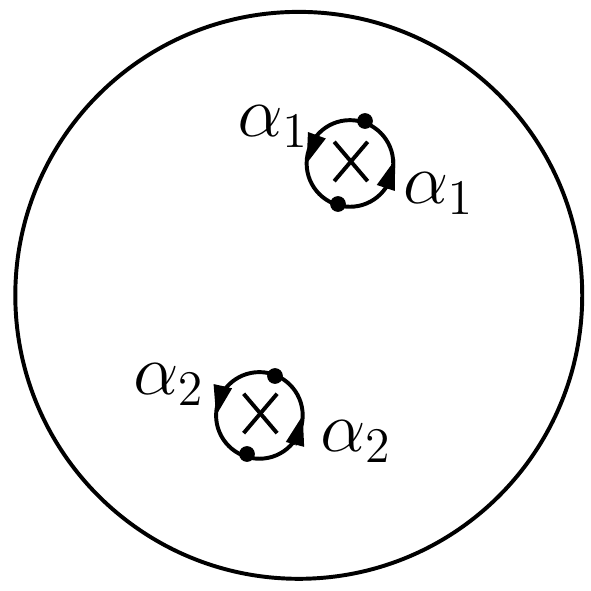}}
\end{picture}

\noindent
This model allows one to readily do the calculations for
$S^2_a+[DCC]$, $S^{2,1}+[DCC]$, and $S^{2,2}+2[FM]$.

For the remaining two cases, the spheres with attached antitubes, it
is perhaps easiest to use other models.  The picture below shows two
Klein bottles (where the arrows denote gluing, not the $C_2$-action):

\begin{picture}(300,100)
\put(0,10){\includegraphics[scale=0.6]{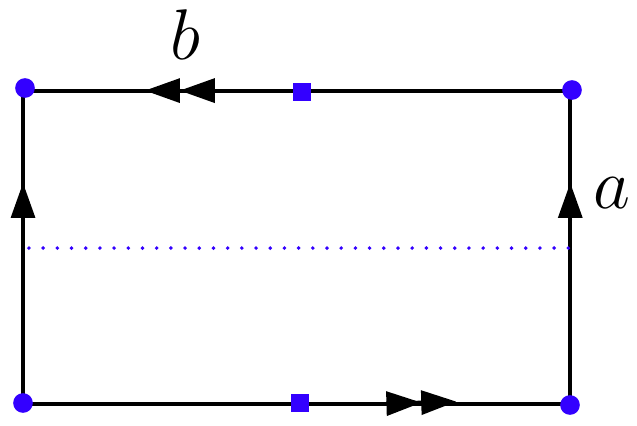}}
\put(200,10){\includegraphics[scale=0.6]{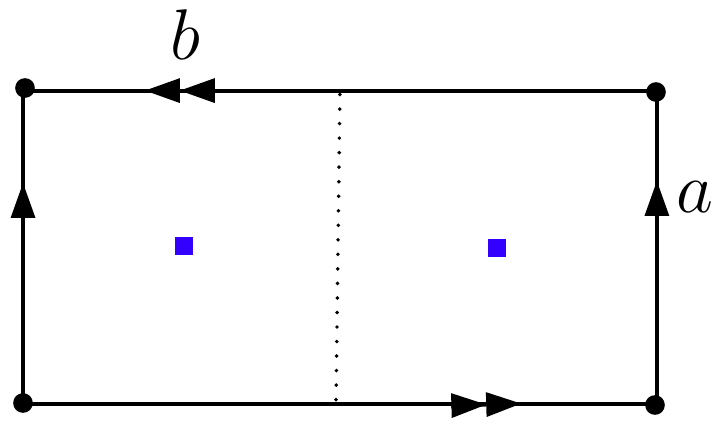}}
\end{picture}

\noindent
In the first, the involution is reflection across the dotted line.
The fixed set is a circle together with two points, and so this is a
model for $S^{2,2}+[S^{1,0}-\antitube]$ (this follows from our
classification; see the table for $N_2$ in Appendix B). 
In the second picture, the involution rotates each of
the two squares 180-degrees about their center.  Here the fixed set
consists of exactly two points, so this is a model for
$S^2_a+[S^{1,1}-\antitube]$.  
In both cases loop $a$ is a generator for
$H_1(K;\Q)$, and the pair $\{a,a+b\}$ is an orthonormal basis for
$H_1(K;\Z/2)$.   Using these models, it is easy to compute the
remaining entries in the above table.

\vspace{0.1in}

In the general case,
it seems possible that $\Gamma_X\colon
\Invol(X)\ra \cM(X)_{\btwo}/\ssim$ is always
surjective.  We do not know how to prove this, though.  
If $X$ is orientable then $\Homeo^+(X)_{\btwo}\ra \cM^+(X)_{\btwo}$ is
surjective by \cite[Theorem 7.1]{FM}, but this does not seem to
immediately imply that $\Homeo(X)_{\btwo}\ra \cM(X)_{\btwo}$ is
surjective.   Moreover, the above examples show that even when $\Gamma_X$
is surjective the cardinalities of the fibers can
differ.  Given an element of $\cM(X)_{\btwo}$ it is
unclear how to predict the size of the fiber over this element.

\subsection{Conjugacy classes of order two elements of
  \mdfn{$\GL_2(\Z)$}}
\label{se:GL2}
We close this section by giving the algebraic analysis needed for
Example~\ref{ex:MCG-torus}.  
One readily checks that $\GL_2(\Z)_{\btwo}$ has exactly two elements of
determinant one, namely $I$ and $-I$.  Furthermore, the elements of
determinant $-1$ all have the form $\begin{bsmallmatrix} a & b \\ c &
-a \end{bsmallmatrix}$ where $a^2+bc=1$.  

The matrices $I$ and $-I$
are central in $\GL_2(\Z)$ and so are the only elements in their
conjugacy classes.  It remains to determine the conjugacy classes for
the above matrices of determinant $-1$.  
We start with four identities:
\[ \begin{bmatrix} -1 & 0 \\ 0 & 1\end{bmatrix}^{-1}\cdot
\begin{bmatrix} x & y \\ z & -x \end{bmatrix} \cdot 
\begin{bmatrix} -1 & 0 \\ 0 & 1\end{bmatrix} =
\begin{bmatrix} x & -y \\ -z & -x \end{bmatrix} \tag{1}
\]

\[ \begin{bmatrix} 1 & 0 \\ \lambda & 1\end{bmatrix}^{-1}\cdot
\begin{bmatrix} x & y \\ z & -x \end{bmatrix} \cdot 
\begin{bmatrix} 1 & 0 \\ \lambda & 1\end{bmatrix} =
\begin{bmatrix} x+\lambda y  & y \\ -2\lambda x-\lambda^2 y +z  &
-x-\lambda y \end{bmatrix}\tag{2}
\]
\[ 
\begin{bmatrix} 1 & \lambda \\ 0 & 1\end{bmatrix}^{-1}\cdot
\begin{bmatrix} x & y \\ z & -x \end{bmatrix} \cdot 
\begin{bmatrix} 1 & \lambda \\ 0 & 1\end{bmatrix} =
\begin{bmatrix} x-\lambda z  & 2\lambda x-\lambda^2z+y \\ 
z   &
-x+\lambda z \end{bmatrix},\tag{3}
\]
\[
\begin{bmatrix} 0 & 1 \\ 1 & 0\end{bmatrix}^{-1}\cdot
\begin{bmatrix} x & y \\ z & -x \end{bmatrix} \cdot 
\begin{bmatrix} 0 & 1 \\ 1 & 0 \end{bmatrix} =
\begin{bmatrix} -x & z \\ y & x
\end{bmatrix}.\tag{4}
\]
Since $\GL_2(\Z)$ is generated by elementary matrices, relations
(1)--(3)  generate all conjugacy relations.  So (4) is actually a
consequence of these, but we list it anyway because of its usefulness.

Observe
that if $A\in \GL_2(\Z)_{\btwo}$ is such that $a_{12}$ and $a_{21}$
are even, then this same property holds for the conjugates of $A$
obtained from (1)--(4), and therefore for {\it all\/} conjugates of
$A$.  
In particular, the matrices
\[ S=\begin{bmatrix} 1 & 0 \\ 0 & -1\end{bmatrix}
\qquad\text{and}\qquad
T=\begin{bmatrix} 0 & 1 \\1 & 0\end{bmatrix}
\]
are in different conjugacy classes.

\begin{prop}
All matrices of determinant $-1$ in $\GL_2(\Z)_{\btwo}$ are conjugate
to either $S$ or $T$.
\end{prop}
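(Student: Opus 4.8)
The plan is to use the eigenlattice decomposition of $A$ rather than to manipulate matrix entries directly. Let $A\in\GL_2(\Z)_{\btwo}$ with $\det A=-1$; then, as noted above, $A$ has the shape $\begin{bsmallmatrix} a & b \\ c & -a\end{bsmallmatrix}$ with $a^2+bc=1$, and over $\Q$ its eigenvalues are $1$ and $-1$. Set $L_+=\ker(A-I)\cap\Z^2$ and $L_-=\ker(A+I)\cap\Z^2$; these are saturated rank-one sublattices, generated by primitive vectors $e_+$ and $e_-$ that are $\Q$-linearly independent. For any $v\in\Z^2$ one has $v+Av\in L_+$ and $v-Av\in L_-$, hence $2v\in L_+\oplus L_-$. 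Thus $2\Z^2\subseteq L_+\oplus L_-\subseteq\Z^2$, so the quotient $Q=\Z^2/(L_+\oplus L_-)$ is a quotient of $(\Z/2)^2$; and $|Q|=4$ is impossible, since it would force $L_+\oplus L_-=2\Z^2$ and hence $e_+\in 2\Z^2$, contradicting primitivity. Therefore $|Q|\in\{1,2\}$.

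First I would dispose of the case $|Q|=1$: here $(e_+,e_-)$ is a $\Z$-basis of $\Z^2$, and with respect to it $A=\begin{bsmallmatrix} 1 & 0 \\ 0 & -1\end{bsmallmatrix}=S$, so $A$ is conjugate to $S$. For $|Q|=2$, pick $v\in\Z^2$ representing the nontrivial class; subtracting integer multiples of $e_+$ and $e_-$ we may assume $2v=m e_++n e_-$ with $m,n\in\{0,1\}$ not both zero, and primitivity of $e_\pm$ rules out $(m,n)=(1,0)$ and $(0,1)$, leaving $v=\tfrac12(e_++e_-)$. Then $Av=v-e_-$ and $A(v-e_-)=v$, while $v-(v-e_-)=e_-$ shows $(v,\,v-e_-)$ generates $\Z v+\Z e_-=\Z^2$ and so is a $\Z$-basis; in this basis $A$ simply interchanges the two basis vectors, i.e. $A=\begin{bsmallmatrix} 0 & 1 \\ 1 & 0\end{bsmallmatrix}=T$. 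Hence $A$ is conjugate to $T$.

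Since the two cases are exhaustive, every determinant-$(-1)$ element of $\GL_2(\Z)_{\btwo}$ is conjugate to $S$ or to $T$; it is not conjugate to both, because (as observed before the proposition) having both off-diagonal entries even is a conjugacy invariant distinguishing $S$ from $T$. A variant staying closer to the setup above runs a Euclidean descent directly on the triple $(a,b,c)$: relation (3) replaces $a$ by $a-\lambda c$, so when $c\neq 0$ we may take $0\le a<|c|$, which forces $|b|<|c|$; relation (4) then swaps $b$ and $c$, strictly decreasing the lower-left entry, and iterating reaches $c=0$ (so $a=\pm1$) or $a=0$ (so $bc=1$), base cases in which relations (1)--(3) reduce to $S$ or $T$ by hand. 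The one point needing real care in either version is the structural bound $|Q|\le 2$ (respectively, checking that the descent quantity strictly decreases and that the base cases do not loop); everything else is routine bookkeeping.
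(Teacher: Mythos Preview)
Your eigenlattice argument is correct and takes a genuinely different route from the paper. The paper runs a Euclidean descent on $|a_{11}|$: it disposes of the base cases $|a_{11}|\in\{0,1\}$ by hand, and for $|a_{11}|>1$ uses $bc=(1-a)(1+a)$ to force $|b|\le|a|$ or $|c|\le|a|$, then applies relation (2) or (3) with $\lambda=\pm1$ to strictly decrease $|a_{11}|$. Your approach instead reads the conjugacy class directly off the index $[\Z^2:L_+\oplus L_-]\in\{1,2\}$, which is cleaner and explains structurally \emph{why} there are exactly two classes: the invariant is the isomorphism type of $\Z^2$ as a $\Z[C_2]$-lattice, and index $1$ versus $2$ corresponds to the split versus nonsplit rank-two module. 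The descent variant you sketch at the end is essentially the paper's proof with different bookkeeping (descending on $|c|$ rather than on $|a|$). One point worth making explicit in your write-up: the conjugating matrices---with columns $(e_+,e_-)$ in the first case and $(v,\,v-e_-)$ in the second---lie in $\GL_2(\Z)$ precisely because you verified these are $\Z$-bases, so the conjugacy really takes place in $\GL_2(\Z)$ and not merely in $\GL_2(\Q)$.
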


\begin{proof}
For matrices $A\in \GL_2(\Z)_{\btwo}$ we proceed by induction on $|a_{11}|$.
When $a_{11}=0$ there are only two such matrices, namely $T$ and
$-T$.  These are conjugate by relation (1).

When $a_{11}=1$ we get the matrices $\begin{bsmallmatrix}
1 & b\\ 0 & -1\end{bsmallmatrix}$ and $\begin{bsmallmatrix}
1 & 0\\ b & -1\end{bsmallmatrix}$, for any $b\in \Z$.  But relations
(2) and (3) show that
\[ 
\begin{bmatrix} 1 & 0\\ b & -1 \end{bmatrix} 
\sim \begin{bmatrix} 1 & 0 \\ b-2\lambda & -1\end{bmatrix},
\qquad
\begin{bmatrix} 1 & b\\ 0 & -1 \end{bmatrix} 
\sim \begin{bmatrix} 1 & b+2\lambda \\ 0 & -1\end{bmatrix}
\]
for any $\lambda$ in $\Z$.  So only the parity of $b$ matters, and
this leaves us with the three elements
\[ \begin{bmatrix} 1 & 0 \\ 0 & -1\end{bmatrix}, \quad  
 \begin{bmatrix} 1 & 1 \\ 0 & -1\end{bmatrix}, \quad  
 \begin{bmatrix} 1 & 0 \\ 1 & -1\end{bmatrix}.
\]
The second two are readily checked to be conjugate to $T$; for
example, using (2) we get

\[ \begin{bmatrix} 1 & 0 \\ 1 & 1\end{bmatrix}^{-1} \cdot 
 \begin{bmatrix} 0 & 1 \\ 1 & 0\end{bmatrix} \cdot 
 \begin{bmatrix} 1 & 0 \\ 1 & 1\end{bmatrix} =
 \begin{bmatrix} 1 & 1 \\ 0 & -1\end{bmatrix}.
\]

A similar analysis applies to $a_{11}=-1$: all such matrices are conjugate
to one of 
\[ \begin{bmatrix} -1 & 0\\ 0 & 1\end{bmatrix}, \quad
\begin{bmatrix} -1 & 0\\ 1 & 1\end{bmatrix}, \quad
\begin{bmatrix} -1 & 1\\ 0 & 1\end{bmatrix}.
\]
The first is conjugate to $S$ using relation (4), and the second two
are conjugate to $T$ using relation (4) and what we have already
shown.    

Now assume that $\begin{bsmallmatrix} a & b \\ c & -a\end{bsmallmatrix}\in
\GL_2(\Z)$ has determinant $-1$ and $|a|>1$.  Then
$bc=1-a^2=(1-a)(1+a)$, and in particular $b\neq 0$ and $c\neq 0$.  
If $|b|\geq |a|+1$
and $|c|\geq |a|+1$ then $|bc|\geq a^2+2|a|+1 > a^2-1$, and this is a
contradiction.  So either $0<|b|\leq |a|$ or $0<|c|\leq |a|$.  In the
former case we use relation (2) with $\lambda\in \{1,-1\}$ to reduce
the magnitude of the upper left entry of the matrix.  By induction
this new matrix is conjugate to $S$ or $T$, so we are done.  The case
$|c|\leq |a|$ is similar, this time using relation (3).
\end{proof}

\begin{prop}
A matrix $A\in \GL_2(\Z)_{\btwo}$ of determinant $-1$ is conjugate to
$S$ if and only if $a_{12}$ and $a_{21}$ are both even.  
\end{prop}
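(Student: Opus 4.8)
The plan is to derive this immediately from the preceding proposition together with the conjugation-invariance of the parity condition that was already observed above. Two facts are now in hand: every determinant $-1$ matrix in $\GL_2(\Z)_{\btwo}$ is conjugate to $S$ or to $T$; and the set of matrices $A\in\GL_2(\Z)_{\btwo}$ with $a_{12}$ and $a_{21}$ both even is a union of conjugacy classes — this is the remark following relation (4), which uses relations (1)--(4) and the fact that $\GL_2(\Z)$ is generated by elementary matrices. Since $S$ has off-diagonal entries $0$ while $T$ has off-diagonal entries $1$, the matrices $S$ and $T$ lie in different conjugacy classes, and the parity of the off-diagonal entries is exactly what distinguishes them.

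First I would prove the forward implication: if $A$ is conjugate to $S$, then $A$ is a conjugate of a matrix whose off-diagonal entries are $0$, hence even, so by conjugation-invariance $a_{12}$ and $a_{21}$ are even as well. For the converse, suppose $a_{12}$ and $a_{21}$ are both even. By the preceding proposition $A$ is conjugate to $S$ or to $T$; if it were conjugate to $T$, then $T$ would be a conjugate of $A$ and hence would also have both off-diagonal entries even, contradicting the fact that those entries equal $1$. Therefore $A$ is conjugate to $S$.

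There is essentially no obstacle here: the real content was front-loaded into the setup of relations (1)--(4) and into the normal-form statement of the previous proposition. The only point that needs a moment's care is the claim that invariance of the parity condition under conjugation by the specific matrices appearing in (1)--(4) propagates to invariance under all of $\GL_2(\Z)$, but this is precisely the generation-by-elementary-matrices argument already made, so nothing further is required.
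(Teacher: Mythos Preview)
Your argument is correct, but it differs from the paper's proof in a meaningful way. The paper proves the ``only if'' direction exactly as you do, citing the conjugation-invariance of the parity condition. For the ``if'' direction, however, the paper does \emph{not} invoke the previous proposition. Instead it gives a direct construction: writing $a=2n+1$, $b=2b'$, $c=2c'$, the condition $a^2+bc=1$ becomes $n(n+1)+b'c'=0$, and the paper factors $b'$ and $c'$ against $n$ and $n+1$ to manufacture integers $x,y,z,w$ with $xw-yz=1$ and
\[
\begin{pmatrix} x & y \\ z & w \end{pmatrix}^{-1}
\begin{pmatrix} 1 & 0 \\ 0 & -1 \end{pmatrix}
\begin{pmatrix} x & y \\ z & w \end{pmatrix}
=
\begin{pmatrix} a & b \\ c & -a \end{pmatrix}.
\]

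Your route is shorter and cleaner: once the classification into the two classes $[S]$ and $[T]$ is in hand, the parity invariant immediately pins down which class a given matrix belongs to. The paper's route is constructive and logically independent of the previous proposition---it exhibits an explicit conjugating matrix rather than arguing by exclusion. Both are valid; yours is the more efficient deduction from what has already been proved, while the paper's version would survive even if the normal-form proposition were removed.
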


\begin{proof}
The ``only if'' part has already been proven, since the property of
$a_{12}$ and $a_{21}$ being even is preserved by relations (1)--(4).  
Now assume that $\begin{bsmallmatrix} a & b \\ c &
-a\end{bsmallmatrix}$ has determinant $-1$ and is such that $b$ and
$c$ are even.  Then $a$ is odd, so write $a=2n+1$, $b=2b'$, and
$c=2c'$.  The relation $a^2+bc=1$ becomes $n(n+1)+b'c'=0$.  

Write each of $b'$ and $c'$ as a product of positive prime factors
and possibly a $-1$.  Because $b'c'=-n(n+1)$, we can pull out enough
factors from $b'$ and $c'$ so that their product is $n+1$.  We can
represent this by the following picture, where each box contains some
subset of the terms in the factorizations:

\begin{picture}(300,70)
\put(0,0){\includegraphics[scale=0.5]{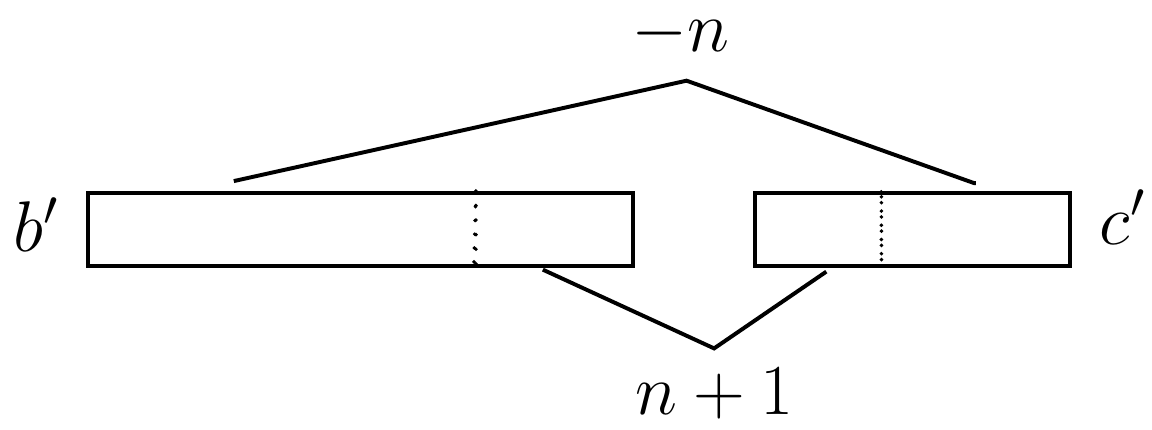}}
\end{picture}

\noindent
Define $x$, $y$, $-z$, and $w$ to be the products of the terms in each
of the boxes, according to the picture

\begin{picture}(300,70)
\put(0,5){\includegraphics[scale=0.5]{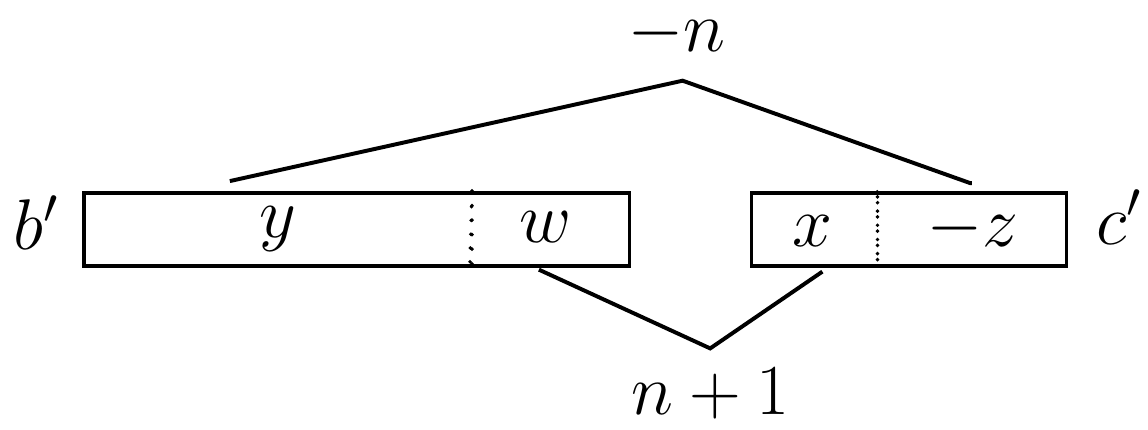}}
\end{picture}

\noindent
So we have $wx=n+1$, $yz=n$, $wy=b'$, and $xz=-c'$.  From this one
readily checks that $xw-yz=1$ and
\[ 
\begin{bmatrix}
x & y \\ z & w \end{bmatrix}^{-1} \cdot
\begin{bmatrix} 1 & 0 \\ 0 & -1 \end{bmatrix} \cdot
\begin{bmatrix}
x & y \\ z & w \end{bmatrix} =
\begin{bmatrix}
2n+1 & 2b' \\ 2c' & -(2n+1)
\end{bmatrix}=
\begin{bmatrix}a & b \\ c & -a \end{bmatrix}.
\]
which completes the proof.
\end{proof}

\appendix

\section{Surgery invariance}

A critical theorem in surgery theory guarantees that the end result of a
surgery doesn't depend on {\it where\/} it was performed.  As an
example in our
equivariant context, this says that
  the result of 
sewing an $S^{1,0}$-antitube into a $C_2$-equivariant space 
does not depend (up to isomorphism) on where the antitube was sewn in.  
We use this in several key places too numerous to mention---but see,
as one example, the appearance in the proof of
Proposition~\ref{pr:surg-subtract}.  In this appendix we give a proof
of this fundamental result.  

\medskip

We begin with a ``moving lemma'':

\begin{prop}
\label{pr:ML}
Let $X$ be a connected, closed $2$-manifold with an involution.
Let $a$ and $b$ be points in a common path component of $X-X^{C_2}$, 
and assume that $a\neq b$ and $a\neq \sigma b$.
Then there is a simple path $\alpha$ from $a$ to $b$ in $X-X^{C_2}$ that does
not intersect its conjugate  $\sigma\alpha$: that is, $\alpha(s)\neq
\sigma\alpha(t)$ for all values of $s$ and $t$.  
\end{prop}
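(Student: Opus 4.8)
The plan is to handle the separating case trivially and then pass to the double cover $(X-X^{C_2})/C_2$ in the non-separating case. First suppose the involution is separating. By Proposition~\ref{pr:conn}, $X-X^{C_2}$ then has exactly two path components $P_1,P_2$ and $\sigma$ interchanges them; with $a,b\in P_1$, any smooth simple arc $\alpha$ from $a$ to $b$ inside the connected surface $P_1$ does the job, since $\sigma\alpha\subseteq P_2$ and $P_1\cap P_2=\emptyset$ (here the hypothesis $a\neq\sigma b$ is automatic). So I may assume $\sigma$ is non-separating, in which case $Y:=X-X^{C_2}$ is connected and $\sigma$ acts freely on $Y$, its fixed points having been removed.

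Next I would pass to $q\colon Y\to\bar Y:=Y/C_2$, a connected double cover of the connected surface $\bar Y$, classified by a nonzero class $\phi\in H^1(\bar Y;\Z/2)$. Writing $\bar a=q(a)$ and $\bar b=q(b)$, the hypotheses $a\neq b$ and $a\neq\sigma b$ say exactly that $\bar a\neq\bar b$. I would then pick a smooth simple arc $\bar\alpha$ from $\bar a$ to $\bar b$ in $\bar Y$ and let $\alpha$ be its unique lift starting at $a$. Because $\bar\alpha$ is simple, $\alpha$ is a simple arc, and $\alpha$ is disjoint from $\sigma\alpha$: a common value $\alpha(s)=\sigma\alpha(t)$ would project to $\bar\alpha(s)=\bar\alpha(t)$, forcing $s=t$ and hence $\alpha(s)=\sigma\alpha(s)$, impossible since $\sigma$ is free on $Y$. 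The endpoint $\alpha(1)$ lies over $\bar b$, so it equals $b$ or $\sigma b$; if it equals $b$, I am done.

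The remaining task is to arrange that $\alpha(1)=b$. For this it suffices to produce a simple arc $\bar\alpha'$ from $\bar a$ to $\bar b$ whose holonomy relative to $\bar\alpha$ is nontrivial --- that is, with $\phi$ nonzero on the loop $\bar\alpha'\ast\overline{\bar\alpha}$ --- since then the lift of $\bar\alpha'$ starting at $a$ ends at $b$. I would get such an $\bar\alpha'$ as follows. Removing the arc $\bar\alpha$ from $\bar Y$ leaves a connected surface on which $\phi$ still restricts nontrivially, and every $\Z/2$-homology class on a surface is carried by an embedded multicurve, so there is an embedded circle $\bar C\subseteq\bar Y\setminus\bar\alpha$ with $\phi([\bar C])=1$. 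I would then choose a point $\bar p$ in the interior of $\bar\alpha$ near $\bar b$ and a simple arc $\bar\lambda$ from $\bar p$ to a point of $\bar C$ meeting $\bar\alpha$ only at $\bar p$, meeting $\bar C$ only at its other endpoint, and missing $\bar a$ and $\bar b$, and let $\bar\alpha'$ be the arc that follows $\bar\alpha$ from $\bar a$ to just before $\bar p$, makes a thin lasso detour out along $\bar\lambda$, once around $\bar C$, and back, and then rejoins $\bar\alpha$ to reach $\bar b$. Realizing the detour inside a thin regular neighborhood of $\bar\lambda\cup\bar C$ as a simple loop homotopic to $\bar\lambda\,\bar C\,\bar\lambda^{-1}$ makes $\bar\alpha'$ a simple arc with holonomy that of $\bar\alpha$ plus $\phi([\bar C])=1$, as needed.

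The hard part is exactly this last modification: verifying that the lasso detour can be made an honest embedded arc meeting the unused part of $\bar\alpha$ only where intended is the usual (slightly fussy) surface-topology bookkeeping. The same issue arises in the more hands-on alternative --- begin with an arbitrary simple arc $\gamma$ from $a$ to $b$ in $Y$, perturb it to be transverse to $\sigma\gamma$ (so $\gamma\cap\sigma\gamma$ is finite, and, being $\sigma$-invariant with no fixed points, of even cardinality), and perform surgery at an innermost intersection point to cut the count down --- so I expect this cut-and-paste step, in whichever form, to be the main obstacle.
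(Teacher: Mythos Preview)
Your argument is correct and takes a genuinely different route from the paper.  The paper argues exactly along the lines of the alternative you sketch at the end: start with a simple arc $\gamma$ from $a$ to $b$ in $X-X^{C_2}$, perturb so that $\gamma$ and $\sigma\gamma$ meet transversally in finitely many points (necessarily an even number, since $\sigma$ pairs them without fixed points), pick an innermost conjugate pair $x,\sigma x$ along $\gamma$, and reroute $\gamma$ near $x$ via a short sidepath onto $\sigma\gamma$ and along $\sigma\gamma$ to $\sigma x$, cutting the intersection count by two.  Iterating finishes.

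Your covering-space approach trades this crossing-by-crossing bookkeeping for a single holonomy flip in the quotient $\bar Y=Y/C_2$.  The lift of any simple arc $\bar\alpha$ in $\bar Y$ is automatically simple and disjoint from its conjugate, so the only issue is landing at $b$ rather than $\sigma b$; your observation that $\bar Y\setminus\bar\alpha$ is connected and that $H^1(\bar Y;\Z/2)\to H^1(\bar Y\setminus\bar\alpha;\Z/2)$ is injective (so an embedded circle $\bar C$ with $\phi([\bar C])=1$ exists off $\bar\alpha$) is exactly right.  The ``lasso'' step is just the band-sum of the arc $\bar\alpha$ with the disjoint circle $\bar C$ along the connecting arc $\bar\lambda$: delete short subarcs of $\bar\alpha$ near $\bar p$ and of $\bar C$ near $\bar\lambda\cap\bar C$, and replace them by the two long edges of a thin band along $\bar\lambda$.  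This is a standard embedded operation and produces a simple arc $\bar\alpha'$ from $\bar a$ to $\bar b$ with $\phi([\bar\alpha'\ast\overline{\bar\alpha}])=\phi([\bar C])=1$, so the lift starting at $a$ now ends at $b$.

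What each buys: your route is more conceptual and one-shot --- once you know simple arcs exist between any two points of a connected surface and that band-sum flips holonomy, you are done without ever counting crossings.  The paper's route is entirely intrinsic to $X$ (no quotient, no $H^1$), at the cost of tracking how conjugate pairs of intersections sit along $\alpha$.
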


\begin{proof}
First note that if $a$ and $\sigma a$ are in different path components
of $X-X^{C_2}$ then there is nothing to prove, as any simple path from $a$ to
$b$ will do the job.  So assume that $a$ and $\sigma a$ are in the
same path component of $X-X^{C_2}$.  

Pick any nice (smooth, or even PL) simple path $\alpha$ from $a$ to $b$ in
$X-X^{C_2}$.  
Assume that $\alpha$ and $\sigma\alpha$ cross each other. For each
point $z$ where they cross, the points $z$ and $\sigma z$ are distinct
and so have disjoint Euclidean neighborhoods.  By altering $\alpha$
in a small neighborhood of $z$, we can assume that
$z$ is an isolated point of the intersection---even more, we can
assume the intersection of $\alpha$ and $\sigma\alpha$ at $z$ is
transverse.  
The same therefore
holds at $\sigma z$.  Proceeding in this way, we can assume that
every point of intersection is transverse---and so there are only
finitely many such points.  Call them $p_1,\ldots,p_k$, and
assume them to be ordered so that they are encountered in
succession as one moves along $\alpha$.  Then it must be that
$\sigma(p_1)=p_k$, $\sigma(p_2)=p_{k-1}$, and so forth.  Therefore $k$
cannot be odd, otherwise the middle $p_i$ would be a fixed point and
this contradicts the way $\alpha$ was chosen.  

Let $x=p_{\frac{k}{2}}$, so that $\sigma x=p_{\frac{k}{2}+1}$.
Locally around $x$ the paths look something like the following (the
two diagrams show neighborhoods of $x$ and $\sigma x$):

\begin{picture}(300,110)
\put(30,10){\includegraphics[scale=0.4]{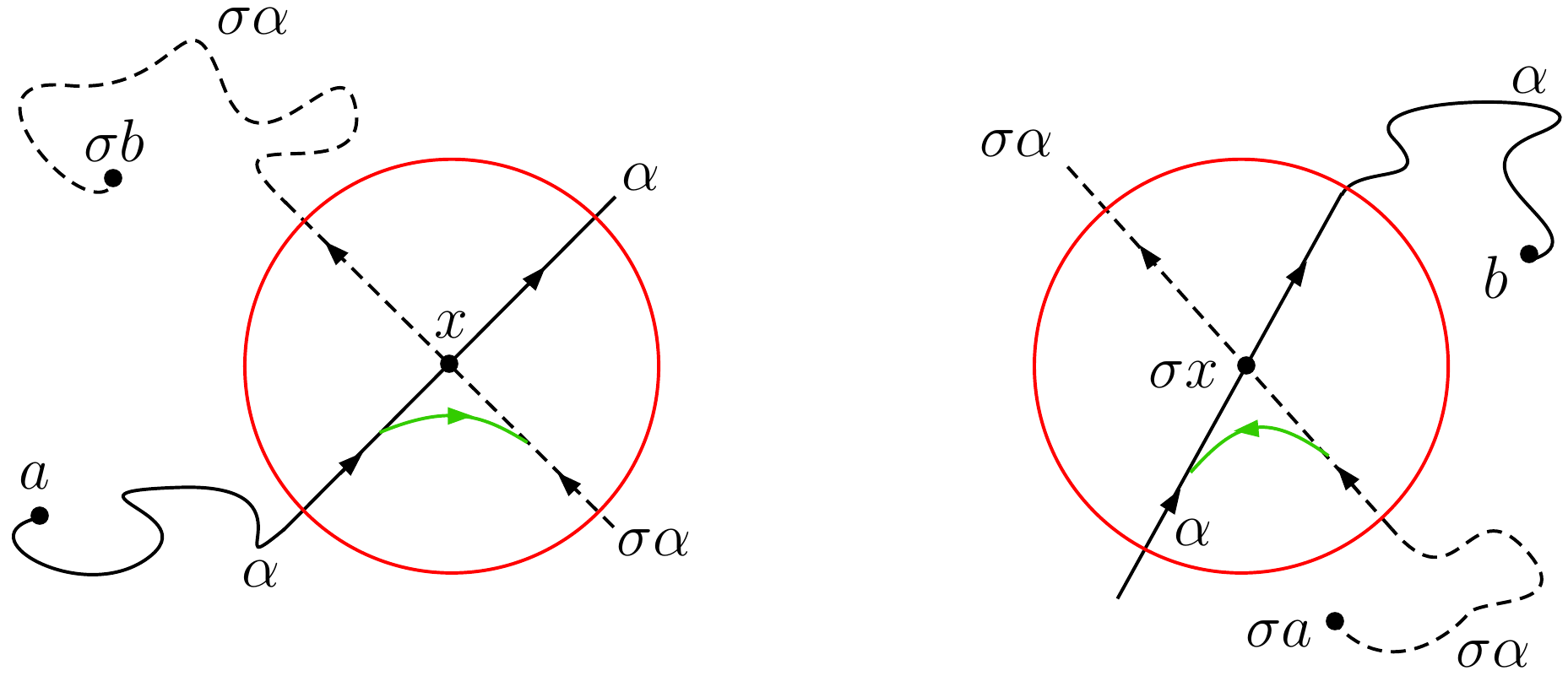}}
\end{picture}

\noindent
Note that the relative orientations of $\alpha$ and $\sigma \alpha$
might coincide with what is shown above, or else they might be
reversed (either around one of $x$ and $\sigma x$, or around both);
these orientations will not matter for the argument.  

Choose a small ``sidepath'' that passes from $\alpha$ to $\sigma \alpha$,
avoiding $x$, as shown in green in the first picture.  The conjugate of this
small path is also shown near $\sigma x$.  Define a new path $\alpha'$
as follows:

\begin{enumerate}[(1)]
\item Start at $a$ and follow $\alpha$ until just before getting to
$x$.
\item Follow the chosen sidepath to avoid $x$, ending up on $\sigma
\alpha$.
\item Follow $\sigma\alpha$ backwards until reaching $\sigma x$.
\item Now follow $\alpha$ again until reaching $b$.  
\end{enumerate}
Here is the modified picture showing $\alpha'$:

\begin{picture}(300,100)
\put(30,0){\includegraphics[scale=0.4]{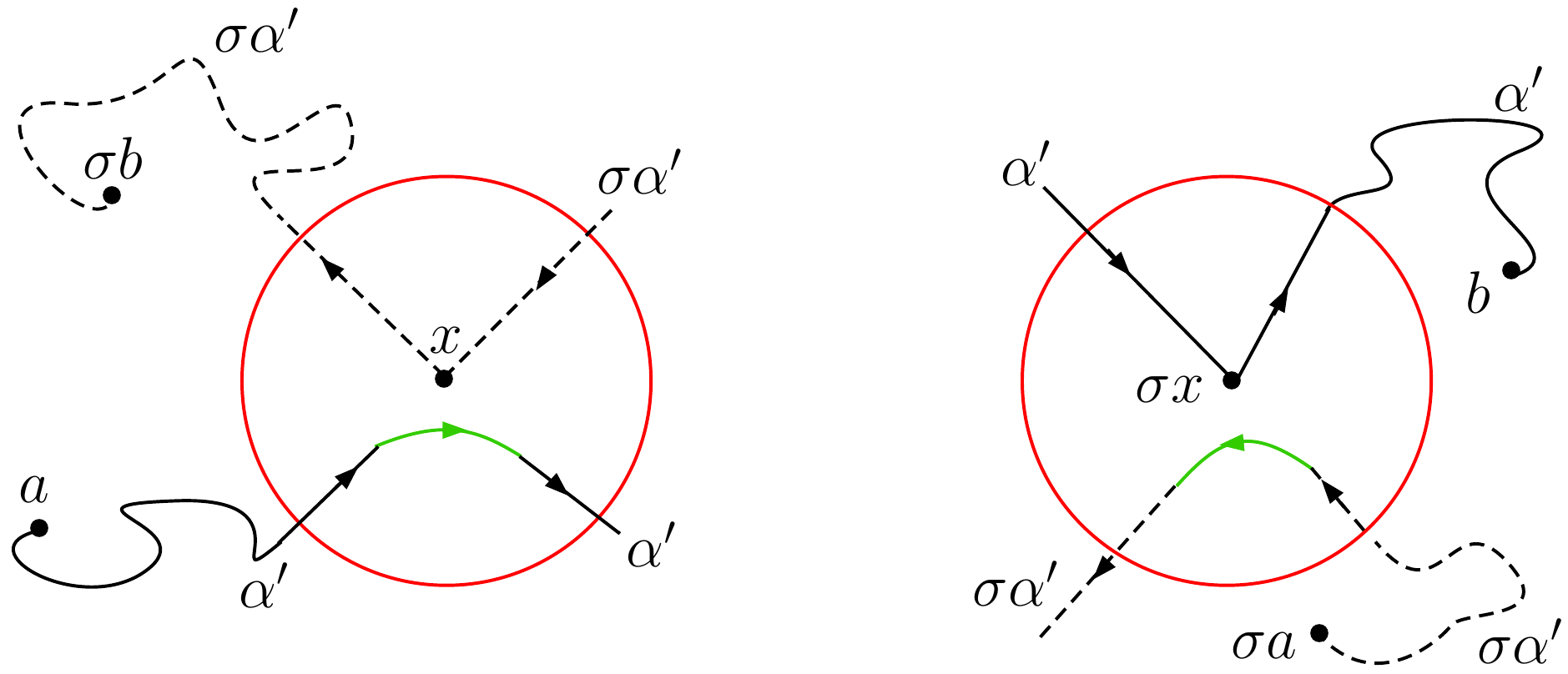}}
\end{picture}

Because $\alpha$ and $\sigma \alpha$ did not intersect each other between $x$ and
$\sigma x$, the path $\alpha'$ intersects $\sigma \alpha'$ in exactly two
fewer points than $\alpha$ and $\sigma\alpha$ did.

Proceeding inductively, one gradually modifies $\alpha$, reducing the
number of intersection points with its conjugate by two each time,  until there
are no intersection points at all.
\end{proof}

\begin{cor}
\label{co:nicepath}
Let $X$ be a connected, closed $2$-manifold with nontrivial involution, and let
$a,b\in X^{C_2}$ be distinct.  Then there is a simple path $\alpha$ from $a$
to $b$ that does not intersect its conjugate other than in the two endpoints.
\end{cor}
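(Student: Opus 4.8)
The plan is to bootstrap from the ``moving lemma'' Proposition~\ref{pr:ML} by first pushing $a$ and $b$ off the fixed set in a controlled way. First I would choose small, disjoint, $\sigma$-invariant disk neighborhoods $U_a\ni a$ and $U_b\ni b$ (invariant because $a$ and $b$ are fixed points). Inside $U_a$ the involution is conjugate to a linear one, either a $180$-degree rotation (if $a$ is isolated) or a reflection across a diameter (if $a$ lies on an oval), and similarly for $U_b$. In either model a radial or straight segment $\gamma_a$ from $a$ to a boundary point $a''\in\partial U_a\setminus X^{C_2}$ meets $X^{C_2}$ only at $a$, meets $\partial U_a$ only at $a''$, and meets its conjugate $\sigma\gamma_a$ only at $a$; similarly one picks $\gamma_b$ with terminal point $b''\in\partial U_b$.

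The one genuinely non-formal point is where the pushed-off endpoints land: I would show one can choose $a'\in U_a\cap(X-X^{C_2})$ and $b'\in U_b\cap(X-X^{C_2})$ lying in a common path component of $X-X^{C_2}$. If $X-X^{C_2}$ is connected there is nothing to do. Otherwise, by Proposition~\ref{pr:doub} (and Corollary~\ref{co:isfp=>ns}) the action is separating, $X$ is a doubling with no isolated fixed points, so $a$ and $b$ both lie on ovals; near such a point $U_a\cap(X-X^{C_2})$ consists of two half-disks, one in each of the two components $P_1,P_2$ (since $\sigma$ swaps the half-disks and swaps $P_1,P_2$), so one simply picks $a'$ and $b'$ on the $P_1$-sides. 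Then I would apply Proposition~\ref{pr:ML} to $a',b'$ — the hypotheses $a'\neq b'$ and $a'\neq\sigma b'$ hold automatically since $a'\in U_a$ and $\sigma b'\in\sigma U_b=U_b$ lie in disjoint sets — obtaining a simple path $\alpha'$ in $X-X^{C_2}$ with $\alpha'\cap\sigma\alpha'=\emptyset$.

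Next I would trim $\alpha'$: let $a''$ be the point of the last exit of $\alpha'$ from $\overline{U_a}$ and $b''$ the point of the first subsequent entry into $\overline{U_b}$; the sub-path $\tilde\alpha'$ running from $a''$ to $b''$ meets $\overline{U_a}$ only at $a''$, meets $\overline{U_b}$ only at $b''$, still lies in $X-X^{C_2}$, and is still disjoint from $\sigma\tilde\alpha'$. Taking $\gamma_a,\gamma_b$ as above with terminal points exactly $a'',b''$, I then set $\alpha=\gamma_a\cdot\tilde\alpha'\cdot\gamma_b^{-1}$. Verifying that $\alpha$ is simple and that $\alpha\cap\sigma\alpha=\{a,b\}$ is a routine case check organized around the $\sigma$-invariance of $U_a,U_b$: each of the three pieces and each of their conjugates lies in $U_a$, in $U_b$, or in $X-X^{C_2}$; $\gamma_a$ meets $\sigma\gamma_a$ only at $a$ and $\gamma_b^{-1}$ meets $\sigma\gamma_b^{-1}$ only at $b$; $\tilde\alpha'$ and $\sigma\tilde\alpha'$ are disjoint; $U_a$ and $U_b$ are disjoint; and every remaining ``mixed'' term (e.g.\ $\gamma_a$ against $\sigma\tilde\alpha'$) is confined to $\partial U_a$ at the single point $\sigma a''$, which the linear model shows does not lie on $\gamma_a$.

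The main obstacle is not any isolated hard step but the interplay between the disconnected case and the concatenation bookkeeping; the cleanest way through is to keep $U_a$ and $U_b$ $\sigma$-invariant throughout, so that ``the conjugate of a piece near $a$'' stays near $a$ and every cross-term collapses to an intersection inside a single disk handled by the explicit rotation/reflection models, while Proposition~\ref{pr:doub} rules out the only scenario in which the pushed-off points could be forced into different components.
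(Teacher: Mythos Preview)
Your proof is correct and follows essentially the same approach as the paper's: choose $\sigma$-invariant disk neighborhoods of $a$ and $b$, push off to nearby non-fixed points in a common component of $X-X^{C_2}$ (using the connectivity results of Section~\ref{se:general}), apply Proposition~\ref{pr:ML}, and concatenate with radial segments inside the disks. You are in fact more careful than the paper in two respects: the trimming step ensuring $\tilde\alpha'$ meets $\overline{U_a}$ and $\overline{U_b}$ only at its endpoints (so the concatenation is genuinely simple), and the explicit case check that the concatenation meets its conjugate only at $a$ and $b$; the paper's proof simply asserts that ``the concatenation $v\alpha u$ has the desired property'' without these verifications.
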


\begin{proof}
Start by choosing Euclidean neighborhoods $U_a$ and $U_b$, of $a$ and
$b$ respectively, that are disjoint, stable under conjugation, and
such that $\sigma|_{\bd U_a}$ and $\sigma|_{\bd U_b}$ are not constant.  Let
$x$ be any point on the boundary of $U_a$ that is not fixed.  
If $b$ is not an
isolated fixed point then by Proposition~\ref{pr:conn} the oval passing through $b$
touches both path components; so one can find a point $y$
on the boundary of $U_b$, also not fixed, that is
in the same path component as $x$ in $X-X^{C_2}$.  
If $b$ is isolated then by Corollary~\ref{co:isfp=>ns} the space $X-X^{C_2}$
is connected, so one can again choose a $y\in \bd U_b$ satisfying the
same properties.

By
Proposition~\ref{pr:ML} there exists a simple path $\alpha$ from $x$
to $y$ in $X-X^{C_2}$ having the property that $\alpha$ does not
intersect its conjugate.
Choose any simple path $u$ from $a$ to $x$ in $U_a$ that avoids its
conjugate, 
and any simple path $v$ from $y$ to $b$ in $U_b$ that avoids its
conjugate.  Then the concatenation $v\alpha u$ has the desired property.
\end{proof}

\begin{cor}[Surgery invariance]
\label{co:surg-inv}
Let $X$ be a path-connected, closed $2$-manifold with involution.  
\begin{enumerate}[(a)]
\item Let
$Y_1$ be obtained from $X$ by removing disjoint, conjugate
disks embedded in $X-X^{C_2}$ 
and sewing in an equivariant antitube of a certain type.
Let $Y_2$ be similarly obtained from $X$, sewing in the same type of
antitube, but using a different pair of conjugate embedded disks.  Then
there is an equivariant isomorphism $Y_1\iso Y_2$.

\item
Likewise, if $M$ is a connected $2$-manifold then the equivariant
isomorphism type of $X\#_2 M$ is independent of the conjugate disks
used in the construction.

\item
Finally, if $X$ has isolated fixed points then the space $X+[FM]$ is
independent (up to equivariant isomorphism) of the choice of fixed
point and surrounding disk used in the construction.

\end{enumerate}
\end{cor}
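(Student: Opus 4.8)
The plan is to derive all three parts of the corollary from a single equivariant isotopy--extension principle. Suppose $h\colon X\to X$ is an equivariant homeomorphism that is equivariantly isotopic to the identity and that carries the disk (or conjugate pair of disks) used in one instance of the construction onto the disk (or pair) used in a second instance. Then $h$ manifestly induces an equivariant isomorphism between the two surgered surfaces: away from the disks it is $h$ itself, and on the sewn-in piece it is the evident self-identification. So in each case it suffices to produce such an equivariant ambient isotopy of $X$ taking one choice of surgery data to the other.

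For (a) and (b), let $(D_1,\sigma D_1)$ and $(D_2,\sigma D_2)$ be the two pairs of conjugate disks, all of them lying in $X-X^{C_2}$. Choose $a$ in the interior of $D_1$ and $b$ in the interior of $D_2$ with $a\neq b$ and $a\neq\sigma b$. If $a$ and $\sigma a$ lie in different path components of $X-X^{C_2}$, then any simple path from $a$ to $b$ is automatically disjoint from its conjugate; otherwise Proposition~\ref{pr:ML} produces a simple path $\alpha$ from $a$ to $b$ in $X-X^{C_2}$ with $\alpha\cap\sigma\alpha=\emptyset$. A thin tubular neighborhood $T$ of $\alpha$ is then disjoint from $\sigma T$, and the standard isotopy--extension theorem yields an ambient isotopy of $X$, supported in $T$, that slides $D_1$ into a small Euclidean chart $U$ around $b$ chosen so that $D_2\subset U$ and $U\cap\sigma U=\emptyset$; running the conjugate isotopy simultaneously inside $\sigma T$ makes the whole move equivariant. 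Inside $U$ one then applies the uniqueness of embedded disks to carry the moved copy of $D_1$ onto $D_2$, again copying the isotopy into $\sigma U$. Composing these moves relates $(D_1,\sigma D_1)$ to $(D_2,\sigma D_2)$ equivariantly. What remains is the gluing homeomorphism $f\colon\bd M\to\bd D$ (and likewise for an antitube): two choices of $f$ differ, up to isotopy, by a reflection of $S^1$, and the attached connected piece possesses a self-homeomorphism inducing such a reflection on its boundary circle, so the two resulting $C_2$-spaces are equivariantly isomorphic.

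Part (c) calls for a separate device. One cannot isotope the isolated fixed point $x_1$ onto $x_2$, because an equivariant homeomorphism must carry fixed points to fixed points and isolated fixed points are not joined by any path inside $X^{C_2}$; instead the plan is to build an equivariant self-homeomorphism of $X$ that \emph{interchanges} $x_1$ and $x_2$. By Corollary~\ref{co:isfp=>ns} the two points lie in a common path component of $X-X^{C_2}$, so Corollary~\ref{co:nicepath} supplies a simple path $\alpha$ from $x_1$ to $x_2$ with $\alpha\cap\sigma\alpha=\{x_1,x_2\}$; then $\gamma:=\alpha\cup\sigma\alpha$ is an equivariantly embedded circle on which $\sigma$ acts as a reflection, and, as in Remark~\ref{re:S11-fp}, its equivariant tubular neighborhood $N$ is an $S^{1,1}$-antitube $S^{1,1}\times\R^{1,1}$ whose core meets $X^{C_2}$ exactly in $\{x_1,x_2\}$. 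A rotation of the $S^{1,1}$-factor of $N$ through an angle $\theta(t)$ that equals $\pi$ along the core and tapers to $0$ near $\bd N$ is then an equivariant self-homeomorphism of $N$ that fixes $\bd N$ pointwise and swaps $x_1$ with $x_2$: equivariance amounts to the constraint $\theta(-t)\equiv-\theta(t)\pmod{2\pi}$, which does permit $\theta(0)=\pi$, and extending this map by the identity over $X\setminus N$ gives the desired self-homeomorphism of $X$. Since it sends a small disk around $x_1$ to a small disk around $x_2$, it identifies the two $[FM]$-surgeries; independence of $X+[FM]$ from the choice of surrounding disk around a \emph{single} fixed point is then the $S^1_a$-cap version of the argument in (a), with a purely local uniqueness-of-invariant-disks input (the action is linear near the fixed point). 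The step I expect to be the main obstacle is the equivariant isotopy--extension argument underlying (a) and (b): each ingredient (isotopy extension, uniqueness of embedded disks) is classical, but it must be carried out strictly equivariantly, which is exactly why the disjointness $\alpha\cap\sigma\alpha=\emptyset$ supplied by the moving lemma, together with $U\cap\sigma U=\emptyset$, is essential; one must also check that these classical facts behave well when run simultaneously on a chart and its conjugate.
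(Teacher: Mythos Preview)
Your proof is correct and follows essentially the same route as the paper: for (a) and (b), use the moving lemma to locate a Euclidean region disjoint from its conjugate inside which one can push $D_1$ onto $D_2$ and then symmetrize; for (c), surround the two isolated fixed points by an $S^{1,1}$-antitube and use an equivariant twist of that antitube to swap them. The paper does (a)/(b) in a single step (take one neighborhood $U$ of the whole path containing both shrunken disks) rather than your two-step slide-then-identify, but the content is the same.

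Two small imprecisions worth fixing. First, in (a)/(b) you implicitly assume $a$ and $b$ lie in the same component of $X-X^{C_2}$; when $X-X^{C_2}$ has two components they could lie in different ones, and the remedy (which the paper states explicitly) is simply to relabel $D_2\leftrightarrow\sigma D_2$. Second, in (c) your angle function $\theta$ cannot literally ``taper to $0$'' at \emph{both} ends of the antitube: the constraint $\theta(-t)\equiv-\theta(t)\pmod{2\pi}$ together with $\theta(0)=\pi$ forces $\theta(t)+\theta(-t)=2\pi$, so one end must go to $0$ and the other to $2\pi$. This is exactly the equivariant Dehn twist $(z,t)\mapsto(e^{2\pi i t}z,t)$ on $S^{1,1}\times[0,1]$ that the paper writes down.
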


\begin{proof}
First consider (a).
Let the disks used to make $Y_i$ be called $D_i$ and $\sigma D_i$.
Clearly we can shrink $D_i$ and $\sigma D_i$ as much as we want
without changing the equivariant homeomorphism type of $Y_i$.
In particular, we can assume that $D_1$ does not meet $D_2\cup \sigma
D_2$.  

Let $a_1$ be the center of $D_1$, and $a_2$ be the center of $D_2$.
Without loss of generality we can assume that $a_1$ and $a_2$ are in
the same path component of $X-X^{C_2}$ (if not, reverse the names of
$D_2$ and $\sigma D_2$).  By Proposition~\ref{pr:ML}, there exists a
simple path $\alpha$ from $a_1$ to $a_2$ that does not intersect its
conjugate.  

Let $U$ denote a neighborhood of the path $\alpha$ that is small
enough to be Euclidean, to contain $D_1$ and $D_2$ (recall
that we can shrink these disks as much as we want), and to have the
property that $U\cap \sigma U=\emptyset$.  Then there exists
a self-homeomorphism of $U$ that fixes its boundary and sends $D_1$ to
$D_2$.  Extend this to a map $h\colon X\ra X$ by letting $h$ be the
identity outside of $U\cup \sigma U$, the chosen automorphism inside
of $U$, and the conjugate of this chosen automorphism inside of
$\sigma U$.  So $h$ is an equivariant map, and clearly 
$h$ extends to give an equivariant isomorphism between $Y_1$ and
$Y_2$.

The proof for (b) is identical to that of (a).

For (c), the independence of the choice of disk is clear enough.
Suppose that $a$ and $b$ are two isolated fixed points in $X$.  As in
Remark~\ref{re:S11-fp} there is an $S^{1,1}$-antitube inside of $X$
that passes through the points $a$ and $b$.  The Dehn twist on this
antitube can be modeled by a $C_2$-equivariant map that interchanges
$a$ and $b$: in terms of the picture below, this is the map that in
the fiber $y=t$ rotates the $xz$-plane 
counterclockwise about the
center of the tube, through $2\pi t$ radians.

\begin{picture}(300,100)
\put(60,0){\includegraphics[scale=0.5]{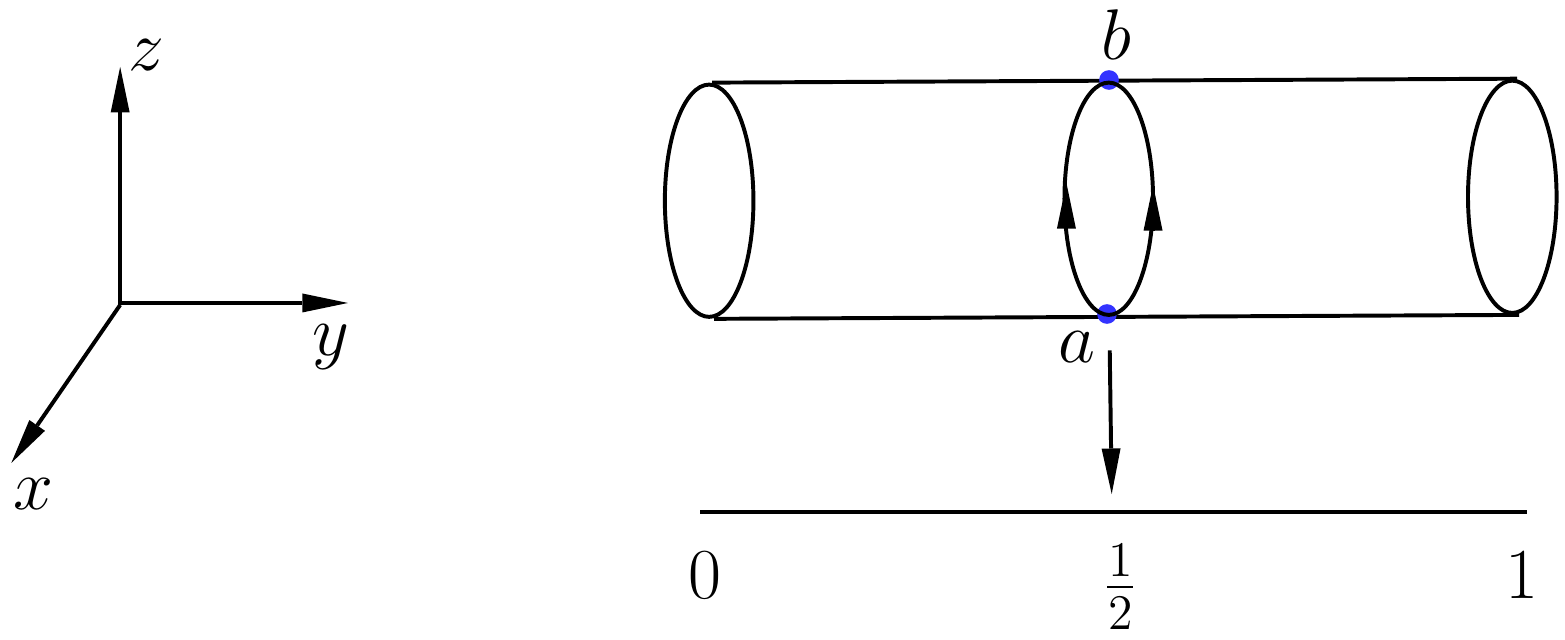}}
\end{picture}

\vspace{0.1in}

\noindent
This equivariant model for the Dehn twist extends to give an equivariant isomorphism
$X+_a[FM]\iso X+_b[FM]$ between $FM$-surgeries around $a$ and $b$.
\end{proof}


\newpage

\section{Tables of involutions on non-orientable surfaces}
\label{se:tables}

Recall from Section~\ref{se:non-orient} that 
$N_r[F,C\colon (C_+,C_-),Q]$ denotes the set of isomorphism classes of
involutions on $N_r$ having taxonomy equal to $[F,C:(C_+,C_-),Q]$.  
The tables below give information about these sets, listing both the
number of elements as well as explicit
names for all the elements.  Of course listing the
{\it number\/} is then redundant information, but we include this
because it makes certain patterns more evident.  

The tables
are organized as follows.  The unsigned taxonomies
$[F,C:(C_+,C_-)]$ index the rows, and the $Q$-signs index the
columns.  For the rows we list all tuples where $F+2C\leq r+2$ and
$F\equiv C_-\equiv r$ (mod $4$), which are the restrictions imposed by
Theorem~\ref{th:invariants}.
The first set of columns gives the number of elements, and the
second sets gives the names of the elements.  For example, the first
table depicts the $5$ nontrivial involutions on $N^2$, three with
negative $Q$-sign and two with positive $Q$-sign.  Tables are included
for $N_k$ where $2\leq k\leq 7$.

Note that ``antitube'' is abbreviated to ``AT'' in the tables, for
space considerations.

\vspace{0.2in}

\begin{tabular}{c|c|c|c|c}
$N_2$ & - & + &  - & + \\
\hline
4,0:(0,0) && & \\
2,0:(0,0) &1 & & $\scriptstyle{S^2_a+[S^{1,1}-AT]}$\\
2,1:(1,0) & & 1 & & $\scriptstyle{S^{2,2}+[S^{1,0}-AT]}$ \\
0,0:(0,0) &1 & & $\scriptstyle{S^2_a+[DCC]}$ \\
0,1:(1,0) & 1 & & $\scriptstyle{S^{2,1}+[DCC]}$ \\
0,2:(2,0) && & \\
0,2:(0,2) & & 1 & & $\scriptstyle{S^{2,2}+2[FM]}$\\
\end{tabular}

\vspace{0.2in}

\begin{tabular}{c|c|c|c|c}
$N_4$ & - & + &  - & + \\
\hline
6,0:(0,0) & &&  \\
4,0:(0,0) & 1 & & $\scriptstyle{S^2_a+2[S^{1,1}-AT]}$\\
4,1:(1,0) &  & 1 &&  $\scriptstyle{T_1^{\spit}[4]+[S^{1,0}-AT]}$\\
2,0:(0,0) & 1 & & $\scriptstyle{S^2_a+[DCC]+[S^{1,1}-AT]}$\\
2,1:(1,0) & 1 & & $\scriptstyle{S^2_a+[S^{1,1}-AT]+[S^{1,0}-AT]}$\\
2,2:(0,2) & & 1 & & $\scriptstyle{T_1^{\spit}[4]+2[FM]}$\\
2,2:(2,0) & & 1  & & $\scriptstyle{S^{2,2}+2[S^{1,0}-AT]}$ \\
0,0:(0,0) & 2 & & $\scriptstyle{S^2_a+2[DCC],\quad T_1^a+[DCC]}$\\
0,1:(1,0) & 2 & 1 & $\scriptstyle{S^{2,1}+2[DCC],\ 
  S^2_a+[DCC]+[S^{1,0}-AT]}$ 
& $\scriptstyle{T_1^{\rot}+[S^{1,0}-AT]}$\\
0,2:(2,0) & 1 & & $\scriptstyle{S^{2,1}+[DCC]+[S^{1,0}-AT]}$ \\
0,2:(0,2) & 1 & & $\scriptstyle{S^2_a+[S^{1,1}-AT]+2[FM]}$\\
0,3:(3,0) & & & \\
0,3:(1,2) & & 1 & & $\scriptstyle{S^{2,2}+[S^{1,0}-AT]+2[FM]}$\\
\end{tabular}

\vspace{0.2in}

\begin{tabular}{c|c|c|c|c}
$N_6$ & - & + & - & + \\
\hline
8,0:(0,0) && && \\
6,0:(0,0) &1 && $\scriptstyle{S^2_a+3[S^{1,1}-AT]}$\\
6,1:(1,0) &&1 && $\scriptstyle{T_2^{\spit}[6]+[S^{1,0}-AT]}$\\
4,0:(0,0) &1 & & $\scriptstyle{S^2_a+[DCC]+2[S^{1,1}-AT]}$  \\
4,1:(1,0) &1  & & $\scriptstyle{S^2_a+2[S^{1,1}-AT]+2[S^{1,0}-AT]}$   \\
4,2:(2,0) & & 1 &&  $\scriptstyle{T_1^{\spit}[4]+2[S^{1,0}-AT]}$\\
4,2:(0,2) &  &1 &&  $\scriptstyle{T_2^{\spit}[6]+2[FM]}$\\
2,0:(0,0) &1  & & $\scriptstyle{S^2_a+2[DCC]+[S^{1,1}-AT]}$   \\
2,1:(1,0) &1 & 1 &
$\scriptstyle{S^2_a+[DCC]+[S^{1,1}-AT]+[S^{1,0}-AT]}$
&    $\scriptstyle{T_2^{\spit}[2]+[S^{1,0}-AT]}$\\
2,2:(2,0) &1  &  & $\scriptstyle{S^2_a+[S^{1,1}-AT]+2[S^{1,0}-AT]}$   \\
2,2:(0,2) &1  & & $\scriptstyle{S^2_a+2[S^{1,1}-AT]+2[FM]}$   \\
2,3:(3,0) && 1  & & $\scriptstyle{S^{2,2}+3[S^{1,0}-AT]}$   \\
2,3:(1,2) & &1  & & $\scriptstyle{T_1^{\spit}[4]+1[S^{1,0}-AT]+[FM]}$   \\
0,0:(0,0) &2& & $\scriptstyle{S^2_a+3[DCC],\quad T_1^a+2[DCC]}$\\
0,1:(1,0) &3& & $\scriptstyle{S^{2,1}+3[DCC],\
  S^2_a+2[DCC]+[S^{1,0}-AT],}$\\
&&& $\scriptstyle{T_1^a+[DCC]+[S^{1,0}-AT]}$\\
0,2:(2,0) &2& 1
& $\scriptstyle{S^{2,1}+2[DCC]+[S^{1,0}-AT],}$
 & $\scriptstyle{T_1^{\rot}+2[S^{1,0}-AT]}$ \\
&&& $\scriptstyle{S^2_a+[DCC]+2[S^{1,0}-AT]}$ \\
0,2:(0,2) &1&1 & $\scriptstyle{S^2_a+[DCC]+[S^{1,1}-AT]+2[FM]}$
& $\scriptstyle{T_2^{\spit}[2]+2[FM]}$ \\
0,3:(3,0) &1& & $\scriptstyle{S^{2,1}+[DCC]+2[S^{1,0}-AT]}$\\
0,3:(1,2) &1& & $\scriptstyle{S^2_a+[S^{1,1}-AT]+[S^{1,0}-AT]+2[FM]}$\\
0,4:(4,0) && && \\
0,4:(2,2) &&1 && $\scriptstyle{S^{2,2}+2[S^{1,0}-AT]+2[FM]}$\\
0,4:(0,4) &&1 && $\scriptstyle{T_1^{\spit}[4]+4[FM]}$\\
\end{tabular}

\vspace{0.2in}

\begin{tabular}{c|c|c|c|c}
$N_3$ & - & + & - & + \\
\hline
3,1:(0,1) && 1 && $\scriptstyle{T^{\spit}_1[4]+[FM]}$ \\
1,1:(0,1) &1 & & $\scriptstyle{S^2_a+[S^{1,1}-AT]+[FM]}$ \\
1,2:(1,1) & &1 && $\scriptstyle{S^{2,2}+[S^{1,0}-AT]+[FM]}$ \\
\end{tabular}

\vspace{0.2in}

\bgroup

\begin{tabular}{c|c|c|c|c}
$N_5$ & - & + &  - & + \\
\hline
5,1:(0,1) & & 1 & & $\scriptstyle{T^{\spit}_2[6]+[FM]}$\\
3,1:(0,1) & 1 & & $\scriptstyle{S^2_a+2[S^{1,1}-AT]+[FM]}$\\
3,2:(1,1) &  & 1 & & $\scriptstyle{T_1^{\spit}[4]+[S^{1,0}-AT]+[FM]}$\\
1,1:(0,1) & 1 & 1 & $\scriptstyle{S^2_a+[DCC]+[S^{1,1}-AT]+[FM]}$ & 
$\scriptstyle{T_2^{\spit}[2]+[FM]}$\\
1,2:(1,1) & 1 & &  $\scriptstyle{S^{2}_a+[S^{1,1}-AT]+[S^{1,0}-AT]+[FM]}$ \\
1,3:(2,1) & & 1 & & $\scriptstyle{S^{2,2}+2[S^{1,0}-AT]+[FM]}$ \\
1,3:(0,3) &  & 1 & & $\scriptstyle{T_1^{\spit}[4]+3[FM]}$\\
\end{tabular}

\egroup

\vspace{0.2in}

\begin{tabular}{c|c|c|c|c}
$N_7$ & - & + &  - & + \\
\hline
7,1:(0,1) &&1 & & $\scriptstyle{T_3^{\spit}[8]+[FM]}$ \\
5,1:(0,1) &1 & & $\scriptstyle{S^2_a+3[S^{1,1}-AT]+[FM]}$ \\ 
5,2:(1,1) & &1 & & $\scriptstyle{T_2^{\spit}[6]+[S^{1,0}-AT]+[FM]}$\\
3,1:(0,1) &1  &1 & $\scriptstyle{S^2_a+[DCC]+2[S^{1,1}-AT]+[FM]}$
& $\scriptstyle{T_3^{\spit}[4]+[FM]}$ \\
3,2:(1,1) &1  & & $\scriptstyle{S^2_a+2[S^{1,1}-AT]+[S^{1,0}-AT]+[FM]}$ \\
3,3:(2,1) & &1  && $\scriptstyle{T_1^{\spit}[4] + 2[S^{1,0}-AT]}$ \\
3,3:(0,3) &  &1 && $\scriptstyle{T_2^{\spit}[6]+ 3[FM]}$\\
1,1:(0,1) &1  & & $\scriptstyle{S^2_a+2[DCC]+[S^{1,1}-AT]+[FM]}$ \\
1,2:(1,1) &1 & 1 & $\scriptstyle{S^2_a+[DCC]+[S^{1,1}-AT]+[S^{1,0}-AT]+[FM]}$
& $\scriptstyle{T_2^{\spit}[2] + [S^{1,0}-AT]+[FM]}$ \\
1,3:(2,1) &1  & & $\scriptstyle{S^2_a+[S^{1,1}-AT]+2[S^{1,0}-AT]+[FM]}$ \\
1,3:(0,3) &1  & & $\scriptstyle{S^2_a+2[S^{1,1}-AT]+[FM]}$ \\
1,4:(3,1) & &1 && $\scriptstyle{S^{2,2}+3[S^{1,0}-AT]+[FM]}$ \\
1,4:(1,3) & &1 && $\scriptstyle{T_1^{\spit}[4]+[S^{1,0}-AT]+3[FM]}$\\
\end{tabular}


\bibliographystyle{amsalpha}

\end{document}